\newcommand{\mres}{%
  \,\raisebox{-.127ex}{\reflectbox{\rotatebox[origin=br]{-90}{$\lnot$}}}%
}
\definecolor{url}{HTML}{0000FF} 
\definecolor{bluesy}{HTML}{0057e7}
\definecolor{redsy}{HTML}{d62d20}
\definecolor{greensy}{HTML}{008744}
\newtheorem{theorem}{Theorem}[section]
\newtheorem{lemma}[theorem]{Lemma}
\newtheorem{proposition}[theorem]{Proposition}
\newtheorem{corollary}[theorem]{Corollary}
\newtheorem{thmx}{Theorem}
\newcommand{\toweakstar}{\overset{*}\rightharpoonup}
\theoremstyle{definition}
\newtheorem{definition}[theorem]{Definition}
\newtheorem{notation}[theorem]{Notation}
\newtheorem{example}[theorem]{Example}
\theoremstyle{remark}
\newtheorem{remark}[theorem]{Remark}
\newcommand{\set}[2]{\left\{\, #1 \  \textup{\textbf{:}}\  #2 \,\right\}}
\newcommand{\R}{\mathbb R}
\newcommand{\spn}{\mathrm{span}}
\newcommand{\Kcal}{\mathcal K}
\newcommand{\Hcal}{\mathcal H}
\newcommand{\Fbf}{\mathbf F}
\newcommand{\Mbf}{\mathbf M}
\newcommand{\Nbf}{\mathbf N}
\newcommand{\Nbb}{\mathbb N}
\newcommand{\eps}{\varepsilon}
\newcommand{\dpr}[1]{\langle #1 \rangle}
\newcommand{\lpr}[1]{\llbracket #1 \rrbracket}
\newcommand{\embed}{\hookrightarrow}
\DeclareMathOperator{\im}{im}
\DeclareMathOperator{\Lip}{Lip}
\DeclareMathOperator{\spt}{spt}
\DeclareMathOperator{\KR}{KR}
\DeclareMathOperator{\BV}{\mathbb{BV}}
\DeclareMathOperator{\SBV}{\mathbb{SBV}}
\DeclareMathOperator{\dist}{dist}
\DeclareMathOperator{\Frag}{Frag}
  \renewcommand*{\glsgroupheading}[1]{}%
\tikzset{
  if/.code n args=3{\pgfmathparse{#1}\ifnum\pgfmathresult=0
    \pgfkeysalso{#3}\else\pgfkeysalso{#2}\fi},
  lower cantor/.initial=.3333, upper cantor/.initial=.6667, y cantor/.initial=.5,
  declare function={
    cantor_l(\lowerBound,\upperBound)=
      (\pgfkeysvalueof{/tikz/lower\space cantor})*(\upperBound-\lowerBound)+\lowerBound;
    cantor_u(\lowerBound,\upperBound)=
      (\pgfkeysvalueof{/tikz/upper\space cantor})*(\upperBound-\lowerBound)+\lowerBound;
    cantor(\lowerBound,\upperBound)=% fun definition
      (\pgfkeysvalueof{/tikz/y\space cantor})*(\upperBound-\lowerBound)+\lowerBound;},
  cantor start/.style n args=5{%
    insert path={(#1,#3)},
    cantor={#1}{#2}{#3}{#4}{#5}{0},
    insert path={to[every cantor edge/.try, cantor 1 edge/.try] (#2,#4)}},
  cantor/.style n args=6{%
    /utils/exec=%
      \pgfmathsetmacro\lBx{cantor_l(#1,#2)}%
      \pgfmathsetmacro\uBx{cantor_u(#1,#2)}%
%      \pgfmathsetmacro\y{.5*(#3+#4)},% proper definition
      \pgfmathsetmacro\y{cantor(#3,#4)},% fun
    style/.expanded={
      if={#6<#5}{cantor={#1}{\lBx}{#3}{\y}{#5}{#6+1}}{},
      insert path={
        to[every cantor edge/.try, cantor 1 edge/.try] (\lBx,\y)
        to[every cantor edge/.try, cantor 2 edge/.try] (\uBx,\y)},
      if={#6<#5}{cantor={\uBx}{#2}{\y}{#4}{#5}{#6+1}}{}}}}
\NewDocumentCommand{\newsy}{O{#2}mmmmm}{%
  \newglossaryentry{#1}{
    type=symbol,
    name={#2},
    symbol={#2},
    description={#3},
%    user1={#4}, % CGS Units
%    user2={#5}, % Base CGS Units
%    user3={#6}, % Dimensions
   user4={},    % This will store page numbers
    user5={\thepage} % Store the current page number
  }%
  \label{symbol:#1}% Note: Does this make sense???
}
\numberwithin{equation}{section} % Equations numbered by section
\title[One dimensional metric currents]{Structural properties of one-dimensional metric currents: \\ SBV-representations, connectedness and the flat chain conjecture}
\author{Adolfo Arroyo-Rabasa}
\address[A.\ Arroyo-Rabasa]{Dipartimento di Matematica,
Largo Bruno Pontecorvo,
5 – 56127 Pisa}
\email{adolfo.rabasa@unipi.it}
\author{Guy Bouchitt\'e}
\address[G.\ Bouchitt\'e]{
	Laboratoire IMATH, Universit\'e de Toulon\\
	BP 20132,  83957 La Garde (France)}
	\email{bouchitte@univ-tln.fr}
\date{}
\begin{document}

\begin{abstract}A comprehensive study of one-dimensional metric currents and their relationship to the geometry of metric spaces is presented. We resolve the one-dimensional flat chain conjecture in this general setting, by proving that its validity is equivalent to a simple geometric connectedness property. More precisely, we prove that metric currents can be approximated in the mass norm by normal currents if and only if every $1$-rectifiable set can be covered by countably many Lipschitz curves up to an $\mathcal H^1$-negligible set. 
Building on this, we demonstrate that any $1$-current in a Banach space can be completed into a cycle by a rectifiable current, with the added mass controlled by the Kantorovich--Rubinstein norm of its boundary. We further refine our approximation result 
by showing that these currents can be approximated by polyhedral currents modulo a cycle. Finally, in arbitrary complete metric spaces, we establish a Smirnov-type decomposition for one-dimensional currents. This decomposition expresses such currents as a superposition, without mass cancellation, of currents associated with curves of bounded variation that have a vanishing Cantor part.

\medskip
\noindent \textsc{Keywords:} Arens--Eells space, bounded variation, Càdlàg function, current, cycle, flat chain conjecture, Lipschitz function, metric space, quasiconvex space, Smirnov's theorem, Kantorovich norm.

\medskip
\noindent \textsc{MSC (2020):} 32C30, 49Q15, 51F30 (primary), 28A75 (secondary).
\end{abstract}

\maketitle

\setcounter{tocdepth}{1}

\tableofcontents

\section{Introduction}

The theory of normal and integral currents, pioneered by Federer and Fleming in \cite{FedererFleming1960,Federer1969}, has been instrumental in the mathematical study of geometric objects like curves, surfaces, and volumes in $\R^d$. Formally, a $k$-current $T$ is a continuous linear functional (distribution) on the space of smooth, compactly supported $k$-forms:
\begin{equation}\label{eq:form}
\omega = f \, d\pi_1\wedge \cdots \wedge d\pi_k.
\end{equation}Those $k$-currents that extend continuously to all compactly supported continuous $k$-forms are called Federer–Fleming (FF) currents. By the Riesz representation theorem, such currents correspond to $k$-vector-valued Radon measures.
In this way, an FF $k$-current generalizes the notion of a $k$-dimensional oriented surface, capturing concepts such as size (mass) \gls{Mass1} $= \text{\gls{mass_measure_1}}(\R^d)$, orientation and boundary. We write \gls{FFk} to denote the space of FF $k$-currents in $\R^d$.

A central operation in this theory is the boundary operator $\partial$, defined via duality in analogy with Stokes’ theorem: if $\omega = f \, d\pi_1 \wedge \cdots \wedge d\pi_{k-1}$, then
\begin{equation}\label{eq:diff_intro}
\partial T(\omega) = T(d\omega) = T(df \wedge d\pi_1 \wedge \cdots \wedge d\pi_{k-1}),
\end{equation}
where $d\omega$ denotes the exterior derivative of $\omega$. The subclass of \emph{normal} $k$-currents \gls{FFNk} consists of FF currents whose boundaries also have finite mass. Altogether, these notions proved pivotal in solving classical problems, such as Plateau's problem \cite{Plateau1873a,Plateau1873b}, and laid the foundation for the development of modern geometric measure theory (see, e.g., \cite{Federer1969,Morgan2000GMT}).

Yet, this distributional framework has its limitations: many non Euclidean spaces of interest, such as fractal sets, path spaces in optimal transport, or general metric spaces, lack a smooth differentiable structure. In such settings, the classical definitions of differential forms and their derivatives, as in \eqref{eq:form}–\eqref{eq:diff_intro}, no longer apply. To address this challenge, Ambrosio and Kirchheim developed in \cite{AK_Acta} a theory of \emph{metric currents} on complete metric spaces $(X,d)$. Their approach, which draws inspiration from De Giorgi's ideas, elegantly bypasses the need of a differentiability structure by defining metric currents as functionals on $(k+1)$-tuples (metric $k$-forms):
\begin{equation}\label{eq:identification_intro}
\text{«} \, f \, d\pi_1 \wedge \cdots \wedge d\pi_k = (f, \pi_1, \dots, \pi_k) \, \text{»}
\end{equation}
where $f : X \to \R$ is a bounded Lipschitz function and the $\pi_i : X \to \R$ are Lipschitz functions acting as coordinate functions of $X$. A metric $k$-current is then a multilinear functional $T : \Lip_b(X) \times \Lip(X)^k \to \R$, satisfying axioms of continuity, locality, and finite mass (see Definition~\ref{def:MC}), which ensure that $(f,\pi_1,\dots,\pi_k)$ behaves formally like a differential form. The boundary operator is also defined via duality, using an exterior differential on metric forms: $d(f, \pi_1, \dots, \pi_k) = (1, f, \pi_1, \dots, \pi_k)$. As in the classical setting, a metric current is called \emph{normal} if both the current and its boundary have finite mass. We denote the spaces of metric $k$-currents and normal metric $k$-currents by \gls{metric_k_currents} and \gls{metric_k_normal}, respectively.

\subsection{The flat chain conjecture: a source of motivation}

A central question in this theory is how metric currents relate to FF currents in Euclidean spaces. In their foundational work, Ambrosio and Kirchheim observed that each metric current $T \in \Mbf_k(\R^d)$ can be associated with an FF current $\tilde T \in \mathscr M_k(\R^d)$, and this correspondence restricts to a linear isomorphism $\Nbf_k(\R^d) \cong \mathscr N_k(\R^d)$ between normal metric and normal FF currents.

Despite this important link, the full scope of this correspondence is not fully understood. In particular, it is unclear which FF currents can be realized as metric currents. A key intermediate notion in this context is that of \emph{flat chains}, introduced by Federer and Fleming in \cite{FedererFleming1960} (see also \cite{Federer1975flat}). The space of flat $k$-chains with finite mass, denoted \gls{Flatk}, is defined as the closure of $\mathscr N_k(\R^d)$ with respect to the \emph{flat norm} (defined equally for FF and metric currents):
\[
	\text{\gls{Flat_norm}} := \inf \left\{ \Mbf(R) + \Mbf(S) \,\middle|\, R + \partial S = T \right\}.
\]
Ambrosio and Kirchheim observed that the correspondence between normal metric and FF currents extends to flat chains: every flat chain $\tilde T \in \mathscr F_k(\R^d)$ induces a metric current $T \in \Mbf_k(\R^d)$. This led them to conjecture (see \cite[p.68]{AK_Acta}) the identification $T \mapsto \tilde T$ extends to an isomorphism between $\mathscr F_k(\R^d)$ and $\Mbf_k(\R^d)$ —a statement now known as the \emph{flat chain conjecture} (FCC).

The FCC has been resolved positively for $k=1$ and $k=d$. Schioppa~\cite{Schioppa_2016} provided the proof of the one-dimensional case through deep algebraic and geometric methods, with alternative and simpler proofs appearing later \cite{ABM,MM2024,demasi2024}. For the top-dimensional case $k=d$, the only known proof relies on Schioppa’s framework and a groundbreaking rigidity result for PDE-constrained measures by De Philippis and Rindler \cite{De_Philippis_2016}. For the intermediate dimensions $1 < k < d$, results (derived from Schioppa's theory and work by Arroyo-Rabasa et al.~\cite{APHR2020, arroyo-rabasa_elementary_2020}) are currently limited to rectifiability and dimensional estimates of the form $\|T\| \ll \mathcal I^k \ll \text{\gls{Haus_k}}$, where $\mathcal I^k$ and $\mathcal H^k$ denote the integral-geometric and Hausdorff $k$-dimensional measures, respectively.

Recent work by Alberti and Marchese \cite{AM_flat} (see also~\cite{DP}) gives additional structural insight, showing that flat chains can be thought of as normal currents “punctured” by Borel holes. Thus, resolving the FCC would not only bridge a theoretical gap but also yield a characterization of metric currents in Euclidean space.

From a practical perspective, the FCC can be viewed as a \emph{density} statement for metric currents: for any $T \in \Mbf_k(\R^d)$, there exists a sequence $(T_j) \subset \Nbf_k(\R^d)$ such that $\mathbf F(T - T_j) \to 0$. When the limit $T$ has finite mass, it is well-known (see \cite[\S4.1.17, p. 374]{Federer1969}or \cite[{Lem.~A.7}]{brasco2014continuous}) that this convergence is equivalent to convergence in mass $\Mbf(T - N_j) \to 0$ for some (possibly different) sequence $(N_j) \subset \Nbf_k(\R^d)$. Figure~\ref{fig:flat_normal} illustrates this idea:

\begin{figure}[h]
\centering
\fbox{\includegraphics[width=0.7\textwidth]{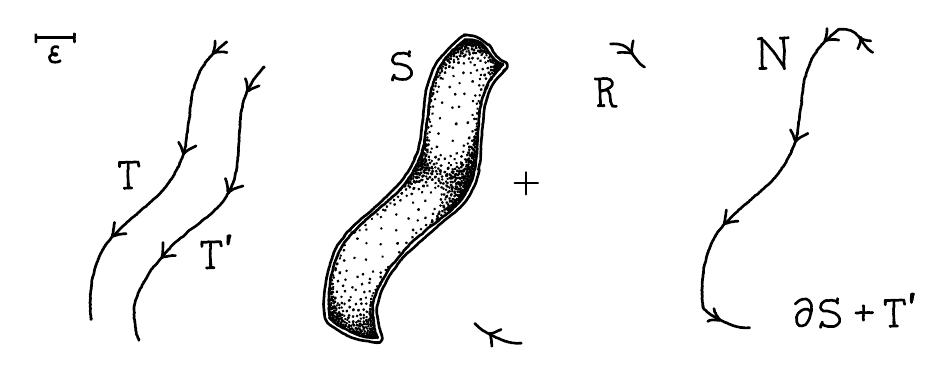}}
\caption{$T$ and $T'$ (left) are $\eps$-close in the flat norm: their difference $T - T'$ can be expressed as $\partial S + R$, where $S$ is a current with area of order $\eps$ and $R$ is a $1$-current with length of order $\eps$ (middle). On the right: $T$ is $\eps$-close in mass to the normal current $N = \partial S + T'$.}
\label{fig:flat_normal}
\end{figure}
It is easy to see that the same reasoning carries over to metric spaces. We will therefore adopt the following formulation of the FCC in a general metric space:

\begin{definition}[Flat chain conjecture]
Let $k \ge 0$ and let $X$ be a metric space. We say that $X$ satisfies the \gls{kFCC} if $\Nbf_k(X)$ is dense in $\Mbf_k(X)$ with respect to the mass norm. That is, for every $T \in \Mbf_k(X)$, there exists a sequence $N_j \in \Nbf_k(X)$ such that
\[
	\lim_{j \to \infty} \Mbf(T - N_j) = 0.
\]
\end{definition}

\subsection*{Main contributions}

The goal of this paper is to develop a comprehensive study of one-dimensional currents in metric spaces, primarily employing functional and optimal transport ideas. Our main results address both their structure and the validity of the {$1$-FCC}. What follows is a brief overview of the principal contributions; we refer the reader to the next section for precise definitions and terminology.

\subsection*{a) Characterizing the $1$-FCC via curve rectifiability}

Our first main result (contained in Theorem~\ref{thm:main1}) provides a sufficient and necessary condition for the validity of the $1$-FCC in a general separable and complete metric space $X$. This condition, which we term as \textbf{curve rectifiability}, requires that every $1$-rectifiable subset of $X$ can be covered, up to a set of $\mathcal H^1$-measure zero, by a countable union of Lipschitz curves. Therefore, we identify the minimal "connectedness" condition required for the $1$-FCC to hold.

\subsection*{b) Approximations by rectifiable and polyhedral currents} We strengthen this approximation property under a more quantitative assumption of connectedness, which we formalize through the notion of \textbf{piecewise quasiconvex} metric spaces (see Definition~\ref{ass:con}). This class includes many familiar settings, such as quasiconvex spaces, geodesic spaces, and vector spaces. In such spaces, we demonstrate (see Theorem~\ref{thm:approx_qc}) that every metric current can be approximated (modulo a cycle) by a sum of integral curve currents, i.e., currents associated with Lipschitz curves. Moreover, in Banach spaces, we further refine the approximation using polyhedral currents —finite linear combinations of oriented segments.

A key aspect of our methods in piecewise quasiconvex spaces is a \emph{novel} construction of primitives associated with $0$-currents (see Lemma~\ref{lem:OT}). This construction is based on an understanding of the Arens–Eells space $\text{\AE}(X)$ of $X$ as a pre-dual of the quotient space $\Lip(X)/\mathbb{R}$, displaying interesting connections to optimal transport theory. 

\subsection*{c) Covering metric currents by cycles}

Crucially, establishing the $1$-FCC in Banach (and piecewise quasiconvex) spaces opens the door to a deeper structural analysis. In Theorem~\ref{cor:cycle_cover}, we show that every one-dimensional metric current $T \in \Mbf_1(E)$ in a Banach space (of dimension at least two) $E$ can be \emph{covered} by a cycle $C \in \Nbf_1(E)$ in an almost optimal way, with respect to a suitable generalization of the Monge transportation problem associated with $\partial T$. Specifically, we establish that $T$ can be written as $T = C \mres B$ for some Borel set $B \subset E$, with the mass of the "remaining" cycle $C \mres B^c$ controlled by the Kantorovich norm of $\partial T$. This construction extends very recent ones by De Pauw~\cite[Thm 2.12]{DP} and refines prior ones —such as those in \cite{AM_flat}— by providing an efficient method for ``filling the holes'' of a current with a cycle of almost \emph{minimal mass}. 

\subsection*{d) A Smirnov-type decomposition for metric currents}

Drawing inspiration from Smirnov's decomposition of solenoidal measures~\cite{smirnov1993decomposition,rodriguez2024smirnov} and its extension to normal $1$-currents in metric spaces by Paolini and Stepanov~\cite{paolini2012decomposition,paolini2012decomposition2}, we conclude our main results by establishing a complete structural description of one-dimensional metric currents. Our Smirnov-type decomposition applies to any complete metric space and notably requires \textbf{no assumptions on connectedness}. 

Theorem~\ref{thm:rep} shows that any $T \in \Mbf_1(X)$ admits a decomposition into an integral over currents associated with a class of \emph{$SBV$-curves}. These are curves $u : [0,1] \to X$ of bounded variation, possessing well-defined left and right limits at every point, whose "fractal-like" part (the Cantorian part) vanishes. This means they only exhibit absolutely continuous behavior or jump-type discontinuities. A version of this result (Corollary~\ref{cor:fragment}) can also be phrased in terms of \emph{path fragments}—a more familiar concept in the metric geometry community. However, the SBV framework offers additional advantages: it distinguishes between the jump and continuous parts of a curve and aligns naturally with tools from analysis and stochastic analysis (e.g., the study of flows, evolutions, or transport along discontinuous trajectories).

More precisely, we demonstrate that there exists a finite Borel measure $\eta$ on the space of SBV-curves, equipped with the topology of Càdlàg functions, such that the current and its mass can be represented as:
\begin{align*}
	T &= \int_{SBV([0,1],X)} \llbracket u \rrbracket^a \, d\eta(u), \\
	\Mbf(T) &= \int_{SBV([0,1],X)} \ell(u) \, d\eta(u),
\end{align*}
where $\llbracket u \rrbracket^a$ denotes the current associated with the absolutely continuous part of $u$ and $\ell(u)$ is its  length.

\subsection*{A Note on Related Work}

While writing this paper, we became aware that Bate et al.~\cite{other_group} have simultaneously and independently obtained results related to those presented here. To the best of our knowledge, their techniques draw on metric geometry and analysis on metric spaces, whereas ours are typically linked to optimal transport and functions of bounded variation. A notable point of convergence is the independent discovery of the utility of the Arens-Eells framework for constructing approximations of metric 1-currents, which was also found by De Pauw very recently \cite{DP}.

%%%%%%%%% Section 2 %%%%%%%%%%

\section{Definitions and description of the main results}

Throughout this paper, we assume that $X$ is a \textbf{separable metric space}. This assumption is standard and simplifies the presentation, though it can be lifted under a specific set-theoretic axiom. The assumption of separability can be replaced by the condition that the cardinality of any set is an \textbf{Ulam number} (see \cite[p.~58]{Federer1969}). Under this axiom, the mass measure of any metric current $T \in \Mbf_k(X)$ is supported on a $\sigma$-compact set (see \cite[Lem.~2.9]{AK_Acta}).

\subsection*{Fundamental definitions}

Metric currents are fundamentally defined as linear functionals on \emph{metric forms}, which are constructed from Lipschitz functions on $X$. A function $f: X_1 \to X_2$ between two metric spaces $(X_1, d_1)$ and $(X_2, d_2)$ is \emph{Lipschitz} if its Lipschitz constant is finite:
\[
	\text{\gls{Lip}} \coloneqq \sup_{\substack{x,y \in X\\ x \neq y}} \frac{d_2(f(x),f(y))}{d_1(x,y)} < \infty.
\]
We denote the space of all such Lipschitz maps by $\Lip(X_1,X_2)$. When the target space is the real line, $X_2 = \mathbb{R}$, we simply write $\Lip(X)$, and \gls{Lip_b} to denote its subspace of bounded Lipschitz functions.

A \emph{metric $k$-form} on $X$ is a tuple $(f, \pi_1, \dots, \pi_k)$, where $f \in \Lip_b(X)$ and $\pi_1, \dots, \pi_k \in \Lip(X)$. The space of such forms is denoted by $\mathscr{D}^k(X) \coloneqq \Lip_b(X) \times \Lip(X)^k$, with $\mathscr{D}^0(X) := \Lip_b(X)$.

With this notion in hand, we can define metric currents.
\begin{definition}[Metric $k$-currents]\label{def:MC} Let $k \ge 0$.
A \emph{metric $k$-current} on $X$ is a multilinear functional $\text{\gls{metric_T}} : \mathscr{D}^k(X) \to \mathbb{R}$ that satisfies three key properties:
\begin{enumerate}[label=\roman*), leftmargin=1.8em, itemsep=0.7em]
    \item \textbf{Finite Mass:} There exists a finite Borel measure $\mu_T$ on $X$ such that
    \[
    |T(f, \pi_1, \dots, \pi_k)| \le \left( \prod_{i=1}^k \text{Lip}(\pi_i) \right) \int_X |f| \, d\mu_T.
    \]
    The minimal measure $\mu_T$ satisfying this inequality is called the \emph{mass measure}, and the \emph{mass} of $T$ is defined as $\text{\gls{mass2}} := \text{\gls{mass_measure}}(X)$.
    \item \textbf{Continuity:} If  $(f, \pi_1^j, \dots, \pi_k^j)$ converges pointwise to  $(f,\pi_1,\dots,\pi_k)$ in $X$ and $\Lip(\pi_i^j) \le C < \infty$ for all $j$ and all $i \in \{1,\dots,k\}$, then 
    \[
    T(f, \pi_1^j, \dots, \pi_k^j)  \to T(f, \pi_1, \dots, \pi_k).
    \]
    \item \textbf{Locality:} If for a tuple $(f,\pi_1,\dots,\pi_k)$, there exists $i \in \{1,\dots,k\}$ such that coordinate function $\pi_i$ is constant on a neighborhood of the support of $f$, then 
    \[
    T(f, \pi_1, \dots, \pi_k) = 0.
    \]
\end{enumerate}
\end{definition}

The space of metric $k$-currents, denoted \gls{metric_k_currents}, is a Banach space when endowed with the mass norm. Any current $T \in \Mbf_k(X)$ can be uniquely and continuously extended to act on $L^1(\|T\|) \times \text{Lip}(X)^k$. Notice that $\Mbf_0(X)$ coincides with the space of finite signed Borel measures. 

The standard operations of restriction and push-forward are defined in the natural way. For any Borel set $E \subset X$, the \emph{restriction} of a current $T$ on $E$ is given by 
\[
\text{\gls{mres}}(f,\pi_1,\dots,\pi_k) \coloneqq T(\mathbf{1}_E f,\pi_1,\dots,\pi_k).
\]
Similarly, if $\Phi: X \to X'$ is a Lipschitz map, the \emph{push-forward} of $T$ under $\Phi$ is defined as 
\begin{equation}\label{eq:push_0}
    \text{\gls{push}}(f,\pi_1,\dots,\pi_k) \coloneqq T(\Phi \circ f, \Phi \circ \pi_1, \dots, \Phi \circ \pi_k).
\end{equation}

The {boundary operator} is defined in the following way:

\begin{definition}[Boundary operator]\label{def:boundary_normal}
The \emph{boundary} of a multilinear functional $T : \mathscr D^k(X) \to \R$ is the multilinear functional $\partial T : \mathscr D^{k-1}(X) \to \R$ defined by
\[
\text{\gls{boundary}}(f, \pi_1, \dots, \pi_{k-1}) \coloneqq T(1, f, \pi_1, \dots, \pi_{k-1}).
\]
\end{definition}
Normal metric currents are then defined analogously to classical currents:
\begin{definition}[Normal currents]
    A current $T$ is called \emph{normal} if both $T$ and its boundary $\partial T$ have finite mass. The space of normal $k$-currents is denoted by \gls{metric_k_normal}. A current $T$ is a \emph{cycle} if its boundary is zero, that is, $\partial T = 0$.
\end{definition}

\subsection*{The Kantorovich–Rubinstein norm}

A key quantity associated with the boundary of a $1$-current is the \emph{Kantorovich–Rubinstein (KR) norm}, a foundational concept in optimal transport theory. For a boundary $\partial T$, its KR norm is defined as:
\[
	\text{\gls{KR_boundary}} := \sup\{ \partial T(f) : f \in \Lip_1(X) \},
\]
where $\Lip_1(X)$ is the space of $1$-Lipschitz functions on $X$. By the definition of a metric current, a crucial inequality holds:
\[
	\|\partial T\|_{\text{KR}} \le \Mbf(T), \qquad \text{for all } T \in \Mbf_1(X).
\]
When $X$ is a geodesic space, then equality is attained attained if $T$ is a transport minimizer for $\partial T$ in a suitable Monge--Kantorovich problem framework (see~\cite{BB-JEMS2001, BBD}). In this case, the KR norm acts as a "homogeneous flat norm" for boundaries of $1$-currents, since
\[
    \text{\gls{F_0}} \coloneqq \inf \set{\Mbf(S)}{\partial S = \partial T} = \|\partial T\|_{\KR}.
\]
In general spaces it holds $\|\partial T\|_{\KR} \le \Fbf_0(\partial T) $ for all $T \in \Mbf_1(X)$, but the inequality can be strict. 

\subsection*{Currents associated to Lipschitz curves}
We now recall a fundamental construction that allows us to associate a metric normal $1$-current to any Lipschitz curve $\theta \in \Lip([0,1],X)$. 

\begin{definition}[Integral curve current]\label{def:integral_curve}
Given a Lipschitz curve $\theta: [0,1] \to X$, we define its associated \emph{(integral) curve current} \gls{integral_curve} by
\[
\llbracket \theta \rrbracket(f, \pi) := \int_0^1 f(\theta(t)) \cdot (\pi \circ \theta)'(t) \, dt.
\]
\end{definition}
The derivative $(\pi \circ \theta)'$ exists almost everywhere by Rademacher's theorem, ensuring that the integral is well-defined. Furthermore, by the area formula in metric spaces (see~\cite{Kirchheim_Rectifiable}), the definition of $\llbracket \theta \rrbracket$ is independent of the specific Lipschitz parameterization of the curve. The mass of this current is bounded by the length of the curve: $\Mbf(\llbracket \theta \rrbracket) \le \ell(\theta)$, where 
\[
\text{\gls{length}} \coloneqq \int_0^1 |\dot \theta|(t) \, dt, \qquad \text{\gls{upper_gradient}}(t):= \limsup_{h\to 0_+} \frac{d(\theta(t+h), \theta(t))}{h}.
\]
 If $\theta$ is an injective curve, then $\Mbf(\llbracket \theta \rrbracket) = \ell(\theta)$. The fundamental theorem of calculus ensures that the boundary of this current is given by: 
 \[
 \partial \llbracket \theta \rrbracket = \delta_{\theta(1)} - \delta_{\theta(0)},
 \]
 where $\delta_x$ is the Dirac delta measure at point $x$.\medskip

Integral curve currents belong to the space of \textbf{rectifiable $1$-currents}, denoted \gls{rect_1}, which are currents whose mass is concentrated on a $1$-rectifiable set and whose mass measure is absolutely continuous with respect to the $1$-dimensional Hausdorff measure $\mathcal{H}^1$.\medskip

In a vector space, we can also define \textbf{polyhedral $1$-currents} as finite linear combinations of oriented segments. The current associated with a straight line segment $[x,y]$, from $x$ to $y$, is denoted by \gls{oriented_interval}. A polyhedral $1$-current is thus of the form:
\[
	T = \sum_{h=1}^r \eta_h \llbracket x_h, y_h \rrbracket, \qquad \eta_h \in \mathbb{R}, \; (x_h, y_h) \in X \times X.
\]
The space of such currents is denoted by \gls{poly_k}.

For a complete treatment of the theory of metric currents, we refer the reader to Ambrosio and Kirchheim's seminal work \cite{AK_Acta}, which contains all the basic theory of metric currents.

\subsection{The $1$-flat chain conjecture: a characterization in terms of connectedness}\label{sec:app}

Our first main result establishes sufficient and necessary conditions for the validity of the $1$-flat chain conjecture on complete metric spaces. This characterization is given in terms of a sufficient degree of "curve-like connectedness." To precisely formulate these findings, we first recall some basic concepts concerning one-dimensional rectifiable sets in metric spaces (see \cite[p. 251]{Federer1969}).

\begin{definition}[Fragments and $1$-rectifiable sets]\label{def:frag}
A subset $E \subset X$ is called \emph{$1$-rectifiable} if there exists a surjective Lipschitz map $\gamma: B \to E$, where $B$ is a bounded subset of $\R$. If, in addition, $B$ is compact, then $\gamma$ is called a \emph{Lipschitz fragment}. The canonical current associated with such a fragment is defined similarly to that of a Lipschitz curve:
\[
	\llbracket \gamma \rrbracket(f,\pi) \coloneqq \int_B f(\gamma) \cdot (\pi \circ \gamma)' \, dt.
\]
\end{definition}
With these definitions, we introduce a specific property for metric spaces that relates to how their $1$-rectifiable sets are covered by Lipschitz curves.

\begin{definition}[Curve rectifiable space]\label{def:cr_space}
A metric space $X$ is said to be \emph{curve rectifiable} if every $1$-rectifiable subset (or any fragment) in $X$ can be covered, up to an $\mathcal{H}^1$-negligible set, by a countable union of images of Lipschitz curves.
\end{definition}

This property is central to our first result:

\begin{thmx}\label{thm:main1}
Let $X$ be a complete and separable metric space. The following are equivalent:
\begin{enumerate}[label=\arabic*., leftmargin=2em, itemsep=0.7em,topsep=0.7em]
    \item $X$ is a curve-rectifiable space.
    \item For every $T \in \Mbf_1(X)$, there exists a sequence $(N_j) \subset \Nbf_1(X)$ such that 
    \[
    \Mbf(T - N_j) \longrightarrow 0 \quad \text{as $j \to \infty$}.
    \]
\end{enumerate}
\end{thmx}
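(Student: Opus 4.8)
The plan is to prove the two implications separately, with the direction $(2) \Rightarrow (1)$ being the softer one and $(1) \Rightarrow (2)$ carrying the analytic weight.

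For $(2) \Rightarrow (1)$, I would argue by contrapositive: suppose $X$ is not curve rectifiable, so there is a $1$-rectifiable set $E$ (equivalently, a Lipschitz fragment $\gamma : B \to X$ with $B \subset \mathbb{R}$ compact) such that the portion of $E$ not covered by any countable union of Lipschitz-curve images has positive $\mathcal H^1$-measure. After discarding a negligible set one may assume $E$ itself is ``purely curve-unrectifiable'' in this sense and $0 < \mathcal H^1(E) < \infty$. I would then manufacture a rectifiable current $T = \llbracket \gamma \rrbracket$ (or a suitable restriction thereof with a well-chosen density and orientation so that the mass measure is comparable to $\mathcal H^1 \mres E$) and show it cannot be approximated in mass by normal currents. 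The key point is that mass-convergence $\Mbf(T - N_j) \to 0$ forces $\|T\|$ to be absolutely continuous with respect to $\limsup_j \|N_j\|$, and — using the structure theory of normal $1$-currents (their mass is carried by Lipschitz curves, e.g.\ via the Paolini--Stepanov/Smirnov decomposition, or more elementarily via the fact that $\mathbf N_1$-currents push forward nicely and their mass measures are sub-additively carried on countably many curves) — the mass of each $N_j$, and hence a mass-limit of them, is concentrated on a countable union of Lipschitz-curve images. This contradicts that $\|T\| \gg \mathcal H^1 \mres E$ with $E$ curve-unrectifiable. The one technical care needed here is to make sure the curves arising from the $N_j$ can be collected into a single countable family independent of $j$; this is routine since a countable union of countable families is countable.

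For $(1) \Rightarrow (2)$, fix $T \in \Mbf_1(X)$; I want $N_j \in \Nbf_1(X)$ with $\Mbf(T - N_j) \to 0$. The strategy is a reduction to the rectifiable, then polyhedral-type, building blocks:
\begin{enumerate}[label=\textup{(\alph*)}, leftmargin=2em]
\item First reduce to the case where $\|T\|$ is concentrated on a $1$-rectifiable set. By the Ambrosio--Kirchheim structure theorem for metric currents, a general $T \in \Mbf_1(X)$ is a sum $T = T^r + T^d$ of a rectifiable part and a diffuse (``purely unrectifiable'') part; but the $1$-dimensional theory — exactly the content underlying the known $1$-FCC proofs, cf.\ Schioppa / Alberti--Marchese / De Masi et al. — shows there is no diffuse part in dimension one, i.e.\ every $T \in \Mbf_1(X)$ is rectifiable. (Alternatively, one can avoid invoking this and instead first approximate $T$ in mass by currents of the form $\Phi_\# T'$ for Lipschitz $\Phi$ landing in $\mathbb R^N$, use the Euclidean $1$-FCC there, and pull back; but the cleanest route is to use rectifiability of $1$-currents directly.)
\item Given that $\|T\|$ lives on a $1$-rectifiable set $E$, use curve rectifiability of $X$: cover $\mathcal H^1$-almost all of $E$ by countably many Lipschitz curves $\theta_1, \theta_2, \dots$. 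Disjointify their images into Borel pieces and write $T = \sum_n T \mres A_n$ with $A_n$ contained in the image of $\theta_n$. Each $T \mres A_n$ is a rectifiable $1$-current carried by a single Lipschitz curve, hence (reparametrizing $\theta_n$ by arclength and pushing forward a one-dimensional current on an interval) it is of the form $(\theta_n)_\# S_n$ for a $1$-current $S_n$ on $[0,1]$, i.e.\ $S_n = g_n \, \mathcal L^1 \mres J_n$ with $g_n \in L^1$. Since $\mathbf M(T) < \infty$, the tail $\sum_{n > M} \mathbf M(T \mres A_n) \to 0$, so it suffices to approximate each finite partial sum, hence each single piece $(\theta_n)_\# S_n$, in mass by normal currents.
\item Finally, approximate a current of the form $\theta_\# (g \, \mathcal L^1 \mres J)$ in mass by normal currents. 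Approximate $g$ in $L^1(J)$ by a finite step function $\sum_h c_h \mathbf 1_{(a_h, b_h)}$; pushing forward, this is a finite sum $\sum_h c_h \llbracket \theta|_{[a_h,b_h]} \rrbracket$, which is a sum of integral curve currents — each of which is a \emph{normal} current (finite mass, boundary two Dirac masses) by the computation $\partial \llbracket \sigma \rrbracket = \delta_{\sigma(1)} - \delta_{\sigma(0)}$ recalled above. Mass-continuity of $\theta_\#$ (mass does not increase under push-forward by a $1$-Lipschitz map, and one can normalize $\Lip(\theta) = 1$ by arclength parametrization) turns the $L^1$-approximation of $g$ into a mass-approximation of $T \mres A_n$. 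Summing the chosen approximants for $n \le M$ with $M$ large gives the desired $N_j$.
\end{enumerate}

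The main obstacle is step (a)/(i): showing that in dimension one there is no purely unrectifiable part, i.e.\ that every $T \in \Mbf_1(X)$ is rectifiable. In a curve-rectifiable $X$ this is precisely what one expects, but one must either cite the (deep) $1$-dimensional flat-chain results in Euclidean space and transfer them via Lipschitz push-forwards $\Phi : X \to \mathbb R^N$ together with a ``gluing'' argument that the diffuse part must have zero push-forward for all such $\Phi$ and therefore vanish, or else run a self-contained slicing argument: slice $T$ by Lipschitz maps $\pi : X \to \mathbb R$, obtaining $0$-currents (finite measures) whose atoms trace out the $1$-rectifiable support, and reassemble. This slicing/reassembly step — controlling mass exactly and ensuring the reassembled rectifiable current equals $T$ rather than merely agreeing on a class of test forms — is where the real care is needed; everything after it (steps (b), (c)) is a routine $L^1$/step-function approximation once the building blocks are identified as push-forwards of one-dimensional currents along Lipschitz curves.
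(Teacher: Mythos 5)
Your direction $(2)\Rightarrow(1)$ follows essentially the paper's route (Paolini--Stepanov decomposition of the approximating normal currents plus an exhaustion by countable families of curves), but one intermediate claim is false as stated: the mass of a normal $1$-current need \emph{not} be concentrated on a countable union of Lipschitz-curve images (take the normal current with mass measure $\mathcal L^2$ on a square, written as an integral of vertical segments). What is true, and what the paper actually uses, is that $\Mbf(\lpr{\gamma}-N_j)\to 0$ forces $\|N_j\|(B)\to\mathcal H^1(B)$ for Borel $B\subset\Gamma=\im\gamma$, and then a maximization argument over \emph{countable} subfamilies of the curves appearing in the Paolini--Stepanov representation of $N_j$ shows that any positive-measure uncovered piece of $\Gamma$ would contradict maximality. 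Your contrapositive can be repaired along these lines, so I regard this direction as essentially correct modulo that fix.

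The genuine gap is step (a) of $(1)\Rightarrow(2)$. You assume that every $T\in\Mbf_1(X)$ in a general complete separable metric space ``has no diffuse part,'' i.e.\ is carried by a $1$-rectifiable set with $\|T\|\ll\mathcal H^1$ and, moreover (as step (b) requires), that $T\mres A_n$ is genuinely a push-forward $(\theta_n)_\#(g_n\,\mathcal L^1\mres J_n)$ of a one-dimensional current on an interval. Neither assertion is citable as routine: the known $1$-FCC/rectifiability results are for $\R^d$ or for Banach spaces with finitely generated differential structure, and knowing that $\Phi_\#T$ is rectifiable in $\R^N$ for every Lipschitz $\Phi$ does not by itself yield a single rectifiable representation of $T$ upstairs. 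In the paper this is exactly the content of the $\SBV$/fragment representation (Theorem~\ref{thm:rep} and Corollary~\ref{cor:fragment}), which is proved by an entirely different mechanism: isometrically embedding $X$ into $\ell^\infty$, completing $iT$ into a cycle via the Arens--Eells/optimal-transport primitive construction (Lemma~\ref{lem:discordia2}, Corollary~\ref{cor:cycle_cover2}), decomposing the cycle by Paolini--Stepanov, and pushing the resulting Lipschitz curves back down as $\SBV$-curves. Your steps (b)--(c) then correspond to Step~1 of the paper's necessity proof (approximating a single fragment current by restricting to finitely many closed intervals), but even there the paper must add a measurable-selection (Von Neumann) argument and a boundary-mass truncation to assemble the approximants, because the decomposition is an integral over $\Frag(X)$ rather than a countable sum; your plan silently assumes the countable-sum situation, which is only available once the hard structural input you are missing has been established.
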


Vector spaces, quasiconvex spaces (which we will define shortly), and geodesic spaces are all examples of curve rectifiable spaces. In particular, Theorem~\ref{thm:main1} extends prior work by Schioppa (\cite[Thm.~1.6]{Schioppa_2016}), who established a version of this result for Banach spaces equipped with a "finitely generated differential structure." Our result shows that the $1$-FCC holds on any Banach space, regardless of the dimension of its differential structure. A key distinction of our proof lies in its methodology: where Schioppa employed deep algebraic and measure-theoretic techniques, we rely solely on fundamental functional analysis principles, thus sidestepping a detailed geometric analysis of fragments or abstract differentiability properties.

Schioppa himself recognized the potential for his proof to adapt to spaces where fragments can be "filled-in" to produce Lipschitz curves (\cite[p. 3012]{Schioppa_2016}). While this property aligns with the intuitive notion of curve rectifiability, it is \emph{not} necessary for the 1-FCC's validity. For example, two disjoint closed intervals in $\R$ are curve rectifiable but do not allow for such "filling-in." The examples below further clarify this concept.

\subsubsection{Examples concerning curve rectifiability}

It is important to note that $1$-rectifiable sets and fragments can exist even if $X$ does not contain any non-trivial Lipschitz curves. The following simple example exhibits a metric space that is $1$-rectifiable but its only Lipschitz curves are constants.

\begin{example}[Fat Cantor set]\label{ex:fat}
Let $X \subset [0,1]$ be the Smith–Volterra–Cantor set of size $\alpha \in (0,1/3)$, also known as the "fat Cantor set." This is a closed and nowhere dense set on the real line with measure $\mathscr{L}^1(X) = 1-\alpha$. It is straightforward to see that every subset of $X$ is $1$-rectifiable as a subspace of $[0,1]$. However, as $X$ is totally disconnected, it contains no non-trivial Lipschitz curve $\theta: [0,1] \to X$, and thus it is not curve rectifiable. In fact, every Borel subset of $X$ defines a metric $1$-current in a canonical way, but the only normal $1$-current in $X$ is the zero current, that is, $\Nbf_1(X) = \{0\}$. 
\begin{figure}[h]
\centering
\fbox{\includegraphics[width=0.4\textwidth]{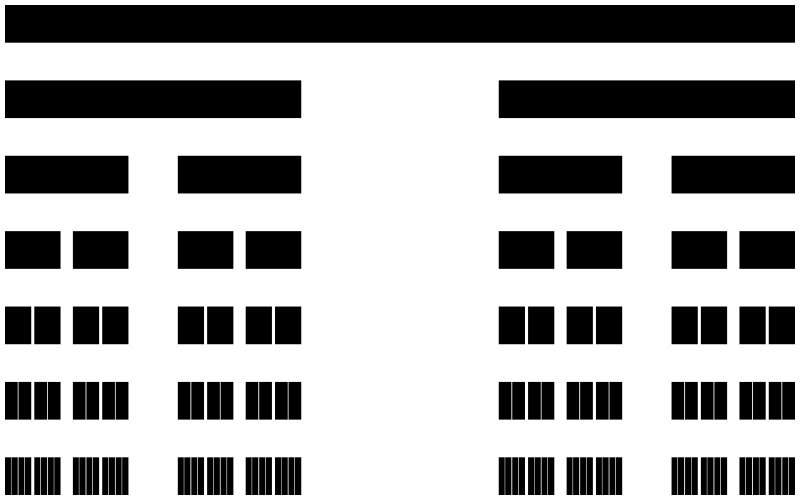}}
\caption{Iterative approximations of the Smith–Volterra–Cantor set.}
\end{figure}
\end{example}
\begin{remark}[Totally disconnected spaces]
    More generally, $\Nbf_1(X) = \{0\}$ for any totally disconnected complete space $X$. This is consequence of representation of normal $1$-currents as a superposition of Lipschitz curves, established by Stepanov and Paolni in~\cite{paolini2012decomposition,paolini2012decomposition2}.
\end{remark}
\newpage
Certain
geometries can also prevent curve rectifiability, even if the space is arc-connected:

\begin{example}[Almost transversal rectifiable set]\label{ex:transversal}
Let $C \subset [0,1]$ be the Cantor ternary set, and consider the space $X = (C \times (0,1)) \cup ([0,1] \times \{0\})$ as a subspace of $\R^2$. Notice that $X$ is complete and arc-connected. Let $u$ be the Cantor function and let $\Gamma_u$ be its graph. Since $u$ is monotone, the graph is $1$-rectifiable in $\R^2$. Moreover, it has length $\mathcal{H}^1(\Gamma_u) = 2$ (see, e.g., \cite[{p.143}]{ambrosioFunctionsBoundedVariation2000}). On the other hand, a projection argument shows that $E = X \cap \Gamma_u$ is $1$-rectifiable in $X$ with $\mathcal{H}^1(E) = 1$. For any $c \in C \cap (0,1)$, the segment $\{c\} \times [0,1]$ intersects $E$ precisely at the point $(c, u(c))$. Any Lipschitz curve on $X$ touching two such points must intersect each of their associated segments in a set of positive length; therefore, it can only intersect countably many of the segments. Since the Cantor set $C$ is uncountable, $E$ cannot be covered by a countable collection of Lipschitz curves. This proves that $X$ is not curve rectifiable.
\begin{figure}[h]
\centering
\fbox{\includegraphics[width=0.3\textwidth]{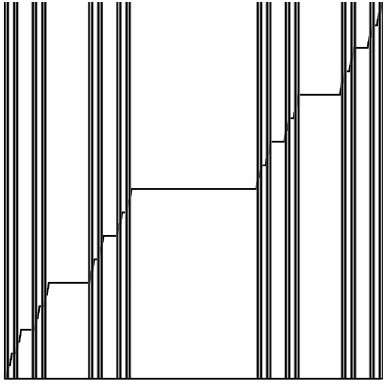}}	
\caption{The space $X$ in Example~\ref{ex:transversal} is depicted by the vertical lines, parametrized by the Cantor set, and the bottom segment connecting them. The $1$-rectifiable set $E$ is the intersection of the graph with the family of vertical lines.}
\label{fig:transversal}
\end{figure}
\end{example}

\subsection{Constructions in piecewise quasiconvex spaces} The proof of Theorem~\ref{thm:main1} relies on a sequence of structural results (Theorems~\ref{thm:approx_qc},~\ref{cor:cycle_cover} and~\ref{thm:rep}); see Fig.~\ref{fig:theorem-deps}. 
Our strategy begins by demonstrating the validity of the $1$-FCC when $X$ is \emph{piecewise quasiconvex} (but not necessarily complete), which is stronger than curve rectifiability, and will play a pivotal role in our constructions. In what follows, readers who wish to simplify the discussion may skip the definition of "piecewise quasiconvex" spaces and instead think in terms of the more familiar notions of "quasiconvex" (see below) or "Banach" spaces.

\begin{definition}[Piecewise quasiconvex space]\label{ass:con}
We say that $X$ is \emph{piecewise quasiconvex}\gls{piece_qc} if there exists a constant $c \ge 1$ with the following property: for any pair of points $x, y \in X$ and any $\eps > 0$, there exist pairs of points $(x_1, y_1), \dots, (x_{m}, y_m)$ in $X$ such that (see Fig.~\ref{fig:qc} below)
\[
		d(x,x_1) + d(y_{m}, y) + \sum_{i = 1}^{m-1} d(x_{i+1}, y_i) < \eps,
\]
together with Lipschitz curves $\theta_1, \dots, \theta_m : [0,1] \to X$ satisfying
\[
		\theta_i(0) = x_{i}, \qquad \theta_i(1) = y_i, \qquad \sum_{i=1}^m \ell(\theta_i) \le c d(x, y).
\]
\end{definition}

\begin{figure}[h]
\centering
\fbox{\includegraphics[width=0.4\textwidth]{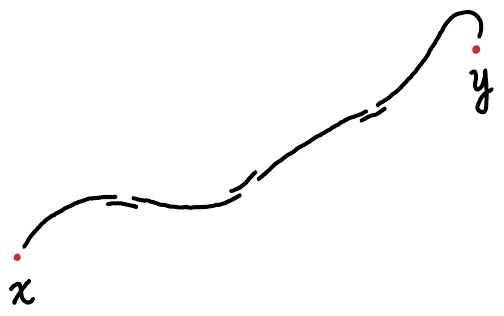}}
\caption{Collection of Lipschitz paths "almost" connecting the points $x$ and $y$.}
\label{fig:qc}
\end{figure}

\begin{remark}[Quasi-convex spaces]
 If $X$ is a complete metric space, then piecewise quasiconvexity implies classical \gls{qc}quasiconvexity: there exists $c \ge 1$ such that, for any $x,y \in X$, there exists a Lipschitz curve $\theta : [0,1] \to X$ with $\theta(1) = y$, $\theta(0) = x$, and $\ell(\theta) \le c d(x,y)$.
\end{remark}

Under this assumption we can construct rectifiable "primitives" of $\partial T$, almost minimizing a version of the Monge transport problem:

\begin{lemma}[Almost optimal primitives]\label{lem:OT} Let $X$ be a separable and piecewise quasiconvex space (with quasiconvexity constant $c\ge 1$) and let $T \in \Mbf_1(X)$. Then, for any $\delta \in (0,1)$, there exists a rectifiable current $R \in \mathcal R_1(X)$ satisfying
\[
	\partial R = \partial T \quad \text{and} \quad \Mbf(R) \le c(1 + \delta) \|\partial T\|_{\KR} .
\]
\end{lemma}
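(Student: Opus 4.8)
The strategy is to produce $R$ by transporting the negative part of $\partial T$ onto its positive part along chains of the Lipschitz curves furnished by piecewise quasiconvexity. Write $\mu := \partial T \in \Mbf_0(X)$, a finite signed Borel measure with $\mu(X) = 0$ (since $\partial T = \partial(\text{something})$, or directly since $\partial\partial$ vanishes on metric $0$-currents only formally — more precisely $\partial T$ annihilates constants). The key identity to exploit is the duality $\|\mu\|_{\KR} = \sup\{\mu(f) : \Lip(f) \le 1\}$, which by Kantorovich--Rubinstein duality equals the optimal cost $\min_{\gamma} \int d(x,y)\,d\gamma(x,y)$ of transporting $\mu^+$ to $\mu^-$ over couplings $\gamma$ of $\mu^+$ and $\mu^-$. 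Fix $\delta \in (0,1)$ and choose a coupling $\gamma$ that is $(1+\delta/2)$-almost optimal, so $\int d(x,y)\,d\gamma \le (1+\delta/2)\|\mu\|_{\KR}$. The plan is then: for each pair $(x,y)$ in the support of $\gamma$, apply Definition~\ref{ass:con} with error parameter depending measurably on $(x,y)$ to obtain finitely many curves $\theta_1^{(x,y)},\dots,\theta_m^{(x,y)}$ whose total length is $\le c\,d(x,y)$ and which, concatenated with short "gap-filling'' segments of total length $<\eps$, connect $x$ to $y$; define $R := \int \bigl(\sum_i \llbracket \theta_i^{(x,y)}\rrbracket + \text{gap terms}\bigr)\,d\gamma(x,y)$.

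The execution breaks into three steps. \textbf{Step 1 (measurable selection).} I would first discretize: approximate $\gamma$ by a finitely-supported coupling $\gamma_n = \sum_k a_k \delta_{(x_k,y_k)}$ with $\int d\,d\gamma_n \to \int d\,d\gamma$ and $(\mathrm{proj}_1)_\#\gamma_n \to \mu^+$, $(\mathrm{proj}_2)_\#\gamma_n \to \mu^-$ weakly-$*$ as $n \to \infty$; this avoids any measurable-selection subtlety since for a finite sum one simply invokes Definition~\ref{ass:con} finitely many times. \textbf{Step 2 (construction of the approximating current).} For the finite coupling $\gamma_n$, and a tolerance $\eps_n \to 0$, set $R_n := \sum_k a_k\bigl(\sum_{i} \llbracket \theta_i^{(k)}\rrbracket + \sum_j \llbracket p_j^{(k)}, q_j^{(k)}\rrbracket\bigr)$, where the second sum runs over the finitely many "gap'' segments realizing $d(x_k,x_1^{(k)}) + d(y_m^{(k)},y_k) + \sum d(x_{i+1}^{(k)}, y_i^{(k)}) < \eps_n$ — except that in a general metric space there are no segments, so instead I fill each gap by \emph{any} curve of length at most twice the gap width (which exists up to a further $\eps$ by applying piecewise quasiconvexity once more, or by absorbing the gaps into the definition directly). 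Each $R_n$ is a finite sum of integral curve currents, hence $R_n \in \mathcal R_1(X)$, with $\Mbf(R_n) \le \sum_k a_k(c\,d(x_k,y_k) + 2\eps_n) = c\int d\,d\gamma_n + 2\eps_n \sum_k a_k$ and $\partial R_n = \sum_k a_k(\delta_{y_k} - \delta_{x_k}) = (\mathrm{proj}_2)_\#\gamma_n - (\mathrm{proj}_1)_\#\gamma_n$, which converges to $\mu^- - \mu^+ = -\partial T$... (orientation: take $-R_n$ or swap, so that $\partial R_n \to \partial T$). \textbf{Step 3 (passage to the limit).} Since $\|\mu\|_{\KR}$ bounds $\liminf \Mbf(R_n)$ up to the factor $c(1+\delta/2)$, and $\Mbf_1(X)$ with the mass norm is a Banach space, I would extract a subsequence with $\Mbf(R_n - R_{n'}) \to 0$ — this is the step that needs care, because mass convergence of the boundaries does not by itself give mass convergence of the $R_n$. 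The honest route is to arrange the discretization so that $R_n$ is \emph{Cauchy in mass} by construction: refine $\gamma_{n+1}$ from $\gamma_n$ by only subdividing atoms and only reselecting curves on the newly-split pieces, so that $\Mbf(R_{n+1} - R_n)$ is controlled by the mass of the curves on the refined part, which tends to $0$. Then $R := \lim R_n \in \mathcal R_1(X)$ (rectifiability is closed under mass limits), $\partial R = \partial T$ by continuity of $\partial$ in mass norm, and $\Mbf(R) \le \liminf \Mbf(R_n) \le c(1+\delta/2)\|\partial T\|_{\KR} \le c(1+\delta)\|\partial T\|_{\KR}$.

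The main obstacle is Step 3: ensuring the approximating sequence converges in \emph{mass} and not merely weakly or flat-ly, while keeping the mass bound sharp. Two devices make this work. First, the gap-filling contributions can be made summable, $\sum_n \eps_n < \infty$, so they contribute a controlled perturbation. Second — and this is the crux — rather than discretizing $\gamma$ once, I would build a \emph{nested} sequence of finite couplings $\gamma_n$ (via a dyadic partition of $X$ adapted to $\mu^+$ and $\mu^-$, using separability) so that $R_n \to R$ is a genuine Cauchy sequence, with the tails estimated by $\Mbf(R_{n+1}-R_n) \lesssim c \int_{\text{refined cells}} \mathrm{diam} \, d\gamma \to 0$. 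An alternative, perhaps cleaner, is to bypass the limit entirely: take $n$ large enough in Step 2 that $c\int d\,d\gamma_n + 2\eps_n \le c(1+\delta)\|\partial T\|_{\KR}$ and $\Fbf(\partial R_n - \partial T)$ is tiny, then correct $R_n$ by a small-mass current with the right boundary — but producing \emph{that} correction is itself a baby case of the lemma, so the nested-refinement argument is likely the right one to write down in full.
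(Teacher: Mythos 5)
The fatal step is the very first one: you set $\mu := \partial T$ and treat it as a finite signed Borel measure, invoking Kantorovich--Rubinstein duality to obtain an (almost) optimal coupling $\gamma$ between $\mu^+$ and $\mu^-$. But $T \in \Mbf_1(X)$ is not assumed normal, so $\partial T$ need not have finite mass, and the Hahn decomposition $\mu^+ - \mu^-$ need not exist. A concrete example is $T = \lpr{K} \in \Mbf_1(\R)$ for $K$ a fat Cantor set, or intrinsically the ``infinite dipoles'' $\lambda = \sum_{i}(\delta_{2^{1-2i}} - \delta_{2^{-2i}})$ of Section~\ref{sec:completion}: there $\|\partial T\|_{\KR} < \infty$ but $|\partial T|(X) = +\infty$, so there is no coupling to discretize and your Step~1 cannot begin. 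The correct substitute --- and the key idea of the paper's proof --- is that $\partial T$, being a linear functional on $\Lip(X)$ that vanishes on constants and is sequentially weak-$*$ continuous, belongs to the Arens--Eells space $\textnormal{\AE}(X)$ (Corollary~\ref{lem:porfin}), in which \emph{molecules} (finite sums of dipoles) are dense in the $\KR$ norm. The molecules play exactly the role your finite couplings $\gamma_n$ were meant to play, and the optimal atomic representation of a molecule realizes its $\KR$ norm without ever invoking $\mu^{\pm}$.

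Once that correction is made, the rest of your plan is close to the paper's argument, and your own diagnosis of Step~3 is accurate. The paper does not fill the gaps left by piecewise quasiconvexity with extra curves (which, as you note, is delicate in a general metric space, and would in any case cost a factor $c$ rather than $2$ per gap); instead it leaves them as a small molecule remainder $m - \partial S$ with $\|m - \partial S\|_{\KR} \le \eps$ and \emph{iterates} the construction on the remainder with geometrically decaying tolerances $2^{-j}\delta\|m\|_{\KR}$. This is precisely the ``correct $R_n$ by a small-mass current with the right boundary'' recursion that you flag as a baby case of the lemma: the point is that each iterate costs at most $c$ times the $\KR$ norm of the previous remainder, so the partial sums are Cauchy in mass with total mass $\le c(1+\delta)\|m\|_{\KR}$, and the boundaries telescope. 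A second telescoping layer over molecule approximations $m_j \to \partial T$ in $\KR$ norm then yields the lemma for $\partial T$ itself (this is Steps~1--3 of the proof of Theorem~\ref{thm:approx_qc}). Your nested-refinement device would also resolve the mass-Cauchy issue, but the argument cannot start without the Arens--Eells identification.
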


\begin{remark}
 The proof of Lemma~\ref{lem:discordia} is surprisingly simple, once one observes that the boundary $\partial T$ of any metric $1$-current belongs to the \emph{Arens--Eells space} $\text{\AE}(X)$ ---the Kantorovich--Rubinstein completion of dipole measures. These ideas, presented in Section~\ref{sec:prelims}, are pivotal for this work and  seem to form a novel approach when applied to the understanding of metric currents. 
 \end{remark}
 
 As a direct consequence of this construction, we obtain the following refinement of Theorem~\ref{thm:main1}:

\begin{thmx}\label{thm:approx_qc}
Let $X$ be separable and piecewise quasiconvex (with quasiconvexity constant $c \ge 1$) and let $T \in \Mbf_1(X)$ be a one-dimensional metric current. For any $\eps > 0$, there exists a cycle $C \in \Mbf_1(X)$ and and a sequence  $(P_j) \in \Nbf_1(X)$, where each $P_j$ is an integral current of the form
\begin{equation}\label{eq:quasi-poly}
	 	\sum_{i = 1}^{r} \eta_i\llbracket \theta_{i} \rrbracket, \qquad \eta_{i} \in \mathbb{R}, \quad \theta_{i} \in \Lip_1([0,1],X),
\end{equation} 
satisfying 
\[
\mathbf{M}(T - C - P_j) \longrightarrow  0 \quad \text{as } j \to \infty
\]
and
	\begin{align*}
		\mathbf{M}(P_j) &\le c \|\partial T\|_{\mathrm{KR}} + \eps.
	\end{align*}
Moreover,
\begin{enumerate}[topsep=0.5em,itemsep=0.5em, leftmargin=2em]
	\item[a)] If $X$ is a geodesic space, then the curves $\theta_i$ in~\eqref{eq:quasi-poly} can be chosen to be geodesics
	\item[b)] If $X$ is a Banach space, then the $P_j$ can be chosen to be polyhedral (cf.~\cite[Remark~3.4]{AM_flat}). \end{enumerate}
	 \end{thmx}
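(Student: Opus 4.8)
The plan is to first establish the statement for currents with finite mass boundary (normal currents), then approximate a general $T$ by currents with nicer boundaries, and finally extract the polyhedral/geodesic refinements. The central ingredient is Lemma~\ref{lem:OT}: applied to $T$ with parameter $\delta$, it yields a rectifiable current $R \in \mathcal R_1(X)$ with $\partial R = \partial T$ and $\Mbf(R) \le c(1+\delta)\|\partial T\|_{\KR}$. Then $T - R$ is a \emph{cycle} with finite mass, i.e.\ $\partial(T-R)=0$ but a priori $T-R \notin \Nbf_1(X)$. The strategy is to write $T = R + (T-R)$, approximate the cycle $T-R$ in mass by \emph{normal cycles} (this is where the bulk of the work lies), and then approximate $R$ itself in mass by integral curve currents. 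For the latter: $R$ is rectifiable, so its mass is carried on a $1$-rectifiable set $E$; since $X$ is piecewise quasiconvex it is curve-rectifiable, so $E$ is covered up to $\mathcal H^1$-null by countably many Lipschitz curves, and by a Besicovitch-type covering/restriction argument one writes $R = \sum_i \lpr{\gamma_i}\,\eta_i$ as a mass-convergent sum of (pieces of) integral curve currents with total mass $\Mbf(R)$; truncating the sum gives a finite polyhedral-type approximant $P_j^{(1)}$ with $\Mbf(P_j^{(1)}) \le \Mbf(R) + o(1) \le c\|\partial T\|_{\KR} + \eps$ after choosing $\delta$ small, and with $\partial P_j^{(1)} \to \partial R = \partial T$ controlled.

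The harder half is approximating the finite-mass cycle $Z := T - R$ (after absorbing, one may as well take $Z$ a cycle with $\partial Z = 0$) in mass by normal currents — but since $Z$ is a cycle, we need normal \emph{cycles}. Here I would iterate the same scheme: a cycle is in particular a metric $1$-current, and one shows that $Z$ can be written as a mass-convergent superposition of integral curve currents associated with \emph{closed} (or nearly closed) Lipschitz loops, again using curve-rectifiability of the $1$-rectifiable carrier of $\|Z\|$ together with the cycle condition to pair up endpoints. Alternatively — and this is cleaner — absorb the endpoint mismatch into the cycle $C$ itself: one does \emph{not} insist the leftover be a normal current, only that $T = C + (\text{limit of }P_j)$ with $C$ a cycle (not necessarily normal). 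Reading the statement carefully, $C \in \Mbf_1(X)$ is merely required to be a cycle, so the decomposition $T = R + Z$ with $C := Z$ (a finite-mass cycle, hence $\partial C = 0$) and $P_j := P_j^{(1)}$ already has the right shape, \emph{provided} $\Mbf(T - C - P_j) = \Mbf(R - P_j) \to 0$, which holds by the curve-covering decomposition of $R$. Thus the core reduces entirely to: (i) Lemma~\ref{lem:OT}, and (ii) the mass-decomposition of a rectifiable current into integral curve currents in a curve-rectifiable space.

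For step (ii) I expect the main obstacle: one must pass from a \emph{covering} of the rectifiable carrier $E$ by countably many Lipschitz curve images $\{\Gamma_i\}$ to an actual \emph{decomposition} of the current $R$, which requires disjointifying ($E_1 = E\cap\Gamma_1$, $E_i = E\cap\Gamma_i \setminus \bigcup_{j<i}\Gamma_j$), restricting $R$ to each $E_i$, and recognizing $R\mres E_i$ as an integral curve current (or a countable sum of such, after further chopping $\Gamma_i$ into injective arcs and using that the multiplicity and orientation of $R$ are $\mathcal H^1$-measurable); the additivity $\Mbf(R) = \sum_i \Mbf(R\mres E_i)$ then gives mass convergence of the truncations. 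To get refinement (a), when $X$ is geodesic each arc $\theta_i$ may be replaced by a geodesic with the same endpoints, changing the current by a cycle (difference of two paths with equal endpoints) which is again absorbed into $C$; the mass bound only improves. For refinement (b), in a Banach space one further replaces each Lipschitz arc by the straight segment joining its endpoints — or rather refines by subdividing and replacing by inscribed polygonal paths — so that $P_j$ becomes polyhedral, the error being a cycle of small mass pushed into $C$, exactly as in \cite[Remark~3.4]{AM_flat}; the constant $c$ for a normed space being $1$, one even gets $\Mbf(P_j) \le \|\partial T\|_{\KR} + \eps$. The only delicate point in (a)–(b) is ensuring the reparametrized curves still satisfy $\theta_i \in \Lip_1([0,1],X)$, which is arranged by constant-speed parametrization after the length bound $\sum_i \ell(\theta_i) \le c\,d(x,y)$ furnished by piecewise quasiconvexity.
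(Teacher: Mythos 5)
Your high-level architecture is the right one and matches the paper's endgame: produce an almost-optimal primitive $R\in\mathcal R_1(X)$ of $\partial T$ via Lemma~\ref{lem:OT}, set $C\coloneqq T-R$ (correctly noting that $C$ need only be a cycle in $\Mbf_1(X)$, not normal), and let $P_j$ be finite sums of integral curve currents converging to $R$ in mass. The gap is in how you propose to obtain the $P_j$. You treat $R$ as an abstract rectifiable current and want to decompose it by invoking curve-rectifiability of $X$; but in the paper's logic the implication ``piecewise quasiconvex $\Rightarrow$ curve rectifiable'' is Corollary~\ref{cor:con}, itself a \emph{consequence} of Theorems~\ref{thm:main1} and~\ref{thm:approx_qc}, so this is circular unless you prove it independently (nontrivial for incomplete piecewise quasiconvex spaces, where fragments cannot simply be ``filled in''). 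Even granting the covering, passing from a covering of the carrier to a mass-convergent decomposition of $R$ into finite sums $\sum_i\eta_i\lpr{\theta_i}$ with genuine Lipschitz curves requires the Ambrosio--Kirchheim parametric structure of rectifiable currents plus an outer approximation of Borel subsets of $[0,1]$ by finite unions of intervals --- essentially the ``necessity'' half of the proof of Theorem~\ref{thm:main1}, which in the paper sits downstream of Theorem~\ref{thm:approx_qc}. The paper sidesteps all of this: since $\partial T\in\textnormal{\AE}(X)$ (Corollary~\ref{lem:porfin}), one chooses molecules $m_j\to\partial T$ with telescoped KR-bounds, applies Lemma~\ref{lem:discordia} to each difference $m_j-m_{j-1}$ to get normal rectifiable $S_j$ built explicitly from the quasiconvexity curves, and takes $P_j\coloneqq S_1+\cdots+S_j$; then $R$ is \emph{defined} as the mass limit of the $P_j$, so the approximation is built in and no decomposition of an abstract rectifiable current is ever needed.

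The refinements a) and b) as you describe them also have a genuine flaw. Replacing an arc $\theta_i$ by the geodesic (or an inscribed polygon) with the same endpoints changes the current by a cycle whose mass is of order $\ell(\theta_i)$ --- small in the flat norm, but not in mass --- so $\Mbf(T-C-P_j)\to 0$ is destroyed; and ``absorbing the error into $C$'' makes $C$ depend on $j$, contradicting the statement, which asks for a single cycle $C$ working for the whole sequence. The correct move (and the paper's) is to build the primitives out of geodesics, respectively straight segments, from the start inside Lemma~\ref{lem:discordia} (see Remark~\ref{rem:rect}), so that every $S_j$, hence every $P_j$, is already of the required form and no post-hoc replacement occurs.
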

 \subsubsection{On the connectedness assumptions}\label{sec:comparisson} Many familiar spaces are piecewise quasiconvex: these include geodesic spaces, quasiconvex spaces, and Banach spaces.\medskip

We observe the following direct implication of Theorems~\ref{thm:main1} and~\ref{thm:approx_qc}:

\begin{corollary}\label{cor:con}
	 If $X$ is complete, separable and quasiconvex, then $X$ is curve rectifiable.
\end{corollary}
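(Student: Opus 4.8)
The plan is to deduce the curve rectifiability of $X$ from the approximation property, namely from condition~2 of Theorem~\ref{thm:main1}. Since $X$ is assumed complete and separable, Theorem~\ref{thm:main1} applies directly, so it suffices to produce, for every $T \in \Mbf_1(X)$, a sequence $(N_j) \subset \Nbf_1(X)$ with $\Mbf(T - N_j) \to 0$.

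First I would observe that a quasiconvex space is automatically piecewise quasiconvex, with the same constant $c \ge 1$: given $x, y \in X$ and $\eps > 0$, simply take $m = 1$, the single pair $(x_1, y_1) = (x, y)$, and for $\theta_1$ the quasiconvex curve joining $x$ to $y$ with $\ell(\theta_1) \le c\, d(x,y)$; then the error sum in Definition~\ref{ass:con} reduces to $d(x,x_1) + d(y_1,y) = 0 < \eps$, and the length bound is exactly quasiconvexity. (Completeness plays no role in this direction; it enters only in the converse implication recorded in the Remark after Definition~\ref{ass:con}.) Thus $X$ is separable and piecewise quasiconvex, and Theorem~\ref{thm:approx_qc} is available.

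Now fix $T \in \Mbf_1(X)$ and apply Theorem~\ref{thm:approx_qc} (with, say, $\eps = 1$): it yields a cycle $C \in \Mbf_1(X)$ and a sequence $(P_j) \subset \Nbf_1(X)$ with $\Mbf(T - C - P_j) \to 0$. Setting $N_j := C + P_j$, we have $\Mbf(N_j) \le \Mbf(C) + \Mbf(P_j) < \infty$, while $\partial N_j = \partial C + \partial P_j = \partial P_j$ has finite mass because $C$ is a cycle and $P_j$ is normal; hence $N_j \in \Nbf_1(X)$. Since $\Mbf(T - N_j) = \Mbf(T - C - P_j) \to 0$, condition~2 of Theorem~\ref{thm:main1} holds, and the equivalence therein forces $X$ to be curve rectifiable.

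The proof is essentially bookkeeping built on the two cited theorems; the only point that warrants a moment's attention is that the cycle $C$ furnished by Theorem~\ref{thm:approx_qc} need not itself be normal, but $C + P_j$ is — precisely because adding $P_j$ leaves the trivial boundary $\partial C = 0$ unchanged up to the finite-mass boundary $\partial P_j$. I do not anticipate any genuine obstacle.
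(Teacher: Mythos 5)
Your argument is correct and is exactly the one the paper intends: the corollary is stated as a direct consequence of Theorems~\ref{thm:main1} and~\ref{thm:approx_qc}, obtained by noting that quasiconvexity implies piecewise quasiconvexity (take $m=1$) and then setting $N_j = C + P_j$. The only cosmetic remark is that your closing caveat is unnecessary: the cycle $C \in \Mbf_1(X)$ has finite mass and zero boundary, so it is already normal on its own.
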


It is crucial to note that the converse of Corollary~\ref{cor:con} does not hold: not all curve rectifiable spaces are quasiconvex. 
To illustrate this, we provide an example of a complete, {arc-connected}, and separable metric space that is \textbf{curve rectifiable but not quasiconvex}.

\begin{example}\label{eq:cr_notqc}
Consider the piece-wise affine curves $\theta_n : [-2,2] \to \R^2$ defined by
\[
	\qquad \theta_n(t)
    \coloneqq \begin{cases}
		n(2 + t) & \text{if $-2 \le t < 0$} \\
		n(2 - t) & \text{if \phantom{-} $0 < t \le 2$}
	\end{cases}, \qquad n = 1,2,\dots
\]
Define the metric space $(X,|\cdot|_{\R^2})$, where $X = \bigcup_{n = 1}^\infty  \im \theta_n$.
Clearly, $X$ is complete, arc-connected, and separable as a subspace of $\R^2$. Moreover, $X$ is the union of images of countably many Lipschitz curves. However, $X$ is not quasiconvex since $|(-1,n) - (1,n)|_{\R^2} = 2$ while the shortest path-length from $(-1,n)$ to $(1,n)$ is $2\sqrt{1 + n^2}$.  
\end{example}

The following diagram summarizes the relations between arc-connectedness and the recently introduced connectedness assumptions on complete spaces:

\begin{figure}[h]
\centering
\begin{tikzcd}[column sep=1cm, row sep=1cm, shorten=4mm]
 & \textbf{\textup{quasiconvex}}
 \arrow[ddl,shift left=1ex,thick,Rightarrow]
 \arrow[ddr,shift right=1ex, shorten=2mm,thick,Rightarrow]
 \arrow[ddr,Leftarrow,"\;\;\;\textup{\scriptsize{Example~\ref{eq:cr_notqc}}}",shift left,"/" marking,start anchor={[xshift=1.4ex]},
start anchor={[yshift=-0.7ex]},end anchor={[xshift=1.4ex]},
end anchor={[yshift=-0.7ex]}, shorten=2mm,thick] & \\
 & & \\
\textbf{\textup{arc-connected}}
\arrow[rr, Rightarrow,"\hspace{-9ex} \textup{\scriptsize Example~\ref{ex:transversal}}","/" marking,shift left=1ex, shorten=2mm,thick]
\arrow[uur,Rightarrow,"\textup{\scriptsize{Example~\ref{ex:transversal}}}\;",shift left,"/" marking,start anchor={[xshift=-1.4ex]},
start anchor={[yshift=-0.7ex]},end anchor={[xshift=-1.4ex]},
end anchor={[yshift=-0.7ex]}, shorten=2mm,thick] &  & 
\textbf{\textup{curve rectifiable}}
\arrow[ll, Rightarrow,"\textup{\scriptsize \hspace{17ex} $X = [0,1] \cup [2,3]$}","/" marking,shift left=1ex, shorten=2mm,thick] 
\end{tikzcd} 
\end{figure}

\subsection{Filling currents into cycles}

Building upon the approximation results, we now explore how metric currents relate to cycles and rectifiable currents, drawing parallels with recent advancements in Euclidean spaces. Recently, Alberti and Marchese demonstrated~\cite{AM_flat} (see also~\cite{alberti1991lusin,arroyo2023lebesgue,DMG}) that every \emph{$k$-dimensional} FF flat chain with finite mass, $T \in \mathscr F_k(\R^d)$, can be ``completed'' into a cycle $C$ by adding a rectifiable current $R$, with the mass of $R$ being roughly the same as that of $T$. Simultaneously to this work (announced in~\cite{UniPi}), De Pauw~\cite[Thm. 2.12]{DP} improved on these approximations by showing that
\begin{equation}\label{eq:estimate1AM}
	 T = C + R, \qquad \mathbf{M}(R) \le \|\partial T\|_{\KR} + \eps \quad \text{for all $T \in \mathscr F_1(\R^d)$.}
\end{equation}
The validity of the $1$-flat chain conjecture in sufficiently well-connected spaces suggests that similar statements might hold for general metric currents. The next result demonstrates that~\eqref{eq:estimate1AM} holds for $1$-currents in piecewise quasiconvex spaces. 
Because our approximations rely on an optimal transport construction, we are also able to derive mass estimates in terms of the Kantorovich-Rubinstein norm of the boundary:

\begin{corollary}[Filling by rectifiable currents]\label{cor:filling}
Let $X$ be a separable and piecewise quasiconvex space  (with constant $c \ge 1$) and let $T \in \mathbf{M}_1(X)$ be a one-dimensional metric current. Then, for any given $\eps > 0$, there exists $C \in \Mbf_1(X)$ and $R \in \mathcal{R}_1(X)$ such that
\begin{enumerate}[itemsep=0.2em, leftmargin=2.5em,label=\arabic*.]
	\item $\partial C = 0$
	\item $T = C + R$
	\item $\mathbf{M}(C) \le \mathbf{M}(T) + c\|\partial T\|_{\mathrm{KR}} + \eps$
	\item $\mathbf{M}(R) \le c\|\partial T\|_{\mathrm{KR}} + \eps$
\end{enumerate}
\end{corollary}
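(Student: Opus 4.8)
\textbf{Proof strategy for Corollary~\ref{cor:filling}.}

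The plan is to derive this filling statement directly from Theorem~\ref{thm:approx_qc}, extracting a genuine (not just approximate) decomposition by a limiting argument on the approximating polyhedral/integral currents. First I would apply Theorem~\ref{thm:approx_qc} to $T$ with the given $\eps$: this yields a cycle $C_0 \in \Mbf_1(X)$ and a sequence $(P_j) \subset \Nbf_1(X)$ of integral currents of the form~\eqref{eq:quasi-poly} with $\Mbf(T - C_0 - P_j) \to 0$ and $\Mbf(P_j) \le c\|\partial T\|_{\KR} + \eps$. The key observation is that the family $(P_j)$ is bounded in mass and, since $T - C_0$ has finite mass, $\partial P_j = \partial(T - C_0) = \partial T$ is fixed; hence $(P_j)$ is bounded in the normal-current norm $\Mbf(P_j) + \Mbf(\partial P_j)$. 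By the compactness theorem for metric normal currents (or, more elementarily, by weak-$*$ compactness of the associated mass measures together with a diagonal argument on the countably many test tuples obtained from a dense family of Lipschitz functions — exactly the closure-type argument used throughout the metric currents literature), a subsequence of $(P_j)$ converges weakly to some $P \in \Nbf_1(X)$ with $\partial P = \partial T$ and, by lower semicontinuity of mass, $\Mbf(P) \le \liminf_j \Mbf(P_j) \le c\|\partial T\|_{\KR} + \eps$. Since $\Mbf(T - C_0 - P_j) \to 0$ and $P_j \rightharpoonup P$, the same mass-convergence forces $T - C_0 - P = 0$ in the limit, i.e. $T = C_0 + P$ exactly.

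Now set $R := P$; this is a rectifiable (indeed integral) current, so $R \in \mathcal R_1(X)$, and item~(4) follows: $\Mbf(R) \le c\|\partial T\|_{\KR} + \eps$. Put $C := C_0$; then $\partial C = 0$ gives item~(1), and $T = C + R$ gives item~(2). For item~(3), simply estimate $\Mbf(C) = \Mbf(T - R) \le \Mbf(T) + \Mbf(R) \le \Mbf(T) + c\|\partial T\|_{\KR} + \eps$. This closes the argument.

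The main technical obstacle is the passage to the limit: one must ensure that the approximating sequence $(P_j)$ not only converges weakly but that the weak limit is compatible with the mass-convergence $\Mbf(T - C_0 - P_j) \to 0$, so that the residual decomposition is exact rather than merely approximate. Concretely, mass convergence $\Mbf(T - C_0 - P_j) \to 0$ means $P_j \to T - C_0$ in the mass norm of $\Mbf_1(X)$, which is complete; hence $(P_j)$ is automatically a Cauchy sequence in mass and converges in mass to $T - C_0$. Thus the limit $P = T - C_0$ is forced, $\Mbf(P) \le \liminf_j \Mbf(P_j)$ by the triangle inequality (or simply by mass-norm continuity), and one does not even need the weak compactness theorem — the completeness of $(\Mbf_1(X),\Mbf(\cdot))$ suffices. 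One should, however, double-check that a mass-limit of rectifiable $1$-currents is again rectifiable, which holds because $\mathcal R_1(X)$ is closed in the mass norm (the mass measures concentrate on a countable union of $1$-rectifiable sets and remain absolutely continuous with respect to $\mathcal H^1$). With that, all four items are verified.
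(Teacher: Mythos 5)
Your proposal is correct and follows essentially the same route as the paper: there, too, $R$ is the mass-norm limit of the rectifiable approximants $P_j$ constructed in the proof of Theorem~\ref{thm:approx_qc} and $C = T - R$, with rectifiability of the limit verified exactly as you indicate (vanishing of $\|R\|$ on $\mathcal H^1$-null sets plus concentration on the countable union of the curves carrying the $P_j$). The only inaccuracy is the claim, in your discarded compactness route, that $\partial P_j = \partial(T - C_0) = \partial T$ for each $j$ — in the construction $\partial P_j = m_j$ is only $\KR$-close to $\partial T$ — but your final argument via completeness of $(\Mbf_1(X),\Mbf)$ does not use this.
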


We can establish an even stronger result if we consider Banach spaces of dimension at least two. Every one-dimensional metric current can be expressed as the Borel restriction of a cycle. This generalizes a result by Alberti and Marchese for flat chains in Euclidean space to a broader class of metric spaces.

\begin{thmx}[Covering by cycles in a Banach space]\label{cor:cycle_cover}
Let $X$ be a separable Banach space with $\dim(X) \ge 2$ and let $T \in \mathbf{M}_1(X)$ be a one-dimensional metric current. Then, for any given $\eps > 0$, there exists a cycle $C \in \Mbf_1(X)$ and a Borel set $B \in \mathcal B(X)$ such that
\begin{enumerate}[itemsep=0.2em, leftmargin=2.5em,label=\arabic*.]
	\item $\partial C = 0$
	\item $\spt C \subset \mathrm{conv}\{x : \dist_X(x, \spt \|\partial T\|) \le \eps\}$
	\item $\|T\|$ and $\|R\|$ are mutually singular; in particular, $T = C \mres B$ and $R = C\mres B^c$
	\item $\Mbf(C) \le \Mbf(T) + \|\partial T\|_{\KR} + \eps$ \label{eq:optimalCC}
\end{enumerate}
\end{thmx}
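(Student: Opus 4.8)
# Proof proposal for Theorem~\ref{cor:cycle_cover}

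The plan is to obtain the cycle $C$ covering $T$ by iterating the filling construction of Corollary~\ref{cor:filling} and exploiting the geometric freedom afforded by $\dim(X)\geq 2$ to push the "error" rectifiable pieces onto a set disjoint from $\spt\|T\|$. Concretely, I would first apply Corollary~\ref{cor:filling} with a small parameter $\eps_1$ to write $T = C_1 + R_1$, where $C_1$ is a cycle and $R_1\in\mathcal R_1(X)$ has mass at most $c\|\partial T\|_{\KR}+\eps_1$, with $C_1$ supported near $\spt\|\partial T\|$ (the support control in item~2 is read off from Lemma~\ref{lem:OT}, whose primitive is built by connecting points in a neighborhood of $\spt\|\partial T\|$ via the Arens--Eells / optimal transport construction; note $c=1$ for a Banach space). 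This already gives $T = C_1 + R_1$ with $\Mbf(C_1)\le\Mbf(T)+\|\partial T\|_{\KR}+\eps_1$ and $\Mbf(R_1)\le\|\partial T\|_{\KR}+\eps_1$, but $\|R_1\|$ need not be singular with respect to $\|T\|$. So the real content is upgrading $T=C+R$ to a decomposition in which $\|R\|\perp\|T\|$ and $T=C\mres B$ for a Borel set.

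The key step is a \emph{separation} argument using $\dim(X)\ge 2$. Since $R_1$ is rectifiable and $\partial R_1 = \partial T - \partial C_1 = \partial T$ (a fixed $0$-current with $\|\partial R_1\|_{\KR}$ bounded), I want to replace $R_1$ by a rectifiable current $R$ having the \emph{same boundary} but whose carrying $1$-rectifiable set is disjoint from the (also $\sigma$-compact, $\sigma$-finite $\mathcal H^1$) set carrying $\|T\|$. Here the idea is: the set $\Sigma := \spt\|T\| \cup \spt\|R_1\|$ still has $\sigma$-finite $\mathcal H^1$-measure, hence $\mathcal H^2(\Sigma)=0$; by Fubini/integralgeometric considerations, for almost every direction $v$ in the (at least $2$-dimensional) space and almost every translation, a "slightly perturbed" polyhedral primitive of $\partial T$ built from segments in the affine $2$-plane directions will avoid $\Sigma$ up to $\mathcal H^1$-null sets. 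Using the polyhedral refinement of Theorem~\ref{thm:approx_qc}(b), one produces $R\in\mathcal R_1(X)$ (indeed approximable by polyhedral currents modulo a cycle) with $\partial R = \partial T$, $\Mbf(R)\le\|\partial T\|_{\KR}+\eps$, $\spt R$ inside the prescribed convex neighborhood of $\spt\|\partial T\|$, and $\|R\|\perp\|T\|$. Then set $C := T + R$: it is a cycle ($\partial C = \partial T + \partial R' $... more precisely $\partial C = \partial T - \partial R = 0$ once we orient $R$ as a primitive with $\partial R = \partial T$, giving $C = T - R$, $\partial C = 0$), its mass is $\Mbf(T)+\Mbf(R)\le\Mbf(T)+\|\partial T\|_{\KR}+\eps$ by singularity, and $B := \spt\|T\|$ (Borel, indeed $\sigma$-compact) gives $C\mres B = T$, $C\mres B^c = -R$ since the two mass measures live on disjoint Borel sets.

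I would assemble the steps in this order: (1) invoke Corollary~\ref{cor:filling} to get a first filling with quantitative mass and support bounds; (2) record that $\spt\|T\|$ and $\spt\|R_1\|$ together span a $1$-rectifiable (hence $\mathcal H^2$-null) set $\Sigma$; (3) run the perturbation/transversality argument in the Banach space, using $\dim\ge 2$ to move a polyhedral primitive of $\partial T$ off $\Sigma$ while keeping it in the $\eps$-convex-neighborhood of $\spt\|\partial T\|$ and controlling its mass by $\|\partial T\|_{\KR}+\eps$ (here Theorem~\ref{thm:approx_qc}(b) supplies the polyhedral approximation, and one passes to a mass limit to get the rectifiable $R$ with exactly $\partial R=\partial T$); (4) define $C:=T-R$, verify $\partial C = 0$, $\Mbf(C)\le\Mbf(T)+\|\partial T\|_{\KR}+\eps$, and take $B:=\spt\|T\|$ to conclude $T=C\mres B$, $R=-C\mres B^c$, with $\|T\|\perp\|R\|$ by construction.

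The main obstacle I anticipate is step~(3): making the transversality argument rigorous in an \emph{infinite-dimensional} Banach space, where there is no Lebesgue measure and no naive Fubini. The cleanest route is probably to first project the boundary $\partial T$ (a compactly supported $0$-current, up to $\eps$-truncation of its mass) into a finite-dimensional subspace $V$ of dimension $\ge 2$ containing most of $\spt\|\partial T\|$, build the polyhedral primitive \emph{inside} $V$, and there invoke the finite-dimensional genericity argument (almost every rotation of a polyhedral curve within $V$, or within $V\times\R$ if one needs an extra dimension, avoids a prescribed $\mathcal H^1$-$\sigma$-finite set up to $\mathcal H^1$-null overlap — this is exactly the kind of Lusin-type / Besicovitch projection fact underlying \cite{AM_flat}). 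One must then check that pushing back into $X$ preserves the cycle/boundary identities and the support localization; this is where keeping careful track of the convex-hull support bound in item~2 matters, and where I would expect the bulk of the technical work to lie.
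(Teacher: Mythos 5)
Your overall strategy coincides with the paper's: build a rectifiable/polyhedral primitive $R$ of $\partial T$ whose segments are perturbed, using $\dim(X)\ge 2$, so that $\|R\|$ and $\|T\|$ are mutually singular, then set $C=T-R$ and restrict to a Borel set. The paper implements this by modifying the approximating molecules $m_j$ of $\partial T$ \emph{before} constructing their primitives: for each finite dipole decomposition $\sum_h\eta_h(\delta_{y_h}-\delta_{x_h})$ one picks a direction $\nu_j$ transversal to all the finitely many vectors $y_h-x_h$, and observes that the translated segment families $\bigcup_h[x_h+t\nu_j,\,y_h+t\nu_j]$ are pairwise disjoint for distinct $t>0$; since $\|T\|$ is a \emph{finite} measure, all but countably many $t$ give $\|T\|$-null segments, and one takes such a $t_j$ small enough that $\|m_j-m_j'\|_{\KR}\le 2^{-j}\eps'$. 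No Fubini, no integral-geometric genericity, and no finite-dimensional projection is needed: the molecules are already finitely supported, so the transversality is a statement about finitely many directions.

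The genuine gap in your write-up is the justification of the separation step. You argue that $\Sigma:=\spt\|T\|\cup\spt\|R_1\|$ is $\mathcal H^1$-$\sigma$-finite, hence $\mathcal H^2$-null, and then run a transversality argument against $\Sigma$. But the \emph{support} of a finite measure need not have $\sigma$-finite $\mathcal H^1$-measure (nor is $\|T\|\ll\mathcal H^1$ known a priori in a general Banach space --- that is downstream of the representation theorems, not available here), so $\mathcal H^2(\Sigma)=0$ is unjustified and the Fubini step against $\Sigma$ does not get off the ground. The correct (and much more elementary) quantity to work with is the measure $\|T\|$ itself, via disjointness of translates, as above. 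Relatedly, your choice $B=\spt\|T\|$ is not safe: once $\|T\|\perp\|R\|$ one should take $B$ to be the complement of the countable union $\Gamma$ of segments carrying $R$ (as the paper does), since a priori $\|R\|(\spt\|T\|)$ could be positive. Finally, your opening application of Corollary~\ref{cor:filling} is a detour --- $C_1$ and $R_1$ are discarded and only enter through the flawed definition of $\Sigma$ --- and the proposed projection of $\partial T$ into a finite-dimensional subspace would change the boundary and is best avoided entirely.
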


It's important to note that, even in compact subsets of $\R$, the only cycles are constants, meaning the previous characterization cannot hold (although, for one-dimensional spaces, a similar version holds with normal currents instead of cycles). 

The next example shows that the estimate in Corollary~\ref{cor:cycle_cover}.\ref{eq:optimalCC} is optimal:

\begin{example}[Optimality of the filling]
Consider $\theta$, a clockwise parametrization of the upper semi-circle in $\mathbb{R}^2$, and the current $T = \llbracket \theta \rrbracket \in \Mbf_1(\mathbb{R}^2)$. Observe that $\mathbf{M}(T) = \pi$ (length of the semi-circle) and $\|\partial T\|_{\mathrm{KR}} = 2$ (distance between its endpoints). The smallest cycle ``covering'' $T$ is the clockwise oriented upper half-circle $C = \llbracket \theta \rrbracket + \llbracket e_1, -e_1 \rrbracket$. Thus, $\mathbf{M}(C) = \pi + 2 = \mathbf{M}(T) + \|\partial T\|_{\mathrm{KR}}$.
\end{example}

\subsection{$\mathbf{SBV}$-representation of one-dimensional currents}

For the remainder of this section, we assume $X$ is a complete and separable metric space. To streamline our notation, we will denote the open interval $(0,1) \subset \R$ with the usual Euclidean metric as \gls{I}.

To fully describe the structure of metric currents in general complete metric spaces, we exploit a fundamental fact: any separable metric space can be isometrically embedded into a separable Banach subspace of $\ell^\infty$. Since metric currents and Lipschitz maps behave well under isometries, this allows us to transfer our problem to a more structured Banach space, where we can then apply the powerful tools developed in Corollary~\ref{cor:cycle_cover}.

This approach enables us to establish a Smirnov-type decomposition for metric $1$-currents in arbitrary complete metric spaces, critically, \textbf{without any connectedness assumptions}. This extends the work of Paolini and Stepanov~\cite{paolini2012decomposition,paolini2012decomposition2}, who previously provided such a decomposition for normal (cyclic and acyclic) metric one-currents on complete metric spaces. Their work represented normal $1$-currents as a superposition of currents associated with Lipschitz curves with constant speed, topologized as a subspace of $C(I,X)$. Our approach generalizes this by representing arbitrary metric $1$-currents as a superposition of currents associated with \textbf{compact} curves in $\SBV(I,X)$, topologized as a subspace of $\mathbb D(I,X)$, the space of \emph{Càdlàg functions} (right-continuous functions with existing left limits). 

Here, $\mathbb D(I,X)$ is equipped with the \emph{Skorokhod metric} 
\begin{equation}\label{eq:skorokhod}
    d_S(u_1,u_2) = \inf_{\lambda \in \Lambda} \max \left\{ \| \lambda - \mathrm{id}\|_\infty,\| d(u_1, u_2(\lambda))\|_\infty\right\}
\end{equation}
where $\Lambda$ is the set of strictly increasing, continuous bijections  $\lambda : [0, 1] \to [0, 1]$. %With this metric, $\mathbb D([0,1],X)$ is a Polish space.
This metric, is particularly well-suited for paths with jumps and coincides with the topology of uniform convergence in $C(I,X)$. 
Convergence in this space does not necessarily imply pointwise convergence, however, it does imply convergence in $L^1((0,1),X)$. 
%Further details of the Càdlàg space can be found in Section~\ref{sec:Cadlag}.

\subsection*{Compact $BV$-curves} Let's first introduce the main properties and definitions of the curve spaces central to this representation. A Borel curve $u: I \to X$ belongs to $\BV(I,X)$ if $u \in L^1(I,X_c)$ for some compact metric subspace $X_c \subset X$ and its \emph{essential total variation} is finite:
\[
\inf_{v = u \, \text{a.e.}} \left\{ \sup_{0 < t_1 < \cdots < t_m < 1} \sum_{i = 1}^{m-1} d(v(t_{i+1}), v(t_i)) \right\} < \infty.
\]
This is equivalent to the existence of a least finite Borel measure $|Du|$, the \emph{total variation measure}, such that $|Du|(B) \ge |Df(u)|(B)$ for all Borel sets $B \subset I$ and $f \in \Lip_1(X)$. A key property of $u \in \BV(I,X)$ is that $u$ has well-defined left and right limits $u^-,u^+$ at every point. Moreover, its discontinuity set $S_u \subset I$ is at most countable, and the total variation measure can be decomposed into mutually singular measures as:
\[
|Du|(B) = \int_{B} |\dot u| \, dt + \int_{B \cap S_u} d(u^-,u^+) \, d\mathcal H^0 + |D^cu|(B)\,,
\]
where the \emph{approximate metric differential}
\begin{equation}\label{mderiv}
|\dot u|(t) \coloneqq \lim_{|h| \to 0} \frac{d(u^+(t+h),u^+(t))}{|h|}
\end{equation}
exists $\mathscr L^1$-a.e. and defines summable function on $I$. Here, $|D^c u|$ is the \emph{cantorian part}, a singular measure vanishing on countable sets, often representing the "fractal behavior" of $BV$ functions. A curve $u \in \SBV(I,X)$ is a $\BV$-curve for which the Cantor part $|D^c u|$ vanishes. 

\subsection*{Currents associated with $\SBV$-curves}
Before stating our representation result, we extend the fundamental correspondence $\theta \mapsto \llbracket \theta \rrbracket$ for $\theta \in \Lip([0,1],X)$ to a correspondence $\SBV(I,X) \to \Mbf_1(X)$ for $\SBV$-curves. Specifically, every $\SBV$-curve gives rise to a rectifiable metric $1$-current corresponding to its \emph{absolutely continuous part}:
\[
\llbracket u \rrbracket^a(f,\pi) \coloneqq \int_0^1 f(u)\, dD^a(\pi \circ u)\, dt, \qquad (f, \pi) \in \Lip_b(X) \times \Lip(X).
\]
As detailed in Section~\ref{sec:BV}, this functional defines a metric $1$-current in $X$. The mass of this current is related to the length of the curve: with a slight abuse of notation, we have $\Mbf(\llbracket u \rrbracket^a) \le \ell(u)$, where
\[
\ell(u) \coloneqq \int_0^1 |\dot u|(t)\, dt.
\]
As it is the case for Lipschitz curves, equality holds on the left-hand side if $u$ is injective. If moreover $u \in \SBV(I,X)$ has constant metric speed (approximate differential), then $\im u \subset X$ is $1$-rectifiable and 
\[
    \|\lpr{u}^a\| = \mathcal H^1 \mres \im u \quad \text{as measures on $X$}.
\]
The boundary of $\llbracket u \rrbracket^a$ is a collection of dipole measures, given by
\[
\partial \llbracket u \rrbracket^a = \delta_{u^-(1)} - \delta_{u^-(0)} \; - \; \sum_{t \in S_u} (\delta_{u^+(t)} - \delta_{u^-(t)}).
\]
We emphasize that $\llbracket u \rrbracket^a$ may fail to be a normal current. This is because its boundary mass $\Mbf(\partial \lpr{u}^a)$ becomes infinite whenever $u$ has infinitely many discontinuities. The Kantorovich norm, however, remains bounded:
\[
\|\partial \llbracket u \rrbracket^a\|_{\text{KR}} \le 2 + \int_{S_u} d(u^-,u^+) \, d\mathcal H^0 <\infty.
\]
All of these properties will be discussed in detail in Section~\ref{sec:BV}.

%We emphasize that $\llbracket u \rrbracket^a$ is not necessarily a normal current, since $\Mbf(\partial \llbracket u \rrbracket^a) = \infty$ whenever $u$ has infinitely many discontinuities.

\subsection*{The representation}

Our representation result states that any metric $1$-current can be understood as a superposition of integral currents associated with \textbf{injective} $\SBV$-curves, without mass cancellations.  The contribution of each curve $u \in \SBV(I,X)$ with constant speed is restricted to the metric current $\llbracket u \rrbracket^a$ associated with its \emph{absolutely continuous part}. The "Borel holes" (cf. Corollary~\ref{cor:filling}) of the current are modeled by the \emph{jump discontinuities} in an almost optimal sense with respect to the Kantorovich--Rubinstein norm.

With these considerations, we finally state our decomposition result:

\begin{thmx}[$\SBV$-representation]\label{thm:rep} Let $X$ be a complete and separable metric space. Then,  $T \in \Mbf_1(X)$ if and only if there exists a finite Borel measure $\eta$ on the Càdlàg space $\mathbb D(I,X)$, concentrated on the set \gls{Theta_SBV} of injective curves in $\SBV(I,X)$ with constant metric speed, satisfying the following:
\begin{enumerate}[itemsep= 0.2em,leftmargin=2.5em,label=\arabic*.\,,topsep=1em]
	\item\label{item:rep_1} For any $(f,\pi) \in \Lip_b(X) \times \Lip(X)$, 
	\[
	T(f,\pi) = \int_{\mathbb D(I,X)} \llbracket u \rrbracket^a(f,\pi) \, d\eta(u).
	\]
	\item \label{item:rep_2}
The representation extends to a representations of the mass measures: for any Borel set $B \subset X$
\begin{align*}
	\|T\|(B) & = \int_{\mathbb D(I,X)} \|\llbracket u \rrbracket^a\|(B) \, d\eta(u)  \\
	& = \int_{\mathbb D(I,X)} \mathcal H^1(B \cap \im u) \, d\eta(u).
	\end{align*}
   In particular, it is free of mass cancellations:
	\begin{align*}
	\Mbf(T) & = \int_{\mathbb D(I,X)} \Mbf(\llbracket u \rrbracket^a) \, d\eta(u)  \\
	& = \int_{\mathbb D(I,X)} \ell(u) \, d\eta(u).
	\end{align*}
\item Moreover,  for any $\eps >0$, there exists $\eta$ satisfying~\ref{item:rep_1}-\ref{item:rep_2} and
	\[
	\int_{\mathbb D(I,X)}\left( \int_{S_u} d(u^-,u^+) \, d\mathcal H^0\right) \, d\eta(u) \le \|\partial T\|_{\KR} + \eps.
	\]
\end{enumerate}
\end{thmx}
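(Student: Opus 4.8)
\textbf{Proof strategy for Theorem~\ref{thm:rep}.}

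The plan is to prove the "only if" direction; the converse is routine since each $\llbracket u \rrbracket^a$ is a metric $1$-current with $\Mbf(\llbracket u \rrbracket^a) \le \ell(u)$, and a finite superposition argument together with the mass bound from item~\ref{item:rep_2} shows the integral defines a current of finite mass. For the forward direction, I would proceed by an iterative filling-and-decomposition scheme combined with a compactness argument in the Càdlàg space. First, embed $X$ isometrically into a separable Banach space $E \subset \ell^\infty$ with $\dim E \ge 2$ (enlarging if necessary), so that by Theorem~\ref{cor:cycle_cover} there is a cycle $C_1 \in \Nbf_1(E)$ and a Borel set $B_1$ with $T = C_1 \mres B_1$, $\Mbf(C_1) \le \Mbf(T) + \|\partial T\|_{\KR} + \eps/2$, and with support in a small neighborhood of $\spt\|\partial T\|$. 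Now apply the Paolini--Stepanov decomposition~\cite{paolini2012decomposition,paolini2012decomposition2} to the \emph{normal} cycle $C_1$: it is a superposition $C_1 = \int \llbracket \theta \rrbracket \, d\sigma(\theta)$ over constant-speed Lipschitz curves, without cancellation, so $\Mbf(C_1) = \int \ell(\theta)\, d\sigma$. Restricting each $\llbracket \theta \rrbracket$ to $B_1$ (via the parametrization, restricting to $\theta^{-1}(B_1) \subset [0,1]$) produces, for $\sigma$-a.e.\ $\theta$, an $\SBV$-curve: the preimage $\theta^{-1}(B_1)$ is a Borel, hence (after the constant-speed reparametrization) a set whose characteristic function, composed appropriately, yields a curve that is locally constant-speed Lipschitz on an open set and jumps across the complementary closed set — this is precisely where $|D^c u| = 0$ must be verified, using that the "deleted" portions contribute only jumps, not Cantor behavior. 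This recovers item~\ref{item:rep_1} for $T$ with the pushforward of $\sigma$ under $\theta \mapsto u_\theta$, and item~\ref{item:rep_2} follows because the Paolini--Stepanov decomposition has no cancellation and restriction to $B_1$ commutes with the mass measure.

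The subtlety is that the naive restriction $\theta^{-1}(B_1)$ need not be \emph{closed} in $[0,1]$, so the resulting curve need not be injective nor have a clean Càdlàg representative with finitely-many-accumulating jumps in the right sense. I would handle this by a two-step refinement: (1) reparametrize by the optimal transport map $u_K$ (the $\Theta_{\SBV}$-type construction from the notation list) sending $\mathscr L^1 \mres [0,1]$ to $|K|^{-1}\mathscr L^1 \mres K$ with $K = \overline{\theta^{-1}(B_1)}$, the closure; this collapses the "gaps" into jump points and gives a curve with constant metric speed and at most countably many jumps, lying in $\SBV(I,X) \cap \mathbb D([0,1],X)$; (2) further subdivide to ensure injectivity — a constant-speed curve can always be cut into countably many injective pieces at its self-intersection times (a standard argument, e.g.\ as in the reduction to $\Theta(X)$ for Lipschitz curves), at the cost of adding jumps, but these additional jumps are harmless for items~\ref{item:rep_1}--\ref{item:rep_2} and for the Kantorovich bound since cutting and re-gluing only rearranges the curve. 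The measure $\eta$ is then the pushforward of $\sigma$ under this composite map $\theta \mapsto u_\theta \in \Theta_{\SBV}(X)$, which is finite because $\sigma$ is finite, and measurability of the map follows from the Borel structure of the Skorokhod space and the measurable-selection nature of the constructions.

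For item~3, the Kantorovich bound, I would track the jump contributions quantitatively. Each jump of $u_\theta$ across an endpoint of a gap $(a,b) \subset [0,1]\setminus \theta^{-1}(B_1)$ has size $d(\theta(a),\theta(b)) \le \ell(\theta\mres[a,b])$, so summing over gaps and integrating in $\theta$ gives $\int \big(\int_{S_{u_\theta}} d(u^-,u^+)\, d\mathcal H^0\big)\, d\sigma(\theta) \le \int \ell(\theta \mres (\theta^{-1}(B_1))^c)\, d\sigma = \Mbf(C_1 \mres B_1^c) = \Mbf(C_1) - \Mbf(T) \le \|\partial T\|_{\KR} + \eps/2$, where the two "endpoint jumps" at $0$ and $1$ contribute the boundary term (bounded by $\Fbf_0(\partial T)$, but in fact absorbed into $\|\partial T\|_{\KR}$ plus a controlled error by choosing the initial filling carefully, using the almost-optimal primitive of Lemma~\ref{lem:OT} rather than an arbitrary one). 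The extra jumps introduced by the injectivity cutting are double-counted pairs $\delta_{u^+} - \delta_{u^-}$ at the same point of $X$, contributing $0$ to the $\mathcal H^0$-integral of $d(u^-,u^+)$. \textbf{The main obstacle} I anticipate is step~(1): verifying that collapsing the Borel (non-closed) preimage via the transport reparametrization genuinely kills the Cantor part — one must show that the "bad set" $K \setminus \theta^{-1}(B_1)$ (boundary-type points where $\theta \in B_1$ is approached through the complement) has the property that the induced singular measure has no continuous part, which should follow from $\theta^{-1}(B_1)$ differing from its closure by an $\mathscr L^1$-null \emph{relatively open-in-$K$} set, but making this precise while simultaneously controlling mass and the Kantorovich norm is the technical heart of the proof.
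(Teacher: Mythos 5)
Your overall architecture matches the paper's: isometric embedding into a separable Banach space, filling $T$ into a normal cycle, applying the Paolini--Stepanov decomposition to that cycle, and then converting each Lipschitz curve into an $\SBV$-curve by a monotone transport reparametrization of the preimage of the set carrying $T$, with the jump estimate controlled by the mass of the filling. However, the obstacle you flag at the end is a genuine gap, not a technicality, and your proposed resolution does not work. If you take $B_1$ to be the Borel set produced by Theorem~\ref{cor:cycle_cover}, the preimage $K=\theta^{-1}(B_1)$ is a general Borel subset of $[0,1]$ with no reason to be essentially closed; by Remark~\ref{rem:notice} the transport parametrization $u_K$ belongs to $SBV(I)$ \emph{if and only if} $K$ is essentially closed, and the Fat Cantor set example (where $\lpr{u_A}^c=\lpr{X}\neq 0$ for $A$ the open complement) shows the Cantor part is generically nonzero. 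Your fallback --- replacing $K$ by its closure $\overline{K}$ --- does kill the Cantor part, but then $\lpr{u_\theta}^a=\lpr{\theta}\mres\theta(\overline{K})$ rather than $\lpr{\theta}\mres\theta(K)$, and since $\overline{K}\smallsetminus K$ can have positive measure, the superposition in item~1 would represent $T$ plus a spurious current; there is no reason the "bad set" is $\mathscr L^1$-null.

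The paper's resolution is structural rather than analytic: instead of restricting to an arbitrary Borel set, it augments the space to $X'=X\oplus\R$ and uses Lemma~\ref{lem:discordia2} and Corollary~\ref{cor:cycle_cover2} to build the filling $R$ so that $\|R\|$ lives entirely in the open upper half-space $X\oplus(0,\infty)$ (touching the hyperplane $X_0=X\oplus\{0\}$ only at the finitely many dipole points of each molecule), while $i_\#T=C\mres X_0$. The restriction set is then the \emph{closed} hyperplane $X_0$, so $K_\theta=\theta^{-1}(X_0)$ is compact for every continuous $\theta$, Lemma~\ref{lem:arc-length} produces a genuinely $SBV$ reparametrization with $\lpr{\gamma_\theta}^c\equiv 0$, and the representation identities follow exactly. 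Without this (or an equivalent device forcing closedness of the preimages $\sigma$-a.e.), your scheme stalls precisely at the point you call the technical heart. The remaining ingredients of your proposal (Kuratowski embedding, measurable selection/Lusin arguments for the pushforward, bounding the jump integral by $\Mbf(R)\le\|\partial T\|_{\KR}+\eps$) are consistent with the paper; the injectivity-cutting step you describe is unnecessary, since Paolini--Stepanov already yields injective curves and composition with the strictly monotone $\gamma_\theta$ preserves injectivity.
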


When $X=\R^d$,  Theorem \ref{thm:rep}  specializes into the following representation result :

\begin{corollary}[Structure of one-dimensional flat chains] Let $T \in \mathscr F_1(\R^d)$ be an FF flat chain with finite mass. Then, there exists a finite Borel measure $\eta$ on $\mathbb D([0,1])^d$, concentrated on the injective constant speed curves in $SBV([0,1])^d$, and satisfies
 \begin{align*}
 T(\omega) & = \int_{\mathbb D([0,1])^d} \left( \int_0^1 \dpr{\omega \circ u, u'} \, dt \right) \, d\eta(u) \quad \text{for all $\omega \in C^1_c(\R^d;\R^d)$,}\\ 
 \Mbf(T) & = \int_{\mathbb D([0,1])^d} \ell(u)  \, d\eta(u).
 \end{align*}
 Moreover, for $\eta$-a.e. $u$, the pointwise derivative $u'$  exists a.e. in $(0,1)$ and satisfies $|u'| \equiv \ell(u)$.
\end{corollary}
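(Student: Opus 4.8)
The plan is to deduce this corollary directly from Theorem~\ref{thm:rep} by transferring between Federer--Fleming flat chains and metric currents. First I would invoke the Ambrosio--Kirchheim dictionary: a flat chain with finite mass $T \in \mathscr F_1(\R^d)$ canonically induces a metric current $\tilde T \in \Mbf_1(\R^d)$, determined on metric $1$-forms built from the coordinate projections by $\tilde T(g_j, x_j) = T(g_j\,dx_j)$ (extended by multilinearity and, by continuity, to $L^1(\|\tilde T\|)\times\Lip(\R^d)$), and this correspondence identifies the mass measures; in particular $\Mbf(\tilde T) = \Mbf(T)$. Since $\R^d$ is complete and separable, Theorem~\ref{thm:rep} then produces a finite Borel measure $\eta$ on $\mathbb D(I,\R^d)$ concentrated on the injective, constant--metric--speed curves in $\SBV(I,\R^d)$, realizing $\tilde T$ and its mass as superpositions of the currents $\llbracket u\rrbracket^a$ and of the lengths $\ell(u)$.

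Next I would unfold these metric identities into the Euclidean ones. Testing against a form $\omega = \sum_{j=1}^d g_j\,dx_j$ with $g_j \in C^1_c(\R^d)$, the dictionary gives $T(\omega) = \sum_{j=1}^d \tilde T(g_j,x_j)$, a legitimate pairing because each $x_j$ is globally $1$-Lipschitz and each bounded $g_j$ lies in $L^1(\|\tilde T\|)$ (the mass measure being finite). For a fixed $u$ in the support of $\eta$, each scalar component $u_j := x_j\circ u$ belongs to $\SBV(I)$, so its pointwise a.e.\ derivative $u_j'$ coincides with the density of the absolutely continuous part of $Du_j$; by the very definition of $\llbracket u\rrbracket^a$ this gives $\llbracket u\rrbracket^a(g_j,x_j) = \int_0^1 g_j(u(t))\,u_j'(t)\,dt$ and, after summing in $j$, $\sum_{j} \llbracket u\rrbracket^a(g_j,x_j) = \int_0^1 \langle \omega(u(t)), u'(t)\rangle\,dt$. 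The uniform bound $|\llbracket u\rrbracket^a(g_j,x_j)| \le \|g_j\|_\infty\,\Mbf(\llbracket u\rrbracket^a) \le \|g_j\|_\infty\,\ell(u)$, together with $\int \ell(u)\,d\eta(u) = \Mbf(T) < \infty$, lets me interchange the finite sum with the $\eta$-integral, yielding the asserted representation of $T(\omega)$. The mass formula is then immediate: $\Mbf(T) = \Mbf(\tilde T) = \int \ell(u)\,d\eta(u)$ by Theorem~\ref{thm:rep}.

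For the final assertion I would note that $\eta$-almost every $u$ lies in $\Theta_{\SBV}(\R^d)$, hence is an injective $\SBV$ curve with constant metric speed; having vanishing Cantor part, its pointwise derivative $u'(t)$ exists for $\mathscr L^1$-a.e.\ $t\in(0,1)$. I would then identify the approximate metric differential $|\dot u|(t)$ of \eqref{mderiv} with the Euclidean norm $|u'(t)|$ for a.e.\ $t$ --- a standard fact for $\R^d$-valued $BV$ curves, read off directly from the definition once the countable jump set is discarded --- and observe that a constant $|\dot u|$ on the unit interval must equal $\int_0^1 |\dot u|(s)\,ds = \ell(u)$, so that $|u'| \equiv \ell(u)$ a.e.

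The step I expect to require the most care is this translation layer: pinning down the exact Ambrosio--Kirchheim correspondence between $\mathscr F_1(\R^d)$ and $\Mbf_1(\R^d)$ --- its compatibility with the action on forms assembled from the coordinate projections, and its preservation of mass --- together with the identity $|\dot u| = |u'|$ a.e.\ for $\R^d$-valued $\SBV$ curves. Neither point is hard, but both are classical inputs that must be quoted and combined cleanly for the reduction to be airtight; everything else is a routine unwinding of Theorem~\ref{thm:rep}.
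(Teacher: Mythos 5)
Your proposal is correct and follows the same reduction the paper intends (the paper states the corollary as a direct specialization of Theorem~\ref{thm:rep} and gives no separate proof): invoke the Ambrosio--Kirchheim correspondence $\mathscr F_1(\R^d)\to\Mbf_1(\R^d)$ with its mass identity, apply Theorem~\ref{thm:rep} to $\R^d$, unwind $\lpr{u}^a$ on the coordinate one-forms $g_j\,dx_j$ to recover $\int_0^1\langle\omega\circ u,u'\rangle\,dt$, and use the componentwise a.e.\ differentiability of one-variable $BV$ functions together with the definition of the approximate metric differential to get $|u'|=|\dot u|\equiv\ell(u)$ a.e.\ for curves of constant metric speed. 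Both the interchange of the finite sum with the $\eta$-integral (justified by the $\eta$-summability of $\ell$) and the identification $|\dot u|=|u'|$ away from the countable jump set are handled correctly.
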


Following Schioppa~\cite{Schioppa_2016}, We write \gls{frag} to denote the space of fragments $\gamma : B \subset [0,1] \to X$, topologized as a subspace of $\mathcal K([0,1]\times X)$. Here, for a metric space $Y$, \gls{K(Y)} denotes the class of compact subsets of $Y$ endowed with the Hausdorff distance $d_H$. 
We construct an $\eta$-measurable identification $\Theta_{\SBV}(X) \embed \Frag(X) : u \mapsto \gamma_u$ satisfying $\lpr{u}^a = \lpr{\gamma_u}$. Therefore, a direct consequence of the $\SBV$-representation theorem is the following fragment representation, which might appeal to a more fundamental metric space theory setting:

\begin{corollary}[Fragment representation]\label{cor:fragment} Let $X$ be a complete and separable metric space. Then,  $T \in \Mbf_1(X)$ if and only if there exists a finite Borel measure $\mu \in \Frag(X)$ satisfying
 \begin{align*}
 T(\omega) & = \int_{\Frag(X)} \lpr{\gamma}(\omega) \, d\mu(\gamma) \quad \text{for all $\omega \in \mathscr D^1(X)$,}\\ 
 \Mbf(T) & = \int_{\Frag(X)} \ell(\gamma)  \, d\mu(\gamma).
 \end{align*}
\end{corollary}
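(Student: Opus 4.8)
The plan is to deduce Corollary~\ref{cor:fragment} from Theorem~\ref{thm:rep} by transporting the representing measure through the measurable identification $\Psi\colon\Theta_{\SBV}(X)\to\Frag(X)$, $\Psi(u)=\gamma_u$, introduced above (the one satisfying $\lpr{\gamma_u}=\lpr{u}^a$), via a push-forward. For the non-trivial implication I would take, for a given $T\in\Mbf_1(X)$, a finite Borel measure $\eta$ on $\mathbb D(I,X)$ as produced by Theorem~\ref{thm:rep}, concentrated on $\Theta_{\SBV}(X)$, and set $\mu:=\Psi_\#\eta$. Since $\Psi$ is $\eta$-measurable into the Polish space $\bigl(\mathcal K([0,1]\times X),d_H\bigr)$ and $\Frag(X)$ is a Borel subset of the latter, $\mu$ is a well defined finite Borel measure on $\Frag(X)$. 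The change-of-variables formula for push-forwards, combined with the pointwise identity $\lpr{\gamma_u}=\lpr{u}^a$, then yields, for every $\omega=(f,\pi)\in\mathscr D^1(X)$,
\[
\int_{\Frag(X)}\lpr{\gamma}(\omega)\,d\mu(\gamma)=\int_{\Theta_{\SBV}(X)}\lpr{\gamma_u}(\omega)\,d\eta(u)=\int\lpr{u}^a(\omega)\,d\eta(u)=T(\omega),
\]
which is the first asserted identity.

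For the mass identity I would reduce matters to the equality $\ell(\gamma_u)=\ell(u)$ for $\eta$-a.e.\ $u$. Recall that $\gamma_u$ is obtained from $u$ by ``opening gaps'': one composes $u$ with the inverse of the strictly increasing map $\sigma_u\colon I\to[0,1]$ that stretches $I$ by inserting, at each jump point $s\in S_u$, an interval of length proportional to $d(u^-(s),u^+(s))$ --- concretely $\sigma_u$ is an affine rescaling of $t\mapsto\ell(u)\,t+\sum_{s\le t}d(u^-(s),u^+(s))$. Since $\sigma_u$ is a strictly increasing bijection onto $\sigma_u(I)$, whose complement in $[0,1]$ is a countable union of intervals, and $u$ is injective, the fragment $\gamma_u$ is injective outside a countable (hence $\mathscr L^1$-negligible) subset of its domain, and $\im\gamma_u$ agrees with $\im u$ up to countably many points. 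Consequently $\|\lpr{\gamma_u}\|=\|\lpr{u}^a\|=\mathcal H^1\mres\im u$ and, by the metric area formula, $\ell(\gamma_u)=\Mbf(\lpr{\gamma_u})=\mathcal H^1(\im u)=\ell(u)$. Feeding this into the mass part of Theorem~\ref{thm:rep} gives $\Mbf(T)=\int\ell(u)\,d\eta(u)=\int_{\Frag(X)}\ell(\gamma)\,d\mu(\gamma)$, finishing the ``only if'' direction.

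The converse is routine. Given a finite Borel measure $\mu$ on $\Frag(X)$ with $\int_{\Frag(X)}\ell(\gamma)\,d\mu(\gamma)<\infty$ and $T(\omega)=\int_{\Frag(X)}\lpr{\gamma}(\omega)\,d\mu(\gamma)$, I would check directly that $T$ is a metric $1$-current: the integrand $\gamma\mapsto\lpr{\gamma}(\omega)$ is Borel for the topology on $\Frag(X)$, so $T$ is well defined and multilinear; locality is inherited pointwise from the rectifiable currents $\lpr{\gamma}\in\mathcal R_1(X)$; continuity follows from dominated convergence, with domination $|\lpr{\gamma}(f,\pi_j)|\le\Lip(\pi_j)\,\|f\|_\infty\,\ell(\gamma)$; and the finite-mass axiom holds with mass measure $\mu_T(A):=\int_{\Frag(X)}\|\lpr{\gamma}\|(A)\,d\mu(\gamma)$, which is a finite Borel measure because $\|\lpr{\gamma}\|(X)=\Mbf(\lpr{\gamma})\le\ell(\gamma)$. (Alternatively, one may invoke the standard superposition principle for metric currents.)

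The genuine content therefore lies in Theorem~\ref{thm:rep}, which we assume, and in the construction of $u\mapsto\gamma_u$; within the present deduction the two delicate points are (i) the $\eta$-measurability of $\Psi$ as a map from the Skorokhod space $\mathbb D(I,X)$ into $\bigl(\mathcal K([0,1]\times X),d_H\bigr)$, which rests on the measurable dependence of the jump data $(S_u,u^-,u^+)$ and of the approximate speed $|\dot u|$ on $u$, and (ii) the identity $\lpr{\gamma_u}=\lpr{u}^a$, where one must cope with $\sigma_u$ not being a homeomorphism and carry out the change of variables on $\sigma_u(I)$, whose complement is a countable union of intervals. I expect (i) to be the main obstacle.
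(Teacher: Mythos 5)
Your proposal follows the same route as the paper: take the measure $\eta$ from Theorem~\ref{thm:rep}, push it forward under the fragment map $u\mapsto\gamma_u$, and use the identities $\lpr{\gamma_u}=\lpr{u}^a$ and $\ell(\gamma_u)=\ell(u)$ together with the change-of-variables formula. The one place you (correctly) flag as the genuine obstacle — the $\eta$-measurability of $u\mapsto\gamma_u$ from the Skorokhod space into $\Frag(X)$ — is also where the paper does the work, but by a slightly different mechanism than the one you sketch: rather than trying to track measurable dependence of the pointwise jump data $(S_u,u^-,u^+)$ on $u$ (which is delicate since these objects do not vary continuously in the Skorokhod topology), the paper first establishes Borel measurability of the scalar maps $u\mapsto\ell(u)$ and $u\mapsto|D^ju|(I)$ via lower semicontinuity (Lemma~\ref{lem:semicont}), then invokes Lusin's theorem to pass to a large closed set where those maps are continuous, and finally uses Lemma~\ref{lem:YES}, which shows the fragment map $u\mapsto\gamma_u$ is continuous into $\bigl(\mathcal K([0,1]\times X),d_H\bigr)$ precisely along sequences where both $\ell$ and $|D^j\cdot|(I)$ converge; this yields measurability without having to control $S_u$, $u^\pm$, or $|\dot u|$ directly. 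A minor inaccuracy in your write-up: $\gamma_u=u\circ g_u^{-1}$ is in fact globally injective on its compact domain (since $g_u$ is strictly monotone and $u$ is injective), not merely ``injective outside a countable set,'' though this does not affect the conclusion $\|\lpr{\gamma_u}\|=\mathcal H^1\mres\im u$.
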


\subsubsection{Comments on the choice of the topology}
Our use of the \emph{Càdlàg topology} is a deliberate choice, as it offers a natural extension of the uniform metric topology from continuous functions. It is worth noting, however, that our methods are robust enough to extend Theorem~\ref{thm:rep} to other topologies on $\Theta_{\SBV}(X)$. Three interesting examples are:
\begin{enumerate}[itemsep= 0.5em,leftmargin=2.5em,label=\arabic*.\,,topsep=1em]
    \item The \emph{length-strict topology} on $\SBV([0,1],X)$, generated by the metric
    \[
    d_1(u,v) \coloneqq \int_0^1 d(u(t),v(t))\,dt + |\ell(u) - \ell(v)| + \left| |D^j u|((0,1)) - |D^j v|((0,1)) \right|.
    \]
    \item The \emph{extended graph topology} on $\SBV([0,1],X)$, generated by the metric
    \[
    d_2(u,v) \coloneqq \int_0^1 d(u(t),v(t))\,dt + d_H(\overline{\Gamma_u},\overline{\Gamma_v}),
    \]
    where $\overline{\Gamma_u} \coloneqq \overline{\set{(t,x)}{t \in (0,1), x \in [u^-(t),u^+(t)]}} \subset [0,1] \times X$ is the extended graph of $u$. 
    
    \item A \emph{stronger Skorokhod metric}, which is a refined version of the standard one. Instead of allowing any strictly increasing continuous reparameterization $\lambda$, it restricts $\lambda$ to be bi-Lipschitz with constants bounded by $2$. This choice imposes more regularity on the re-parameterizations, leading to a stronger topology.
\end{enumerate}
Each of these metrics ensures the continuity of the embedding $\Theta_{\SBV}(X) \embed \Frag(X)$, however, neither of them is comparable with the Càdlàg topology. 

\subsection*{Organization}The diagram below summarizes the logical dependencies among the main results.

\begin{figure}[h]
\centering

\begin{tikzpicture}[
    node distance=0.8cm and 0.8cm,
    theorem/.style={minimum width=2cm, minimum height=1.5cm, align=center},
    doublearrow/.style={->, double, double equal sign distance, line width=0.5pt, >=Stealth}
]

% Nodes
\node[theorem] (B) {\textbf{\small{Theorem B}}};
\node[theorem, above left=of B] (C) {\textbf{\small{Theorem C}}};
\node[theorem, above right=of B] (D) {\textbf{\small{Theorem D}}};
\node[theorem, above left=of D] (A) {\textbf{\small{Theorem A}}};

% Arrows for implications (explicit style)
\draw[->, >=Stealth,double,line width=0.5pt] (B) -- (C);
\draw[->, >=Stealth,double,line width=0.5pt] (C) -- (D);
% \draw[->, >=Stealth,double] (B) -- (A);
\draw[->, >=Stealth,double,line width=0.5pt] (D) -- (A);

% Custom-length diagonal lines crossing at the center
\coordinate (Center) at ($(C)!0.5!(D)$);
\coordinate (CB) at ($(C)!0.5!(B)$);

\node[blue]  at ($(B) + (0,+0.4)$) {\small{$X$ piecewise quasiconvex}};
\node  at ($(B) + (0,-0.4)$) {\small{$1$-FCC}};
\node  at ($(A) + (0,-0.4)$) {\small{$1$-FCC characterization}};
\node[blue] at ($(A) + (0,0.4)$) {\small{$X$ curve-rectifiable}};
\node[blue]  at ($(C) + (0,0.4)$) {\small{$X$ Banach space}};
\node  at ($(C) + (0,-0.4)$) {\small{filling of currents}};
\node  at ($(D) + (0,-0.4)$) {\small{$SBV$-representation}};
\node[blue]  at ($(D) + (0,+0.4)$) {\small{$X$ complete}};
\end{tikzpicture}
% \caption{}
\label{fig:theorem-deps}
\end{figure}

This paper is organized as follows. Section~\ref{sec:prelims} introduces the necessary background on Lipschitz spaces and their duality with the Arens–Eells space. Section~\ref{sec:OT} presents the key optimal transport construction used to define primitives for boundaries of metric $1$-currents (see Lemma~\ref{lem:OT}). Section~\ref{sec:qc} contains the proof of the approximation of one-dimensional metric currents by normal currents in piecewise quasiconvex spaces (Theorem~\ref{thm:approx_qc}), and establishes the hole-filling property for metric currents (Theorem~\ref{cor:cycle_cover}). Section~\ref{sec:BV} introduces compact $BV$- and $SBV$-curves and their associated currents. We also develop the $BV$-parametrization of sets in the real line, which plays a key role in assembling fragments into $SBV$-curves. Section~\ref{sec:rep} proves the $SBV$-curve decomposition for metric $1$-currents in arbitrary separable metric spaces. Section~\ref{sec:char} uses the $SBV$-representation to establish the equivalence between curve rectifiability and the validity of the $1$-flat chain conjecture. We also include a \textit{List of Symbols} to assist readers unfamiliar with some of the technical vocabulary introduced throughout. The \textit{Appendix} contains a few technical measurability lemmas used in the main proofs.

\subsection*{Acknowledgements} This research was supported by the European Research Council (ERC) Starting Grant "ConFine" (Grant No. 01078057). AR was supported as a grant holder and GB was funded in his visits to the University of Bonn and the University of Pisa, where this project began.

We are deeply grateful to many colleagues whose insights and discussions enhanced our understanding of the theory of currents in metric spaces. In particular, we thank Peter Gladbach for sharing ideas from stochastic analysis that suggested SBV curves might play a role in representing flat chains, helping guide us toward our conjecture in Theorem~\ref{thm:rep}. We also thank Giovanni Alberti and Giacomo del Nin for a stimulating conversation that led to Example~\ref{ex:transversal}. Finally, we are indebted to Ivan Violo for suggesting a simple proof of Lemma~\ref{lem:Hausdorff}, and to Andrea Merlo for pointing us to measurability techniques related to fragment approximation.

%%%%%%%%%% Section 3 %%%%%%%%%%

\section{Preliminaries}\label{sec:prelims}

\subsection{Functional analysis}
Given a normed vector space $E$, we denote its dual space, the set of all continuous linear functionals, by $E^*$. The following remark is crucial for our purposes (see~\cite[Proposition 3.14]{Brezis}):

\begin{proposition}[Weak-$*$ continuous maps]\label{rem:identification_continuity}
If $E$ is Banach and $\varphi : E^* \to \mathbb{R}$ is weak-$*$ continuous, then there exists $x_0 \in E$ such that $\varphi(f) = f(x_0)$ for all $f \in E^*$.
\end{proposition}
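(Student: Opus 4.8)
The plan is to run the classical argument identifying the dual of $(E^{*},\sigma(E^{*},E))$ with $E$ itself. Since the statement (and the cited reference~\cite[Prop.~3.14]{Brezis}) concerns a \emph{linear} functional, I treat $\varphi$ as linear throughout. First I would use continuity of $\varphi$ at the origin: there is a basic weak-$*$ neighbourhood $U$ of $0$ on which $|\varphi|\le 1$. Such a neighbourhood has the form
\[
U=\set{f\in E^{*}}{|f(x_i)|<\delta,\ i=1,\dots,n}
\]
for some finite family $x_1,\dots,x_n\in E$ and some $\delta>0$. Thus $|f(x_i)|<\delta$ for all $i$ already forces $|\varphi(f)|\le 1$.

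Next I would upgrade this bound to an inclusion of kernels, by homogeneity. If $f(x_i)=0$ for every $i$, then $tf\in U$ for all $t>0$, hence $|\varphi(f)|=|\varphi(tf)|/t\le 1/t$ for all $t>0$, so $\varphi(f)=0$. Writing $\widehat{x}_i\colon E^{*}\to\R$ for the evaluation map $f\mapsto f(x_i)$, this says precisely that $\bigcap_{i=1}^{n}\ker\widehat{x}_i\subset\ker\varphi$.

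The only mildly technical ingredient is then the elementary linear-algebra lemma: a linear functional vanishing on the common kernel of finitely many linear functionals lies in their linear span. Applying it here produces scalars $c_1,\dots,c_n$ with $\varphi=\sum_{i=1}^{n}c_i\widehat{x}_i$, and therefore $\varphi(f)=\sum_{i}c_i f(x_i)=f\bigl(\sum_{i}c_i x_i\bigr)$ for every $f\in E^{*}$. Setting $x_0:=\sum_{i=1}^{n}c_i x_i\in E$ finishes the proof.

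I do not expect a genuine obstacle here: the completeness of $E$ is never actually used (it is merely inherited from the ambient setting), and the argument is entirely formal. The only points requiring a little care are bookkeeping ones: correctly describing a basic weak-$*$ neighbourhood of $0$ in terms of a finite family $x_1,\dots,x_n\in E$, and invoking the span lemma; neither introduces any analytic difficulty.
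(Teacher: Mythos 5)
Your argument is correct and is exactly the classical proof of the cited reference \cite[Prop.~3.14]{Brezis}; the paper itself gives no proof beyond that citation, so there is nothing to compare against. Your observations that $\varphi$ must be read as linear and that completeness of $E$ is not actually needed are both accurate.
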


There is a canonical isometric embedding $J : E \to E^{* *}$ given by the evaluation map $x \mapsto e_x$. Therefore, we often identify $E$ with a subspace of its bidual $E^{* *}$. 
If $E$ is not complete, we denote its norm-completion by $\overline{E}$. It is a fundamental fact that dual spaces are stable under completion: $E^* = \overline{E}^*$. 
We also make use of the following classical results: 

\begin{theorem}[Banach--Alaoglu--Bourbaki, {\cite[Thm. 3.16]{Brezis}}]
	Let $E$ be a Banach space. The unit ball $B_{E^*} = \set{f \in E^*}{\|f\|_{E^*} \le 1}$
	is compact in the weak-$*$ topology.
\end{theorem}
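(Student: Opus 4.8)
The plan is to realize the weak-$*$ topology on $E^*$ as the subspace topology inherited from a product of compact intervals and then invoke Tychonoff's theorem. Concretely, I would form the product space $K \coloneqq \prod_{x \in E} [-\|x\|, \|x\|]$ with the product topology, which is compact by Tychonoff's theorem, and consider the evaluation map $\Phi : B_{E^*} \to K$, $\Phi(f) \coloneqq (f(x))_{x \in E}$. This is well defined because $|f(x)| \le \|f\|_{E^*}\,\|x\| \le \|x\|$ for every $f \in B_{E^*}$, and it is injective since a linear functional is determined by its values on $E$.

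First I would check that $\Phi$ is a homeomorphism of $B_{E^*}$ (with the weak-$*$ topology) onto $\Phi(B_{E^*})$ (with the subspace topology from $K$). This is essentially a tautology about initial topologies: the weak-$*$ topology is the coarsest topology on $E^*$ making each evaluation $f \mapsto f(x)$ continuous, the product topology on $K$ is the coarsest making each coordinate projection continuous, and the coordinate projections pull back along $\Phi$ exactly to these evaluations; hence the two topologies correspond under $\Phi$.

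Next I would show $\Phi(B_{E^*})$ is closed in $K$. A point $\omega = (\omega_x)_{x \in E} \in K$ belongs to the image precisely when the assignment $x \mapsto \omega_x$ is linear, the bound $|\omega_x| \le \|x\|$ being automatic from $\omega \in K$ and forcing the resulting functional to have norm at most $1$. For fixed $x, y \in E$ and scalars $a, b \in \R$, the set $\set{\omega \in K}{\omega_{ax+by} = a\omega_x + b\omega_y}$ is the zero set of the continuous map $\omega \mapsto \omega_{ax+by} - a\omega_x - b\omega_y$ (continuous as a finite combination of coordinate projections), hence closed; intersecting over all such $x,y,a,b$ writes $\Phi(B_{E^*})$ as an intersection of closed sets. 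A closed subset of the compact space $K$ is compact, and transporting compactness back along the homeomorphism $\Phi$ yields that $B_{E^*}$ is weak-$*$ compact.

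The main obstacle — in fact the only nontrivial ingredient — is Tychonoff's theorem applied to the (generally uncountable) product $K$, which relies on the axiom of choice; everything else is the routine bookkeeping of matching initial topologies and recognizing linearity as a closed condition. I would mention that the same argument works for an arbitrary normed space, completeness of $E$ playing no role.
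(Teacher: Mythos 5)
Your proof is correct and is precisely the classical Tychonoff-product argument given in the cited reference (Brezis, Thm.~3.16), which the paper simply quotes without proof. Your closing remark that completeness of $E$ is irrelevant is also accurate.
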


\begin{remark}
	If $E$ is separable, then $B_{E^*}$ is metrizable in the weak-$*$ topology (see~\cite[Theorem 3.28]{Brezis}).
\end{remark}
 
\begin{theorem}[Banach--Dieudonné--Krein--Šmulian, {\cite[Theorem 3.33]{Brezis}}]
Let $V$ be a Banach space and let $C \subset E^*$ be convex set. Suppose that for every integer $n$ the set $C \cap (nB_{E^*})$ is weak-$*$ closed. Then, $C$ is closed in the weak-$*$ topology.\end{theorem}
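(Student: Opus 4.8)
The plan is to prove this through the inductive construction underlying the Banach--Dieudonné theorem, combined with the observation that on a Banach space $E$ the topology $\kappa$ of uniform convergence on the norm-compact subsets of $E$ is compatible with the duality $\langle E^*,E\rangle$ --- that is, $(E^*,\kappa)^*=E$. This compatibility holds because a norm-compact subset of $E$ is weakly compact and, by Mazur's theorem, its closed absolutely convex hull is again norm-compact; hence $\kappa$ lies between $\sigma(E^*,E)$ and the Mackey topology, so by Mackey--Arens its continuous dual is $E$. Consequently the $\kappa$-closed convex subsets of $E^*$ coincide with the weak-$*$ closed ones, and it suffices to prove that $C$ is $\kappa$-closed. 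The hypotheses on $C$ are stable under translation by any $f_0\in E^*$ (a translate of $nB_{E^*}$ is weak-$*$ closed and contained in a larger ball $mB_{E^*}$, so $(C-f_0)\cap nB_{E^*}$ is again a finite intersection of weak-$*$ closed sets), so after translating I may assume the point to be separated is $0$, with $0\notin C$, and I need only produce a $\kappa$-neighbourhood of $0$ disjoint from $C$. I will in fact produce a norm-compact set $L\subset E$ with $0\in L$ and $C\cap L^\circ=\emptyset$, where $L^\circ:=\{\,f\in E^*:\sup_{x\in L}|f(x)|\le 1\,\}$; then the $\kappa$-open set $\{\,f:\sup_{x\in L}|f(x)|<1\,\}\subset L^\circ$ does the job.

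The set $L$ is built by induction. By Banach--Alaoglu together with the hypothesis, every $C\cap mB_{E^*}$ is weak-$*$ compact. I would construct finite sets $F_0,F_1,F_2,\dots\subset E$ with $F_n\subset\tfrac1n B_E$ for $n\ge 1$, such that, writing $\Pi_n:=\{\,f\in E^*:f(x)\le 1\text{ for all }x\in F_0\cup\dots\cup F_n\,\}$, one has $C\cap(n+1)B_{E^*}\cap\Pi_n=\emptyset$ for all $n\ge 0$. For $n=0$: the weak-$*$ compact set $C\cap B_{E^*}$ omits $0$, hence is covered by finitely many weak-$*$ open half-spaces $\{\,f:f(x)>1\,\}$ (for each such $f$ pick $x$ with $f(x)\ne 0$ and rescale), and the corresponding points $x$ form $F_0$. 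For the step $n\rightsquigarrow n+1$: put $K:=C\cap(n+2)B_{E^*}\cap\Pi_n$, which is weak-$*$ compact; by the inductive hypothesis $K\cap(n+1)B_{E^*}=\emptyset$, so $\|f\|>n+1$ for each $f\in K$, and hence each $f\in K$ admits $x_f\in E$ with $\|x_f\|\le\tfrac1{n+1}$ and $f(x_f)>1$. A finite subcover of $K$ by the weak-$*$ open sets $\{\,g:g(x_f)>1\,\}$ supplies a finite $F_{n+1}\subset\tfrac1{n+1}B_E$ with $K\cap\Pi_{n+1}=\emptyset$, i.e. $C\cap(n+2)B_{E^*}\cap\Pi_{n+1}=\emptyset$. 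Finally set $L:=\{0\}\cup\bigcup_{n\ge 0}F_n$. Since each $F_n$ is finite and $\sup\{\|x\|:x\in F_n\}\to 0$, the set $L$ is norm-compact. If $f\in C$, choose an integer $n>\|f\|$; then $f\in(n+1)B_{E^*}$, so $f\notin\Pi_n$, so $\sup_{x\in L}|f(x)|>1$, i.e. $f\notin L^\circ$. Hence $C\cap L^\circ=\emptyset$, which finishes the argument.

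The main obstacle is the scale bookkeeping inside the induction: the ball radii must grow by a fixed increment while the norm bounds on the $F_n$ must shrink to $0$, and these two demands have to be reconciled with the separation performed at each stage. The mechanism that makes this work is precisely the inductive hypothesis $C\cap(n+1)B_{E^*}\cap\Pi_n=\emptyset$, which forces $K=C\cap(n+2)B_{E^*}\cap\Pi_n$ to lie \emph{outside} the ball $(n+1)B_{E^*}$; this is exactly what permits each point of $K$ to be separated by a functional of norm at most $\tfrac1{n+1}$, and the resulting decay is what guarantees norm-compactness of $L$. The second, unavoidable subtlety --- and the reason convexity of $C$ cannot be dropped --- is that $L^\circ$, being the polar of a compact set, is \emph{not} a weak-$*$ neighbourhood of $0$, so one cannot separate $0$ from $C$ directly in the weak-$*$ topology; one is obliged to pass to the finer compatible topology $\kappa$ and then invoke Hahn--Banach (Mazur) to come back to the weak-$*$ topology. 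Indeed, the conclusion fails for non-convex $C$.
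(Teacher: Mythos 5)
The paper does not prove this statement; it is quoted as a classical result with a citation to Brezis, so there is no in-paper argument to compare against. Your proof is the standard Banach--Dieudonn\'e/Krein--\v{S}mulian argument and it is correct: the reduction to the topology $\kappa$ of uniform convergence on norm-compact sets via Mackey--Arens (using Mazur's theorem to see that $\kappa$ is compatible with the duality $\langle E^*,E\rangle$), the translation step, and the inductive construction of the finite sets $F_n\subset\tfrac1n B_E$ with the separation property $C\cap(n+1)B_{E^*}\cap\Pi_n=\emptyset$ are all sound, and you correctly isolate that convexity enters only in the final passage from $\kappa$-closedness back to weak-$*$ closedness.
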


This has the following immediate corollaries:

\begin{corollary}
Let $V$ and $W$ be Banach spaces and $T : V^* \to W^*$ be a linear map. Then $T$ is weak-$*$ continuous if and only if its restriction to $B_{V^*}$ is weak-$*$ continuous.
\end{corollary}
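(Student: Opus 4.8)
The plan is to deduce this from the Banach--Dieudonn\'e--Krein--\v{S}mulian theorem by considering the graph of $T$. First I would recall that a linear map $T : V^* \to W^*$ is weak-$*$ continuous if and only if its graph $G_T := \set{(f, Tf)}{f \in V^*}$ is weak-$*$ closed in $V^* \times W^*$ (equipped with the product of the weak-$*$ topologies, which is itself the weak-$*$ topology of $(V \times W)^*$). One direction is trivial; for the converse, if $G_T$ is weak-$*$ closed then, since $T$ is a linear map defined on all of $V^*$, one checks that weak-$*$ convergence $f_\alpha \to f$ forces $Tf_\alpha \to Tf$: the net $(f_\alpha, Tf_\alpha)$ lies in $G_T$ and, up to passing to the appropriate bounded piece, any weak-$*$ cluster point must be of the form $(f, g)$ with $(f,g) \in G_T$, hence $g = Tf$, which yields the continuity. (Alternatively one invokes the standard closed-graph characterization of weak-$*$ continuity for linear maps between dual spaces.)

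Next I would apply the Banach--Dieudonn\'e--Krein--\v{S}mulian theorem to the convex set $C = G_T \subset (V\times W)^*$. By hypothesis, the restriction of $T$ to $B_{V^*}$ is weak-$*$ continuous. For each integer $n$, I claim $C \cap \big( n B_{(V\times W)^*} \big)$ is weak-$*$ closed. Indeed, this set is contained in $\set{(f,Tf)}{\|f\|_{V^*} \le n}$, and on the ball $n B_{V^*}$ (which is weak-$*$ compact and, if $V$ is separable, metrizable) the map $f \mapsto (f, Tf)$ is weak-$*$ continuous by assumption (rescaling $B_{V^*}$ to $nB_{V^*}$ changes nothing), so its image is weak-$*$ compact, hence weak-$*$ closed; intersecting with the closed ball $nB_{(V\times W)^*}$ preserves closedness. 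Therefore $C = G_T$ satisfies the hypothesis of Banach--Dieudonn\'e--Krein--\v{S}mulian, so $G_T$ is weak-$*$ closed, and by the first step $T$ is weak-$*$ continuous. The reverse implication is immediate since the restriction of a weak-$*$ continuous map to any subset is weak-$*$ continuous.

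The main obstacle, modest as it is, is the bookkeeping around the topology on the product $V^* \times W^*$ and the ball $B_{(V\times W)^*}$: one must make sure that the product of weak-$*$ topologies coincides with the weak-$*$ topology of $(V\times W)^*$ under the natural identification, and that boundedness in the product norm corresponds (up to equivalent constants) to simultaneous boundedness of both coordinates, so that the hypothesis "$C \cap nB$ weak-$*$ closed for every $n$" is genuinely controlled by the behaviour of $T$ on the balls of $V^*$. Once this identification is in place, the argument is a direct application of the quoted theorem together with the graph characterization of weak-$*$ continuity.
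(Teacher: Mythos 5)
Your application of Banach--Dieudonn\'e--Krein--\v{S}mulian to the graph $G_T$ is sound: $G_T$ is convex, $G_T \cap nB_{(V\oplus W)^*}$ is contained in (and in fact equal to, after intersecting with $nB$) the image of $nB_{V^*}$ under $f \mapsto (f, Tf)$, which is weak-$*$ compact because $T|_{nB_{V^*}}$ is weak-$*$ continuous by rescaling, and so $G_T$ is weak-$*$ closed. The bookkeeping about identifying $V^* \times W^*$ with $(V\oplus W)^*$ is likewise fine.

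The gap is in the step ``$G_T$ weak-$*$ closed $\Rightarrow$ $T$ weak-$*$ continuous''. Your cluster-point argument does not work: a weak-$*$ convergent \emph{net} $f_\alpha \to f$ in $V^*$ is not in general norm-bounded, and even for a bounded net the values $Tf_\alpha$ need not stay inside any weak-$*$ compact set until one already knows $T$ is bounded. So ``passing to the appropriate bounded piece'' has no content, and there is no compactness from which to extract a cluster point. The claim itself is true, but it is not trivial and not a standard citable theorem at the level of Brezis; it requires an argument, for instance via the bipolar $(G_T^\perp)^\perp = G_T$ to show that $\{w \in W : \hat w \circ T \in \widehat V\}$ is a closed and dense (hence full) subspace of $W$. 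If you want to keep the graph route, you should supply such an argument rather than gesture at a cluster-point extraction.

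A cleaner route avoids the graph entirely and applies BDKS to scalar kernels. Recall that $T$ is weak-$*$ continuous iff $T = S^*$ for some bounded $S : W \to V$, equivalently iff $\hat w \circ T \in \widehat V$ for every $w \in W$. For each $w$, set $\phi_w := \langle T(\cdot), w\rangle : V^* \to \R$; by assumption $\phi_w|_{B_{V^*}}$ is weak-$*$ continuous, hence $\ker\phi_w \cap nB_{V^*}$ is weak-$*$ closed for every $n$ (rescaling), and BDKS gives that $\ker\phi_w$ is weak-$*$ closed, so that $\phi_w$ is weak-$*$ continuous, i.e.\ $\phi_w = \hat{v}$ for some $v = S(w) \in V$. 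The map $S$ is linear and bounded: $T(B_{V^*})$ is weak-$*$ compact (continuous image of a compact) hence norm-bounded by Banach--Steinhaus, giving $\|S(w)\| = \sup_{\|f\| \le 1}|\langle Tf, w\rangle| \le \big(\sup_{f\in B_{V^*}}\|Tf\|\big)\|w\|$. Then $T = S^*$ is weak-$*$ continuous. This sidesteps the closed-graph-to-continuity implication altogether and is the argument I would encourage you to adopt.
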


Paired with the Banach--Alaoglu theorem, it yields the following weak-$*$ continuity criterion: 

\begin{corollary}\label{lem:porfin} Let $V$ and $W$ be Banach spaces and $T : V^* \to W^*$ be a linear map.
If $V$ is a separable Banach space and $T$ is sequential weak-$*$ continuous, then $T$ is weak-$*$ continuous (even if the weak-$*$ topology is not metrizable).
\end{corollary}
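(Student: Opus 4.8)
The idea is to localize the problem to bounded subsets of $V^*$, where the weak-$*$ topology is metrizable, so that sequential continuity can be upgraded to genuine continuity.

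First I would apply the corollary immediately preceding the statement: $T$ is weak-$*$ continuous on $V^*$ if and only if its restriction $T|_{B_{V^*}}$ to the closed unit ball of $V^*$ is weak-$*$-to-weak-$*$ continuous. Hence it suffices to prove that $T|_{B_{V^*}}$ is continuous. By the Banach--Alaoglu--Bourbaki theorem, $B_{V^*}$ is weak-$*$ compact, and since $V$ is separable it is moreover weak-$*$ metrizable; fix a metric $\rho$ on $B_{V^*}$ inducing the (relative) weak-$*$ topology.

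Next I would verify that $T|_{B_{V^*}}$ is sequentially continuous as a map from $(B_{V^*},\rho)$ to $W^*$ with the weak-$*$ topology: if $(f_n) \subset B_{V^*}$ converges to $f$ in $(B_{V^*},\rho)$, then $f_n \to f$ in the weak-$*$ topology of $V^*$, so $Tf_n \to Tf$ in the weak-$*$ topology of $W^*$ by the hypothesis that $T$ is sequentially weak-$*$ continuous. Since the domain $(B_{V^*},\rho)$ is a metric space, sequential continuity of $T|_{B_{V^*}}$ implies continuity. Applying the preceding corollary once more, we conclude that $T$ is weak-$*$ continuous on all of $V^*$.

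The one point requiring care — and the reason the statement is not a triviality — is that the weak-$*$ topology on the full space $V^*$ is in general not metrizable, so sequential continuity of $T$ on $V^*$ does not by itself yield continuity. The way around this is exactly the Banach--Dieudonné--Krein--Šmulian reduction already used in the previous corollary, which permits checking continuity only on the balls $nB_{V^*}$; each of these is weak-$*$ metrizable thanks to separability of $V$. That external ingredient is the whole substance of the argument; everything else is routine.
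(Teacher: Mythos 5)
Your argument is correct and is exactly the route the paper intends: reduce to the unit ball via the Banach--Dieudonn\'e--Krein--\v{S}mulian corollary, invoke Banach--Alaoglu plus separability of $V$ to make $B_{V^*}$ weak-$*$ compact and metrizable, and then upgrade sequential continuity to continuity because the domain is a metric space. The paper leaves the proof implicit (``Paired with the Banach--Alaoglu theorem, it yields\dots''), and your write-up simply fills in those same details.
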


\subsection{Lipschitz functions in a metric space}\label{sec:Lip}
Let $X = (X,d)$ be a metric space. The vector space of real-valued Lipschitz functions, denoted as Lip$(X)$, consists of all functions $f:X\to \R$ satisfying:
\[
\Lip(f) = \sup_{x \neq y} \frac{|f(x) - f(y)|}{d(x, y)} < +\infty
\]
We denote by $\Lip_1(X)$ the space of all Lipschitz maps with Lipschitz constant $\Lip(f) \le 1$. If $X$ is a pointed metric space, meaning it has a distinguished "base point" $x_0$ then we define \gls{Lip_0} to be the space of all Lipschitz functions $f:X \to \R$ vanishing at $x_0$. Endowed with the Lip$(\cdot)$ semi-norm, Lip$_0(X)$ is a Banach space. Note that the Banach space structure of Lip$_0(X)$ does not depend on the base point.

\subsection{${\Lip_0(X)}$ as a dual Banach space} In a pointed metric space $X$ (with base point $x_0$), the space Lip$_0(X)$ is a dual Banach space (in the sense that it has a pre-dual) of a particular space of measures $\text{\AE}(X)$, often referred to as the `Arens-Eells space' or the `Lipschitz-free space' of $X$. This space is constructed as follows: an atom in $X$ is a measure of the form $\delta_x - \delta_y$, where $x,y \in X$. A molecule in $X$ is a finite sum $m = \sum_j 
 \eta_j \delta_{x_j}$, where each $x_j$ belongs to $X$ and $\eta_j \in \R$ satisfy $\sum_j \eta_j = 0$.  Note that atoms form a linear basis for the molecules. Also note that a molecule $m$ may not have a unique representation in terms of the sums described above. The metric on $X$ defines a norm on a molecule $m$ by:
\begin{equation}\label{eq:AEnorm}
	\rho(m) \coloneqq \inf \set{\sum_j |\eta_j| d(x_j,y_j)}{m = \sum_j \eta_j (\delta_{x_j} - \delta_{y_j})}
\end{equation}
where the infimum is taken over all atomic representations of $m$. The Arens--Eells space \gls{AE} is then defined as the completion of all molecules with respect to the metric $\rho$ introduced above.  
\begin{remark}
$\textnormal{\AE}(X)$ is separable if and only if $X$ is separable.
\end{remark}

Next, we elucidate the relationship between the dual of the Arens-Eells space, $\text{\AE}(X)^*$, and $\Lip_0(X)$. 
 First, notice the fact that $d$ is a metric on $X$ implies that the topology induced by $\rho$ on $\text{\AE}(X)$ is Hausdorff. 
Arens and Eells established the following crucial identification: any function $f \in \Lip_0(X)$ defines a functional as
\begin{equation}\label{eq:AEmap}
	\Psi(f)(m) \coloneqq \dpr{m,f}.
\end{equation}
Through this identification, the following holds (see~\cite[{Thm. 3.3}]{Weaver_Book} for a modern exposition):
\begin{theorem}[Arens--Eells~{\cite{AE}}]\label{thm:AE} Let $X$ be a pointed metric space.
	The linear map $\Psi$ in~\eqref{eq:AEmap} defines a one-to-one correspondence between $\Lip_0(X)$ and the linear functionals $\varphi : \textnormal{\AE}(X) \to \R$ that are continuous with respect to the norm $\rho$. Moreover, this map defines a linear isometry since
	\[
		\Lip(f) =  \sup_{\substack{m \in \textnormal{\AE}(X),\rho(m) \le 1}} \Psi(f)(m)\,.
	\]
	In particular, $\textnormal{\AE}(X)^* \cong \Lip_0(X)$ is a dual Banach space with the $\Lip(\cdot)$ (semi-)norm. 
\end{theorem}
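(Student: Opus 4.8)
The plan is to exhibit $\Psi$ as an isometric bijection by writing down its inverse explicitly and checking everything on the dense subspace of molecules, where all the identities reduce to finite sums. Forward-looking, the backbone is a single estimate that does triple duty.

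\medskip
\noindent\emph{The key inequality.} For a molecule $m = \sum_j \eta_j \delta_{x_j}$ (with $\sum_j \eta_j = 0$) and any $f \in \Lip_0(X)$, I would first record that for every atomic representation $m = \sum_i \eta_i(\delta_{x_i} - \delta_{y_i})$,
\[
    |\dpr{m,f}| = \Bigl|\sum_i \eta_i\bigl(f(x_i) - f(y_i)\bigr)\Bigr| \le \Lip(f)\sum_i |\eta_i|\, d(x_i,y_i),
\]
and taking the infimum over representations yields $|\dpr{m,f}| \le \Lip(f)\,\rho(m)$. This single estimate, applied to $1$-Lipschitz $f$, is exactly what makes $\rho$ non-degenerate (so that $\textnormal{\AE}(X)$ is a genuine Banach space), and it shows that $m \mapsto \dpr{m,f}$ is $\rho$-continuous; hence it extends uniquely to $\Psi(f) \in \textnormal{\AE}(X)^*$ with $\|\Psi(f)\|_{\textnormal{\AE}(X)^*} \le \Lip(f)$.

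\medskip
\noindent\emph{Isometry and injectivity.} For the reverse inequality I would test $\Psi(f)$ on atoms: since $\rho(\delta_x-\delta_y) \le d(x,y)$ for $x \ne y$ (trivial representation),
\[
\|\Psi(f)\|_{\textnormal{\AE}(X)^*} \ge \sup_{x\ne y}\frac{|\Psi(f)(\delta_x-\delta_y)|}{\rho(\delta_x - \delta_y)} \ge \sup_{x\ne y}\frac{|f(x)-f(y)|}{d(x,y)} = \Lip(f),
\]
which gives the asserted identity $\Lip(f) = \sup_{\rho(m)\le 1}\Psi(f)(m)$ (the supremum without absolute values agrees with the norm by the symmetry $m \mapsto -m$). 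In passing, feeding $f = d(\cdot,y) - d(x_0,y) \in \Lip_1(X)\cap\Lip_0(X)$ into the key inequality gives $\rho(\delta_x-\delta_y) = d(x,y)$. Injectivity is then immediate: $\Psi(f) = 0$ forces $\Lip(f) = 0$, and $f(x_0)=0$ yields $f \equiv 0$.

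\medskip
\noindent\emph{Surjectivity and the main obstacle.} The one genuine construction is the inverse. Given $\varphi \in \textnormal{\AE}(X)^*$ I would set $f_\varphi(x) := \varphi(\delta_x - \delta_{x_0})$, so that $f_\varphi(x_0) = \varphi(0) = 0$ and
\[
|f_\varphi(x) - f_\varphi(y)| = |\varphi(\delta_x - \delta_y)| \le \|\varphi\|\,\rho(\delta_x - \delta_y) \le \|\varphi\|\, d(x,y),
\]
hence $f_\varphi \in \Lip_0(X)$ with $\Lip(f_\varphi) \le \|\varphi\|$. By construction $\Psi(f_\varphi)$ and $\varphi$ agree on every atom $\delta_x - \delta_y = (\delta_x-\delta_{x_0}) - (\delta_y - \delta_{x_0})$; since atoms span the molecules and the molecules are dense in $\textnormal{\AE}(X)$, continuity of both functionals forces $\Psi(f_\varphi) = \varphi$. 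There is essentially no hard step: the argument is pure duality bookkeeping on a dense subspace. The only points demanding care are foundational — that $\rho$ really is a norm on molecules (guaranteed by the key inequality together with the separating bump functions $x \mapsto \max\{0, r - d(x,x_k)\}$, normalised to vanish at $x_0$), so that passing to the completion is legitimate, and the routine observation that agreement of two bounded functionals on the spanning family of atoms propagates by linearity and density to all of $\textnormal{\AE}(X)$.
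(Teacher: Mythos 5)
Your proposal is correct and is exactly the classical argument (the one in Weaver's book, which the paper cites rather than proving the theorem itself): the single estimate $|\dpr{m,f}|\le \Lip(f)\rho(m)$ over atomic representations, the reverse inequality by testing on normalized atoms, and surjectivity via $f_\varphi(x)=\varphi(\delta_x-\delta_{x_0})$ plus density of molecules. All steps, including the non-degeneracy of $\rho$ via separating Lipschitz bumps and the identity $\rho(\delta_x-\delta_y)=d(x,y)$, are handled correctly, so nothing further is needed.
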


\begin{remark}\label{rem:weak_star}A direct consequence of this is that an equi-Lipschitz sequence $(f_j) \subset \Lip_0(X)$ converges weak-$*$ to $f$ (relative to the pre-dual $\textnormal{\AE}(X)$) if and only if it converges pointwise, i.e., 
\[
	f_j(x) \to f(x) \quad \forall x \in X.
\]
Therefore, in the following we will write $f_j \toweakstar f$ to denote that $(f_j) \subset \Lip_0(X)$ is uniformly equi-Lipschitz and converges pointwise to $f$. 
\end{remark}

\begin{remark}\label{remm}
	If $X$ is separable and $\lambda : \Lip_0(\R) \to \R$ is sequentially weak-$*$ continuous, then $\lambda$ is weak-$*$ continuous. In particular, $\lambda$ can be identified (under the canonical bidual isometry) with an element of   $\textnormal{\AE}(X)$. 
	\end{remark}
	
\begin{corollary}\label{cor:completion}
	Let $\hat X$ be the completion of $X$. Then,  $\Lip_0(X)$ is isometrically isomorphic with $\Lip_0(\hat X)$. In particular, $\textnormal{\AE}(X)$ is isometrically isomorphic with $ \textnormal{\AE}(\hat X)$.
\end{corollary}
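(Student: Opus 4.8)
The plan is to prove the two assertions in turn: first the isometry $\Lip_0(X)\cong\Lip_0(\hat X)$ by a standard Lipschitz extension argument, and then the identification $\textnormal{\AE}(X)\cong\textnormal{\AE}(\hat X)$ by combining an ambient‑independence property of the molecule norm with a density argument.

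First I would show that the restriction map $g\mapsto g|_X$ is an isometric isomorphism from $\Lip_0(\hat X)$ onto $\Lip_0(X)$. Since $X$ is dense in $\hat X$ and every $f\in\Lip_0(X)$ is uniformly continuous, it extends uniquely to a continuous function $\hat f:\hat X\to\R$; passing to the limit along sequences in $X$ in the inequality $|f(x)-f(y)|\le\Lip(f)\,d(x,y)$ gives $\Lip(\hat f)=\Lip(f)$, and $\hat f(x_0)=f(x_0)=0$ because $x_0\in X$. Thus extension and restriction are mutually inverse linear maps preserving the $\Lip(\cdot)$ seminorm, which proves $\Lip_0(X)\cong\Lip_0(\hat X)$ isometrically.

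Next I would transfer this to the preduals. The key point is that for a molecule $m$ \emph{supported on $X$}, its Arens--Eells norm does not depend on whether it is computed in $X$ or in $\hat X$. By Hahn--Banach together with Theorem~\ref{thm:AE}, $m$ (as an element of its own bidual) satisfies $\rho_Z(m)=\sup\{\dpr{m,g}:g\in\Lip_0(Z),\ \Lip(g)\le 1\}$ for $Z\in\{X,\hat X\}$; since $m$ is concentrated on $X$ we have $\dpr{m,g}=\dpr{m,g|_X}$, and the first step shows that restriction carries the unit ball of $\Lip_0(\hat X)$ onto that of $\Lip_0(X)$, so the two suprema coincide, i.e.\ $\rho_{\hat X}(m)=\rho_X(m)$. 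Combined with the fact that the molecules supported on $X$ are $\rho$‑dense among all molecules on $\hat X$ — given $m=\sum_j\eta_j\delta_{\hat x_j}$ with $\sum_j\eta_j=0$, choosing $x_j\in X$ with $d(\hat x_j,x_j)<\delta$ yields a molecule $m'$ on $X$ with $\rho(m-m')\le\delta\sum_j|\eta_j|$ — this exhibits the normed space of molecules supported on $X$ as an isometric dense subspace of the complete space $\textnormal{\AE}(\hat X)$. Since by definition $\textnormal{\AE}(X)$ is the $\rho$‑completion of that same space, the universal property of completion gives the canonical isometric isomorphism $\textnormal{\AE}(X)\cong\textnormal{\AE}(\hat X)$.

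I expect the only genuinely delicate point to be the ambient‑independence of the molecule norm, since a priori an atomic representation of $m$ using points of $\hat X\setminus X$ might seem to lower the cost in~\eqref{eq:AEnorm}; the duality identity of Theorem~\ref{thm:AE} is exactly what rules this out, reducing the matter to the norm‑preserving extension established in the first step. Alternatively, one could bypass the norm identity entirely by checking that the restriction map $\Lip_0(\hat X)\to\Lip_0(X)$ is weak-$*$ continuous (it is bounded, and tested against atoms it is weak-$*$ continuous on bounded balls, so the Banach--Dieudonné criterion applies) and hence is the adjoint of an isometric isomorphism between the preduals $\textnormal{\AE}(X)$ and $\textnormal{\AE}(\hat X)$.
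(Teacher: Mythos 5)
Your proposal is correct and follows essentially the same route as the paper: one first shows that extension and restriction give mutually inverse isometries between $\Lip_0(X)$ and $\Lip_0(\hat X)$, and then transfers this to the Arens--Eells preduals via Theorem~\ref{thm:AE}. Your treatment of the second step (ambient-independence of the molecule norm via the duality formula, density of molecules supported on $X$, and the weak-$*$ continuity alternative) is more detailed than the paper's one-line appeal to Theorem~\ref{thm:AE}, and correctly identifies the point that an isometry of duals only descends to the preduals because the natural map is weak-$*$ continuous.
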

\begin{proof}
By standard completion methods, there exists a unique canonical embedding $i : X \embed \hat X$. This embedding gives rise (with a possible abuse of notation) to an embedding $i : \Lip_0(X) \embed \Lip_0(\hat X)$ given by
\[
	if(\hat x) \coloneqq \lim_{x_j \to \hat x} f(x_j), \qquad \hat x \in \hat X,
\]
where the pointed Lipschitz spaces are pointed at some point $x_o \in X$ and are endowed with the strong Lipschitz semi-norm. Notice that $if$ is well-defined since, due to the Lipschitz continuity of $f$, the limit on the right-hand side does not depend on choice of the sequence $x_j \to \hat x$. This way, using the density of $X \embed \hat X$ it is straightforward to show that $i$ is injective. Similarly, one can show that the restriction operator $r : \Lip(\hat X) \to \Lip(X)$ given by $r \hat f = \hat f \circ i$ is also one-to-one and hence it follows that both $i$ and $r$ inverses of each other. Moreover, by definition and by density, it follows that both $i$ and $r$ are isometries. In particular, $\Lip_0(X)$ is isometrically isomorphic with $\Lip_0(\hat X)$ when equipped with the Lipschitz semi-norm. The isometry between the Arens--Eells spaces follows from Theorem~\ref{thm:AE}.\end{proof}
	
The following characterization will play a crucial role in our constructions:
\begin{corollary}[Sequential continuity characterization]\label{lem:porfin}
	Let $X$ be a separable metric space and let $\phi : \Lip(X) \to \R$ be a linear functional satisfying the following properties:
	\begin{enumerate}
		\item $\phi$ vanishes on constants: $\phi(1_X) = 0$
		\item $\phi$ is sequentially weak-$*$ continuous, i.e.,
		\[
		f_j \toweakstar f \quad \Rightarrow \quad \phi(f_j) \to \phi(f) \qquad \forall f \in \Lip(X).
	\]
	\end{enumerate} 
	Then, there exists $\lambda \in \textnormal{\AE}(X)$ satisfying
	\begin{equation}\label{eq:porfin2}
		\phi(f) = \dpr{\lambda, f} \qquad \forall f \in \Lip(\R).
	\end{equation}
\end{corollary}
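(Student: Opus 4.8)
\textbf{Proof strategy for Corollary~\ref{lem:porfin}.}
The plan is to reduce the statement to Theorem~\ref{thm:AE} together with the sequential weak-$*$ continuity criteria recorded in Remarks~\ref{rem:weak_star}--\ref{remm}. The first observation is that although $\phi$ is defined on all of $\Lip(X)$, properties (i) and (ii) say that $\phi$ only ``sees'' $\Lip(X)$ modulo constants. So I would fix the base point $x_o \in X$ and work with the splitting $\Lip(X) = \Lip_0(X) \oplus \R 1_X$: every $f \in \Lip(X)$ decomposes uniquely as $f = (f - f(x_o)1_X) + f(x_o)1_X$, with $f - f(x_o)1_X \in \Lip_0(X)$ and $\Lip(f - f(x_o)1_X) = \Lip(f)$. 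Since $\phi(1_X) = 0$, we get $\phi(f) = \phi(f - f(x_o)1_X)$, so it suffices to produce $\lambda \in \textnormal{\AE}(X)$ with $\phi(g) = \dpr{\lambda, g}$ for all $g \in \Lip_0(X)$.

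Next I would check that the restriction $\phi|_{\Lip_0(X)} : \Lip_0(X) \to \R$ is sequentially weak-$*$ continuous with respect to the predual $\textnormal{\AE}(X)$. By Remark~\ref{rem:weak_star}, weak-$*$ convergence $g_j \toweakstar g$ in $\Lip_0(X)$ means exactly that $(g_j)$ is equi-Lipschitz and $g_j \to g$ pointwise. Given such a sequence, set $f_j \coloneqq g_j$ and $f \coloneqq g$ viewed inside $\Lip(X)$; then $f_j \toweakstar f$ in the sense of Remark~\ref{rem:weak_star} (the pointwise limit of an equi-Lipschitz sequence), so hypothesis (ii) gives $\phi(f_j) \to \phi(f)$, i.e.\ $\phi(g_j) \to \phi(g)$. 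Thus $\phi|_{\Lip_0(X)}$ is sequentially weak-$*$ continuous.

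Now invoke the separability of $X$: by the Remark following Banach--Alaoglu, the unit ball of $\Lip_0(X) = \textnormal{\AE}(X)^*$ is weak-$*$ metrizable (here I use that $\textnormal{\AE}(X)$ is separable, which holds precisely because $X$ is separable), and by Corollary~\ref{lem:porfin} (the Banach--Dieudonné--Krein--Šmulian consequence applied with $W = \R$, equivalently a direct $nB$-exhaustion argument) sequential weak-$*$ continuity of the linear functional $\phi|_{\Lip_0(X)}$ upgrades to genuine weak-$*$ continuity. By Proposition~\ref{rem:identification_continuity} applied to the Banach space $E = \textnormal{\AE}(X)$ (so $E^* = \Lip_0(X)$), there exists $\lambda \in \textnormal{\AE}(X)$ with $\phi(g) = \dpr{\lambda, g}$ for every $g \in \Lip_0(X)$. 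Combining with the first paragraph, $\phi(f) = \dpr{\lambda, f - f(x_o)1_X} = \dpr{\lambda, f}$ for all $f \in \Lip(X)$ — the last equality because $\dpr{\lambda, 1_X} = 0$ for any molecule $\lambda$ (zero total mass) and hence for its completion. Specializing to $f \in \Lip(\R)$ (pulled back along any Lipschitz coordinate, or simply as a subclass when $X \subset \R$) gives~\eqref{eq:porfin2}.

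The only genuinely delicate point is the passage from \emph{sequential} to \emph{topological} weak-$*$ continuity, since the weak-$*$ topology on $\Lip_0(X)$ need not be metrizable on the whole space — only on bounded balls. This is exactly what the Banach--Dieudonné--Krein--Šmulian corollary is designed to handle: one writes the graph (or rather checks weak-$*$ continuity of the functional by testing on the sets $nB_{\textnormal{\AE}(X)^*}$), uses metrizability of each ball to run a sequential argument there, and then assembles the pieces. I would make sure to cite Corollary~\ref{lem:porfin} (the sequential criterion stated just above in the Functional Analysis subsection) rather than reproving it, so the argument stays short.
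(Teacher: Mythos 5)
Your proposal is correct and follows essentially the same route as the paper: reduce to $\Lip_0(X)$ via the quotient by constants, use separability of $X$ (hence of $\textnormal{\AE}(X)$) together with the Banach--Dieudonn\'e--Krein--\v{S}mulian criterion to upgrade sequential weak-$*$ continuity to weak-$*$ continuity, and then represent $\phi$ by an element of the predual $\textnormal{\AE}(X)$. You simply spell out in more detail the steps the paper delegates to Remark~\ref{remm}, and correctly handle the extension back to all of $\Lip(X)$ using that elements of $\textnormal{\AE}(X)$ annihilate constants.
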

\begin{proof}
	Since $\phi$ vanishes on the constants and there is a natural isometric isomorphism between $\Lip(X)/\R$ and $\Lip_0(X)$ with $X$ as a pointed space, we may without any loss of generality consider $\phi$ as a functional on $\Lip_0(X)$. We recall that if $X$ is separable, then $\text{\AE}(X)$ is separable, and therefore $\text{\AE}(X)$ is a separable Banach space. It then follows from Remark~\ref{remm} that $\lambda$ is weak-$*$ continuous and hence there exists an element in the Banach pre-dual of $\Lip_0(X)$ representing it, i.e., there exists $\lambda \in \text{\AE}(X)$ satisfying~\eqref{eq:porfin2} for all $f \in \Lip_0(X)$. The fact that this holds for all $f \in \Lip(X)$ follows from the fact that $\phi$ vanishes on constants.\end{proof}
	
	\begin{remark}
	If $X$ is separable, then the weak-$*$ topology of $\Lip(X)/\R$ is metrizable on bounded sets (sets of maps with bounded Lipschitz constant) and coincides with  the topology of pointwise convergence.
\end{remark}

\subsection{The Kantorovich completion of signed measures}\label{sec:completion}

This subsection will not be used directly in our proofs, but it provides context to some of the ``optimal transport'' ideas which underpin our constructions. Let us assume that $X$ is a real separable Banach space and denote by
 $\mathscr M(X)$  the Banach space of real-valued, finite Borel measures $\lambda$ on $X$, equipped with the total variation norm $|\lambda|(X)$. We then define $\mathbf X_{0,1}(X)$ to be  the subspace of measures with zero average and finite first moment, i.e., those satisfying $\lambda(X)= 0$ and $\int |x| \, d\lambda(x) < + \infty$. Notice that $\mathbf X_{0,1}(X) \subset \mathscr M(X)$.  This space is endowed with the \emph{Kantorovich--Rubinstein}  norm:
\[
	\text{\gls{KR}} \coloneqq \sup_{f \in \Lip_1(X)} \int f \, d\lambda.
\]
The well-known Kantorovich--Rubinstein Duality Theorem establishes a crucial link to optimal transport. It states that the \emph{Monge distance} $W_1(\mu,\nu)$, also known as the \emph{Wasserstein-1 distance},  between two non-negative measures $\mu$ and $\nu$ of equal mass is given by:
\[ W_1(\mu,\nu) \ =\ \|\lambda\|_{\KR} \quad \text{where}\ \lambda=\nu-\mu . \]
This identity forms a significant bridge to optimal transport theory. However it can be easily shown that the $\KR$ norm is not complete when $X$ is not discrete. In the particular case when $X = \R^d$, the closure of zero-average measures with respect to the $\KR$ norm does not include only measures of finite mass but also a subclass of  distributions of order one. 
 A simple illustration of this situation is the case of \emph{infinitely many dipoles} (see also \cite{Ponce2004}):
 \begin{example}[Infinite dipoles; {\cite[p. 368]{Federer1969}}]For each $j \in \mathbb N$, consider 
\[
	m_j \coloneqq \sum_{i =1}^j  \delta_{2^{1-2i}} - \delta_{2^{-2i}} \in \Mbf_0(\R), \qquad \lambda \coloneqq \sum_{i =1}^\infty  \delta_{2^{1-2i}} - \delta_{2^{-2i}}.
\]
By construction, the sequence of ``molecules'' $(m_j)$ is Cauchy since $\| m_j - m_k\|_{\KR} \le 3^{-1}4^{-j}$ whenever $k > j$. Moreover, $m_j \to \lambda$ in $\Lip(\R)^*$, implying $\lambda \in \text{\AE}(\R)$. However, a straightforward application of the fundamental theorem of calculus reveals that
\[
	\lambda(f) \coloneqq \sum_{n = 1}^\infty f(2^{-2i}) - f(2^{1 - 2i})  \qquad \Rightarrow \quad  |\lambda|(\R) = +\infty.
\]
\end{example}

\subsection*{Completion and tangent bundles to a measure}
The $\KR$-completion of $\mathbf X_{0,1}(X)$ has been explored from various perspectives \cite{Hanin1997, Haninext, BCJ}. In the case of a pointed metric space $X$, this completion coincides with the Arens-Eells space. All molecules on $X$ are elements of $\mathbf X_{0,1}(X)$, and, if $m$ is a molecule, then $\rho(m)$ given by \eqref{eq:AEnorm} coincides with the Monge transport distance between $m^+$ and $m^-$ (the positive and negative parts of $m$). In other words, we have the equalities: $\rho(m) = W_1(m^+,m^-)= \|m\|_{\KR}$. It follows that the $\KR$-completion of $\mathbf X_{0,1}(X)$ coincides with $\text{\AE}(X)$, i.e.,
\[
	 \text{\AE}(X) = \mathrm{clos}_{\KR} \{ \mathbf X_{0,1}(X)\}.
\]

\begin{notation}
In light of this identification, we shall henceforth write
\[
	\rho(\lambda) = \|\lambda\|_{\KR} \qquad \text{for all $\lambda \in \text{\AE}(X)$}.
\]
\end{notation}

When $X=\R^d$, this completion was identified as the subspace $\Mbf_{0,1}(\R^d)$, comprising distributions (of order at most $1$) that are continuous with respect to the weak-$\ast$ convergence in $\Lip(\R^d)$ \cite[Lemma~2.6]{BCJ} (see also~\cite{BBD}). There, the authors of \cite{BCJ} showed that $\Mbf_{0,1}(\R^d)$ is the space of divergences $\operatorname{div} \sigma$ associated with \emph{tangential transport measures} $\sigma = \tau \mu$, where $\mu$ is a positive Borel measure and $\tau \in L^1(\mu)^d$ is a vector field that is \emph{tangent} to $\mu$. This analysis led to the introduction of a \emph{tangent bundle} associated with a finite Borel measure, generalizing the concepts of differential geometry. More precisely, given a positive Borel measure on $\R^d$, the tangent bundle $T_\mu$ (as defined  in \cite{BCJ}) is the unique $\mu$-measurable map $x \mapsto T_\mu(x)$ from $\R^d$ to $\operatorname{Gr}(d)$, where $\operatorname{Gr}(d)$ is the Grassmannian of subspaces of $\R^d$, characterized by the following tangential regularity property (see \cite[Theorem 2.4, Theorem 3.6]{BCJ}):
\[
	\tau(x) \in T_\mu(x) \; \text{$\mu$-a.e.} \quad \Longleftrightarrow \quad \operatorname{div} (\tau \mu) \in \Mbf_{0,1}(\R^d).
\]
Building upon a general variational approach from earlier works \cite{BBF2001, BBS-Calva1997, BouFra-Hessian}, the second author and collaborators introduced in~\cite{BCJ} the construction of a weak-$\ast$ continuous functional $\mu$-tangential gradient operator $\nabla_\mu : \Lip(\R^d) \to L^\infty(\mu)^d$. This operator enables integration by parts against all vector fields $\tau\in L^1(\mu)^d$ such that $\tau(x)\in T_\mu(x)$ $\mu$-a.e., which in turn allows for a differential calculus over $\mu$. 

In a forthcoming paper, the authors of the present work will establish that in fact $T_\mu$ coincides with the differentiability bundle $\mathscr D(\mu,\cdot)$ introduced in the seminal work by G. Alberti and Marchese \cite{AM}, where a remarkable $\mu$-quantitative counterpart of the Rademacher Theorem was obtained.

An interesting application of the results from the next section (see Corollary~\ref{cor:boundaries} below), will be that $\Mbf_{0,1}(\R^d)$ coincides with the boundaries of $1$-dimensional flat chains: 
\[
	\textnormal{\AE}(\R^d) = \Mbf_{0,1}(\R^d) = \set{\partial T}{T \in \mathscr F_1(\R^d)}.
\]

%%%%%%%%%% Section 3 %%%%%%%%%%

\section{Primitives: a simple optimal transport construction}\label{sec:OT}

Next, we introduce a transport construction to produce primitives of any given $0$-metric current, which substantially improves upon the known estimates of the cone-construction. The construction relies on the density of molecules in the Arens--Eells space, and we believe it might be independent interest:

\begin{lemma}\label{lem:discordia} Let $X$ be a separable and piecewise quasiconvex space (with quasiconvexity constant $c\ge 1$) and let $m  \in \textnormal{\AE}(X)$ be a molecule. Then, for any $\delta \in (0,1)$, there exists a rectifiable normal current $R \in \mathcal R_1(X) \cap \Nbf_1(X)$ satisfying
\[
	\partial R = m \quad \text{and} \quad \Mbf(R) \le c(1 + \delta) \|m\|_{\KR}.
\]
\end{lemma}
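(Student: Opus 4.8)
The plan is to reduce the statement to a transport/path-decomposition argument using the fact that $m$ is a finite sum of dipoles with a near-optimal Kantorovich representation. Since $m\in\textnormal{\AE}(X)$ is a molecule, $\rho(m)=\|m\|_{\KR}=W_1(m^+,m^-)$, and by the definition~\eqref{eq:AEnorm} of $\rho$ we may fix $\delta\in(0,1)$ and choose an atomic representation
\[
m=\sum_{j=1}^{N}\eta_j\,(\delta_{x_j}-\delta_{y_j}),\qquad \eta_j>0,\qquad \sum_{j=1}^N\eta_j\,d(x_j,y_j)\le (1+\delta/2)\,\|m\|_{\KR}.
\]
The idea is then to replace each atom $\eta_j(\delta_{x_j}-\delta_{y_j})$ by a rectifiable current $R_j$ with $\partial R_j=\eta_j(\delta_{x_j}-\delta_{y_j})$ and $\Mbf(R_j)$ close to $c\,\eta_j\,d(x_j,y_j)$, and set $R:=\sum_j R_j$. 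By linearity of the boundary, $\partial R=m$; by subadditivity of mass, $\Mbf(R)\le\sum_j\Mbf(R_j)\le c(1+\delta/2)\sum_j\eta_j d(x_j,y_j)\le c(1+\delta)\|m\|_{\KR}$ after adjusting constants. So everything comes down to the single-atom case.

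For a single pair $(x,y)$, I would invoke piecewise quasiconvexity (Definition~\ref{ass:con}) with $\eps$ chosen small relative to $\eta\,d(x,y)$: it yields pairs $(x_1,y_1),\dots,(x_m,y_m)$ with $d(x,x_1)+d(y_m,y)+\sum_{i=1}^{m-1}d(x_{i+1},y_i)<\eps$ and Lipschitz curves $\theta_i$ joining $x_i$ to $y_i$ with $\sum_i\ell(\theta_i)\le c\,d(x,y)$. The "gap" dipoles — namely $\delta_{x_1}-\delta_x$, each $\delta_{y_i}-\delta_{x_{i+1}}$, and $\delta_y-\delta_{y_m}$ — have total $\KR$-mass $<\eps$, so I iterate the same construction on them with geometrically decreasing error budgets $\eps/2,\eps/4,\dots$; this produces a series of curve currents whose masses sum to at most $c\,d(x,y)+c\eps\sum_k 2^{-k}= c(d(x,y)+\eps)$. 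The partial sums $\sum_i\lpr{\theta_i}$ are rectifiable normal currents whose boundaries converge (in $\KR$, hence in $\textnormal{\AE}(X)$) to $\delta_y-\delta_x$; since mass is lower semicontinuous and the series of masses converges, the limit $R_{(x,y)}$ is itself a rectifiable current (as a mass-convergent sum of rectifiable curve currents supported on a countable union of Lipschitz images, hence on a $1$-rectifiable set) with $\partial R_{(x,y)}=\delta_y-\delta_x$ and $\Mbf(R_{(x,y)})\le c\,d(x,y)+c\eps$. Scaling by $\eta$ and summing over the finitely many atoms, then absorbing the $\eps$-error into $\delta$, gives the claim; normality of $R$ follows since $\partial R=m$ has finite mass (it is a molecule).

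The main obstacle I expect is the bookkeeping in the iterated "gap-filling" construction: one must ensure the countable family of curve currents assembles into a single \emph{rectifiable} current — i.e., that the mass measure concentrates on a $1$-rectifiable set and is absolutely continuous with respect to $\mathcal H^1$ — and that boundaries pass to the limit correctly (using that $\KR$-convergence of $0$-currents is exactly weak-$*$ convergence against $\Lip_1$, together with the completeness of $\textnormal{\AE}(X)$ from Theorem~\ref{thm:AE}). A clean way to handle this is to note that $\mathcal R_1(X)$ is closed under mass-convergent sums and that $\partial:\Mbf_1(X)\to\Mbf_0(X)$ is continuous for the mass norm on the source and the $\KR$ (equivalently $\textnormal{\AE}$) norm on the target, so the limit $R$ is rectifiable and has the prescribed boundary; the mass bound is just the convergent geometric estimate above. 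A secondary technical point — placing the $\eps$ budgets so that the final constant is exactly $c(1+\delta)$ rather than $c(1+\delta)+o(1)$ — is routine once the geometric series is set up.
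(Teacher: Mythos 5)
Your proposal is correct and follows essentially the same strategy as the paper's proof: use piecewise quasiconvexity to fill a dipole (resp.\ molecule) by curve currents up to a small $\KR$-residual molecule, iterate with geometrically decaying error budgets, sum the resulting series in the mass norm, and recover rectifiability and the boundary identity in the limit via $\|\partial(\cdot)\|_{\KR}\le\Mbf(\cdot)$. The only difference is organizational — you iterate atom by atom while the paper runs the gap-filling iteration on the whole molecule at once — and this does not affect the argument.
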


\begin{remark}\label{rem:rect}
If $X$ is a geodesic space, then $R$  can be constructed of the form
\[
	R = \sum_{h = 1}^L \eta_h \llbracket \theta \rrbracket, \qquad \eta_h \in \R, \; \theta \in \Lip_1([0,1],X)
\]
and satisfying the sharp estimate $\Mbf(R) =  \|m\|_{\KR}$. 
If moreover $X$ is a Banach space, then $R$ can also be chosen to be a polyhedral current.
\end{remark}

\begin{proof} 
According to its optimal representation, we can write $m$ as a finite sum of dipoles
\[
m = \sum_{h=1}^L \eta_h (\delta_{y_h} - \delta_{x_h})
\]
attaining its minimal transportation cost, so that
\begin{equation}\label{optimo-m}
\|m\|_{\KR} = \sum_{h = 1}^L |\eta_h|d(x_h,y_h) < +\infty, \qquad |\eta_h| > 0 \quad \forall h = 1,\dots,L.
\end{equation}
For the convenience of the reader, we begin by providing a very simple proof of Lemma~\ref{lem:discordia}  when the underlying space X is either a geodesic space or a Banach space, thus arriving to the equality stated in Remark \ref{rem:rect}. 
 Second, we'll look at the case where X is a piecewise quasiconvex space, where completeness is not  anymore assumed.
 \\

\emph{\textbf{a) Proof for geodesic spaces.}}\vskip0.7em

For each pair of points $(x_h,y_h)$, in the optimal representation of $m$, let $\theta_h :[0,1] \to X$ be a geodesic connecting $x$ to $y$. This means:
\begin{equation}
\theta_h(0) = x_h, \quad \theta_h(1) = y_h, \quad \ell(\theta_h) = d(x_h,y_h) \quad \text{for all } h =1,\dots,L.
\end{equation}
Now, we define a $1$-dimensional normal rectifiable current $R$ as the weighted sum of these geodesics, oriented from $x_h$ to $y_h$:
\[
	R \coloneqq \sum_{h = 1}^L \eta_h \llbracket \theta_h \rrbracket  \in \mathcal R_1(X) \cap \Nbf_1(X).
\]
Then we have $\partial R= \sum_{h =1}^L \eta_h (\delta_{y_h} - \delta_{x_h}) = m$ , whereas by \eqref{optimo-m}:
\begin{align*}
\Mbf(R) & \le  \sum_{h =1}^L |\eta_h| \ell(\theta_h) =  \sum_{h =1}^L |\eta_h| \dist(x_h,y_h) = \|m\|_{\KR}.
\end{align*}
On the other hand, we know that  $\Mbf(R) \ge \|\partial R\|_{\KR}$ whence the equality $\Mbf(R) = \|m\|_{\KR}$ as announced in Remark \ref{rem:rect}.~\vskip0.7em

\noindent\emph{\textbf{b) Proof for piecewise quasiconvex spaces.}} Now, we consider the more general case where X is a piecewise quasiconvex space. We write $\hat X$ to denote the completion of $X$. Being piecewise quasiconvex allows us to approximate a path between two points with a sequence of shorter paths, albeit possibly with a larger total length up to a constant factor.\vskip0.7em

\noindent\emph{1. A first approximation.} The first step consists in showing that, for any $\eps > 0$, we can find a normal rectifiable-$1$ current $S$ such that 
\begin{equation}\label{eq:intermediate}
	m - \partial S \in \text{\AE}(X), \qquad \| m - \partial S\|_{\KR} \le \eps, \qquad \Mbf(S) \le c\|m\|_{\KR},
\end{equation}
where $c$ is the piecewise quasiconvex constant of $X$ given in Definition~\ref{ass:con}.
Appealing to the definition of piecewise quasiconvex space, we can, for each $h \in \{1,\dots,L\}$, find  $(p^h_1,q^h_1), \dots, (p^h_{r(h)},q^h_{r(h)}) \in X^2$ such that
\begin{equation}\label{eq:diste}
d(x_h,p_1^h) + d(y_h,q_{r(h)}^h) + \sum_{i = 1}^{{r(h)-1}} d({p_{i+1}^h},q_i^h) < \frac{ \eps }{L|{\eta_h}|} ,
\end{equation}
together with Lipschitz curves $\theta_1^h,\dots,\theta^h_{r(h)} \in \Lip([0,1],X)$ satisfying
\begin{equation}\label{eq:distC}
	{\theta_i^h(0) = p_i^h, \quad \theta_i^h(1) = q_i^h}, \qquad \sum_{{i=1}}^{r(h)} \ell (\theta_i^h) \le c d(x_h,y_h).
\end{equation}
We define a rectifiable metric $1$-current on $X$ by  setting
\[
	S \coloneqq \sum_{h=1}^L {\eta_h}  \left\{   \sum_{{i = 1}}^{r(h)} \llbracket \theta_i^h \rrbracket \right\}
\]
We claim that $S$ is a $1$-current satisfying~\eqref{eq:intermediate}: Indeed, by using the identity $\partial \llbracket \theta \rrbracket = \delta_{\theta(1)} - \delta_{\theta(0)}$ for $\theta \in \Lip([0,1],X)$, we get
	\begin{align*}
		m - \partial S =   \sum_{h=1}^L \eta_h  \left\{ { (\delta_{p_1^h} - \delta_{x^h})} +
        \sum_{i = 1}^{{r(h)-1}}  (\delta_{{p^h_{i+1}}} - \delta_{q^h_i}) + { (\delta_{y^h} - \delta_{q^h_{r(h)}})} \right\} .
	\end{align*}
	Thus, $\partial S - m$ is a molecule. On the other hand, the fact that $\|\delta_x - \delta_y\|_{\KR} = d(x,y)$ for any two $x,y \in X$ and the triangle inequality and  gives
	\[
		\| \partial S - m\|_{\KR} \le \sum_{h=1}^L |\eta_h| \left\{{d(x_h,p_1^h) + d(y_h,q_{r(h)}^h)} +  \sum_{i = 1}^{{r(h)-1}}  d({p^h_{i}},{q^h_i}) \right\} \stackrel{\eqref{eq:diste}}\le \eps.
	\]
Recalling that $\Mbf(\llbracket \theta \rrbracket) \le \ell(\theta)$ for all $\theta \in \Lip([0,1],X)$, we can estimate the mass of $S$ by computing
	\[
		\Mbf(S) \le \sum_{h=1}^L |\eta_h|    \sum_{i = 1}^{{r(h)}} \ell( \theta_i^h) \stackrel{\eqref{eq:distC}}\le c \sum_{h=1}^L |\eta_h|   d(x_h,y_h) = c \|m\|_{\KR}.
	\]  	
	This proves~\eqref{eq:intermediate}.~\vskip0.7em

\noindent\emph{2. Construction of the Rectifiable Current $R$.} The strategy here is to iteratively apply the approximation result from the previous step to construct a sequence of rectifiable currents whose boundaries progressively approach $m$. The sum of this sequence will then converge to the desired rectifiable current $R$ with $\partial R = m$ and satisfying a controlled mass estimate $\Mbf(S) \le c' \|m\|_{\KR}$.

\begin{enumerate}[itemsep= 0.7em,  leftmargin=1.8em,label=$\bullet$,topsep=1em]
\item The First Step ($j=1$): Starting with $m_0 \coloneqq m$, we apply the approximation from Equation~\eqref{eq:intermediate} with a specific error bound. Let $\delta > 0$ be a parameter (to be determined later). We can find a rectifiable current $S_1 \in \mathcal R_1(X) \cap \Nbf_1(X)$ such that the remainder $m_1 \coloneqq m_0 - \partial S_1$ is a molecule in $\text{\AE}(X)$ satisfying:
\[
\|m_1\|_{\KR} \le 2^{-1} \delta \|m\|_{\KR}, \qquad \Mbf(S_1) \le c \|m\|_{\KR}.
\]

\item The Inductive Step ($j \ge 2$): Now, we proceed inductively. Assume that for some $j \ge 2$, we have constructed a sequence of rectifiable currents $S_1, S_2, \dots, S_{j-1}$ and a sequence of molecules $m_1, m_2, \dots, m_{j-1}$ such that $m_i = m_{i-1} - \partial S_i$ for $i = 1, \dots, j-1$, and they satisfy the bounds:
\[
\|m_{i}\|_{\KR} \le 2^{-i} \delta \|m\|_{\KR}, \qquad \Mbf(S_i) \le c \|m_{i-1}\|_{\KR} \quad \text{for } i = 1, \dots, j-1.
\]
Applying the approximation from~\eqref{eq:intermediate} to the molecule $m_{j-1}$ (with an error bound scaled by $2^{-j}$), we can find a rectifiable current $S_{j} \in \mathcal R_1(X) \cap \Nbf_1(X)$ such that the next remainder $m_{j} \coloneqq m_{j-1} - \partial S_{j}$ is a molecule in $\text{\AE}(X)$ satisfying:
\begin{equation}\label{eq:KR_bound_revised}
\|m_{j}\|_{\KR} \le 2^{-j} \delta \|m\|_{\KR}, \qquad \Mbf(S_j) \le c \|m_{j-1}\|_{\KR}.
\end{equation}
Using the inductive hypothesis, we can bound the mass of $S_j$:
\begin{equation}\label{eq:mass_bound_revised}
\Mbf(S_j) \le {c} \|m_{j-1}\|_{\KR} \le c 2^{-(j-1)} \delta \|m\|_{\KR} = c 2^{-j+1} \delta \|m\|_{\KR}.
\end{equation}
\end{enumerate}

\noindent\emph{3. Construction of a Rectifiable Current.} 
We now define a sequence of partial sums of the rectifiable currents:
\[
P_j \coloneqq S_1 + S_2 + \cdots + S_j \in \mathcal R_1(X) \cap \Nbf_1(X)
\]
We can show that the sequence $(P_j)$ is Cauchy in the mass norm $\Mbf_1(X)$. For $k > j \ge 1$:
\[
\Mbf(P_k - P_j) \le \sum_{h = j+1}^k \Mbf(S_h) \stackrel{\eqref{eq:mass_bound_revised}}{\le} \sum_{h = j+1}^k c 2^{-h+1} \delta \|m\|_{\KR} \le c \delta \|m\|_{\KR} 2^{-j+1}.
\]
Since $\Mbf_1(\hat X)$ is a Banach space, the Cauchy sequence $(P_j)$ converges in mass to a limit current $R \in \Mbf_1(\hat X)$:
\[
\Mbf(R - P_j) \to 0 \quad \text{as } j \to \infty, \quad \text{where } R \coloneqq \sum_{h = 1}^\infty S_h \in \Mbf_1(X).
\]
From the mass bound for each $S_h$ (recalling that $\Mbf(S_1) \le c \|m\|_{\KR}$), we can estimate the mass of $R$:
\[
\Mbf(R) \le \sum_{h = 1}^\infty \Mbf(S_h) \stackrel{\eqref{eq:mass_bound_revised}}\le c \|m\|_{\KR} + {c}\delta \|m\|_{\KR} \sum_{h = 2}^\infty  2^{-h+1} \le c(1 +\delta) \|m\|_{\KR}.
\]
This proves the mass bound.\vskip0.7em

\noindent\emph{5. Proof that $R$ is a rectifiable current in $X$.} Observe that $\|R\|$ is concentrated on $\Gamma = \bigcup_j B_j \subset X$, where $B_j$ is the (finite) union of curves over which each $\|S_j\|$ is concentrated. This follows from the first inequality in the estimate above. In particular, $\Gamma$ is a rectifiable set contained in $X$. Now, let $U \in \mathcal B(\hat X)$ be a Borel set with vanishing $\mathcal H^1$ measure. Then, since each $P_j$ is $1$-rectifiable, it follows that
\[
	\|R\|(U) \le \|R - P_j\|(U) + \|P_j\|(U \cap X) \le \Mbf(R - P_j) \qquad \forall j \ge1.
\] 
Letting $j \to \infty$, we discover that $\|R\|(U) = 0$. This proves that $R \in \mathcal R_1(\hat X)$. Since $R$ is concentrated on $\Gamma \subset X$, it follows that $R \in \mathcal R_1(X)$.\vskip0.7em

\noindent \emph{6. Proof of the boundary identity.} It remains to verify that $\partial R = m$.
Notice that by construction,  $m - \partial P_j = m - (\partial S_1 + \cdots + \partial S_j) = m_j$. Hence, for $j \ge 2$ we get
\begin{align*}
	\|\partial R - m\| & \le \|m - \partial P_j\|_{\KR} + \|\partial P_j - \partial R\|_{\KR} \\
	& \le \|m_j\|_{\KR} + \Mbf(P_j - R)   
\end{align*} 
We know from Equation~\eqref{eq:KR_bound_revised} that $\|m_j\|_{\KR} \le 2^{-j} \delta \|m\|_{\KR} \to 0$ as $j \to \infty$. Therefore, letting $j \to \infty$, we conclude that $\partial R = m$. Since $|m|$ is a finite Radon measure, it follows that $R$ is a normal current, i.e., $R \in \Nbf_1(X)$.
\end{proof}

The following variant of the previous lemma will useful later on:

\begin{lemma}\label{lem:discordia2} Let $X$ be a separable Banach space and let $m  \in \textnormal{\AE}(X)$ be a molecule. Then, for any ${\eps} \in (0,1)$, there exists a rectifiable normal current $R \in \mathcal R_1(X \oplus \R) \cap \Nbf_1(X \oplus \R)$ satisfying
\[
	\partial R = m \oplus 0, \qquad \|R\| \mres (X \oplus \{0\}) \equiv 0 \quad \text{and} \quad \Mbf(R) \le \|m\|_{\KR} + {\eps}.
\]
\end{lemma}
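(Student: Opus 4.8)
\emph{Proof idea.} The plan is to take the optimal polyhedral filling of $m$ that already exists inside the Banach space $X$ --- a finite sum of oriented segments of total mass exactly $\|m\|_{\KR}$, as produced at the start of the proof of Lemma~\ref{lem:discordia} --- and to displace it off the slice $X\oplus\{0\}$ by lifting it to a parallel copy $X\oplus\{s\}$ for a small $s>0$, reconnecting to level $0$ through short vertical segments. First one disposes of the trivial case $m=0$ (take $R=0$); otherwise fix an optimal dipole representation
\[
	m=\sum_{h=1}^{L}\eta_h\,(\delta_{y_h}-\delta_{x_h}),\qquad |\eta_h|>0,\qquad \|m\|_{\KR}=\sum_{h=1}^{L}|\eta_h|\,d_X(x_h,y_h),
\]
where we may assume $x_h\neq y_h$ for every $h$.

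For a parameter $s\in(0,1)$ to be chosen at the end, set
\[
	R\coloneqq\sum_{h=1}^{L}\eta_h\Big(\,\lpr{(x_h,0),(x_h,s)}+\lpr{(x_h,s),(y_h,s)}+\lpr{(y_h,s),(y_h,0)}\,\Big),
\]
a polyhedral, hence rectifiable, $1$-current in $X\oplus\R$: each dipole of $m$ is realized by moving up to level $s$, across at level $s$, and back down. Using $\partial\lpr{p,q}=\delta_q-\delta_p$, the intermediate points $(x_h,s)$ and $(y_h,s)$ telescope within each $h$-block, so $\partial R=\sum_h\eta_h(\delta_{(y_h,0)}-\delta_{(x_h,0)})=m\oplus 0$; since $m$ is a finite measure this also yields $R\in\Nbf_1(X\oplus\R)$. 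For the mass one uses $\Mbf(\lpr{p,q})=\|q-p\|$ (each segment current is injective): the horizontal segment at level $s$ has length $d_X(x_h,y_h)$ and each vertical segment has length $s$, whence, by subadditivity of the mass,
\[
	\Mbf(R)\le\sum_{h=1}^{L}|\eta_h|\big(2s+d_X(x_h,y_h)\big)=\|m\|_{\KR}+2s\sum_{h=1}^{L}|\eta_h|,
\]
and it suffices to take $s$ with $2s\sum_h|\eta_h|\le\eps$. Finally, $\|R\|$ is concentrated on a finite union of segments: the horizontal ones lie in $X\oplus\{s\}$, disjoint from $X\oplus\{0\}$, while each vertical one meets $X\oplus\{0\}$ only at its endpoint, a single point; since a rectifiable $1$-current charges no $\mathcal H^1$-negligible set, $\|R\|\mres(X\oplus\{0\})\equiv 0$.

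I do not expect a genuine difficulty here: the whole point is that the auxiliary coordinate provides room to move the optimal filling off the distinguished slice at arbitrarily small extra cost. The two points requiring minimal care are that $X\oplus\R$ be equipped with a product norm for which the slice maps $x\mapsto(x,t)$ are isometric embeddings --- so that the constant in the mass estimate is $2$ and independent of $m$ --- and the elementary measure-theoretic fact that a rectifiable $1$-current assigns zero mass to the finitely many points at which $R$ touches $X\oplus\{0\}$.
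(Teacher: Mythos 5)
Your proof is correct and follows essentially the same approach as the paper: both start from the optimal dipole decomposition of $m$ and lift its polyhedral filling off the slice $X\oplus\{0\}$ into the auxiliary coordinate at a cost of order $s\sum_h|\eta_h|$, concluding via the observation that a rectifiable $1$-current charges no finite set. The only (cosmetic) difference is the shape of the lift — the paper routes each dipole through the single midpoint $\tfrac{x_h+y_h}{2}\oplus t$ using two slanted segments, whereas you use a three-segment "up, across, down" path — and both yield the same estimate.
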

\begin{proof}
	Let 
  \[
        m = \sum_{h = 1}^{r} \eta_h(\delta_{y_h} - \delta_{x_h}), \qquad x_h,y_h \in X,
    \]
    be an optimal decomposition of $m$ with respect to the Kantorovich--Rubinstein norm. 
    
    Letting ${M} \coloneqq \max\{|\eta_h|\}_{h =1}^r$ and ${t \coloneqq (2Mr)^{-1}\eps}$, we define $R \in \Nbf_1(X \oplus \R) \cap \mathcal R_1(X \oplus \R)$ by
    \[
        R \coloneqq \sum_{h=1}^{r} \eta_h \lpr{x_h \oplus 0, \frac{x_h + y_h}2\oplus  t} + \eta_h\lpr{\frac{x_h + y_h}2\oplus t, y_h\oplus 0}.
    \]
 By construction $\partial R = m \oplus 0$. Moreover, 
 \[
 	\|R\| = {\mathcal H^1 \mres \bigcup_{h=1}^{r} \left\{  (x_h \oplus 0, \frac{x_h + y_h}2\oplus  t) \cup  (\frac{x_h + y_h}2\oplus t, y_h\oplus 0) \right\}}.
 \]
 Since $X \oplus \{0\}$ intersects the support of $\|R\|$ on finitely many points (the ones associated to the dipoles of $m$), it follows that $\|R\| \mres X \oplus \{0\}$ is the zero measure. Finally, the triangular inequality and the isometry of the embedding $X \embed X \oplus \{0\}$ give
 \[
 	\Mbf(R) \le \sum_{h=1}^{r} |\eta_h| \|x_h - y_h\|_X +2 \sum_{h=1}^{r} |\eta_h| (2{M}r)^{-1}{\eps} \le  \|m\|_{\KR} + \eps.
 \]
 This finishes the proof.
\end{proof}

\section{Proof of the approximation and filling results}\label{sec:qc}
\subsection{Proof of Theorem~\ref{thm:approx_qc}}

Our goal is to show that any metric current $T \in \Mbf_1(X)$, can be approximated in mass by a sequence of normal currents $(T_j) \subset \Nbf_1(X)$, which, in fact, can be chosen to be rectifiable up to a cycle. 

\emph{Step 1. Approximation by Molecules.} By the axioms of currents, the boundary $\partial T$ acts as a linear functional on $\Lip(X)$ satisfying the assumption of Lemma~\ref{lem:porfin}, which guarantees that $\partial T \in \text{\AE}(X)$. Notice that this is a subtle but powerful advantage of the Arens--Eells approach. Indeed, $\partial T$ could "go out" of $X$, as we are not assuming that $X$ is complete. However, $\partial T$ is still represented as an element of $\text{\AE}(X)$. Another way of seeing this, is to work directly in the completion $\hat X$ of $X$ and recall from Corollary~\ref{cor:completion} that $\text{\AE}(X)$ is isometrically isomorphic to $\text{\AE}(\hat X)$. 

Since the space of molecules on $X$ is dense in $\text{\AE}(X)$ under the Kantorovich--Rubinstein norm ($\|\cdot\|_{\KR}$), this means we can find a sequence of molecules $(m_j)_j \subset \text{\AE}(X)$ satisfying
\begin{equation}\label{eq:firsteq-improved}
\|\partial T - m_j\|_{\KR} \to 0 \quad \text{as } j \to \infty.
\end{equation}

\emph{Step 2. Telescopic Estimates.} For a fixed small positive number $\eps \in (0,1)$ (whose specific value is not crucial for this part of the proof but will be used later for other computations), we can choose, without loss of generality, the sequence $(m_j)$ to satisfy the following conditions:
\begin{align}\label{eq:m1}
\|m_1\|_{\KR} \le \|\partial T\|_{\KR} + 2^{{-2}}\eps, \qquad \|m_j - m_{j-1}\|_{\KR} \le 2^{{-(j+1)}}\eps \quad \text{for each } j \ge 2.
\end{align}
These conditions ensure that the ``jumps'' between consecutive molecules in the sequence become progressively smaller.\vskip0.7em

\emph{Step 3. Constructing the approximating sequence.} For $\delta \in (0,1)$ and for each $j \ge 1$, considering the difference between consecutive molecules (with $m_0$ defined as the zero measure),  Lemma~\ref{lem:discordia} produces a rectifiable current $S_j \in \mathcal R_1(X) \cap \Nbf_1(X)$ such that:
\begin{equation}\label{eq:step2}
    \partial S_j = m_j - m_{j-1}, \qquad \Mbf(S_j) \le \begin{cases}
        c(1 + \delta) \|m_1\| \le c(1 + \delta)\|\partial T\|_{\KR} + c2^{-1}\eps & \text{for $j = 1$}\\
        c (1 + \delta) \|m_{j} - m_{j-1}\|_{\KR} \le  c2^{-j+1} \eps & \text{if $j \ge 2$}
    \end{cases}.
\end{equation}
We now define a sequence of rectifiable currents $(P_j)_{j \ge 1}$ by taking the cumulative sum  
\[
P_j \coloneqq S_1 + S_2 + \cdots + S_j  \in \mathcal R_1(X) \cap \Nbf_1(X).
\]
Notice that  the sum $\partial S_1 + \cdots + \partial S_j$ telescopes to $m_j$, giving $\partial P_j = m_j$. 
 We now observe that $(P_j)$ is a Cauchy sequence in $\Mbf_1(X)$. For any $k > j \ge 2$, the estimate in~\eqref{eq:step2} gives
 \begin{align*}\label{eq:estimatePj-improved}
\Mbf(P_k - P_j) & = \Mbf(S_{j+1} + \cdots + S_k) \le {c2^{-j} \eps}.
\end{align*}
Following the reasoning from Steps 4 and 5 in the proof of Lemma~\ref{lem:discordia}, we deduce that there exists a limit current $R \in \mathcal R_1(X)$ satisfying
\begin{equation}\label{eq:Ps-improved}
\Mbf(R - P_j) \to 0 \quad \text{as } j \to \infty.
\end{equation}
Notice that, even though $R$ is a priori only a limit of a Cauchy sequence in $\Mbf_1(X)$, it is itself concentrated in $X$ (and not in the completion $\hat X$).
 Moreover, notice that each $S_j$ is supported on some $\Gamma_j \subset X$, which is a finite union of images of Lipschitz curves.

For later use, we record that the mass convergence, the first bound in~\eqref{eq:m1} and the bounds in~\eqref{eq:step2} give
\begin{equation}\label{eq:eps1}
	\Mbf(R) \le \sup_j \Mbf(P_j) \le {c(1 + \delta)}\|\partial T\|_{\KR} +  {c\eps}.
\end{equation}

Now, we relate the boundary of $R$ to the boundary of $T$. Using the triangle inequality for the Kantorovich--Rubinstein norm and the estimate $\|\partial \cdot\|_{\KR} \le \Mbf(\cdot)$, we have:
\begin{align*}
\|\partial R - \partial T\|_{\KR} &\le \|\partial R - \partial P_j\|_{\KR} + \|\partial P_j - \partial T\|_{\KR} \\
&\le \Mbf(R - P_j) + \|m_j - \partial T\|_{\KR}.
\end{align*}
Therefore, taking the limit as $j \to \infty$, we conclude, from~\eqref{eq:firsteq-improved} and \eqref{eq:eps1}, that $\partial R = \partial T$ as $0$-functionals on $X$. Hence, $C \coloneqq T - R$ is a cycle in $\Mbf_1(X)$.  Finally, we define a sequence of currents $T_j$ by adding the cycle $C$ to the sequence of rectifiable currents $P_j$:
\[
T_j \coloneqq C + P_j \in \Nbf_1(X), \quad j \in \mathbb N.
\]
Since the difference $T - T_j$ is precisely $R - P_j$, from~\eqref{eq:Ps-improved} we know that 
\[
\Mbf(T - T_j) = \Mbf(R - P_j) \to 0 \quad \text{as } j \to \infty.
\]
This shows that the original current $T$ can be approximated in the mass norm by a sequence of currents $T_j$, where each $T_j$ is the sum of a cycle $C$ and a rectifiable current $P_j$. 

The fact that the elements of sequence $(P_j)$ constructed above can be chosen to be
\begin{itemize}
\item[a)] a sum of of geodesic integral curves (in geodesic spaces),
\item[b)] polyhedral $1$-currents (on Banach spaces) 
\end{itemize} 
follows directly from Remark~\ref{rem:rect}. This completes the proof.\qed\\

\subsection{Proof of Corollary~\ref{cor:filling}} The proof follows directly from the decomposition $C = T -  R$ and the estimate~\eqref{eq:eps1} with $\eps' = \frac \eps 2 (1+ c)^{-1}$ and $\delta = \frac \eps 2(1 + \|\partial T\|_{\KR})^{-1}$ in the proof of Theorem~\ref{thm:approx_qc}.

\subsection{Proof of Theorem~\ref{cor:cycle_cover}}
	If $X$ is a Banach space of dimension $\dim(X) \ge 2$, we can modify the construction to ensure that the original current $T$ and the rectifiable current $R$ from Step~2 are concentrated in disjoint sets. The argument follows by slightly modifying certain steps in the proof of Theorem~\ref{thm:approx_qc}, namely:
	
	 In Step~1, we can modify the approximating sequence $(m_j)$ in the following way: for fixed $j$, let $\sum_{h=1}^{L(j)} \eta_h(\delta_{y_h} - \delta_{x_h})$ be the optimal decomposition of $m_j$. Now, let $h \in \{1,\dots,L(j)\}$. The fact that $X$ is a normed space with dimension at least 2, implies that we can find $\nu_j \in \partial B_X$ that does not belong to $\spn\{y_h - x_h\}$ for all $h \{1,\dots,L(j)\}$. In particular, since $\|T\|$ is $\sigma$-finite, we can also find 
	 \begin{equation}\label{eq:tj}
	 0 < t_j  < L(j)^{-1}2^{-j+1}\eps' \max\{|\eta_1|,\dots,|\eta_{L(j)}|\}
	 \end{equation}
	  such that 
	 \[
	 \|T\|(\bigcup_{h = 1}^{L(j)} [x_h+t_j\nu_j,y_h+t_j\nu_j]) = 0.
	 \] We then define 
	 \[
	 m_j' \coloneqq \sum_{h=1}^{L(j)} \eta_h(\delta_{y_h'} - \delta_{x_h'}), \qquad x'_h \coloneqq x_h+t_j\nu_j, \quad y'_h \coloneqq y_h+t_j\nu_j.
	 \]
	 Notice that $\|m_j - m_j'\|_{\KR}\le 2^{-j} \eps'$ and therefore $(m_j')$ still satisfies~\eqref{eq:firsteq-improved}.

	In Step~2, we consider the estimates with $\eps' = \eps/2$ instead of $\eps$. 
	
	In Step~3, the associated primitive $S_j'$ to $m_j'$ (given by Lemma~\ref{lem:discordia}  and Remark~\ref{rem:rect}) is then of the form
	\[
		\sum_{h=1}^{L} \eta_h \llbracket x_h',y_h' \rrbracket, \quad \text{with} \quad \|T\|(\bigcup_{h = 1}^L [x_h',y_h']) = 0.
	\]
Following the proof by verbatim, replacing $(m_j)$ with $(m_j')$ and $(S_j)$ with $(S_j')$, we obtain that both $(P_j)$ and $R$ satisfy the same properties as in Corollary~\ref{cor:filling} for $\eps'$. Moreover, they are concentrated on a countable union $\Gamma \subset X$ of intervals where $\|T\|$ vanishes. In particular, $C = T - R$ is a cycle, and, letting $B = X \setminus \Gamma\in \mathcal B(X)$ we get 
\[
	T = C + R \quad \Rightarrow \quad T = T \mres B = C \mres B + R \mres (X \setminus \Gamma) = C \mres B.
\]
Recalling that $(P_j)$ and $R$ satisfy~\eqref{eq:eps1}, we obtain (recall that $C = 1$ on a geodesic space)
\[
	\Mbf(C) = \Mbf(T) + \Mbf(R) \le \Mbf(T) + \|\partial T\|_{\KR} + 2\eps' \le \Mbf(T) + \|\partial T\|_{\KR} + \eps.
\] 
This finishes the proof.\qed\\

Similar arguments building on Lemma~\ref{lem:discordia2} (instead of Lemma~\ref{lem:discordia}) yield the following version of Theorem~\ref{cor:cycle_cover}, which will be instrumental in the proof of Theorem~\ref{thm:rep}.

\begin{corollary}\label{cor:cycle_cover2}
Let $X$ be a separable Banach space and let $T \in \mathbf{M}_1(X)$. Then, for any given $\eps > 0$, there exists a cycle $C \in \Mbf_1(X \oplus \R)$ and a rectifiable current $R \in \mathcal R_1(X \oplus \R)$ such that
\begin{enumerate}[itemsep=0.2em, leftmargin=2.5em,label=\arabic*.]
	\item $C = iT + R$, where $i : X \embed X_0 \coloneqq X \oplus \{0\}$ is the canonical isometric embedding
	\item $\|iT\|$ is concentrated on $X_0$ and $\|R\| \mres X_0 \equiv 0$
	\item $\|T\|$ and $\|R\|$ are mutually singular; in particular, $T = C \mres X_0$ and $R = C\mres \{ X \oplus (0,\infty) \}$
	\item $\Mbf(R) \le  \|\partial T\|_{\KR} + \eps$ \label{eq:optimalCC}
\end{enumerate}
\end{corollary}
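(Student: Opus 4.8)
The plan is to rerun the proof of Theorem~\ref{cor:cycle_cover} essentially verbatim, with the single change that the rectifiable primitives are produced by Lemma~\ref{lem:discordia2} (which lifts them into $X\oplus\R$ with the sharp mass constant $1$) rather than by Lemma~\ref{lem:discordia}. This substitution does two things at once: it removes the quasiconvexity factor, and, since Lemma~\ref{lem:discordia2} yields a primitive whose mass measure avoids $X_0\coloneqq X\oplus\{0\}$, it makes the mutual singularity of $\|T\|$ and $\|R\|$ automatic, so the transversal-perturbation device used in the case $\dim(X)\ge 2$ of Theorem~\ref{cor:cycle_cover} is no longer needed. We may assume $\eps\in(0,1)$. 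Throughout, $iT$ denotes the push-forward $i_\#T$.

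First I would recall, arguing as in Step~1 of the proof of Theorem~\ref{thm:approx_qc}, that $\partial T$ is a linear functional on $\Lip(X)$ which vanishes on constants and is sequentially weak-$*$ continuous, so $\partial T\in\text{\AE}(X)$. By density of molecules in the Kantorovich--Rubinstein norm, I pick molecules $(m_j)\subset\text{\AE}(X)$ with $\|\partial T-m_j\|_{\KR}\to 0$ which, after relabelling, also satisfy the telescopic bounds $\|m_1\|_{\KR}\le\|\partial T\|_{\KR}+2^{-3}\eps$ and $\|m_j-m_{j-1}\|_{\KR}\le 2^{-(j+2)}\eps$ for $j\ge 2$ (with $m_0\coloneqq 0$). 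Applying Lemma~\ref{lem:discordia2} to the molecule $m_{j-1}-m_j$ with error parameter $2^{-(j+2)}\eps$ produces currents $S_j\in\mathcal R_1(X\oplus\R)\cap\Nbf_1(X\oplus\R)$ with $\partial S_j=(m_{j-1}-m_j)\oplus 0$, with $\|S_j\|$ carried by finitely many segments contained in $X\oplus(0,\infty)$ (in particular $\|S_j\|\mres X_0\equiv 0$), and with $\Mbf(S_j)\le\|m_{j-1}-m_j\|_{\KR}+2^{-(j+2)}\eps$.

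Next I would set $P_j\coloneqq S_1+\cdots+S_j$, so that the boundaries telescope to $\partial P_j=-m_j\oplus 0$, and note that $\Mbf(P_k-P_j)\le\sum_{h>j}\Mbf(S_h)\le C\,2^{-j}\eps$ for $k>j$, so $(P_j)$ is Cauchy in the mass norm of $\Mbf_1(X\oplus\R)$. Arguing as in the last steps of the proof of Lemma~\ref{lem:discordia}, the mass limit $R\coloneqq\sum_h S_h$ is rectifiable: its mass is carried by the $1$-rectifiable set $\bigcup_h\spt\|S_h\|$, and it charges no $\mathcal H^1$-negligible Borel set $U$, since $\|R\|(U)\le\Mbf(R-P_j)+\|P_j\|(U)=\Mbf(R-P_j)\to 0$. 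Passing to the limit in the same way gives $\|R\|\mres X_0\equiv 0$ and keeps $\|R\|$ concentrated on $X\oplus(0,\infty)$; and summing the mass bounds of the $S_h$ and inserting the telescopic estimates yields $\Mbf(R)\le\|\partial T\|_{\KR}+\eps$, which is item~4.

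Finally, using that $i$ is an isometric embedding — so that $\|(\partial T-m_j)\oplus 0\|_{\KR}=\|\partial T-m_j\|_{\KR}$, since every $1$-Lipschitz function on $X_0$ extends to a $1$-Lipschitz function on $X\oplus\R$ — I would estimate
\[
\|\partial R+\partial(iT)\|_{\KR}\ \le\ \Mbf(R-P_j)+\|(\partial T-m_j)\oplus 0\|_{\KR}\ =\ \Mbf(R-P_j)+\|\partial T-m_j\|_{\KR}\ \longrightarrow\ 0,
\]
whence $\partial R=-\partial(iT)$ and $C\coloneqq iT+R$ is a cycle of finite mass in $\Mbf_1(X\oplus\R)$, which gives items~1 and~2. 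Item~3 follows because $\|iT\|$ is carried by $X_0$ while $\|R\|$ is carried by $X\oplus(0,\infty)$, so the two are mutually singular under the identification $X\cong X_0$, and restricting the identity $C=iT+R$ to $X_0$, respectively to $X\oplus(0,\infty)$, recovers $T$, respectively $R$. I expect the only genuinely delicate points to be the stability of $\mathcal R_1$ under mass convergence together with the bookkeeping that keeps $\spt\|R\|$ off $X_0$ — both already handled in the proof of Lemma~\ref{lem:discordia} — whereas the sharp constant in item~4 is inherited with no extra effort from the sharp constant $1$ of Lemma~\ref{lem:discordia2}.
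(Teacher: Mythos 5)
Your proposal is correct and takes essentially the same route as the paper's own proof, which simply instructs the reader to rerun the argument of Theorem~\ref{thm:approx_qc} with the primitives supplied by Lemma~\ref{lem:discordia2} in place of Lemma~\ref{lem:discordia}. The observations you add — that the lifted primitives $S_j$ automatically have $\|S_j\|\mres X_0\equiv 0$, so the transversal-perturbation device of Theorem~\ref{cor:cycle_cover} is unnecessary and the mutual singularity of $\|iT\|$ and $\|R\|$ comes for free, and that the sharp constant in item~4 is inherited directly from Lemma~\ref{lem:discordia2} — are precisely the details the paper leaves implicit in its one-line proof.
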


\begin{proof}
	The proof follows by verbatim the steps of Theorem~\ref{thm:approx_qc}, using the primitives provided by Lemma~\ref{lem:discordia2} (instead of the ones provided by Lemma~\ref{lem:discordia}).
\end{proof}

\subsection{Direct consequence of the proofs} Here we collect two side results results, which are connected to the discussion in Section~\ref{sec:completion}. A direct consequence of the proof of Theorem~\ref{thm:approx_qc} is the following construction of primitive currents for elements in $\text{\AE}(X)$:

\begin{corollary}[One-dimensional metric primitives]\label{cor:primitive}
	Let $X$ be a separable and piecewise quasiconvex space (with piecewise quasiconvexity constant $c$) and let $\lambda \in \textnormal{\AE}(X)$. For any $s > 0$, there exists a rectifiable metric current $R \in \Mbf_1(X) \cap \mathcal R_1(X)$ satisfying
	\[
	\partial R = \lambda \quad \text{and} \quad \Mbf(R) \le c \|\lambda\|_{\KR} + s.
	\]
\end{corollary}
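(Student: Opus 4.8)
The plan is to run the same machinery used in the proof of Theorem~\ref{thm:approx_qc}, now with the abstract element $\lambda \in \textnormal{\AE}(X)$ playing the role of $\partial T$. First I would fix auxiliary parameters $\delta \in (0,1)$ and $s' > 0$ (to be tuned at the end) and use the density of molecules in $\textnormal{\AE}(X)$ for the Kantorovich--Rubinstein norm to select molecules $m_j$ with $\|\lambda - m_j\|_{\KR} \to 0$, arranged telescopically so that, writing $m_0 := 0$,
\[
\|m_1\|_{\KR} \le \|\lambda\|_{\KR} + s', \qquad \|m_j - m_{j-1}\|_{\KR} \le 2^{-(j+1)} s' \quad (j \ge 2).
\]

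Next, I would feed each difference $m_j - m_{j-1}$ --- a genuine molecule --- into Lemma~\ref{lem:discordia}, obtaining rectifiable normal currents $S_j \in \mathcal R_1(X) \cap \Nbf_1(X)$ with $\partial S_j = m_j - m_{j-1}$ and $\Mbf(S_j) \le c(1+\delta)\|m_j - m_{j-1}\|_{\KR}$. Setting $P_j := S_1 + \cdots + S_j \in \mathcal R_1(X) \cap \Nbf_1(X)$, telescoping gives $\partial P_j = m_j$, and the geometric decay of $\Mbf(S_j)$ makes $(P_j)$ Cauchy in the mass norm of $\Mbf_1(\hat X)$, where $\hat X$ denotes the completion of $X$. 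Passing to the limit yields $R := \sum_h S_h$; exactly as in the arguments of Steps~5--6 in the proof of Lemma~\ref{lem:discordia}, $\|R\|$ is carried by the $1$-rectifiable set $\bigcup_h \Gamma_h \subset X$ (each $\Gamma_h$ the finite union of Lipschitz-curve images on which $\|S_h\|$ is concentrated), so $R$ lies in $\Mbf_1(X) \cap \mathcal R_1(X)$ and not merely in the completion.

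Finally I would verify the boundary identity through $\|\partial R - \lambda\|_{\KR} \le \Mbf(R - P_j) + \|m_j - \lambda\|_{\KR} \to 0$, and the mass bound by summing: $\Mbf(R) \le \sum_h \Mbf(S_h) \le c(1+\delta)\big(\|\lambda\|_{\KR} + \tfrac{3}{2} s'\big)$, which is at most $c\|\lambda\|_{\KR} + s$ once $\delta$ and $s'$ are taken small enough. Note that $R$ need not be normal here, since $\lambda = \partial R$ may have infinite mass (it is only an element of $\textnormal{\AE}(X)$, possibly a first-order distribution), which is precisely why the statement asks only for a rectifiable metric current. I do not expect a genuine obstacle: the only delicate point is the one already settled inside the proof of Lemma~\ref{lem:discordia}, namely that the mass limit of the $P_j$ stays concentrated in $X$ and remains rectifiable --- both consequences of $\Mbf(R - P_j) \to 0$ together with the rectifiability of each $P_j$.
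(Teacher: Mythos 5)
Your proposal is correct and is essentially the paper's own argument: the paper proves this corollary by running the proof of Theorem~\ref{thm:approx_qc} verbatim with $\lambda$ in place of $\partial T$ and the error parameters tuned so that the final bound in~\eqref{eq:eps1} falls below $c\|\lambda\|_{\KR}+s$, which is exactly what you do. Your remarks on rectifiability of the limit and on the possible failure of normality (since $\lambda$ may have infinite mass) also match the paper's treatment.
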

\begin{proof}
The proof follows by verbatim the steps of the proof of Theorem~\ref{thm:approx_qc} under the following considerations: in Step~1, replace $\partial T$ by $\lambda$; in Step~2, let  $\eps > 0$ be small enough so that $\eps (1 + c) < s$ in~\eqref{eq:eps1}. \end{proof}

In particular, the Arens--Ells space coincides with the space of boundaries of metric currents:

\begin{corollary}\label{cor:boundaries}
	Let $X$ be a piecewise quasiconvex space. Then
	\begin{align*}
		\textnormal{\AE}(X) & = \set{\partial T}{T \in \Mbf_1(X)} \\
		& = \set{\partial T}{T \in \Mbf_1(X) \cap \mathcal R_1(X)}. 
	\end{align*}
	If moreover $X$ is a geodesic space and $\lambda \in \textnormal{\AE}(X)$, then
	\[
		\|\lambda\|_{\KR} = \inf\set{\Mbf(R)}{T \in \Mbf_1(X), \partial T = \lambda}. 
	\]
	In particular, for $X = \R^d$ with the standard Euclidean metric, we recover the results from~\cite{BCJ}:
	\begin{align*}
		\textnormal{\AE}(\R^d) & = \mathbf M_{0,1}(\R^d) = \set{\partial T}{T \in \mathscr F_1(\R^d)} 
	\end{align*}
	and also that
	\begin{align*}
		\|\lambda\|_{\KR} & = \inf \set{\Mbf(T)}{T \in \mathscr F_1(\R^d), \partial T = \lambda}.
	\end{align*}
\end{corollary}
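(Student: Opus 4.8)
The plan is to combine Corollary~\ref{cor:primitive} (which produces primitives) with the elementary inequality $\|\partial T\|_{\KR} \le \Mbf(T)$ (valid for all $T \in \Mbf_1(X)$) and with Corollary~\ref{lem:porfin} (which identifies boundaries of metric $1$-currents with elements of $\text{\AE}(X)$).

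\begin{proof}
We first establish the chain of equalities
\[
\textnormal{\AE}(X) = \set{\partial T}{T \in \Mbf_1(X)} = \set{\partial T}{T \in \Mbf_1(X) \cap \mathcal R_1(X)}.
\]
The inclusion $\set{\partial T}{T \in \Mbf_1(X) \cap \mathcal R_1(X)} \subset \set{\partial T}{T \in \Mbf_1(X)}$ is trivial, since rectifiable currents are in particular metric currents. For the inclusion $\set{\partial T}{T \in \Mbf_1(X)} \subset \textnormal{\AE}(X)$, let $T \in \Mbf_1(X)$. By Definition~\ref{def:MC}, the functional $f \mapsto \partial T(f) = T(1,f)$ is linear on $\Lip(X)$, vanishes on constant functions (by locality, since $\partial T(1_X) = T(1,1_X)$ and $1_X$ is locally constant), and is sequentially weak-$*$ continuous by the continuity axiom for $T$. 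Hence Corollary~\ref{lem:porfin} applies and yields $\lambda \in \textnormal{\AE}(X)$ with $\partial T(f) = \dpr{\lambda,f}$ for all $f$; that is, $\partial T = \lambda \in \textnormal{\AE}(X)$. Finally, for the inclusion $\textnormal{\AE}(X) \subset \set{\partial T}{T \in \Mbf_1(X) \cap \mathcal R_1(X)}$, given $\lambda \in \textnormal{\AE}(X)$, apply Corollary~\ref{cor:primitive} (with, say, $s = 1$) to obtain $R \in \Mbf_1(X) \cap \mathcal R_1(X)$ with $\partial R = \lambda$. This closes the loop and proves both equalities.

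Next, suppose $X$ is a geodesic space and $\lambda \in \textnormal{\AE}(X)$. For any $T \in \Mbf_1(X)$ with $\partial T = \lambda$, the inequality $\|\lambda\|_{\KR} = \|\partial T\|_{\KR} \le \Mbf(T)$ (recorded in the discussion of the Kantorovich--Rubinstein norm) gives $\|\lambda\|_{\KR} \le \inf\set{\Mbf(R)}{T \in \Mbf_1(X),\ \partial T = \lambda}$. For the reverse inequality, fix $s > 0$; since a geodesic space has quasiconvexity constant $c = 1$, Corollary~\ref{cor:primitive} produces $R \in \Mbf_1(X) \cap \mathcal R_1(X)$ with $\partial R = \lambda$ and $\Mbf(R) \le \|\lambda\|_{\KR} + s$. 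Letting $s \to 0$ yields the infimum is at most $\|\lambda\|_{\KR}$, hence equality.

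Finally, specialize to $X = \R^d$ with the Euclidean metric, which is geodesic. The equalities above give $\textnormal{\AE}(\R^d) = \set{\partial T}{T \in \Mbf_1(\R^d)}$, and since the correspondence between metric $1$-currents and FF flat $1$-chains (discussed in the introduction) identifies $\Mbf_1(\R^d)$ with $\mathscr F_1(\R^d)$ and commutes with the boundary operator, we also obtain $\set{\partial T}{T \in \Mbf_1(\R^d)} = \set{\partial T}{T \in \mathscr F_1(\R^d)}$. The identification $\textnormal{\AE}(\R^d) = \mathbf M_{0,1}(\R^d)$ is the content of~\cite[Lemma~2.6]{BCJ} (recalled in Section~\ref{sec:completion}), which characterizes $\mathbf M_{0,1}(\R^d)$ as the weak-$*$ continuous zero-average functionals on $\Lip(\R^d)$. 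The mass formula $\|\lambda\|_{\KR} = \inf\set{\Mbf(T)}{T \in \mathscr F_1(\R^d),\ \partial T = \lambda}$ follows from the geodesic case together with this identification.
\end{proof}

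The only genuinely delicate point is verifying the hypotheses of Corollary~\ref{lem:porfin} for $\partial T$, but this is immediate from the defining axioms of a metric current; everything else is bookkeeping assembling results already proved in the excerpt.
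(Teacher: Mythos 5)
Your proof is correct and takes the same route as the paper's own (one-sentence) proof: place $\partial T$ in $\textnormal{\AE}(X)$ via Corollary~\ref{lem:porfin}, produce rectifiable primitives via Corollary~\ref{cor:primitive}, and observe that geodesic spaces have quasiconvexity constant $c=1$. You additionally spell out the verification that $\partial T$ satisfies the hypotheses of Corollary~\ref{lem:porfin} (locality gives vanishing on constants; the continuity axiom gives sequential weak-$*$ continuity) and the $\R^d$ specialization via the $1$-FCC, both of which the paper leaves implicit.
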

\begin{proof}
	The first two assertions follow directly from Corollary~\ref{lem:porfin} and Corollary~\ref{cor:primitive}, taking into account that geodesic spaces are quasiconvex with constant $c=1$. 	
\end{proof}

%%%%%%%%%% Section 6 %%%%%%%%%%

   \section{Currents associated with $BV$-curves}\label{sec:BV}
This section introduces a specific type of one-dimensional metric current associated with certain curves of bounded variation ($BV$). These currents decompose naturally into absolutely continuous and cantorian parts, reflecting the structure of the $BV$ function's derivative. 

\subsection{The class of curves with bounded variation on a metric space} Let's first recall the definitions of curves that we shall consider. 

\begin{notation}
For the reminder of this section, we write $I$ to denote the open interval $(0,1)$ endowed with the Euclidean metric. We also assume that $X$ is \textbf{complete and separable}. 
\end{notation}

\subsection*{Integrable curves and the essential total variation} If $Y$ is a metric space, we define $L^1(I,Y)$ as the class of Borel curves $u : I \to Y$ such that $d(u,y) \in L^1(I)$ for some point $y \in Y$. The \emph{essential total variation} of $u \in L^1(Y)$ is the defined as the (possibly infinite-valued) infimum
\[
V(u) \coloneqq \inf_{v = u \, \text{a.e.}} \left\{ \sup_{0 < t_1 < \cdots < t_m < 1} \sum_{i = 1}^{m-1} d(v(t_{i+1}), v(t_i)) \right\} \in [0,\infty].
\]

We introduce the space  
\[
	\text{\gls{BV_curves}} \coloneqq \set{u \in L^1(I,X_c)}{X_c \subset \subset X, f(u) \in BV(I) \; \forall f \in \Lip_1(X)}
\]
of $BV$-curves with values on $X$ and compactly supported images.  We purposefully consider compact curves in order to exploit the theory developed in~\cite{Ambrosio1990BV}, where it is shown that $u \in \BV(I,X)$ if and only if the class of finite measures $\sigma$ on $I$ satisfying
\begin{equation}\label{eq:TV}
	\sigma(K) \ge |D f(u)|(B) \qquad \text{for all $K \subset \subset (0,1)$ and all $f \in \Lip_1(X)$}
\end{equation}
is non-empty. In this case, there exists a minimal measure \gls{total_variation} with this property, called the \emph{total variation measure} of $u$.\medskip %The fact that this measure exists is a consequence of the existence of an infimum over a class in the lattice of measures.\medskip

Next, we recall some of the highlights of this theory for a curve $u \in \BV(I,X)$. The fact that $u \in \BV(I,X)$ is equivalent with the finiteness of the essential total variation $V(u)$; in this case it holds $V(u) = |Du|(I)$. Every $u \in \BV(I,X)$ possesses left and right continuous representatives. In other words, there exist left and right continuous curves $u^-,u^+ : I \to X$ satisfying
\[
	\text{\gls{u-}}(t) = \lim_{s \to t^-} u(s) \quad \text{and} \quad \text{\gls{u+}}(t) = \lim_{s \to t^+} u(s) \qquad \text{for every $t \in (0,1)$.}
\]
Moreover, $u^-$ and $u^+$ coincide outside a countable $S_u \subset I$, where $u$ is discontinuous. In particular, every $u \in \BV(I,X)$ curve has an $L^1$-representative in the space of Càdlàg curves. The \emph{approximate metric differential} 
\[
	\text{\gls{approx_metric}}(x) \coloneqq \lim_{|h| \to 0} \frac{d(u^-(t + h),u^-(t))}{|h|} = \lim_{|h| \to 0} \frac{d(u^+(t + h),u^+(t))}{|h|}
\]
exists $\mathcal L^1$-almost everywhere, defining a function in $L^1(I)$ satisfying $|\dot u| = |Du|/\mathcal L^1$. Lastly, the total variation measure can be decomposed into mutually singular measures as:
\begin{equation}
\begin{split}
    |Du|(B) & \coloneqq |D^a u|(B) + \, |D^j u|(B) \, + |D^c u|(B), \qquad \forall B \in \mathcal B(I),
\end{split}
\end{equation}
where
\[
\text{\gls{total_ac}}(B)\coloneqq \int_{B} |\dot u| \, dt \qquad \text{and} \qquad \text{\gls{total_j}}(B) \coloneqq   \int_{B \cap S_u} d(u^-,u^+) 
\]
 are the \emph{absoutely continuous} and \emph{jump} parts, and \gls{total_c} is the \emph{cantorian part}, a singular measure vanishing on countable sets, often representing the "fractal behavior" of $BV$ functions.\medskip
\subsection*{Compact curves with special bounded variation} A curve $u \in \text{\gls{SBV_curves}}$ is a compact $BV$-curve for which $|C\gamma|$ vanishes. It can be shown (from McShane's extension theorem~\cite[Thm 1.33]{Weaver_Book} and~\cite[Rmk.~2.4]{Ambrosio1990BV}) that $\SBV(I,X)$ also satisfies a weak formulation by composition with Lipschitz maps:
\begin{equation}\label{eq:SBV_weak}
	u \in \SBV(I,X) \quad \Leftrightarrow \quad f(u) \in SBV(I) \quad \forall f \in \Lip_1(X)
\end{equation}
where $SBV(I)$ is the classical space of functions with bounded variation on $I$ (see~\cite[p. 212]{ambrosioFunctionsBoundedVariation2000}).

\subsection{Currents associated with injective $BV$-curves} Now that we have introduced the concept of compact $BV$ curve, we will associate to any curve $u  \in \BV(I,X)$,   two particular integral currents:

\begin{definition}[Currents associated with $BV$ curves]
    Let $u \in \BV(I,X)$. We define the \emph{absolutely continuous metric current} $\lpr{u}^a$ and the \emph{cantorian metric current} $\lpr{u}^c$ associated with $\gamma$ as bi-linear functionals acting on pairs $(f, \pi) \in \mathscr{D}^1(X)$ by
    \[
         \text{\gls{abs_current}}(f,\pi) \coloneqq \int_0^1 f(u) \, d D^a(\pi(u))
    \]
    and
    \[
         \text{\gls{cantor_current}}(f,\pi) \coloneqq \int_0^1 f(u) \, d D^c(\pi(u)).
    \]
Here, $D^a$ and $D^c$ denote the absolutely continuous and cantorian parts of the classical $BV$ derivative. When $X$ is a Banach space, we also define the \emph{jump metric current} 
    \[
         \text{\gls{jump_current}} \coloneqq \bigcup_{t \in S_u} \lpr{u^-(t),u^+(t)}
    \]
    and (with a possible abuse of notation) 
    %\commentAdo{is the following defined element used?}
    \[
     \lpr{u}  \coloneqq  \lpr{u}^a+  \lpr{u}^c+  \lpr{u}^j.
    \]
 \end{definition}   
 
 The following identity for post-composition with Lipschitz maps will be useful throughout the rest of the manuscript (for the definition of push-forward, see~\eqref{eq:push_0}): 
 \begin{remark}\label{rem:composition}
 	If $\Phi : X \to X'$ is Lipschitz and $u \in \SBV(I,X)$, then $\Phi(u) \in \SBV(I,X')$ and 
	\[
		\lpr{\Phi(u)}^a = \Phi_\#\lpr{u}^a.
	\]
 \end{remark}
 \begin{proof}
 	The fact that $\Phi(u) \in \SBV(I,X')$ follows from the fact that continuous images of compact sets are compact.  Let $(f,\pi) \in \mathscr D^1(X')$ be given. The associativity of the composition gives
	\begin{align*}
	\lpr{\Phi(u)}^a(f,\pi) & = \int_0^1 f(\Phi(u)) \, dD^a(\pi(\Phi(u))) \\
	& = \int_0^1 (f \circ \Phi)(u) \, d(D^a (\pi \circ \Phi)(u) ) = \lpr{u}^a(f \circ \Phi,\pi\circ \Phi) = \Phi_\#\lpr{u}^a(f,\pi).
	\end{align*}
	This proves the desired identity.
 \end{proof}

\begin{proposition}\label{prop:SBV} If $u \in \BV(I,X)$, then $\lpr{u}^a,\lpr{u}^c$ belong to $\Mbf_1(X)$. If moreover $X$ is a Banach space, then
$\lpr{u} - \lpr{u^-(0),u^+(1)}$ is a cycle.
\end{proposition}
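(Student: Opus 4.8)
I would treat the two claims separately. For the first, the plan is to verify the three axioms of Definition~\ref{def:MC} directly for $\lpr{u}^a$ and $\lpr{u}^c$, handling both in parallel. The algebraic fact I would record first is a consequence of the metric $BV$-calculus recalled above: for $\pi\in\Lip(X)$ one has $\pi\circ u\in BV(I)$ with $|D(\pi\circ u)|\le\Lip(\pi)\,|Du|$, so writing $D(\pi\circ u)=h\,|Du|$ with $\|h\|_{L^\infty(|Du|)}\le\Lip(\pi)$ and noting that the three summands of $|Du|=|\dot u|\mathcal L^1+|D^j u|+|D^c u|$ are respectively absolutely continuous, purely atomic and diffuse-singular (hence mutually singular and preserved under multiplication by the bounded density $h$), uniqueness of the Lebesgue--jump--Cantor decomposition gives $D^a(\pi\circ u)=h\,|\dot u|\mathcal L^1$ and $D^c(\pi\circ u)=h\,|D^c u|$; in particular $|D^a(\pi\circ u)|\le\Lip(\pi)\,|D^a u|$ and $|D^c(\pi\circ u)|\le\Lip(\pi)\,|D^c u|$. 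Multilinearity of $\lpr{u}^a,\lpr{u}^c$ is immediate, and \emph{finite mass} follows at once: $|\lpr{u}^a(f,\pi)|\le\Lip(\pi)\int_I|f\circ u|\,d|D^a u|=\Lip(\pi)\int_X|f|\,d\mu^a$ with $\mu^a:=u_\#(|D^a u|)$ a finite Borel measure (total mass $|D^a u|(I)\le V(u)<\infty$; $u$ is Borel), and likewise $\mu^c:=u_\#(|D^c u|)$ serves for $\lpr{u}^c$. For \emph{locality}, if $\pi$ is constant on an open $U\supseteq\spt f$, then whenever $u$ is continuous at $t$ with $u(t)\in U$ one has $u(s)\in U$ near $t$, so $\pi\circ u$ is constant near $t$; hence the open set $E$ of points where $\pi\circ u$ is locally constant carries no mass of $D(\pi\circ u)$ and contains $\{t:u\text{ continuous at }t,\ u(t)\in U\}$, so $\{t:u(t)\in U\}\subseteq E\cup S_u$, and both $D^a(\pi\circ u),D^c(\pi\circ u)$ vanish there (being $\ll|D(\pi\circ u)|$ and respectively absolutely continuous / non-atomic). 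Since $f\circ u$ is supported in $\{u\in U\}$, this gives $\lpr{u}^a(f,\pi)=\lpr{u}^c(f,\pi)=0$.

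\textbf{Continuity (the crux).} Let $f$ be fixed and $\pi_j\to\pi$ pointwise on $X$ with $\Lip(\pi_j)\le C$. Since $u(I)$ is bounded, $\pi_j\circ u\to\pi\circ u$ in $L^1(I)$, and as $\sup_j|D(\pi_j\circ u)|(I)\le C|Du|(I)<\infty$, testing against $C_c(I)$ gives $D(\pi_j\circ u)\rightharpoonup D(\pi\circ u)$ weakly-$*$. Writing $D(\pi_j\circ u)=h_j|Du|$ with $\|h_j\|_{L^\infty(|Du|)}\le C$, the $h_j$ form a bounded sequence in $L^\infty(I,|Du|)$, the dual of the separable space $L^1(I,|Du|)$; any weak-$*$ cluster point $\tilde h$ satisfies $\int\phi\tilde h\,d|Du|=\int\phi\,dD(\pi\circ u)$ for $\phi\in C_c(I)$, forcing $\tilde h=h$ where $D(\pi\circ u)=h|Du|$, so in fact $h_j\rightharpoonup h$ weakly-$*$ in $L^\infty(I,|Du|)$. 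Testing against $(f\circ u)\mathbf{1}_A\in L^1(I,|Du|)$, where $A$ is a Borel carrier of the absolutely continuous part ($|Du|\mres A=|\dot u|\mathcal L^1$), yields $\lpr{u}^a(f,\pi_j)=\int(f\circ u)\mathbf{1}_A h_j\,d|Du|\to\int(f\circ u)\mathbf{1}_A h\,d|Du|=\lpr{u}^a(f,\pi)$; using a carrier of the Cantor part gives the analogue for $\lpr{u}^c$. This establishes $\lpr{u}^a,\lpr{u}^c\in\Mbf_1(X)$.

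\textbf{Second assertion.} Assume $X$ is Banach. First, $\lpr{u}^j=\sum_{t\in S_u}\lpr{u^-(t),u^+(t)}$ converges in mass (its tails are bounded by $\sum_{t\in S_u}\|u^+(t)-u^-(t)\|=|D^j u|(I)<\infty$), so $\lpr{u}-\lpr{u^-(0),u^+(1)}\in\Mbf_1(X)$ and it remains to check that its boundary vanishes. Fix $f\in\Lip_b(X)$ and set $g:=f\circ u\in BV(I)$. From $\partial S(f)=S(1,f)$ and the definitions, $\partial\lpr{u}^a(f)=D^a g(I)$, $\partial\lpr{u}^c(f)=D^c g(I)$, and $\partial\lpr{u}^j(f)=\sum_{t\in S_u}\big(f(u^+(t))-f(u^-(t))\big)=D^j g(I)$ — the last equality because the jumps of $g$ sit at points of $S_u$ with $g^\pm(t)=f(u^\pm(t))$ by continuity of $f$. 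Adding the three and using $Dg(I)=g(1^-)-g(0^+)$, which equals $\big(\delta_{u^+(1)}-\delta_{u^-(0)}\big)(f)$ (the endpoint traces of $u$), we obtain $\partial\lpr{u}(f)=\big(\delta_{u^+(1)}-\delta_{u^-(0)}\big)(f)=\partial\lpr{u^-(0),u^+(1)}(f)$, using $\partial\lpr{x,y}=\delta_y-\delta_x$ in the last step. As $f$ was arbitrary, $\partial\big(\lpr{u}-\lpr{u^-(0),u^+(1)}\big)=0$.

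\textbf{Where the difficulty lies.} The only non-routine point is the continuity axiom in the first part: one must exploit that the absolutely continuous (resp.\ Cantor) part of $D(\pi\circ u)$ is the fixed reference measure $|Du|$, restricted to a fixed carrier, multiplied by a density bounded by $\Lip(\pi)$; this is what makes the passage to the limit work via weak-$*$ compactness of densities, whereas weak-$*$ convergence of the measures $D(\pi_j\circ u)$ alone neither respects the Lebesgue decomposition nor allows testing against the merely Borel function $f\circ u$. Finite mass and locality are straightforward, and the boundary identity in the second part telescopes cleanly precisely because the Cantor contribution to $\partial\lpr{u}^a$ is cancelled by $\partial\lpr{u}^c$ and the jump contribution by $\partial\lpr{u}^j$.
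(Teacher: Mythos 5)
Your proof is correct, and for finite mass, locality, and the boundary identity it follows essentially the same lines as the paper (domination of $\|\lpr{u}^i\|$ by $u_\#|D^i u|$, vanishing of $D^a,D^c$ on the countable jump set for locality, and the fundamental theorem of calculus for $BV(I)$ plus the telescoping of the jump sum for the cycle statement). The one place where you take a genuinely different route is the continuity axiom, which is also the only delicate step. The paper proves it by an integration-by-parts device: it introduces the continuous primitives $\pi(u)^i(t)=D^i(\pi(u))((0,t])$, rewrites $\lpr{u}^i(f,\pi)=-\int_0^1\pi(u)^i\,dD(f(u))+f(u^-(1))\,D\pi(u)^i((0,1))$, and passes to the limit using $BV$-compactness (hence $L^1$ and a.e.\ convergence of $\pi_j(u)^i$), dominated convergence against the fixed measure $D(f(u))$, and a separate argument that no mass of $D\pi_j(u)^i$ concentrates at the endpoint. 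You instead work with the Radon--Nikodym densities $h_j=dD(\pi_j\circ u)/d|Du|$, identify their weak-$*$ limit in $L^\infty(I,|Du|)$ through testing against $C_c(I)$, and then test against the fixed $L^1(|Du|)$ function $(f\circ u)\mathbf 1_A$ restricted to a carrier of the relevant part of $|Du|$. Your route buys a few things: it treats the absolutely continuous and Cantor parts completely symmetrically, it never needs $f\circ u$ to be $BV$ (only bounded and Borel), and it avoids the endpoint/boundary-term bookkeeping entirely; the price is invoking separability of $L^1(I,|Du|)$ and the uniqueness-of-cluster-point argument, which the paper's more hands-on computation does not need. Both are valid; your observation that the whole argument hinges on the decomposition of $D(\pi\circ u)$ being governed by the \emph{fixed} reference measure $|Du|$ and its fixed carriers is exactly the right diagnosis, and is the same structural fact the paper exploits in a different guise.
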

\begin{proof}
First, we recall by the definition of $\BV(I,X)$ through composition with Lipschitz maps that $|D(\pi(u))| \le \Lip(\pi) |Du|$ for any $\pi \in \Lip(X)$. This, in turn, implies that
\begin{equation}\label{eq:Lip_chain_mass}
      |D^i (\pi(u))| \le \Lip(\pi) |D^i u| \quad \text{for all $i \in \{a,c,j\}$.}  
\end{equation}
From this, it is straightforward to compute that
\[
    \| \lpr{u}^a\| \le u_\# |D^a u| \quad \text{ana} \quad \|\lpr{u}^c\| \le u_\# |D^c u| 
\]
as measures. This implies that both $\lpr{u}^a$ and $\lpr{u}^c$ have finite mass provided that $u \in \BV(I)$. 

Now, suppose that $(f,\pi) \in \mathscr D^1(C)$ are such that $\pi \equiv c_0$ on a neighborhood $V$ of $\spt f$. Then, it follows that $\pi(u) \equiv C_0$ on a neighborhood $J$ of $\spt f(u)$. This follows from the fact that $u$ has left and right continuous representatives. In particular $D(\pi(u)) \equiv 0$ on $J$ and hence $\lpr{u}^a(f,\pi) = \lpr{u}^c(f,\pi) = 0$. This demonstrates $\lpr{u}^a$ and $\lpr{u}^c$ satisfy the locality property.

Now, we demonstrate that both currents satisfy the continuity axiom. For any $v \in BV(I)$, let $i \in \{a,c\}$ and define $v^i(t) \coloneqq D^i v(0,t]$ for $t \in (0,1)$. Observe that $v^i$ is continuous, belongs to $BV(I)$ and satisfies the identity $D^i v \equiv D v^i$ as measures. In particular, by the locality property,
\begin{equation}\label{eq:muero}
	\lpr{u}^i(f,\pi) =  \int_0^1 f(u) \, d D^i(\pi(u)) = \int_0^1 f(u) \, dD(\pi(u)^i), \quad i \in \{a,c\}.
\end{equation}
By the finite properties of $BV$-functions of one-variable, it follows that $\pi(u)^i$ is continuous and $\pi(u) \in BV(I)$. Hence, integration by parts holds and we get (recall that $v^i(0^+) = 0$)
\begin{equation}\label{eq:muero2}
	\lpr{u}^i(f,\pi) = - \int_0^1 \pi(u)^i \, dD(f(u)) + f(u^-(1)) D\pi(u)^i((0,1)), \quad i \in \{a,c\}.
\end{equation}
Now, let $\pi_j \toweakstar \pi$ in $\Lip(X)$. The definition of $\BV$ by composition with Lipschitz maps warrants that $|D^i (\pi_j \circ u)| \le \Lip(\pi_j) |D^iu|$ for all $i \in \{a,j,c\}$. %Now, by assumption $\im u$ is contained in compact set of $X$. 
Since moreover $\pi_j(u) \to \pi(u)$ pointwise, %it follows from the Arzelà–Ascoli theorem that $\pi_j \to \pi$ uniformly on $\im u$. %Observing that the operation $(\cdot)^i$ is linear, we further get
% \[
%     \pi(u)^i -\pi_j(u)^i = ((\pi_j - \pi) \circ u)^i
% \]
%From this two observations, 
we gather that $D(\pi_j(u)^i) = D^i(\pi_j(u))$ converges on $I$ in the weak sense of measures  to $D(\pi(u)^i) = D^i(\pi(u))$ for each $i \in \{a,c,j\}$. In fact, the uniform mass bound in~\eqref{eq:Lip_chain_mass} warrants that no mass of $|D\pi_j(u)^i|$ can concentrate towards $\{1\}$ and hence
\[
    D\pi_j(u)^i((0,1)) \to  D\pi(u)^i((0,1)), \quad i \in \{a,c\}.
\]

By the standard $BV$-compactness, we further deduce that $\pi_j(u)^i$ converges to $\pi(u)^i$ in $L^1$, and hence also pointwise up to taking a subsequence (not relabelled). On the other hand~\eqref{eq:Lip_chain_mass} tells us that $|\pi_j(u)| \le \Lip(\pi_j)|Du|(I)$ for each $j$.  Since this is uniformly bounded on $j$, we can apply the dominated convergence theorem to the sequence $(\pi_j(u)^i)$ and pass to the limit in~\eqref{eq:muero2}:   
\begin{align*}
 \lpr{u}^i(f,\pi_j) & = - \int_0^1 \pi_j(u)^i \, dD(f(u)) + f(u^-(1)) D\pi_j(u)^i((0,1)) \\
 & \to \quad - \int_0^1 \pi(u)^i \, dD(f(u)) + f(u^-(1)) D\pi(u)^i((0,1)) =  \lpr{u}^i(f,\pi)
\end{align*}
for $i \in \{a,c\}$. Since this holds for all convergent subsequences, this proves that $\lpr{u}^a$ and $\lpr{u}^c$ satisfy the continuity property, as desired.

Let $\tilde D \coloneqq D^a + D^c$ and let $f \in \Lip(X)$.  Since $S_{f(u)} \subset S_u$ and $\partial\lpr{u}^j = \sum_{t \in S_u} \delta_{u^+(t)} - \delta_{u^-(t)}$, it follows from the fundamental theorem of calculus for $BV(I)$ functions that 
  \begin{align*}
        \partial \lpr{u}(f)
        & = \int_0^1  d\tilde D(f(u)) + \lpr{u}^j(1,f) \\ 
        &  = \int_0^1 d\tilde D(f(u)) + \sum_{t \in S_{f(u)}} f(u)^+(t) - f(u)^-(t) \\ 
        & = \int_0^1  \, d D(f(u)) = f(u^-(0)) - f(u^+(1)).
    \end{align*}
Since $f$ was chosen arbitrarily, this demonstrates that $\partial (\lpr{u} - \lpr{u^-(0),u^+(1)}) = 0$.
\end{proof}

We will also need the following semicontinuity result:

\begin{lemma}\label{lem:semicont}
	Suppose that $u_h \to u$ in $\SBV(I,X) \subset \mathbb D([0,1],X)$. Then
    \[
       |D^ju|(I) \le \liminf_{h} |D^j u_h|(I) \quad \text{and} \quad |D^ju|(I) \le \liminf_{h} |D^j u_h|(I).
    \]
\end{lemma}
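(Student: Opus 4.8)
\emph{Plan.} The strategy is to reduce the statement to \emph{uniform} convergence on $[0,1]$ and then prove the jump estimate one discontinuity point at a time. First I would unwind the Skorokhod convergence $u_h\to u$: there are $\lambda_h\in\Lambda$ with $\|\lambda_h-\mathrm{id}\|_\infty\to 0$ and $\sup_t d(u_h(t),u(\lambda_h(t)))\to 0$. Setting $v_h\coloneqq u_h\circ\lambda_h^{-1}$ one has $\|\lambda_h^{-1}-\mathrm{id}\|_\infty=\|\lambda_h-\mathrm{id}\|_\infty\to 0$ and, after the substitution $t=\lambda_h^{-1}(s)$, $\sup_s d(v_h(s),u(s))\to 0$, i.e.\ $v_h\to u$ \emph{uniformly} on $[0,1]$. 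Since $\lambda_h^{-1}$ is a strictly increasing homeomorphism of $[0,1]$, it only relabels the discontinuity set, $S_{v_h}=\lambda_h(S_{u_h})$, and preserves the one-sided limits at matched points, so $|D^{j}v_h|(I)=|D^{j}u_h|(I)$ and, by invariance of the variation under monotone reparametrizations, $|Dv_h|(I)=|Du_h|(I)$. Thus it suffices to prove the inequalities with $u_h$ replaced by the uniformly convergent sequence $v_h$.

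The heart of the matter is a pointwise jump bound. I would fix $t_0\in I$ and $\eps>0$, and use that $u\in\mathbb D([0,1],X)$ is càdlàg to find $\delta>0$ with $d(u(s),u^-(t_0))<\eps$ for $s\in(t_0-\delta,t_0)$ and $d(u(s),u^+(t_0))<\eps$ for $s\in[t_0,t_0+\delta)$. For $s\in(t_0-\delta,t_0)$ this gives $d(v_h(s),u^-(t_0))\le\|v_h-u\|_\infty+\eps$, and letting $s\uparrow t_0$ (the left limit of the càdlàg function $v_h$ exists) yields $d(v_h^-(t_0),u^-(t_0))\le\|v_h-u\|_\infty+\eps$; likewise $d(v_h^+(t_0),u^+(t_0))=d(v_h(t_0),u(t_0))\le\|v_h-u\|_\infty$. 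The triangle inequality then gives
\[
 d\big(v_h^-(t_0),v_h^+(t_0)\big)\ \ge\ d\big(u^-(t_0),u^+(t_0)\big)-2\|v_h-u\|_\infty-2\eps ,
\]
and, $\eps>0$ being arbitrary, $d(v_h^-(t_0),v_h^+(t_0))\ge d(u^-(t_0),u^+(t_0))-2\|v_h-u\|_\infty$ for every $t_0\in I$.

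To conclude I would fix finitely many discontinuity points $t_1<\dots<t_N$ of $u$; since $d(v_h^-(t_i),v_h^+(t_i))\ge 0$ always (and is positive only if $t_i\in S_{v_h}$),
\[
 |D^{j}v_h|(I)=\sum_{s\in S_{v_h}}d\big(v_h^-(s),v_h^+(s)\big)\ \ge\ \sum_{i=1}^{N}d\big(v_h^-(t_i),v_h^+(t_i)\big)\ \ge\ \sum_{i=1}^{N}d\big(u^-(t_i),u^+(t_i)\big)-2N\|v_h-u\|_\infty .
\]
Taking $\liminf_h$ (recall $|D^{j}v_h|(I)=|D^{j}u_h|(I)$ and $\|v_h-u\|_\infty\to 0$) yields $\liminf_h|D^{j}u_h|(I)\ge\sum_{i=1}^{N}d(u^-(t_i),u^+(t_i))$, and letting $\{t_1,\dots,t_N\}$ exhaust $S_u$ gives the claim, as $|D^{j}u|(I)=\sum_{t\in S_u}d(u^-(t),u^+(t))$. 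The companion lower semicontinuity of the total variation, $|Du|(I)\le\liminf_h|Du_h|(I)$, follows from the same reduction together with the classical fact that $v\mapsto|Dv|(I)=\sup\sum_i d(v(t_{i+1}),v(t_i))$ is lower semicontinuous along the pointwise-convergent sequence $v_h$.

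The main obstacle I anticipate is the reduction to uniform convergence: one must verify carefully that the Skorokhod reparametrizations invert to give $v_h\to u$ uniformly and that pre-composition with an increasing homeomorphism of $[0,1]$ leaves the jump variation (and the total variation) unchanged. It should be stressed that nothing stronger can be expected here — the absolutely continuous part $|D^{a}u_h|(I)$ is \emph{not} lower semicontinuous under Skorokhod convergence (a pure-jump staircase converging in $J_1$ to a ramp-plus-jump provides a counterexample) — so the argument must stay confined to the jump and total-variation functionals and cannot be carried out by passing to absolutely continuous parts.
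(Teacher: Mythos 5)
Your proof is correct and follows essentially the same route as the paper: both arguments extract the Skorokhod reparametrizations $\lambda_h$, establish the pointwise jump-size estimate $|d(u^-,u^+)-d(u_h^-(\lambda_h),u_h^+(\lambda_h))|\le 2\delta_h$ (your bound for $v_h=u_h\circ\lambda_h^{-1}$ is this inequality in disguise), and conclude by exhausting $S_u$ with finite sets of jumps. The only cosmetic difference is in the total-variation part, where the paper passes to an a.e.-convergent subsequence and invokes the lower semicontinuity theorem of Ambrosio, while you run the elementary partition-supremum argument on the uniformly convergent reparametrized sequence; both are valid.
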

\begin{proof}
    We may assume without loss of generality that both limit inferiors are limits. Since $d(u_h,u) \to 0$ in $L^1((0,1))$, we may pass to a subsequence $(u_{h_k})$ such that $u_{h_k} \to u$ pointwise almost everywhere. Then, by~\cite[Thm.~2.4]{Ambrosio1990BV}, we get 
    \[
        |Du|(I) \le \liminf_{k} |Du_{h_k}|(I) = \liminf_{h}|Du_{h}|(I).
    \]
    Let $\delta > 0$ and let $F_\delta\subset S_u$ consist of those points $t$ with $d(u^+(t),u^-(t)) > \delta$. Clearly, $F_\delta \uparrow (0,1)$ and therefore $|D^j u|(F_\delta) \to |D^j u|(I)$. On the other hand, the Càdlàg convergence warrants that $\delta_h = d(u,u_h(\lambda_h)) + \|\lambda_h - \mathrm{id}\|_{\infty} \to 0$ with $\lambda_h \in \Lambda$. This translates into the following key estimate (used throughout the text):
    \begin{equation}\label{eq:estimate}
        |d(u^+,u^-) - d(u_h^+(\lambda_h),u^-h(\lambda_h))| \le 2 \delta_h
    \end{equation}
    In particular, if $t \in F_\delta$, then $\lambda_h(S_u) \subset S_{u_h}$ provided that $h = h(\delta)$ is sufficiently large. From the estimate above and the finiteness of $F_\delta$ it follows that $|D^j u_h|(\lambda_h(F_\delta)) \to |D^j u|(S_u)$ as $j \to \infty$, for each $\delta > 0$. Gathering both limits, we conclude that
    \[
       \liminf_h |D^j u_h|(I) \ge \liminf_{\delta \to 0}\liminf_h |D^j u_h|(\lambda_h(F_\delta)) = \liminf_{\delta \to 0} |D^j u|(F_\delta) = |D^ju|(I). 
    \]
    This finishes the proof.
\end{proof}

\subsection{From $SBV$-curves to fragments}\label{sec:fragments} We write \gls{Theta_SBV} to denote the class of injective $\SBV$-curves $u : [0,1] \to X$ with constant metric speed. For a given $u \in \Theta_{\SBV}(X)$, we consider the cumulative distribution function of its total variation:
\[
g_u(t) \coloneqq \int_0^t d|Du|, \qquad t \in [0,1].
\]
Notice that $g_u$ is right-continuous and strictly monotone; the latter, due to the fact that $u$ has constant metric speed. In fact, $Dg_u = |Du| \ge \ell(u)\mathscr L^1 > 0$ as measures and hence $g_u^{-1}$ is Lipschitz in its domain of definition (which can be  extended to the closure $K_u$ of $\im g_u$). 

\begin{definition}[Fragment map] For any $u \in \Theta_{\SBV}(X)$, we define a fragment $\gamma_u \in \Frag(X)$ by
\[
    \gamma_u \coloneqq u \circ g_u^{-1} : K_u \to X, \qquad K_u \coloneqq \overline{\im u}.
\]
\end{definition}
To see that $\gamma_u$ is indeed a fragment, we use the definition of $g_u$ to observe 
\[
    |\gamma_u(s) - \gamma_u(t)| = |u(g^{-1}_u(s)) - u(g^{-1}_u(t))| = \left| \int_{(g^{-1}_u(s),g^{-1}_u(t)])} d|Du| \right| = |s - t|, \qquad s < t.
\]
This proves that $\gamma_u$ is Lipschitz with constant metric speed $1$, and hence it defines a fragment. Moreover the map is stable under the integral current associations: if $u \in \SBV$, then $\gamma_u \in \Frag(X)$ and by the chain rule for Lipschitz functions (see~\eqref{def:chain_rule} below) and the area formula it holds
\begin{equation}\label{eq:currents}
\begin{split}
    \lpr{u}^a(f,\pi) & = \int f(u) \, D^a(\pi(u)) \\
    & = \int (f \circ u)(g_u^{-1}) \, D^a(\pi \circ u)(g_u^{-1}) (g_u^{-1})' \\
    & = \int f(\gamma_u) \, (\pi (\gamma_u))' = \lpr{\gamma_u}(f,\pi).
\end{split}
\end{equation}
In particular $\lpr{u}^a \equiv \lpr{\gamma_u}$ as currents and the following holds:

\begin{remark}If $u \in \Theta_{\SBV}(X)$, then $\im u$ is $1$-rectifiable in $X$. Moreover, by the area formula for Lipschitz maps into metric spaces (see~\cite[Thm. 7]{Kirchheim_Rectifiable}),
\begin{equation}\label{eq:length_measure}
        \|\lpr{u}^a\| = \|\lpr{\gamma_u}\| = \mathcal H^1 \mres \im u \quad \text{as measures on $X$}.
\end{equation}
\end{remark}

\begin{notation} For a Borel map $u : K \subset [0,1] \to X$, we write  
 $\Gamma_u \subset [0,1] \times X$ to denote its graph. 
 \end{notation}
 The following result establishes sufficient conditions for the continuity of the fragment map.

\begin{lemma}\label{lem:YES} Let $(u_h) \subset \Theta_{\SBV}(X)$ be a sequence converging $u_h$ in $\mathbb D([0,1],X)$ to some $u \in \Theta_{\SBV}(X)$. If moreover $\ell(u_h) \to \ell(u)$ and $|D^j u_h|(I) \to |D^j u|(I)$, then
\[
	\Gamma_{\gamma_{u_h}} \longrightarrow \Gamma_{\gamma_{u}} \quad \text{with respect to the Hausdorff metric in $\mathcal K([0,1] \times X)$.}
\]
In particular, $\gamma_{u_h} \to \gamma_{u}$ in $\Frag(X)$.
\end{lemma}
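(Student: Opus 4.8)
The plan is to reduce, via a reparametrization, to the case of uniform convergence, and then to read the graphs of $\gamma_{u_h}$ and $\gamma_u$ off the cumulative total-variation functions $g_{u_h},g_u$ together with the curves themselves. For the reduction, note that $\Gamma_{\gamma_u}$ depends on $u$ only through its unsigned trace parametrized by arc length: if $\mu:[0,1]\to[0,1]$ is a strictly increasing continuous bijection, the total-variation measure transforms by push-forward, $|D(u\circ\mu)|(E)=|Du|(\mu(E))$, so $g_{u\circ\mu}=g_u\circ\mu$ and hence $\gamma_{u\circ\mu}=(u\circ\mu)\circ(g_u\circ\mu)^{-1}=u\circ g_u^{-1}=\gamma_u$. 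Choosing $\lambda_h\in\Lambda$ realizing $d_S(u_h,u)\to0$ and replacing $u_h$ by $u_h\circ\lambda_h^{-1}$, we may thus assume $\sup_t d(u_h(t),u(t))\to0$; the new curves need no longer lie in $\SBV$ or have constant speed, but they still carry well-defined nondecreasing right-continuous cumulative total variations $g_{u_h}$, and $|Du_h|(I)$, $|D^j u_h|(I)$ are unchanged, so the hypotheses give $|Du_h|(I)=\ell(u_h)+|D^j u_h|(I)\to|Du|(I)$ and $|D^j u_h|(I)\to|D^j u|(I)$. A uniform bound on $|Du_h|(I)$ makes $K:=\overline{\im u\cup\bigcup_h\im u_h}$ compact, so all graphs sit in the fixed compact set $[0,R]\times K$ with $R:=\sup_h|Du_h|(I)$.

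Next I would establish the convergence of the relevant measures. Uniform — hence $L^1$ — convergence together with lower semicontinuity of the essential total variation (\cite[Thm.~2.4]{Ambrosio1990BV}) forces every weak-$\ast$ limit of $(|Du_h|)$ to dominate $|Du|$; since $|Du_h|(I)\to|Du|(I)$ the limit must be $|Du|$, so $|Du_h|\rightharpoonup|Du|$. Likewise $d(u_h^\pm(t),u^\pm(t))\to0$ for every $t$, whence $|Du_h|(\{t\})=d(u_h^+(t),u_h^-(t))\to d(u^+(t),u^-(t))$; summing over finite subsets of $S_u$ shows every weak-$\ast$ limit of $(|D^j u_h|)$ dominates $|D^j u|$, and $|D^j u_h|(I)\to|D^j u|(I)$ then pins it down, so $|D^j u_h|\rightharpoonup|D^j u|$. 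Consequently $g_{u_h}(t)\to g_u(t)$ at every $t\notin S_u$, and for $t\in S_u$ the identity $g_{u_h}(t)=g_{u_h}(t^-)+|Du_h|(\{t\})$ together with $\limsup_h g_{u_h}(t)\le|Du|([0,t])=g_u(t)$ and $\liminf_h g_{u_h}(t^-)\ge|Du|((0,t))=g_u(t^-)$ forces $g_{u_h}(t)\to g_u(t)$ and $g_{u_h}(t^-)\to g_u(t^-)$.

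Then I would prove the two Hausdorff inclusions, using the identification $\Gamma_{\gamma_w}=\overline{\{(g_w(t),w(t)):t\in[0,1]\}}$, valid because $\im g_w$ is dense in $K_w$ and $\gamma_w$ is continuous. For $\Gamma_{\gamma_u}\subset(\Gamma_{\gamma_{u_h}})_\eps$: a point $(g_u(t),u(t))$ lies within $|g_{u_h}(t)-g_u(t)|+d(u_h(t),u(t))$ of $(g_{u_h}(t),u_h(t))\in\Gamma_{\gamma_{u_h}}$, which is small — uniformly for $t$ away from the finitely many jumps of $g_u$ above a threshold (monotone-convergence uniformity away from jumps) and, near such a jump $t_0$, by the pointwise limits just established; the left gap-endpoints $(g_u(t^-),u^-(t))$ are matched by $(g_{u_h}(t^-),u_h^-(t))$. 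For $\Gamma_{\gamma_{u_h}}\subset(\Gamma_{\gamma_u})_\eps$: given $(g_{u_h}(t),u_h(t))$, the value $u_h(t)$ is $O(\eps)$-close to $u(t)$, and near a jump $t_0$ of $u$ to either $u^-(t_0)$ or $u^+(t_0)$, while the arc-length coordinate $g_{u_h}(t)$ advances in step with $u_h$ and is therefore $O(\eps)$-close to $g_u(t_0^-)$ in the first case and to $g_u(t_0)$ in the second (here one uses that the jump of $g_{u_h}$ near $t_0$ has nearly the right mass), so $(g_{u_h}(t),u_h(t))$ is $O(\eps)$-close to $\Gamma_{\gamma_u}$; away from jumps it is closer still. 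This yields $d_H(\Gamma_{\gamma_{u_h}},\Gamma_{\gamma_u})\to0$, i.e. $\gamma_{u_h}\to\gamma_u$ in $\Frag(X)$.

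The main obstacle is precisely this synchronization at jumps: both hypotheses enter exactly to guarantee that a jump of $u$ at $t_0$ is matched, for large $h$, by a jump of $g_{u_h}$ of essentially the right mass located near $t_0$, with no total variation of $u_h$ leaking into a steep continuous ramp and no spurious gaps appearing in $K_{u_h}$. Indeed $\ell(u_h)\not\to\ell(u)$ (extra oscillation invisible to $d_S$) or $|D^j u_h|(I)\not\to|D^j u|(I)$ (extra vanishing jumps) would each leave points of $\Gamma_{\gamma_{u_h}}$ at distance bounded away from $\Gamma_{\gamma_u}$. A secondary technical point is promoting the pointwise estimates of the third step to genuine uniformity in $t$, handled by isolating the finitely many jumps exceeding a prescribed threshold and controlling everything else by the uniform convergences of the second step.
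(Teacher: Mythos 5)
Your proof is correct, and it takes a genuinely different route from the paper's. The paper keeps the original constant-speed curves $u_h$, pulls out the Skorokhod bijections $\lambda_h$, and introduces the arc-length matching $\rho_h = g_{u_h}\circ\lambda_h\circ g_u^{-1}$, then shows $\|\rho_h - \mathrm{id}\|_\infty\to 0$ by a direct threshold decomposition of the cumulative total variations into absolutely continuous and jump parts (the constant-speed hypothesis is used there to linearize the AC contribution). Your approach instead exploits the reparametrization invariance $\gamma_{u\circ\mu}=\gamma_u$ to trade the original sequence for $\tilde u_h := u_h\circ\lambda_h^{-1}$, reducing to uniform convergence, and then uses weak-$*$ measure convergence (lower semicontinuity of the variation to get $\mu\ge|Du|$, Portmanteau to get the reverse, and likewise for the jump parts) to read off pointwise convergence of $g_{u_h}$ and its left limits, from which the Hausdorff convergence of the graphs follows via $\Gamma_{\gamma_w}=\overline{\{(g_w(t),w(t))\}}$. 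This is arguably more conceptual; what it costs is that $\tilde u_h$ may leave $\Theta_{\SBV}(X)$ (a Cantor part can appear, and constant speed is lost), which you rightly flag, and that the final step — promoting the pointwise estimates on $g_{u_h}$ to the uniform Hausdorff bound — is left largely at the level of "a Dini-type argument away from the big jumps plus pointwise limits at the jumps." That step does go through, but it needs the same observation the paper makes explicitly: after your reparametrization, any jump of $u_h$ of size $>2\delta$ must sit at a jump of $u$ of size $>\delta$, so for large $h$ there are no spurious big jumps between the finitely many big jumps of $u$, and the partition/Dini argument closes. This is the one point where the paper's more computational estimate is more self-contained; everything else in your write-up is sound and the reparametrization reduction is a nice simplification.
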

\begin{proof}
By definition of the Skorokhod metric, there exist monotone bijections $\lambda_h : [0,1] \to [0,1]$ satisfying $\delta_h \coloneqq \sup_{t \in [0,1]} d(u_h(\lambda_h(t)),u(t)) + \|\lambda_h - \mathrm{id}\|_\infty \longrightarrow 0$.

For the ease of notation, we write $g = g_u$, $g_h = g_{u_h}$ for the cumulative distribution functions associated with $|Du$ and $|Du_h|$ respectively; we also write  $K = K_u$ and $K_h = K_{u_h}$ for the closures in $X$ of $\im u$ and $\im u_h$. Then, recalling that the inverse function $g^{-1}:K \to [0,1]$  is Lipschitz continuous,  we define a one-to-one monotone map $\rho_h : K \to K_h$ by the concatenation of the bijective maps
\[
	K \stackrel{g^{-1}}\longrightarrow [0,1] \stackrel{\lambda_h}\longrightarrow [0,1] \stackrel{g_{h}}\longrightarrow K_h.
\]

We \emph{claim} that $\|\rho_h - \mathbf 1_K \mathrm{id}\|_\infty \to 0$ as $h \to 0$. Letting $s = g(t) \in K$ and computing we get
\[
	|\rho_h(s) - s| =   |Du_h|((0,\lambda_h(t)]) - |Du|((0,t])| + |Du_{h}|.
\]
Decomposing $|D(\cdot)| = |D^a(\cdot) + |D^j(\cdot)|$ on the right hand side we obtain the following estimate (recall that $u$ and $u_h$ have constant metric speed at most $1$):
\[
	|\rho_h(s) - s| \le \underbrace{|\ell(u_h) - \ell(u)| +\delta_h}_{\eqqcolon \alpha_h} +  \underbrace{| |D^ju_h|((0,\lambda_h(t)]) - |D^ju|((0,t])|}_{\eqqcolon \beta_h(t)}. 
\]	
By assumption, $\alpha_h \to 0$ as $h \to 0$. In order to prove the claim we need to show that $\beta_h(t) \to 0$ uniformly for all $t  \in g^{-1}(K)$. Let $I_t \coloneqq (g^-(t),g^+(t))$ so that $|D^j u| = \sum_{t \in S_u} |I_t| \delta_t$. Here, we have used that $Dg = |Du|$ as measures. Now, for fixed $\delta >0$, let $\mathcal F_\delta = \set{t \in S_u}{|I_t| > 0}$ be collection of points where the jump discontinuity has size larger than than $\delta$. The cardinality of this set is trivially bounded as $\#(\mathcal F_\delta) \le \delta^{-1}|Du|(I) \le \delta^{-1}$. Now, we use the fact that $u_h$ converges to $u$ in the Skorokhod metric: if $t \in \mathcal F_\delta$, then writing $t_h \coloneqq \lambda_h$ it holds  
\begin{equation}\label{eq:please}
 |\, |I_t| - d(u_h^-(t_h), u^+_h(t_h)) \, | \le 2\delta_h.
\end{equation}
This shows that $s_h \coloneqq \lambda_h(s) \in S_{u_h}$ whenever $|I_t| > 2\delta_h$. 
The fact that $\lambda_h$ is monotone conveys $\lambda_h((0,t]) = (0,t_h]$. Gathering the observations above, we get 
\[
	\beta_h(t) \le 2\delta_h \delta^{-1} + |D^j u_h|(I \smallsetminus \lambda_h(\mathcal F_{\delta}))  +|D^j u|(I \smallsetminus \mathcal F_{{\delta}}).
\]
Now, we make crucial use of the convergence $\eps_h \coloneqq |Du_h|(I) - |Du|(I) \to 0$, as it will allow us to get a a more uniform estimate on the second term of the right hand side: 
\begin{align}\label{eq:porfin}
	|D^j u_h|(I \smallsetminus \lambda_h(\mathcal F_{\delta})) 
	& =  |D^ju_h|(I) -  |D^ju_h|(\lambda_h(\mathcal F_\delta))  \stackrel{\eqref{eq:please}}\le   2\delta_h\delta^{-1}  +|\eps_h| +  |D^j u|(I \smallsetminus \mathcal F_{{\delta}}).
\end{align}
Choosing $\delta = \sqrt{\delta_h}$ and $h$ sufficiently large (so that $ \sqrt{\delta_h} > 2\delta$) gives
\begin{align*}
\beta_h(t) & \le 2\sqrt{\delta_h} + |D^j u_h|(I \smallsetminus \lambda_h(\mathcal F_{\sqrt\delta}))  +|D^j u|(I \smallsetminus \mathcal F_{\sqrt{\delta}}) \\
& \le   4\sqrt{\delta_h} + |\eps_h| + 2|D^j u|(I \smallsetminus \mathcal F_{\sqrt{\delta}}) 
\end{align*}
Since $\mathcal F_\delta \uparrow S_u$ as sets and $|Du|$ is a Radon measure, we conclude that $\beta_h(t) \to 0$ as $h \to 0$. 

This proves the claim. We are now ready to prove the assertion. Recalling that 
\[
	\gamma_{u_h}(\rho_h) = u_h \circ g_h^{-1} \circ (g_h \circ \lambda_h \circ g^{-1}) = u_h \circ \lambda_h \circ g^{-1},
\]
we conclude $\|d(\gamma_{u_h}(\rho_h),\gamma_{u})\|_\infty = \|d(u_h \circ \lambda_h(g^{-1}),u(g^{-1}))\|_\infty\le \delta_h \to 0$. 

Since $\rho_h : K \to K_h$ is bijective, it follows that 
\[
d_H(\Gamma_{\gamma_{u_h}},\Gamma_{\gamma_{u}}) \le \delta_h + \|\rho_h - \mathbf 1_K\mathrm{id}\|_\infty \to 0 \ ,
\]
hence the wished convergence in $\mathcal K([0,1] \times X)$. This finishes the proof.
\end{proof}

\subsection{The area formula for real-valued monotone maps}\label{sec:area}
	Now, assume that $X = \R$.
    The chain rule (see, e.g.,~\cite[Thm.~3.99]{ambrosioFunctionsBoundedVariation2000}) for functions of bounded variation ensures that if $f \in \Lip(\R)$, then $f'$ exists $|\tilde Df|$-almost everywhere and
    \begin{equation}\label{def:chain_rule}
        D(f\circ u) = f'(u) \, \tilde D\gamma +  (f(u^+) - f(u^-)) \, \mathcal H^0 \mres S_\gamma, 
    \end{equation}
    where $\tilde Du \coloneqq D^a u + D^c u$. 
    
    \begin{notation}
    	If $A \subset \R$ be a Borel set, we write (with a possible abuse of notation): 
	\[
		\text{\gls{set}}(f,\pi) \coloneqq \int_A f \pi' \, dt, \qquad f \in \Lip_b(\R), \pi \in \Lip(\R).
	\]    
	\end{notation}
    We have the following characterization:

\begin{lemma}\label{lem:pushforward}
    Let $\phi : [0,1] \to \R$ be a Borel monotone function. 
    Then, the classical pointwise derivative $\phi'$ exists $\mathscr L^1$-almost everywhere and the push-forward of the absolutely continuous and cantorian parts of $D\phi$ under $\phi$ are given by:
    \begin{align*}
        \phi_\# D^a\phi &= \mathscr L^1 \mres \phi(A) \\
        \phi_\# D^c \phi &= \mathscr L^1 \mres \{\, {\mathrm{im} \, \phi} \smallsetminus \phi(A) \,\},
    \end{align*}
    where
    \[
        A \coloneqq \set{  t \in [0,1]}{   \frac{d\mathscr L^1}{d D^c \phi}(t) =  \infty }.
    \]
    In particular, the metric currents associated with $\phi$ can be represented as currents over the following sets:
    \begin{align*}
        \lpr{\phi}^a  &=\llbracket \, \phi(A)\,\rrbracket  \\
         \lpr{\phi}^c  &= \llbracket \, {\mathrm{im} \, \phi} \smallsetminus \phi(A) \,\rrbracket \\
         \lpr{\phi} &= \lpr{\phi^+(0),\phi^-(1)},
    \end{align*}
    
\end{lemma}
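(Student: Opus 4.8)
The plan is to reduce everything to the classical one-dimensional Stieltjes calculus and then push forward along $\phi$. We may assume that $\phi$ is non-decreasing (otherwise replace $\phi$ by $-\phi$ and reflect the target), and we fix its right-continuous representative; this changes neither the measures $D^a\phi, D^c\phi, D^j\phi$ nor --- up to $\mathscr L^1$-negligible sets --- the image and the currents involved. By Lebesgue's theorem for monotone functions (see, e.g., \cite{ambrosioFunctionsBoundedVariation2000}), $\phi'$ exists $\mathscr L^1$-a.e., $0 \le \phi' \in L^1(0,1)$, and $D^a\phi = \phi'\,\mathscr L^1$. Since $D^c\phi \perp \mathscr L^1$, the Besicovitch differentiation theorem gives $\frac{d\mathscr L^1}{dD^c\phi} = +\infty$ at $\mathscr L^1$-a.e.\ point and $\frac{d\mathscr L^1}{dD^c\phi} = 0$ at $D^c\phi$-a.e.\ point; hence $A$ is a Borel set with $\mathscr L^1((0,1)\setminus A) = 0$ and $D^c\phi(A) = 0$. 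Writing $\tilde D\phi \coloneqq D^a\phi + D^c\phi$, this says precisely that $\tilde D\phi \mres A = D^a\phi$ and $\tilde D\phi \mres ((0,1)\setminus A) = D^c\phi$.

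First I would prove the change-of-variables identity
\[
	\phi_\# \tilde D\phi = \mathscr L^1 \mres \im\phi .
\]
Both sides are finite positive measures concentrated on $\im\phi \subseteq [\phi^+(0),\phi^-(1)]$, so it suffices to compare them on the left-rays $(-\infty,y]$. For right-continuous non-decreasing $\phi$ the sublevel set $\{t\in(0,1):\phi(t)\le y\}$ is an initial segment, and $\phi(\{\phi\le y\}) = \im\phi \cap (-\infty,y]$ by construction. Moreover, for every $s$ the image $\phi((0,s))$ coincides, modulo a countable set, with $[\phi^+(0),\phi^-(s)]$ with the countably many pairwise disjoint open jump-gaps $(\phi^-(t),\phi^+(t))$, $t\in S_\phi\cap(0,s)$, removed, so that
\[
	\mathscr L^1\big(\phi((0,s))\big) = \big(\phi^-(s)-\phi^+(0)\big) - D^j\phi((0,s)) = \tilde D\phi((0,s)).
\]
Since $\tilde D\phi$ is atomless, evaluating at $s=s_y := \sup\{t:\phi(t)\le y\}$ yields $\phi_\#\tilde D\phi((-\infty,y]) = \tilde D\phi(\{\phi\le y\}) = \mathscr L^1(\im\phi\cap(-\infty,y])$, as required.

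Next I would split this measure along $A$. As $D^a\phi$ is concentrated on $A$, the measure $\phi_\# D^a\phi$ is concentrated on $\phi(A)$. The crucial point is that $\phi_\# D^c\phi$ does \emph{not} charge $\phi(A)$: since $\phi_\# D^c\phi(\phi(A)) = D^c\phi(\phi^{-1}(\phi(A)))$ and $D^c\phi(A) = 0$, it suffices to show $D^c\phi(\phi^{-1}(\phi(A))\setminus A) = 0$. If $t\in\phi^{-1}(\phi(A))\setminus A$, then $\phi(t)=\phi(s)$ for some $s\in A$ with $s\ne t$, so by monotonicity $\phi$ is constant on the non-degenerate interval with endpoints $s,t$; hence $t$ lies in one of the at most countably many maximal intervals on which $\phi$ is constant. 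On the interior of such an interval $D\phi$ --- a fortiori $D^c\phi$ --- vanishes, and the endpoints of these intervals form a countable, hence $D^c\phi$-null, set; therefore $D^c\phi(\phi^{-1}(\phi(A))\setminus A) = 0$. Combining this with $\phi_\# D^a\phi + \phi_\# D^c\phi = \phi_\#\tilde D\phi = \mathscr L^1\mres\im\phi$ and restricting successively to $\phi(A)$ and to $\im\phi\setminus\phi(A)$ gives $\phi_\# D^a\phi = \mathscr L^1\mres\phi(A)$ and $\phi_\# D^c\phi = \mathscr L^1\mres(\im\phi\setminus\phi(A))$. This splitting is the one genuinely delicate step: one must prevent the push-forward of the Cantor part from leaking onto $\phi(A)$, which is exactly where the analysis of the constancy intervals of $\phi$ enters; the remaining issues (endpoint conventions, open versus half-open sublevel sets, and the measurability of $\phi(A)$, which is analytic and hence Lebesgue measurable) are routine.

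Finally I would read off the current identities. Applying the chain rule \eqref{def:chain_rule} with an arbitrary $\pi\in\Lip(\R)$ in place of $f$ gives $D(\pi\circ\phi) = \pi'(\phi)\,\tilde D\phi + (\pi(\phi^+)-\pi(\phi^-))\,\mathcal H^0\mres S_\phi$; separating the part absolutely continuous with respect to $\mathscr L^1$ and invoking the uniqueness of the Lebesgue decomposition yields $D^a(\pi\circ\phi) = \pi'(\phi)\,D^a\phi$ and $D^c(\pi\circ\phi) = \pi'(\phi)\,D^c\phi$. Hence, for $(f,\pi)\in\mathscr D^1(\R)$,
\[
	\lpr{\phi}^a(f,\pi) = \int_0^1 f(\phi)\,\pi'(\phi)\,dD^a\phi = \int_\R f\,\pi'\,d(\phi_\# D^a\phi) = \int_{\phi(A)} f\,\pi'\,dt = \llbracket\,\phi(A)\,\rrbracket(f,\pi),
\]
and similarly $\lpr{\phi}^c = \llbracket\,\im\phi\setminus\phi(A)\,\rrbracket$. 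For $\lpr{\phi}$, I would use that $\llbracket\,\phi(A)\,\rrbracket + \llbracket\,\im\phi\setminus\phi(A)\,\rrbracket = \llbracket\,\im\phi\,\rrbracket$ together with the jump-gap description of $\im\phi$ from the second step, which gives $\llbracket\,\im\phi\,\rrbracket = \lpr{\phi^+(0),\phi^-(1)} - \sum_{t\in S_\phi}\lpr{\phi^-(t),\phi^+(t)}$; adding $\lpr{\phi}^j = \sum_{t\in S_\phi}\lpr{\phi^-(t),\phi^+(t)}$ the jump terms cancel and one gets $\lpr{\phi} = \lpr{\phi^+(0),\phi^-(1)}$. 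Alternatively, one may deduce the last identity from Proposition~\ref{prop:SBV} together with the fact that the only cycle in $\Mbf_1(\R)$ is the zero current.
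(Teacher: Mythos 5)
Your proof is correct, and it departs from the paper's argument in the one place where the lemma is genuinely delicate: the passage from the change-of-variables identity $\phi_\#\tilde D\phi = \mathscr L^1\mres\im\phi$ to the separate identification of $\phi_\#D^a\phi$ and $\phi_\#D^c\phi$. The paper handles this by first treating the case of \emph{injective} $\phi$ (where $\phi(A)$ and $\phi([0,1]\smallsetminus A)$ are automatically disjoint and the splitting is immediate), and then reducing the general monotone case to the injective one by the perturbation $\phi_\eps(t)=\phi(t)+\eps t$, passing to the limit $\eps\to 0$ after noting that $D^c\phi_\eps=D^c\phi$ forces $A_{\phi_\eps}=A_\phi$. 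You instead attack the non-injective case directly: you show that $D^c\phi(\phi^{-1}(\phi(A))\smallsetminus A)=0$ by observing that any point of $\phi^{-1}(\phi(A))\smallsetminus A$ must lie in one of the (at most countably many) non-degenerate maximal constancy intervals of $\phi$, on whose interiors $D\phi$ vanishes and whose endpoints form a countable hence $D^c\phi$-null set. This is a correct and arguably cleaner argument — it avoids the limit $\eps\to 0$, which in the paper requires justifying $\mathbf 1_{\phi_\eps(A)}\to\mathbf 1_{\phi(A)}$ in $L^1$, a step the paper leaves somewhat terse. You also prove the identity $\phi_\#\tilde D\phi=\mathscr L^1\mres\im\phi$ by an elementary comparison of distribution functions on rays $(-\infty,y]$, whereas the paper deduces it from the current identity $\lpr{\phi}=\lpr{\phi^+(0),\phi^-(1)}$ (which in turn invokes Proposition~\ref{prop:SBV} and the absence of non-trivial cycles in $\Mbf_1(\R)$); your route is more self-contained, the paper's is shorter given what it already has available. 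Both proofs correctly use the Besicovitch differentiation theorem to see that $A$ has full $\mathscr L^1$-measure and is $D^c\phi$-null, and both read off the three current identities from the push-forward identities via the BV chain rule and the uniqueness of the Lebesgue decomposition.
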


\begin{proof}
    Assume that $\phi$ is non-decreasing without loss of generality. Since $\phi \in BV(I)$, the approximate gradient $\nabla \phi$ exists and is non-negative almost everywhere, and satisfies $D^a\phi = \nabla\phi \mathscr L^1$. Notice also that $D^j \phi = \partial \lpr{\phi}^j$, as measures in $\Mbf_0(\R)$. Proposition~\ref{prop:SBV} and the fact that the only cycle on $\R$ is the zero current, imply that $\lpr{\phi} = \lpr{a,b}$ with $a = \phi^+(0)$ and $b = \phi^-(1)$.

    Since $\phi$ has left and right limits at every point, it follows that 
    $\Gamma_\phi  \coloneqq \bigcup_{t \in S_\phi} (\phi^-(t),\phi^+(t)) = [a,b] \smallsetminus \overline {\mathrm{im} \, \phi}$. This, the fact that $\lpr{\phi} = \lpr{a,b}$ and the definition of push-forward yields
    \begin{equation}\label{eq:tilde}
          \mu \coloneqq   \phi_\# \tilde D\phi  = \mathscr L^1 \mres {\mathrm{im} \, \phi} =  \mathscr L^1 \mres {\overline{\mathrm{im} \, \phi}}.
    \end{equation}
 
    By the Besicovitch Differentiation Theorem, the Borel set $A$  is $D^c\phi$-null and has full $\mathscr L^1$-measure. In particular $D^c\phi$ is concentrated on $B \coloneqq \im \phi  \smallsetminus A$ and $D^a \phi$ is concentrated on $A$. If $\phi$ is injective, then $\mu^a \coloneqq  \mu \mres \phi(A)$ and $\mu^c \coloneqq  \mu \mres \phi(B)$ are mutually singular and hence the first assertion holds
    when $\phi$ is injective. 
    
    To address the case when $\phi$ is only monotone, we argue as follows: for any given $\eps > 0$, let $\phi_\eps(t) \coloneqq \phi(t) + \eps t$. Since $\phi_\eps$ is strictly monotone, it is injective. Denoting by $A_u$ and $B_u$ the $u$-dependence of the sets $A$ and $B$ introduced above, we have  $A_{\phi_\eps} = A_\phi$ and $B_{\phi_\eps} = B_\phi$ since $D^c \phi_\eps = D^c \phi$. By the previous step, for every $g \in C_c(\R)$ it holds
    \[
       \int \ g(\phi_\eps) \, dD^a \phi + \eps \int g(\phi_\eps) \, dt = \int \ g(\phi_\eps) \, dD^a \phi_\eps = \int_{\phi_\eps(A_{\phi})} g \, dt.
    \]
    Since $\phi_\eps \to \phi$ uniformly and $1_{\phi_\eps(A_{\phi})} \to 1_{\phi(A_\phi)}$ in $L^1$ as $\eps \to 0$, we conclude from the identity above that 
    \[
        \int f(\phi) \, dD^a \phi =   \int_{\phi(A_\phi)} f \, dt.
    \]
    This, together with~\eqref{eq:tilde} gives
    \[
        \int f(\phi) \, D^c \phi = \int_{\phi(B_\phi)} f \, dt,
    \]
    as desired. The identities for $\lpr{\phi}^a$ and $\lpr{\phi}^c$ are consequences of the push-forward identities. 
\end{proof}

The following version of the area formula is a direct consequence of the previous result:

\begin{corollary}[Area formula]\label{cor:area}
    Let $\gamma : \R \to \R$ be a monotone non-decreasing function and let $g \in L^1(\R)$. Then,
    \[
        \int_{{U \cap \im \gamma}} \,  g(x) 
        \, dx  = \int_{\gamma^{-1}(U)} g(t)\, d\tilde D\gamma(t) \qquad \text{for all Borel sets \, $U \subset \R$.}
    \]
    Moreover, these integrals do not depend on the choice of Lebesgue representative of $\gamma$.
\end{corollary}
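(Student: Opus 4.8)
The plan is to read Corollary~\ref{cor:area} off Lemma~\ref{lem:pushforward}: the only substantive ingredient needed is the single identity of ($\sigma$-finite) Borel measures
\[
	\gamma_\#\bigl(\tilde D\gamma\bigr) \;=\; \mathscr L^1 \mres \im\gamma ,
\]
after which the area formula is just the change-of-variables rule for push-forward measures applied to the integrand $x\mapsto g(x)\,\mathbf 1_U(x)$.

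First I would establish this identity. Lemma~\ref{lem:pushforward} is stated for monotone maps $\phi:[0,1]\to\R$, but its proof is purely local, so it applies to the restriction $\gamma\mres[-n,n]$ (formally: reparametrize $[-n,n]$ onto $[0,1]$ by an affine change of the parameter; this leaves the push-forwards $\gamma_\#(D^a\gamma\mres(-n,n))$ and $\gamma_\#(D^c\gamma\mres(-n,n))$ unchanged, as the Jacobian of the parameter change cancels the rescaling of the derivative measures). Summing the two push-forward formulas $\phi_\#D^a\phi=\mathscr L^1\mres\phi(A)$ and $\phi_\#D^c\phi=\mathscr L^1\mres(\im\phi\setminus\phi(A))$ of Lemma~\ref{lem:pushforward}, and using $\tilde D\gamma = D^a\gamma+D^c\gamma$, gives $\gamma_\#(\tilde D\gamma\mres(-n,n))=\mathscr L^1\mres\gamma([-n,n])$; letting $n\to\infty$ with monotone convergence (using $\tilde D\gamma\mres(-n,n)\uparrow\tilde D\gamma$ and $\gamma([-n,n])\uparrow\im\gamma$) yields the claimed identity. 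With it in hand, for any Borel set $U\subset\R$ the function $g\,\mathbf 1_U$ is Borel and integrable against $\mathscr L^1\mres\im\gamma\le\mathscr L^1$ (since $g\in L^1(\R)$), so
\[
	\int_{U\cap\im\gamma} g(x)\,dx \;=\; \int g\,\mathbf 1_U\,d\bigl(\gamma_\#\tilde D\gamma\bigr) \;=\; \int (g\,\mathbf 1_U)(\gamma(t))\,d\tilde D\gamma(t) \;=\; \int_{\gamma^{-1}(U)} g(\gamma(t))\,d\tilde D\gamma(t),
\]
with $\gamma^{-1}(U)$ Borel because $\gamma$ is Borel; this is the asserted area formula.

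It remains to argue independence of the chosen Lebesgue representative. I would use that a monotone function is continuous off an at most countable set and that its values at continuity points are fixed by its $\mathscr L^1$-class, so any two Lebesgue representatives of $\gamma$ differ only on a countable set $N$. The left-hand side is unaffected because $\gamma(N)$ is countable, hence $\mathscr L^1$-negligible, so $U\cap\im\gamma$ changes only by an $\mathscr L^1$-null set. For the right-hand side, the diffuse measure $\tilde D\gamma=D^a\gamma+D^c\gamma$ depends only on the distributional derivative of the $L^1$-class, and it gives zero mass to the countable set $N$ (as $D^c\gamma$ vanishes on countable sets and $D^a\gamma\ll\mathscr L^1$); since the two representatives agree off $N$, the integrand $t\mapsto g(\gamma(t))\,\mathbf 1_{\gamma^{-1}(U)}(t)$ is unchanged $\tilde D\gamma$-a.e., so the integral is unchanged.

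I do not expect a genuine obstacle: the delicate content — in particular the treatment of non-injective $\gamma$, which is exactly where the sets $A$ and $\im\gamma\setminus\phi(A)$ come in — is already packaged inside Lemma~\ref{lem:pushforward}. The only points that require a little care are the passage from $[0,1]$ to $\R$ (a routine monotone-convergence argument, harmless because $g\in L^1$) and the representative-independence bookkeeping sketched above.
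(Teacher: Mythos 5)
Your proof is correct and is exactly the argument the paper intends: Corollary~\ref{cor:area} is presented there as a direct consequence of Lemma~\ref{lem:pushforward}, obtained by summing the two push-forward identities to get $\gamma_\#\tilde D\gamma = \mathscr L^1 \mres \im\gamma$ and then changing variables, with the passage from $[0,1]$ to $\R$ and the representative-independence handled just as you do. Note only that your (correct) computation yields $g(\gamma(t))$ in the right-hand integrand, whereas the statement as printed has $g(t)$; the latter is evidently a typo, since the push-forward change of variables necessarily composes $g$ with $\gamma$.
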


\subsection{Parametrization of sets in the real line}

For a Borel set $K \subset \R$, we write $\mathbf 1_K \in L^1(\R)$ to denote its Lebesgue representative indicator function. In this way, we shall indentify the Borelians $\mathcal B([0,1])$ modulo negligible sets as a metric subspace of $L^1((0,1))$. We will write \gls{|K|} to denote the one-dimensional Lebesgue measure of $K$. The following lemma demonstrates a fundamental property: any Borel set (up to Lebesgue equivalence classes) in the real line can be parametrized by arc-length using a $BV$ curve.

\begin{figure}[h]
\centering 
\fbox{\includegraphics[width=.5\textwidth]{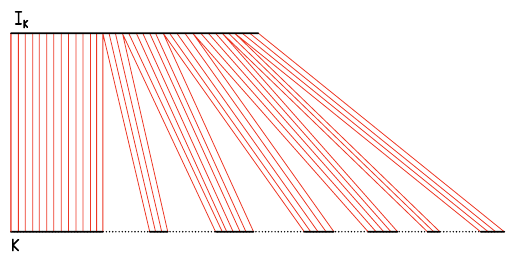}}
\caption{Optimal transport maps (paths in red) $T : I_K \coloneqq [0,|K|] \to K$ associated with the transportation of
 $\mathcal L^1 \mres I_K$ into ${\mathcal L}^1 \mres K$. The monotone map $u_K$ constructed in Lemma~\ref{lem:arc-length} can be expressed in terms of $T$ as $u_K(t) = |K| \cdot T(|K|^{-1}t)$.}  
 \end{figure}
 
\begin{lemma}[Transport parametrization]\label{lem:arc-length}
Let $K$ be a Borel set in $[0,1]$.  There exists a monotone non-decreasing Càdlàg function $\text{\gls{K_parameter}} : [0,1] \to  [0,1]$ satisfying:
\begin{align*}
    \lpr{u_K}^{a}  & = \llbracket K \rrbracket\\
    \lpr{u_K}^{c}  & = \lpr{\, \im u_K \smallsetminus K\,} = \llbracket \, \overline K \smallsetminus K \, \rrbracket \\
    \lpr{u_K}^j & = \lpr{\mathrm{ess\, inf} \, K, \mathrm{ess\, sup} \, K} \smallsetminus \lpr{\, \overline K \,}.
\end{align*}
Moreover, 
\begin{enumerate}[itemsep=0.2em, leftmargin=2em, label=\roman*), topsep=0.5em]
\item \label{item:i_arc} For $\mathscr L^1$-almost every $t \in (0,1)$, it holds
\[
   u'_K(t) \coloneqq \lim_{|h| \to 0} \frac{u_K(t + h) - u_K(t)}{|h|} = |K|.
\]
\item \label{item:ii_cont} The map $\mathcal B([0,1]) \subset L^1([0,1]) \to L^1((0,1)) : K \mapsto u_K$ \label{item:iii_arc}
is continuous. 
\end{enumerate}
\end{lemma}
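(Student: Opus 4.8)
The plan is to realize $u_K$ as the monotone optimal-transport map already announced in the figure caption and the list of symbols. Assume first $|K|>0$ (the case $|K|=0$ is trivial: $\lpr K=0$, $\overline K := \supp(\mathscr{L}^1\mres K)=\emptyset$, and a constant curve works). Set $c := |K|$, $a := \mathrm{ess\,inf}\,K$, $b := \mathrm{ess\,sup}\,K$, and let $\overline K := \supp(\mathscr{L}^1\mres K)$, a closed subset of $[a,b]$ depending only on the $L^1$-class of $K$. Let $F_K(x) := |K\cap[0,x]|$ (continuous, non-decreasing, with $F_K'=\mathbf 1_K$ a.e.) and let $u_K:[0,1]\to[0,1]$ be the unique non-decreasing right-continuous map with $(u_K)_\#(\mathscr{L}^1\mres[0,1])=|K|^{-1}\mathscr{L}^1\mres K$; concretely $u_K(t)=T(ct)$ with $T(s):=\inf\{x:F_K(x)>s\}$. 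First I would record the elementary facts $u_K(0^+)=a$, $u_K(1^-)=b$, and — since the target measure has no atoms — that $u_K$ has no interval of constancy, whence $F_K\circ u_K=c\,\mathrm{id}$ on $[0,1]$.

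For the constant-speed statement \ref{item:i_arc} I would apply the chain rule \eqref{def:chain_rule} to the Lipschitz map $F_K$ and the monotone curve $u_K$: at each jump of $u_K$ the two one-sided values $u_K^\pm(t)$ are mapped by $F_K$ to the same number $ct$, so the jump term drops out and $c\,\mathscr{L}^1 = D(F_K\circ u_K)=F_K'(u_K)\,(u_K'\,\mathscr{L}^1+D^cu_K)$. The summand $F_K'(u_K)\,D^cu_K$ is absolutely continuous with respect to $D^cu_K$, hence singular to $\mathscr{L}^1$, yet by the identity it is also absolutely continuous with respect to $\mathscr{L}^1$; so it vanishes and $F_K'(u_K)\,u_K'=c$ a.e. Since $(u_K)_\#\mathscr{L}^1=|K|^{-1}\mathscr{L}^1\mres K$, for a.e.\ $t$ the point $u_K(t)$ is a density point of $K$, where $F_K'=1$; therefore $u_K'\equiv|K|$ a.e., and in particular $D^au_K=|K|\,\mathscr{L}^1$.

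For the three current identities I would invoke Lemma~\ref{lem:pushforward} with $\phi=u_K$. It already gives $\lpr{u_K}=\lpr{a,b}$, $\lpr{u_K}^a=\llbracket u_K(A)\rrbracket$, $\lpr{u_K}^c=\llbracket\,\im u_K\setminus u_K(A)\,\rrbracket$, and $(u_K)_\#D^au_K=\mathscr{L}^1\mres u_K(A)$, where $A$ has full Lebesgue measure. Comparing with $D^au_K=|K|\,\mathscr{L}^1$ yields $\mathscr{L}^1\mres u_K(A)=(u_K)_\#(|K|\,\mathscr{L}^1)=\mathscr{L}^1\mres K$, i.e.\ $u_K(A)=K$ up to a null set, hence $\lpr{u_K}^a=\lpr K$. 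Next, modulo a countable set $\im u_K$ equals its closure (for a monotone map only unattained left limits are missing), and $\overline{\im u_K}=\overline K$: this is the statement that the range of the generalized inverse $T$ coincides with the set of points at which $F_K$ is not locally constant, which is exactly $\supp(\mathscr{L}^1\mres K)$. Consequently $\lpr{u_K}^c=\llbracket\overline K\setminus K\rrbracket=\lpr{\im u_K\setminus K}$, and from $\lpr{u_K}=\lpr{u_K}^a+\lpr{u_K}^c+\lpr{u_K}^j$ together with $\overline K\subseteq[a,b]$ one reads off $\lpr{u_K}^j=\lpr{a,b}-\lpr K-\llbracket\overline K\setminus K\rrbracket=\lpr{a,b}-\llbracket\overline K\rrbracket=\lpr{a,b}\setminus\lpr{\overline K}$, which is the asserted formula with $a=\mathrm{ess\,inf}\,K$, $b=\mathrm{ess\,sup}\,K$.

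Finally, for the continuity in \ref{item:ii_cont}: if $|K_n\triangle K|\to0$ then $F_{K_n}\to F_K$ uniformly on $[0,1]$ and $c_n:=|K_n|\to c>0$, and the standard continuity of quantile functions gives $T_n(s_n)\to T(s)$ whenever $s_n\to s$ and $T$ is continuous at $s$ (all but countably many $s$); thus $u_{K_n}(t)=T_n(c_nt)\to T(ct)=u_K(t)$ for a.e.\ $t$, and dominated convergence ($0\le u_{K_n}\le1$) upgrades this to convergence in $L^1((0,1))$. I expect the genuine obstacles to be the identification $\overline{\im u_K}=\supp(\mathscr{L}^1\mres K)$ — which needs careful bookkeeping of the flat parts and level sets of $F_K$, i.e.\ of the precise range of its one-sided generalized inverse — and, in \ref{item:ii_cont}, the quantile-convergence input (done either by hand or by reducing to weak-$*$/$W_1$ convergence of the target measures), noting that continuity genuinely fails along sequences degenerating to a null set, so the statement is to be read for $K$ of positive measure, which is all that is used downstream.
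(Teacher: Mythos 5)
Your proposal is correct and follows essentially the same route as the paper: the same generalized inverse of the (normalized) cumulative distribution $F_K$, the same BV chain-rule computation to kill the jump and Cantor contributions of $F_K\circ u_K$ and extract $u_K'=|K|$ a.e., and the same appeal to Lemma~\ref{lem:pushforward} combined with the push-forward identity $(u_K)_\#\mathscr L^1=|K|^{-1}\mathscr L^1\mres K$ to identify $u_K(A)$ with $K$ up to a null set and read off the three current identities. The only genuine divergence is item \ref{item:ii_cont}: the paper proves that $f_K\mapsto u_K$ is an $L^1$-\emph{isometry} via the layer-cake formula and super-level sets, yielding a quantitative modulus of continuity, whereas you use the softer convergence of quantile functions at continuity points plus dominated convergence, which is qualitative but equally valid for $|K|>0$. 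Your caveat that continuity degenerates as $|K_n|\to 0$ (the limit constant depends on the sequence) is a fair observation that the paper's statement and proof gloss over; it is harmless downstream since the lemma is only invoked on sets of positive measure.
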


\begin{proof} The case when $|K| = 0$ is trivial and therefore we may assume that $|K| > 0$.

Define $f : [0,1] \to [0,1]$ by $f(x) = |K|^{-1}\int_0^x 1_K = |K \cap [0,x]||K|^{-1} $. The function $f$ is non-negative, monotone non-decreasing,  $1-$Lipschitz and satisfies $f' = |K|^{-1}$ almost everywhere on $K$. Let $a = \mathrm{ess \, inf} \, K$ and $b = \mathrm{ess \, sup} \, K$. Now, let us consider its pseudoinverse $\gamma : [0,1] \to [0,1]$ defined by $\gamma(t) =  
\inf \set{ x \in [0,1]}{f(x) > t}$. Here, we have adopted the convention that the infimum above is $b$ when $\{f  > t\}$ is empty.
Notice that this function is defined by the condition $\gamma(t) \leqslant x$ if and only if $t \leqslant f(x)$. Since $f$ is monotone, so is $\gamma$ and in particular $\gamma \in BV(I)$. In addition, by definition and the Lipschitzianity of $f$, it follows that $\gamma$ is right-continuous (independently of its $L^1$-representative).   Notice that $\gamma(0^+) = a$ and $\gamma(1^-) = b$, and
more generally $f(\gamma)(t) = t$ on $I$.
This follows from the left-continuity of $\gamma$ and the definition of $f$ and $\gamma$. 

The chain rule for BV functions (see \cite[Theorem 3.99]{ambrosioFunctionsBoundedVariation2000}) guarantees that $1_K(\gamma)$ is well-defined $|\tilde D \gamma|$-a.e. ($\gamma$ is continuous $|\tilde D \gamma|$-a.e.) and conveys the following identity of measures:
\[
\mathscr L^1 \mres I \equiv  |K|^{-1} 1_K (\gamma) \, \dot \gamma \, \mathscr{L}^1 \, + \,  |K|^{-1}  1_K(\gamma) D^c \gamma \, + \, (f(\gamma^+) - f(\gamma^-)) \, \mathcal H^1 \mres S_\gamma
\] 
We deduce that $\gamma(t) \in K$ and $\dot \gamma(t) = |K|$ for $\mathscr L^1$-almost every $t \in I$ and also that the last two terms on the right-hand side above vanish, which gives $|K| \mathscr L^1 \mres I \equiv   1_K (\gamma) \, \dot \gamma \, \mathscr{L}^1$. Integrating both sides and recalling the pushforward identity $\mathscr L^1 \mres \gamma(A_\gamma) = \gamma_\# D^a\gamma$ (cf. Lemma~\ref{lem:pushforward}), we get
\[
	|K|  = \int_I 1_K(\gamma) \dot \gamma \, dt=  \int_{K}   \, d \gamma_\#D^a \gamma = \mathscr L^1 \mres \gamma(A_\gamma) (K) = |\gamma(A_\gamma) \cap K|.
\]
From these two observations, we deduce that $|\gamma(A_\gamma) \triangle K| = 0$. Recalling Proposition~\ref{prop:SBV} and the identities from Lemma~\ref{lem:pushforward} we get the desired expressions for $\lpr{\gamma}^a$, $\lpr{\gamma}^c$ and $\lpr{\gamma}^j$.
The first part of the assertion follows by setting $u_K \coloneqq \gamma$.

\emph{Proof of~\ref{item:i_arc}.} To see that the pointwise derivative $\gamma'$ exists almost everywhere in $I_K$, we recall that the fundamental theorem of calculus holds with $D\gamma$ outside of the discontinuity set (see~\cite{ambrosioFunctionsBoundedVariation2000}): 
\[
    \gamma(t + h) - \gamma(t) =  D\gamma(t, t+h) \qquad \forall t,h \in I_K \smallsetminus S_\gamma.
\]
From this, the monotonicity of $\gamma$ and the Lebesgue--Besicovitch differentiation theorem (which on the real line holds with half-intervals) it follows that
\[
    \lim_{|h| \to 0}\frac{\gamma(t + h) - \gamma(t)}{|h|} = \dot \gamma(t) = |K| \qquad \text{for $\mathscr L^1$-almost every $t \in I_K$.}
\]

\emph{Proof of~\ref{item:ii_cont}.} The proof of this part can be found in~\cite[Prop. 2.17]{santambrogioOptimalTransportApplied2015}. For the convenience of the reader, we have furbished a proof here. For an integrable function $f : [0,1] \to [0,1]$ and $t \ge 0$, we write $E_{f,t} = \{f \ge t \}$ denote the $t$ super-level set of $f$. We recall the layer cake representation
\begin{equation}\label{eq:layer}
    f(x) = \int_0^1 1_{E_{f,t}}(x) \, dt. 
\end{equation}
for later use.  Now, let $K,L \subset [0,1]$ be any two given Borel sets. 
If $x \in [0,1]$, then by definition it holds $u_K(t) = \mathrm{ess}\,\inf E_{f_K,t}$ and similarly for $\gamma_L(t)$. Moreover, by the monotonicity of $f_K,f_L$, we can express the pointwise distance from $u_K$ to $\gamma_L$ as the Lebesgue size difference of their associated super level sets, i.e., 
\begin{equation}\label{eq:super_identity}
|u_K(t) - \gamma_L(t)| = | \mathscr L^1(E_{f_K,t}) - \mathscr L^1(E_{f_L,t})|  \qquad \forall t \in [0,1].
\end{equation}
Expressing $f_K,f_L$ in terms of the layer cake formula~\eqref{eq:layer} and changing the order of integration by means of the Fubini-Tonelli Theorem we deduce 
\begin{align*}
    \int_0^1 |f_K(x) - f_L(x)| \, dx & = \int_0^1 | \mathscr L^1(E_{f_K,t}) - \mathscr L^1(E_{f_L,t})| \, dt \stackrel{\eqref{eq:super_identity}}=  \|u_K - \gamma_L\|_{L^1(0,1)}.  
\end{align*}
This proves that the assignment $f_K \mapsto u_K$ is continuous from $L^1(0,1)$ to $L^1(0,1)$. In fact, it is a continuous isometry. Finally, we make the simple observation that (by the very definition of $f_K$)
\begin{equation}\label{eq:f_gamma}
    |f_K(x) - f_L(x)| \le \int_0^x |1_K(s) - 1_L(s)| \, ds \le \|1_K - 1_L\|_{L^1(0,1)}.
\end{equation}
This proves that $1_K \mapsto f_K$ is continuous. In particular, $1_K \mapsto u_K$ is continuous as it is the concatenation of a continuous map and a continuous isometry. 
This finishes the proof.
\end{proof}

    \begin{remark}\label{rem:notice}
        Notice that 
        \begin{itemize}
        \item $D^j \gamma$ vanishes  if and only if $K$ is essentially connected,
        \item $\gamma \in SBV(I)$ if and only if $K$ is essentially closed. 
        \end{itemize}
    \end{remark}

\begin{example}[Fat Cantor set II] Let $X \subset [0,1]$ be the fat Cantor set from Example~\ref{ex:fat}. Recall that $X$  is closed, is nowhere dense and has measure $|X| = 1-\alpha$ for some $\alpha \in (0,1/3)$. Consider its open complement $A = [0,1] \smallsetminus X$ and notice that $\partial A = X$.
Therefore, 
    \begin{align*}
        \lpr{u_X}^a = \lpr{X}, \qquad \lpr{u_X}^c = 0, \qquad \lpr{u_X}^j \equiv 0,
    \end{align*}
    and
        \begin{align*}
        \lpr{u_A}^a = \lpr{A}, \qquad \lpr{u_A}^c = \lpr{X}, \qquad \lpr{u_X}^j \equiv 0.
    \end{align*}
   \end{example}

%%%%%%%%%% Section 7 %%%%%%%%%%

\section{Proof of the $SBV$-representation}\label{sec:rep}
This section presents the construction of the $\SBV(I,X)$ representation for $1$-dimensional metric currents in Banach spaces. We begin by setting up the necessary preliminary tools, including the crucial hole filling of metric currents (cf. Theorem~\ref{cor:cycle_cover}) and the decomposition of cyclic currents by Paolini and Stepanov. We then use the structure of the hole filling results to define a map that pushes continuous curves into $\SBV$ curves under the Càdlàg topology. In order to keep the presentation as clear as possible, we have decided to cover the rigorous (and somewhat nontrivial) measurability properties of the push-forward map in the Appendix.  

\subsection{Proof on separable Banach spaces}We first establish Theorem~\ref{thm:rep} for Banach spaces \textbf{of dimension at least $2$}.  

\subsection{Preliminaries}
To begin, fix an arbitrary $\eps > 0$. As a first step, we augment our Banach space $X$ to $X' = X \oplus \mathbb{R}$, where $X_0 = X \oplus \{0\}$ is an isometric embedding of $X$, and $X^+ = X \oplus (0,\infty)$ denotes the upper half-space. As shown in Corollary~\ref{cor:cycle_cover2}, for any given current $T \in \mathbf{M}_1(X)$, we can find a cycle $C \in \mathbf{M}_1(X')$ and a rectifiable current $R \in \mathcal{R}_1(X')$ such that:
\begin{enumerate}
    \item $C = i_\# T + R$, where $i : X \embed X_0$ is the canonical isometric isomorphism.
    \item $i_\#T = C \mres X_0$ and $R = C \mres X^+$. \label{item:restriction}
    \item $\mathbf{M}(R) \le \|\partial T\|_{\KR} + \eps$. \label{item:4}
    \item $R$ is of the form $R = \sum_{k = 1}^\infty \eta_k \llbracket x_k,y_k\rrbracket$, with $x_k,y_k \in X^+$ and $\eta_k \in \mathbb{R}$, such that the union of open intervals $\bigcup_{k=1}^\infty(x_k,y_k)$ is disjoint.
\end{enumerate}
\begin{figure}[h]
\centering
\includegraphics[width=0.7\textwidth]{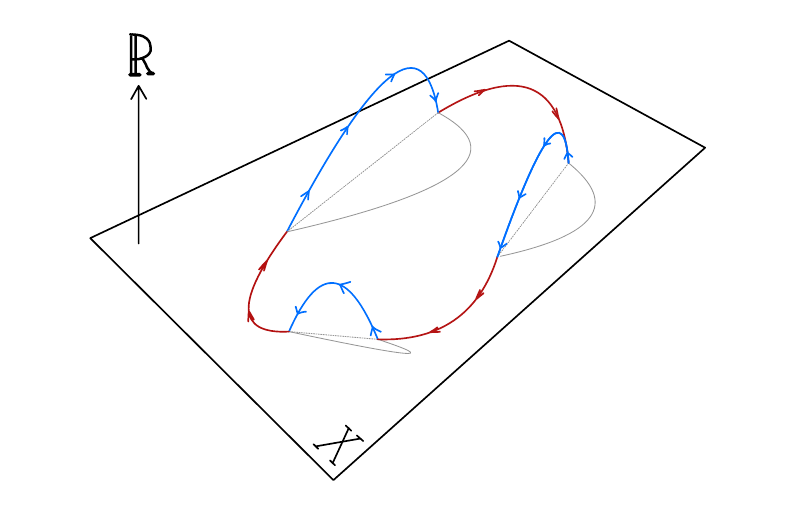}
\caption{Depiction of $i_\#T$ (in red) and $R$ (in blue).}
\label{fig:X_0}
\end{figure}
As usual, we denote $C([0,1],X')$ as the space of continuous curves equipped with the uniform convergence topology. By Theorem 3.1 and Corollary 4.1 in~\cite{paolini2012decomposition2}, there exists a finite Borel measure $\bar\eta$ on $C([0,1],X')$ concentrated on $\mathrm{Lip}_1([0,1],X')$, such that
\begin{align*}
    C(\omega') & = \int_{C([0,1],X')} \llbracket \theta \rrbracket(\omega') \, d \bar\eta(\theta)
\end{align*}
for all metric $1$-forms $\omega'$ on $X'$. This superposition is free of mass cancelations:
\begin{align}\label{eq:mass}
    \mathbf{M}(C) & = \int_{C([0,1],X')} \mathbf{M}(\llbracket \theta\rrbracket) \, d \bar\eta(\theta) = \int_{C([0,1],X')} \ell(\theta) \, d \bar\eta(\theta).
\end{align}
Moreover, $\bar\eta$-almost every curve $\theta$ is an injective arc (hence $\mathbf{M}(\llbracket\theta\rrbracket) = \ell(\theta) = 1$) with constant speed $|\dot \theta| \equiv 1$. The evaluation maps $e_t : C([0,1],X') \to X'$ (defined by $e_t(\theta) = \theta(t)$ for $t \in [0,1]$) are Borel, and $(e_0)_\# \bar\eta = (e_1)_\# \bar\eta = \|C\|$. From property~\ref{item:restriction}, we deduce $T,R \le C$ and their associated measures $\|T\|, \|R\|$ are mutually singular. In particular, from~\eqref{eq:mass} it follows that for any $\omega' \in \mathscr D^1(X')$ we have
\begin{align}
 i_\# T(\omega') & = \int_{C([0,1],X')} \llbracket  \theta \rrbracket \mres X_0(\omega') \, d \bar\eta(\theta) \label{eq:T}\\
\mathbf{M}(i_\#T) & = \int_{C([0,1],X')} \mathbf{M}(\llbracket \theta \rrbracket \mres X_0) \, d \bar\eta(\theta) \label{eq:T_mass}
\end{align}
and
\begin{align}
R(\omega') & = \int_{C([0,1],X')} \llbracket \theta \rrbracket \mres X^+(\omega') \, d \bar\eta(\theta) \label{eq:R}\\
\mathbf{M}(R) & = \int_{C([0,1],X')} \mathbf{M}(\llbracket \theta \rrbracket \mres X^+) \, d \bar\eta(\theta). \label{eq:R_mass}
\end{align}
\begin{remark}\label{rem:isometry}
Notice that, since $i$ is an isomorphism of Banach spaces, 
\[
	T(\omega) = T(\omega \circ i^{-1} \circ i) = i_\#T(\omega \circ i^{-1}) \qquad \text{for all $\omega \in \mathscr D^1(X)$}.
\]	
The fact that $i$ is an isometry further gives $\Mbf(T) = \Mbf(i_\#T)$.
\end{remark}

\begin{remark}
    In all that follows we will make use of the following property (see~\cite[Sec.~6]{Schioppa_2016}): for any $\omega \in \mathscr D^1(X)$, the maps $\gamma \mapsto \lpr{\gamma}(\omega)$ and $\gamma \mapsto \ell(\gamma)$ are Borel from $\Frag(X)$ to $\R$, where the space of fragments is topologized as subspace of $\mathbf K([0,1] \times X)$. 
\end{remark}

\subsection{Constructing $\mathbf{SBV}$-curves via push-forward} In all that follows, we write $I \coloneqq (0,1)$.
The next step is to construct a $\bar \eta$-measurable map from $C([0,1],X')$ to $\SBV$-curves on $\mathbb D([0,1],X)$. This push-forward mechanism is central to translating the decomposition of cyclic currents into a representation in terms of $\SBV$ paths.

Let $\theta \in C([0,1],X')$ and let $K_\theta \coloneqq \theta^{-1}(X_0) \in \mathcal{K}([0,1])$. By Lemma~\ref{lem:arc-length}, there exists a monotone non-decreasing Càdlàg curve $\gamma : [0,1] \to [0,1]$ belonging to $BV(I)$, with $\dot \gamma_\theta \equiv \gamma_\theta'$ in $L^1(I)$, and satisfying (cf. Remark~\ref{rem:notice}):
\begin{equation}\label{eq:sets}
\begin{split}
    \llbracket\gamma_\theta\rrbracket^a & = \llbracket\mathrm{im} \, \gamma_\theta\rrbracket = \llbracket K_\theta\rrbracket \\
    \llbracket\gamma_\theta\rrbracket^c & = \llbracket\mathrm{im} \, \gamma_\theta \smallsetminus K_\theta\rrbracket \equiv 0 \\
    \llbracket\gamma_\theta\rrbracket^j & = \llbracket a_\theta,b_\theta\rrbracket - \llbracket \mathrm{im} \, \gamma_\theta \rrbracket = \llbracket (a_\theta,b_\theta) \smallsetminus \im \gamma_\theta\rrbracket,
    \end{split}
\end{equation}
where $a_\theta = \mathrm{ess \, inf} \, K_\theta$ and $b_\theta = \mathrm{ess \, sup} \, K_\theta$. Therefore $\gamma_\theta \in SBV(I)$ for all $\theta \in C([0,1],X')$.

Define $\beta : C([0,1],X') \to \mathbb D([0,1],X) \cap \SBV(I,X)$ by $\beta(\theta) \coloneqq \aleph \circ \theta \circ \gamma_\theta$, where $\aleph : X' \to X$ is the composition of the isometry $i^{-1} : X_0 \to X$ with the canonical projection $p : X' \twoheadrightarrow X_0$. Here, the space $\mathbb D([0,1],X)$ is  topologized with the Skorokhod metric~\eqref{eq:skorokhod}.
Notice that $\beta$ is well-defined since $\mathbb D([0,1])$ and $SBV(I) = \SBV(I)$ are both stable under the composition with metric-valued Lipschitz maps.

Proposition~\ref{prop:measurability_beta} in the Appendix and the lipschitzianity of $\aleph : X_0 \to X$ warrant that $\beta$ is $\bar \eta$-measurable. We have thus far defined an $\bar \eta$-measurable map that assigns an $\SBV$-curve to every continuous curve with values on $X$. The crucial property for $\beta$ is contained in the following chain of identities: First, observe that if $(f,\pi) \in \Lip_b(X) \times \Lip(X)$, then  
\begin{equation}\label{eq:crucial_beta}
\begin{split}
	\lpr{\beta(\theta)}^a(f,\pi) &  = \lpr{\gamma_\theta}^a (f(\aleph \circ \theta),\pi(\aleph \circ \theta)) = \lpr{K_\theta} (f(\aleph \circ\theta) ,\pi(\aleph \circ\theta) )
\end{split}
\end{equation}
Here, in passing to the first equality we have simply used the definition of $\lpr{\cdot}^a$. The second equality follows from the identity $\lpr{\gamma_\theta}^a = \lpr{K_\theta}$ from~\eqref{eq:sets}. 
Using that $\mathbf 1_{K_\theta} \equiv \mathbf 1_{X_0} \circ \theta$ and computing on the right-hand side of~\eqref{eq:crucial_beta},  we deduce that 
\begin{equation}\label{eq:crucial_beta2}
\begin{split}
	\lpr{\beta(\theta)}^a(f,\pi) =  \int_0^1 (\mathbf 1_{X_0} \circ \theta) (f(\aleph) \circ\theta) (\pi(\aleph) \circ\theta))'  
	= \lpr{\theta}\mres X_0 (f(i^{-1})), \pi(i^{-1})), 
	\end{split}
\end{equation}
where the last inequality follows from the fact that $\aleph|_{X_0} = i^{-1}$.

\subsection*{The push-forward}We define $\Theta_{\SBV}(X) \subset \mathbb D([0,1],X)$ as the metric subspace of injective Càdlàg functions, whose restriction to $I$ are $\SBV$, and possess constant metric speed (i.e., $|\dot \gamma| \equiv \ell(\gamma)$). 

We claim that $\beta(\theta) \in \Theta_{\SBV}(X)$ for $\bar \eta$-almost every $\theta$. 
Since $\bar \eta$ is concentrated on \gls{Theta_X}, the subspace of $\Lip_1([0,1],X)$ of all injective curves with constant metric speed one, we may assume without loss of generality that $\theta \in \Theta(X)$.  
The injectivity of $\beta(\theta)$ is clear. We are left to demonstrate the constant-speed property. 
To verify this, let us first recall from Lemma~\ref{lem:arc-length} that $\gamma'_\theta(t) = \ell(\gamma_\theta)$ for $\mathscr L^1$-a.e. $t \in (0,1)$. 
Since $\theta \in \Theta(X)$, we have $|\dot \theta|(s) = 1$ for $\mathscr L^1$-almost every $s \in [0,1]$. Then, resorting to properties of the metric differential established in~\cite[Thm.~2]{Kirchheim_Rectifiable}, we find that for $\mathscr L^1$-almost every $x \in \mathrm{im}(\gamma_\theta)$, it holds $| \theta(z) - \theta(y) |_X - |z-y| = \mathrm{o}(|z - x| + |y - x|)$. 
Letting $z = \gamma_\theta(t + h)$ and $y = \gamma_\theta(t)$, we conclude that:
\[
\frac{| \theta(\gamma_\theta(t + h)) - \theta(\gamma_\theta(t)) |_X}{|\gamma_\theta(t+h)-\gamma_\theta(t)|} -1 = \mathrm{o}(|\gamma_\theta(t + h) - x| + |\gamma_\theta(t) - x|).
\]
Letting $x = \gamma_\theta(t)$ in the identity above yields 
\[
    |\nabla (\theta \circ \gamma_\theta)|(t) = \lim_{|h| \to 0} \frac{| \theta(\gamma_\theta(t + h)) - \theta(\gamma_\theta(t)) |_X}{|h|} = \ell(\gamma_\theta).
\]
This proves the claim. 

With the measurability of $\beta$ and the constant speed of $\beta(\theta)$ established, we can now define a finite Borel measure $\eta$ on $\mathbb D([0,1],X)$ by pushing forward $\bar \eta$ under $\beta$, i.e., $\eta = \beta_\# \bar \eta$. This measure will serve as our "$SBV$ representation measure." Notice from the previous step that $\eta$ is concentrated on $\Theta_{\SBV}(X)$. 

\subsection{The representation}\label{sec:rep_meas}

By definition, for any $\eta$-summable Borel map $g : \mathbb D([0,1],X) \to \mathbb{R}$, its integral with respect to $\eta$ is:
\begin{equation}\label{eq:push}
\int_{\mathbb D([0,1],X)} g(u) \, d\eta(u) \coloneqq \int_{C([0,1],X')} g(\beta(\theta)) \, d\bar \eta(\theta).
\end{equation}
Let $\omega \in \mathscr D^1(X)$ be an arbitrary metric $1$-form. We define a map $g : \mathbb D([0,1],X) \to \mathbb{R}$ by
\[
g(u) =
\begin{cases}
\lpr{u}^a(\omega) & \text{if $u \in \Theta_{\SBV}(X)$} \\
0 & \text{else}
\end{cases}
\]
To be able to represent $T$ as a current, the key step is to prove that $g$ is Borel measurable. We recall from Lemma~\ref{lem:semicont} that the maps $u \mapsto |Du|(I)$, $|D^au|(I)$, and $|D^ju|(I)$  are measurable with respect to the $\mathbb{D}([0,1],X)$ topology. By Lusin's theorem for finite Borel measures in metric space (see~\cite[Thm. 17.12]{Kechris}), it suffices to show that $g$ is Borel on any set where these maps are continuous. 

Consider any Borel set $F \subset \mathbb{D}([0,1],X) \cap \Theta_{\SBV}(X)$ on which these maps are continuous. Lemma~\ref{lem:porfin} provides the following convergence property: for any sequence $u_j \to u$ in $F$, their associated fragments converge, i.e., $\gamma_{u_j} \to \gamma_u$ in the fragment topology on $\Frag(X)$. Since $\llbracket u\rrbracket^a = \llbracket \gamma_u\rrbracket$ as currents and $g$ is known to be Borel measurable with respect to the fragment topology (see~\cite[Sec.~6]{Schioppa_2016}), we conclude that $g$ is Borel measurable on $F$. This establishes the overall $\eta$-measurability of $g$.

Since also $B \mapsto \|\lpr{\gamma}\|(B)$ is Borel for all $\gamma \in \Frag(X)$ and all Borel $B \subset X$, the Lusin-type property argument above can also be applied to show that the map
\[
    u \mapsto
\begin{cases}
\|\lpr{u}^a\|(B) = \mathcal H^1(\im u \cap B) & \text{if $\gamma \in \Theta_{\SBV}(X)$} \\
0 & \text{else}
\end{cases}
\]
is $\eta$-measurable. 

We can hence now make use of the push-forward identity~\eqref{eq:push} with these two maps.

By construction, we obtain (cf. Remark~\ref{rem:isometry}):
\begin{align*}
   \int_{\mathbb D([0,1],X)}  \llbracket u\rrbracket^a(\omega) \, d\eta(u) & \stackrel{\eqref{eq:push}}= \int_{C([0,1],X')} \llbracket\beta(\theta)\rrbracket^a(\omega) \, d\bar \eta(\theta) \\
    & \stackrel{\eqref{eq:crucial_beta2}}= \int_{C([0,1],X')} \llbracket \theta \rrbracket \mres X_0 (\omega\circ i^{-1}) \, d\bar \eta(\theta) \stackrel{\eqref{eq:T}}= i_\#T(\omega\circ i^{-1}) = T(\omega).
\end{align*}
This proves that $\eta$ represents $T$ as a superposition over $\Theta_{\SBV}(\Theta)$. Testing with metric differentials of the form $\omega = (1,f)$, we deduce that the boundary of $T$ also satisfies a similar representation: for every $f \in \Lip(X)$ it holds
\begin{equation}\label{eq:KRe}
\begin{split}
	\partial T(f) & = \int_{\mathbb D([0,1],X)}  \partial \llbracket u \rrbracket^a(f) \, d\eta(u) \\
	& = \int_{\mathbb D([0,1],X)}  \{ f(u^+(0)) - f(u^-(1)) - D^ju(f) \} \, d\eta(u). 
	\end{split}
\end{equation}

\subsection{Absence of mass cancellations}
Let $B \subset X$ be a Borel set. Similarly as above, we get
\begin{align*}
   \int_{\mathbb D([0,1],X)}  \|\llbracket u\rrbracket^a\|(B) \, d\eta(u) & \stackrel{\eqref{eq:push}}= \int_{C([0,1],X')} \|\llbracket\beta(\theta)\rrbracket^a\|(B) \, d\bar \eta(\theta) \\
    & \stackrel{\eqref{eq:crucial_beta2}}= \int_{C([0,1],X')} \|\llbracket \theta \rrbracket\|(X_0 \cap iB) \, d\bar \eta(\theta) \stackrel{\eqref{eq:T}} \ge \|i_\#T\|(iB) = \|T\|(B).
\end{align*}
This tells us that
\begin{equation}\label{eq:measures}
    \int_{\mathbb D([0,1],X)}  \|\llbracket u\rrbracket^a\| \ge \|T\| \quad \text{as measures on $X$.}
\end{equation}

Recalling~\eqref{eq:T_mass}, we get:
\begin{align*}
   \Mbf(T) = \mathbf{M}(i_\#T) & = \int_{C([0,1],X')} \mathbf{M}( \lpr{\theta} \mres X_0) \, d\bar\eta(\theta) \\
    & \stackrel{\eqref{eq:crucial_beta}}= \int_{C([0,1],X')} \mathbf{M}(\llbracket\beta(\theta)\rrbracket^a) \, d\bar\eta(\theta) = \int_{\mathbb D([0,1],X)}  \mathbf{M}(\llbracket\gamma\rrbracket^a) \, d\eta(\gamma).
\end{align*}
Since $\Mbf(T) = \|T\|(X)$, we conclude that the inequality of measures in~\eqref{eq:measures} must be an equality and hence 
\[
     \int_{\mathbb D([0,1],X)}  \|\llbracket\gamma\rrbracket^a\|(B) = \|T\|(B) \quad \text{for all Borel sets $B \subset X$.}
\]
This proves the mass decomposition for $T$, in terms of $\eta$. 

\subsection{Estimate for the jump part} Let $\theta \in \Lip([0,1],X)$. Recalling Remark~\ref{rem:composition} we compute
\begin{align*}
    \llbracket\beta(\theta)\rrbracket^j & = \sum_{t \in S_{\gamma_\theta}} \llbracket \aleph \circ \theta(\gamma^-_\theta(t)),\aleph \circ \theta(\gamma^+_\theta(t))\rrbracket \\
    & = \aleph_\# \lpr{\theta \circ \gamma_\theta}^j = \aleph_\#\theta_\# \llbracket [a_\theta,b_\theta] \smallsetminus K_\theta \rrbracket = \aleph_\# \lpr{\theta|_{a_\theta,b_\theta}} \mres X^+.
\end{align*}
Since $\Lip(\aleph) \le 1$, we deduce $\Mbf(\llbracket\beta(\theta)\rrbracket^j) \le \Mbf(\lpr{\theta}\mres X^+)$. From this bound for Lipschitz curves on $X'$ and the fact that $\bar \eta$ is concentrated in $\Lip_1([0,1],X')$, we deduce:
\begin{align*}
    \int_{\mathbb D([0,1],X)} |D^j \gamma|(0,1)\, d\eta(\gamma) & = \int_{\mathbb D([0,1],X)}  \mathbf{M}(\llbracket\gamma\rrbracket^j)\, d\eta(\gamma) \\
    & = \int_{C([0,1],X')} \mathbf{M}(\llbracket\beta(\theta)\rrbracket^j) \, d\bar \eta(\theta) \\
    & \le \int_{C([0,1],X')} \mathbf{M}(\llbracket\theta\rrbracket\mres X^+) \, d\bar \eta(\theta) \stackrel{\eqref{eq:R_mass}}= \mathbf{M}(R) \stackrel{\eqref{item:4}}\le \|\partial T\|_{\KR} + \eps.
\end{align*}
This establishes the Kantorovich estimates for the superposition of the approximate jump derivatives. 

This finishes the proof of the $SBV$-representation in Banach spaces (of dimension at least $2$). \qed

\subsection{Proof of the $\mathbf{SBV}$ representation in separable metric spaces}
This section extends the $\mathbf{SBV}$ representation from Banach spaces to general separable metric spaces. The key idea is to leverage isometric embeddings of separable metric spaces into $\ell^\infty$ and then apply the result established in the previous section.

\subsection{Push-forwards between ssometric metric spaces}\label{sec:push}
Here, we establish how metric currents and $\SBV$-curves behave under isometric push-forwards, which is essential for transporting our representation result.

Let $X,X'$ be metric spaces and let $\Phi : X \to X'$ be an isometric isomorphism. Recall that every current $T \in \mathbf{M}_1(X)$ induces a current $T' \in \mathbf{M}_1(X')$ by the push-forward $T' = \Phi_\# T$, defined by:
\[
    \text{\gls{push}}(f,\pi) = T(f\circ \Phi,\pi\circ \Phi), \qquad (f,\pi) \in \mathscr D^1(X').
\]
The locality and continuity properties are straightforward to verify, as the composition of Lipschitz maps is Lipschitz. Since $\Phi$ is an isometry, $\mathrm{Lip}(\pi \circ \Phi) = \mathrm{Lip}(\pi)$ for any $\pi \in \mathrm{Lip}(X')$. Thus,
\[
    |T'(f,\pi)| \le \mathrm{Lip}(\pi) \int_X |f \circ \Phi| \, d\|T\| = \int_X |f| \, d(\Phi_\#\|T\|),
\]
where $\Phi_\# \|T\|$ is the standard push-forward of a Borel measure. This implies $\|T'\| \le \|\Phi_\# T\|$. Because $\Phi^{-1}$ is also an isometry, an analogous argument shows $\|T\| \le \|(\Phi^{-1})_\# T'\|$. Therefore, $\Phi_\# : \mathbf{M}_1(X) \to \mathbf{M}_1(X') : T \mapsto T'$ is a bi-Lipschitz isomorphism.

Notice that if $\Phi : X \to X'$ is an isometry, then the map $\Phi_{\mathbb D}: \gamma \mapsto \Phi\circ \gamma$ also induces an isometry of Cadlag spaces $\mathbb D([0,1],X) \to \mathbb D([0,1],X')$. Indeed, since $\Phi$ is bi-Lipschitz from $X$ to $X'$, it follows that $\Phi_{\mathbb D}$ is bi-Lipschitz. The fact that it is an isometry follows directly from the fact that $\Phi$ is itself an isometry. This isometry also restricts to an isometry between $\Theta_{\SBV}(X)$ and $\Theta_{\SBV}(X')$.

\subsection{Embedding a separable metric space $X$ into a separable Banach space}
The classical Kuratowski embedding theorem states that every (pointed) separable metric space can be isometrically embedded into $\ell^\infty$. This section briefly reviews the construction of this embedding.
Let $(X,d)$ be a separable metric space. Choose a countable dense set $(x_h)_{h = 1}^\infty \subset X$ and an arbitrary reference point $x_o \in X$. Define the map $\Phi : X \to \ell^\infty$ by:
\[
    \Phi(x) \coloneqq (d(x,x_h) - d(x_o,x_h))_{h =1}^\infty, \qquad x \in X.
\]
The triangle inequality ensures that $\|\Phi(x)\|_{\ell^\infty} = \sup_{h \in \mathbb{N}} |d(x,x_h) - d(x_o,x_h)| \le d(x,x_o)$, so the map is well-defined.
Since $X$ is Hausdorff, $\Phi$ is injective. For any $x,y \in X$, the triangle inequality also gives:
\[
    \| \Phi(x) - \Phi(y)\|_{\ell^\infty} = \sup_{h \in \mathbb N} |d(x,x_h) - d(y,x_h)| \le d(x,y).
\]
Conversely, the map $z \mapsto |d(x,z) - d(y,z)|$ is $1$-Lipschitz. By the density of $(x_h)$, for any $\varepsilon > 0$, we can find $x_h$ such that $|d(x,x_h) - d(y,x_h)| \ge d(x,y) - \varepsilon$. Taking the supremum and letting $\varepsilon \to 0^+$, we conclude that $\| \Phi(x) - \Phi(y)\|_{\ell^\infty} = d(x,y)$.
Thus, $\Phi$ is an isometry. This implies that $\Phi$ restricts to a bijective isometric isomorphism from $X$ into its image $X' \coloneqq \Phi(X)$, which is itself contained in a separable subspace of $\ell^\infty$.

\subsection{Proof of the representation Theorem~\ref{thm:rep}}
We now combine the previous sections to prove the $SBV$ representation for $1$-dimensional metric currents in any separable metric space.
Let $X$ be a separable metric space and $T \in \mathbf{M}_1(X)$ be a $1$-dimensional metric current.
Let $\Phi : X \to \ell^\infty$ be the isometric embedding constructed in the previous section. Let $E \subset \ell^\infty$ be a separable Banach space of \textbf{dimension at least $2$} containing $X' = \Phi(X)$ (this can always be found by considering a larger space).
Define $T' = \Phi_\# T \in \mathbf{M}_1(X') \subset \mathbf{M}_1(E)$. Since $E$ is a Banach space of dimension at least two, by the results of the previous section, there exists a finite Borel measure $\mu$ on $\mathbb D([0,1],E)$, concentrated on $\Theta_{\SBV}(X') \cap \mathbb D([0,1],X')$ satisfying:
\begin{align}
    T'(\omega) & = \int_{ \mathbb D([0,1],E)} \llbracket\zeta\rrbracket^a(\omega) \, d\mu(\zeta), \\
    \|T'\|(B) & = \int_{ \mathbb D([0,1],E)} \|\llbracket\zeta\rrbracket^a\|(B) \, d\mu(\zeta). \label{eq:iso_mass}
\end{align}
Now, let $\Psi = \Phi^{-1} : X' \to X$ be the inverse isometry. 
Recalling that $\mu$ is concentrated on $\mathbb D([0,1],X') \cap \Theta_{\SBV}(X')$ we get
\begin{align*}
    T(f,\pi) & = T'(f\circ \Psi,\pi\circ \Psi) \\
       & = \int_{ \mathbb D([0,1],X')} \llbracket\zeta\rrbracket^a(f \circ \Psi,\pi\circ \Psi) \, d\mu(\zeta) 
         = \int_{ \mathbb D([0,1],X')} \llbracket\Psi(\zeta)\rrbracket^a(f,\pi) \, d\mu(\zeta).
\end{align*}
Let $\eta = \Psi_\# \mu$. By construction and the stability of Càdlàg spaces under Lipschitz isometries, $\eta$ is a finite Borel measure on $\mathbb D([0,1],X)$, which by the previous identity yields the representation:
\[
    T(f,\pi) = \int_{\mathbb D([0,1],X)} \llbracket u\rrbracket^a(f,\pi) \, d\eta(u).
\]
Moreover, by construction $\eta$ is concentrated on the injective curves of $\Theta_{\SBV}(X)$ with constant speed. 

The mass identity:
\[
    \|T\| = \int_{\Theta_{\SBV}(X)} \|\lpr{u}^a\| \, d\eta(u)
\]
follows directly from the identity $\Psi_\# \|T'\| = \|T\|$ and~\eqref{eq:iso_mass}.

For the boundary estimate, recall that for any $\varepsilon > 0$, the representation in the Banach space $E$ satisfies:
\[
    \int_{\Theta_{\SBV}(E)} |D^j \zeta| \, d\mu(\zeta) \le \|\partial T'\|_{\KR} + \varepsilon.
\]
Since $\Psi$ is an isometry, it preserves the jump lengths, that is, $|D^j \Psi(\zeta)| \equiv |D^j \zeta|$ as measures. Also, $\|\partial T'\|_{\KR} = \|\partial T\|_{\KR}$ because push-forwards by isometries also map Arens--Eells spaces isometrically. Thus, pushing forward $\mu$ to $\eta$ yields:
\[
    \int_{\Theta_{\SBV}(X)} |D^j u| \, d\eta(u) = \int_{\Theta_{\SBV}(E)} |D^j \Psi(\zeta)| \, d\mu(\zeta) \le \|\partial T\|_{\KR} + \varepsilon.
\]
The full statement of Theorem~\ref{thm:rep} is therefore proven.

\subsection{Proof of Corollary~\ref{cor:fragment}}

By Theorem~\ref{thm:rep}, there exists a finite Borel measure $\eta$ on $\mathbb D([0,1],X)$, concentrated on $\Theta_{\SBV}(X)$ and satisfying
\begin{align}
    T(\omega) & = \int_{ \mathbb D([0,1],E)} \llbracket u\rrbracket^a(\omega) \, d\eta(u), \\
    \|T\|(B) & = \int_{ \mathbb D([0,1],E)} \|\llbracket u\rrbracket^a\|(B) \, d\eta(u), \label{eq:iso_mass}
\end{align}
for any $\omega \in \mathscr D^1(X)$ and $B \in \mathcal B(X)$. 

Recalling from Lemma~\ref{lem:semicont} that the functions from $\Theta_{\SBV}(X)$  into $\R$ defined by $u \mapsto \ell(u)$ and $u \mapsto |D^j u|(I)$ are Borel, and hence $\eta$-measurable. It then follows from Lusin's theorem and Lemma~\ref{lem:YES} that the fragment map $\Gamma : \Theta_{\SBV}(X) \to \Frag(X) : u \mapsto \gamma_u$ introduced in Section~\ref{sec:fragments} is $\eta$-measurable. This allows us to define the push-forward $\mu \coloneqq \Gamma_\# \eta$, which defines a Borel measure on $\Frag(X)$. 

Since the functions $\gamma \mapsto \lpr{\gamma}(\omega)$ and $\gamma \mapsto \ell(\gamma)$, from $\Frag(X)$ into $\R$, are Borel (see~\cite[Sec.~6]{Schioppa_2016}) and $\mu$-summable, we can integrate them against $\mu$. Finally, since $\lpr{u}^a = \lpr{\gamma_u}$ as currents, it follows from the definition of push-forward that
\begin{align*}
    T(\omega) & = \int_{ \mathbb D([0,1],E)} \llbracket u\rrbracket^a(\omega) \, d\eta(u) \\
    & = \int_{ \mathbb D([0,1],E)} \llbracket \gamma_u\rrbracket(\omega) \, d\eta(u) \\
    & = \int_{\Frag(X)} \llbracket \gamma \rrbracket(\omega) \, d\mu(\gamma).
\end{align*}
The superposition for the mass measures and cancellation-free property follow analogously using that $\ell(u) = \ell(\gamma_u)$ and arguing as in the previous proofs. \qed

%%%%%%%%%% Section 8 %%%%%%%%%%

\section{Proof of the $1$-FCC characterization}\label{sec:char}
This section is dedicated to the proof of Theorem~\ref{thm:main1}), which states that a metric space $X$ is curve rectifiable if and only if every metric current $T \in \Mbf_1(X)$ can be approximated in the mass norm by normal currents.

Before diving into the proof, let us recall the definition of curve rectifiability: A metric space $X$ is said to be \emph{curve rectifiable} if every $1$-rectifiable set can be covered, up to an $\mathcal{H}^1$-null set, by countably many images of Lipschitz curves.

We divide the proof into two parts:

\subsection*{Sufficiency}
First, we show that if every metric current $T \in \Mbf_1(X)$ can be approximated in the mass norm by normal currents, then $X$ must be curve rectifiable.

Since $1$-rectifiable sets can be expressed as the union of at most countably many images of fragments, it suffices to show that the image of any fragment can be covered by at most countably many Lipschitz curves on $X$. More precisely, we want to prove the following: if $\gamma \in \Frag(X)$ is a nontrivial fragment and $\Gamma \coloneqq \gamma([0,1]) \subset X$, then there exists a countable set $\mathcal{J} \subset \Lip([0,1],X)$ such that
\begin{equation}\label{eq:main_1}
	\mathcal{H}^1 \left(\Gamma \smallsetminus \bigcup_{\theta \in \mathcal{J}} \Gamma_\theta \right) = 0 \qquad \text{where}\quad
	\Gamma_\theta := \Gamma \cap \theta([0,1]).
\end{equation}
By assumption, we can find a sequence of normal currents $(N_j) \subset \Nbf_1(X)$ satisfying
\begin{equation}\label{convNj}
\Mbf(\lpr{\gamma} - N_j) \to 0.
\end{equation}
Recalling Paolini and Stepanov's decomposition of normal metric currents \cite{paolini2012decomposition,paolini2012decomposition2}, for each $j \in \mathbb{N}$, there exists a finite Borel measure $\eta_j$ on $C([0,1],X)$, concentrated on a family $\mathcal{J}_j$ of $1$-Lipschitz curves in $\Lip_1([0,1],X)$ with constant speed one such that
\begin{align}\label{eq:no_mass}
	N_j(\omega) & = \int_{C([0,1],X)} \lpr{\theta}(\omega) \, d\eta_j(\theta) \qquad \text{for all $\omega \in \mathscr{D}^1(X)$} \nonumber \\
	\|N_j\| & = \int_{C([0,1],X)} \|\lpr{\theta}\| \, d\eta_j(\theta)
\end{align}
We observe that, by the triangular inequality, we have:
\[
	\Mbf(\lpr{\gamma} - N_j) \ge \Mbf(\lpr{\gamma} \mres B - N_j \mres B) \ge |\Mbf(\lpr{\gamma} \mres B) - \|N_j\|(B)| ,
\]
where $B$ is an arbitrary Borel set in $X$.
If we additionally assume that $B\subset \Gamma$, then $\Mbf(\lpr{\gamma} \mres B) =\|\lpr{\gamma}\|(B) = \mathcal{H}^1 (B)$ due to the injectivity of $\gamma$ and, by exploiting the mass convergence \eqref{convNj}, we infer that
\[ \|N_j\|(B) \to \mathcal{H}^1(B) \qquad \text{for all $B\subset \Gamma$}\]
As a consequence, we obtain the following key implication that will be needed later:
\begin{equation}\label{eq:missing}
B \subset \Gamma\ ,\ \mathcal{H}^1(B) >0 \quad \Rightarrow \quad \liminf_{j\to \infty} \|N_j\|(B) > 0.
\end{equation}

Next, for every $j \in \mathbb{N}$, we consider the following length maximization problem:
\[
	 \alpha_j \coloneqq \sup \set{H(\mathcal{F})}{\mathcal{F}\ \text{countable} \ \subset \mathcal{J}_j \; }, \qquad H(\mathcal{F}) \coloneqq \mathcal{H}^1\left(\bigcup_{\theta\in \mathcal{F}} \Gamma_\theta \right).
\]

Clearly $\alpha_j$ is finite, because it is always majorized by $\mathcal{H}^1(\Gamma)$. In fact, it is a maximum (i.e., it is attained). Indeed, let $(\mathcal{F}_k)_{k \in \mathbb{N}} \subset \mathcal{J}_j$ be a maximizing sequence such that $H(\mathcal{F}_k) \to \alpha_j$. Now, simply take $\mathcal{F}^j \coloneqq \bigcup_{k} \mathcal{F}_k$, which is again a countable subset of $\mathcal{J}_j$. Then, since the functional $H$ is non-decreasing with respect to set inclusion of countable families of curves, it holds
\[
	H(\mathcal{F}^j) \ge \lim_{k \to \infty} H(\mathcal{F}_k) = \alpha_j.
\]
This proves that  the supremum above is indeed a maximum. 

 Now, consider the family $\mathcal{J} \coloneqq \bigcup_j \mathcal{F}^j$, and notice that $\mathcal{J}$ is countable by construction. By the monotonicity property of the functional $H$ and since $\bigcup_{\mathcal{J}} \Gamma_\theta \subset \Gamma$, for every $j$ it holds:
\[
\alpha_j = H(\mathcal{F}^j) \le H(\mathcal{J}) = \mathcal{H}^1\left(\bigcup_{\theta\in \mathcal{J}} \Gamma_\theta\right) \le \mathcal{H}^1(\Gamma) ,
\]
whence, by the sub-additivity of outer measures:
\[
\mathcal{H}^1\left(\Gamma \smallsetminus \bigcup_{\mathcal{J}} \Gamma_\theta \right) \le \mathcal{H}^1(\Gamma)- \alpha_j\ .
\]
We can conclude with the desired equality \eqref{eq:main_1} (proving the sufficiency) once we show the following claim:
\begin{equation}\label{Claim_alphaj}
\alpha_j \to \mathcal{H}^1(\Gamma) \qquad \text{as $j\to \infty$}\ .
\end{equation}
Suppose that \eqref{Claim_alphaj} is false. Then there exists $\eps > 0$ such that
$\eps + \limsup_{j \to \infty} \alpha_j \le \mathcal{H}^1(\Gamma)$.
In particular, for sufficiently large $j$, there exists a Borel set $B_j \subset X$ such that
	\[
		B_j \subset \Gamma \smallsetminus \bigcup_{\theta\in \mathcal{F}^j} \Gamma_\theta, \qquad \mathcal{H}^1(B_j) \ge \eps,
	\]
where $\mathcal{F}^j$ is a maximizer of $H$ within the set of countable subsets of $\mathcal{J}_j$.

However, from \eqref{eq:missing}, for sufficiently large $j$, it holds $\|N_j\|(B_j) > 0$. In particular, by \eqref{eq:no_mass} we have
\[
	0 < \|N_j\|(B_j) = \int \|\lpr{\theta}\|(B_j) \,d \eta_j(\theta) = \int \mathcal{H}^1(\theta \cap B_j) \,d \eta_j(\theta).
\]

Therefore, there must exist at least one curve $\theta_j \in \mathcal{J}_j$ such that $\mathcal{H}^1(\theta_j \cap B_j)>0$. Then, consider $\mathcal{G}^j \coloneqq \mathcal{F}^j \cup \{\theta_j\}$. Observe that
\[
	\mathcal{H}^1 \left(\bigcup_{ \theta \in \mathcal{G}^j} \Gamma_\theta \right) \ge \mathcal{H}^1 \left(\bigcup_{ \theta \in \mathcal{F}^j} \Gamma_\theta \right) + \mathcal{H}^1(\theta_j \cap B_j) > \mathcal{H}^1 \left(\bigcup_{ \theta \in \mathcal{F}^j} \Gamma_\theta \right).
\]
This contradicts the maximality of $\mathcal{F}^j$ for $j$ sufficiently large. Hence, \eqref{Claim_alphaj} is true. \medskip

\subsection*{Necessity}
We show that if $X$ is curve rectifiable, then every metric current $T \in \Mbf_1(X)$ can be approximated in the mass norm by normal currents.

\subsubsection*{1. Approximation of fragments by normal currents}
The first objective is to demonstrate that any fragment within a curve rectifiable space can be approximated in the mass norm by normal currents.

To this end, consider an arbitrary fragment $\gamma \in \Frag(X)$. By the definition of a curve rectifiable metric space, there exists a sequence $(\theta_j)_{j =1}^\infty \subset \Lip_1([0,1],X)$ such that
\begin{equation}\label{eq:cr}
	\im \gamma \subset \left(\bigcup_{j=1}^\infty \im \theta_j \right) \cup N, \qquad \Hcal^1(N) = 0.
\end{equation}
This sequence can be refined inductively so that the images of distinct curves are essentially disjoint:
\begin{equation}\label{eq:no_over}
i \neq j \quad \Rightarrow \quad \Hcal^1(\im \theta_i \cap \im \theta_j) = 0.
\end{equation}
The idea is to approximate each "piece" $S_j \coloneqq \lpr{\theta_j} \mres \im \gamma$ separately with normal currents $N_{j,\eps} \in \Nbf_1(X)$ for each $\eps > 0$. The critical aspect, enabled by \eqref{eq:no_over}, is that the length of $\gamma$ will not be over-counted when combining the approximations of these pieces. Moreover, in order to ensure the approximation by normal currents, we must manage the potential for infinite mass boundaries arising from the full series, which requires a controlled approximation by partial sums. 

Let us then fix $j \in \mathbb{N}$ and focus on the approximation of $S_j = \lpr{\theta_j} \mres \im \gamma$. Since we are working with Lipschitz parameterizations, the underlying idea is similar to the problem of approximating a compact set $C \subset [0,1]$ in length measure by a sequence of finitely many intervals. The following construction mimics the ideas from~\cite[Prop. A.2]{Merlo}:

Fix $\eps > 0$. Let $\Gamma \coloneqq \im \gamma$ and consider the pre-image $K_j \coloneqq \theta_j^{-1}(\Gamma)$, which is a compact set in $[0,1]$. Since we do not want to add unnecessary length in the approximation, we consider $I_j \coloneqq [a_j,b_j] \subset [0,1]$, the smallest closed interval containing $K_j$ (whose endpoints are the infimum and supremum of $K_j$). Since $I_j \smallsetminus K_j \subset (0,1)$ is open, it can be expressed as a union of a sequence $(I_{j,h})$ of disjoint open intervals (possibly empty) of $(0,1)$. Therefore, there exists a sufficiently large number $h_j \in \mathbb{N}$ such that
\begin{equation}\label{eq:tail}
	\sum_{h \ge h_j} |I_{j,h}| < 2^{-(j+1)}\eps.
\end{equation}
Now, consider the compact set
\[
	C_j \coloneqq I_j \smallsetminus \bigcup_{h < h_j} I_{j,h}.
\]
By construction $K_j \subset C_j$, and $C_j$ is an $\eps$-approximation from the outside of $K_j$ in terms of length, consisting of finitely many closed intervals. We may now define a normal current approximation of $S_j$, by letting
\[
	N_{j,\eps} \coloneqq \lpr{\theta_j} \mres C_j = (\theta_j)_\# \lpr{C_j} \in \mathcal{R}_1(X).
\]
As the sum of finitely many integral currents associated with Lipschitz curves, it follows that $N_{j,\eps} \in \Nbf_1(X)$. Moreover, by construction $S_j = (\theta_j)_\# \lpr{K_j}$ and hence
\begin{equation}\label{eq:approx_eps}
	\Mbf(S_j - N_{j,\eps}) \le \Mbf( (\theta_j)_\# \lpr{I_j \smallsetminus C_j}) \le \Hcal^1(I_j \smallsetminus C_j) \stackrel{\eqref{eq:tail}}{<} 2^{-(j+1)}\eps.
\end{equation}
Observe that
\[
	\sum_{j = 1}^\infty (\Mbf(N_{j,\eps}) + \Mbf(S_{j})) \le \sum_{j = 1}^\infty (2\Mbf(S_j) + 2^{-(j+1)}\eps) = 2\mathcal{H}^1(\Gamma) + \eps.
\]
From this, we also deduce the existence of a sufficiently large natural number $h_\eps$ such that
\begin{equation}\label{eq:most}
	\sum_{j > h_\eps} \Mbf(N_{j,\eps}) + \Mbf(S_{j}) < 2^{-1}\eps.
\end{equation}

We can now "glue" most of our approximations together, by defining
\[
	N_\eps \coloneqq \sum_{j = 1}^{h_\eps} N_{j,\eps} \in \Nbf_1(X) \cap \mathcal{R}_1(X).
\]
Thus, we have:
\[
	\Mbf(\lpr{\gamma} - N_\eps) \stackrel{\eqref{eq:no_over},\eqref{eq:most}} \le \sum_{j = 1}^{h_\eps} \Mbf(S_j - N_{j,\eps}) + 2^{-1}\eps \stackrel{\eqref{eq:approx_eps}}\le \eps.
\]
Letting $\eps \to 0$ yields the desired approximation.\qed~\\

\subsubsection*{2. A superposition of normal approximations.}
In light of Corollary~\ref{cor:fragment}, every metric current $T \in \Mbf_1(X)$ can be represented, regardless of the connectedness assumptions on $X$, as a superposition of currents associated with fragments such that there are no mass cancellations. Specifically, we demonstrated the existence of a finite Borel measure $\eta$ on $\Frag(X)$ such that
\begin{align}
	T(\omega) & = \int_{\Frag(X)} \lpr{\gamma})(\omega) \, d\eta(\gamma) \\
	\|T\|(B) & = \int_{\Frag(X)} \mathcal{H}^1 (\im \gamma \cap B) \, d\eta(\gamma). \label{eq:nomas}
\end{align}

In the previous step we have shown that for any $\eps$ and any fragment $\gamma \in \Frag(X)$ there exists (at least one) normal current $N_{\gamma,\eps} \in \Nbf_1(X)$ satisfying $\Mbf(\lpr{\gamma} - N_{\gamma,\eps}) < \eps$. From a quantitative viewpoint, the superposition
\begin{equation}\label{eq:candidate}
	R_\eps(\omega) \coloneqq \int_{\Frag(X)} N_{\gamma,\eps}(\omega) \, d\eta(\gamma)
\end{equation}
of such approximations is a natural candidate for the mass approximation of $T$. Indeed, supposing that $R_\eps$ is well-defined \emph{and} defines a normal current, it is straightforward to verify, using \eqref{eq:nomas}, that
\[
	\Mbf(T - R_\eps) \le \int_{\Frag(X)} \Mbf(\lpr{\gamma} - N_{\gamma,\eps}) \, d\eta(\gamma) \le \eta(\Frag(X)) \eps.
\]

However, defining \eqref{eq:candidate} introduces two key challenges:\medskip
\begin{itemize}
\item[(a)] The first is the \textbf{well-definedness} of $R_\varepsilon$ as a metric current. For the integral in \eqref{eq:candidate} to be well-defined, the map $\gamma \mapsto N_{\gamma,\varepsilon}$ must be $\eta$-measurable, and $\gamma \mapsto \Mbf(N_{\gamma,\eps})$ must be $\eta$-summable. Summability follows directly from \eqref{eq:nomas}, the finiteness of $\eta$ and the bound $\Mbf(\lpr{\gamma} - N_{\gamma,\eps}) < \eps$. While our construction in Step~1 might be intuitively measurable, rigorously establishing this directly can be cumbersome. To address this, we will appeal to \textbf{Von Neumann's measurable selection theorem}. \\

\item[(b)] The second is whether $R_\varepsilon$ defines a \textbf{normal current}. The mass of the boundary, $\Mbf(\partial N_{\gamma,\varepsilon})$, generally increases with the irregularity of the fragment $\gamma$. Consequently, $R_\varepsilon$ as defined in \eqref{eq:candidate} will not necessarily be a normal current. This is consistent with the fact that $\Nbf_1(X)$ is not a closed subset of $\Mbf_1(X)$. Our solution involves constructing a monotone sequence of Borel sets $\mathcal{X}_j \subset \Frag(X)$ that converge to a full $\eta$-measurable set. On these sets, we will ensure that $\Mbf(\partial N_{\gamma,\varepsilon}) \le j$ for $\eta$-almost every $\gamma$, and then we will take the limit as $j \to \infty$.
\end{itemize}\medskip

With these considerations in mind, we can now proceed to develop the rigorous mathematical arguments:\\

\subsubsection*{2.(a) A measurable selection of normal approximations.}
The primary goal of this section is to construct an \textbf{$\eta$-measurable map} $\Phi_\eps : \Frag(X) \to \Nbf_1(X)$ using a measurable selection theorem. This map must satisfy the following condition:
\begin{equation}\label{eq:eps_meas}
	\Mbf(\lpr{\gamma} - \Phi_\eps(\gamma)) \le \eps.
\end{equation}
\textbf{Notation.} For simplicity of notation, and to avoid carrying a heavy $\eps$-dependence throughout the mathematical objects, we suppress $\eps$ from the notation of $\Phi$ and other related sets and functions where the dependence on $\eps$ is implicit from the context of this section.

To begin, we can assume that \textbf{$X$ is a separable space}. This assumption is crucial because it ensures that:
\begin{itemize}
    \item The \textbf{space of fragments $\Frag(X)$}, equipped with the Hausdorff metric of their graphs (as subsets of $\Kcal([0,1]\times X)$), is a \textbf{Polish space}.
    \item The space $C([0,1],X)$ is a \textbf{Polish space} under the topology induced by uniform convergence. Consequently, its subspace $\Lip_m(X) \coloneqq \set{\theta \in \Lip([0,1],X)}{\Lip(\theta) \le m}$ is also a Polish space for all $m \in \mathbb{N}$, when endowed with the uniform distance between functions.
\end{itemize}

Now, let $\mathcal{X} \coloneqq \Frag(X)$. Consider the \textbf{product spaces $\mathcal{Z}_m \coloneqq \Lip_m(X)^m$}. This space is also Polish, inheriting its topology from the uniform metric on each coordinate (which induces the product topology). Next, we define a map $\Delta_m: \mathcal{X} \times \mathcal{Z}_m \to \mathbb{R}$ given by
\begin{align*}
	\Delta_m(\gamma, (\theta_1,\dots,\theta_m)) & \coloneqq \Mbf( \lpr{\gamma} - \sum_{k = 1}^m \lpr{\theta_k})
\end{align*}
These maps are \textbf{Borel measurable}. This follows from the Borel measurability of operations on currents (e.g., sum, pushforward, mass) and the fact that an integral current associated with a Lipschitz curve is Borel measurable.

From the preceding discussion, it follows that \textbf{$\mathcal{Y}_m \coloneqq \Delta_m^{-1}([0,\eps)) \subset \mathcal{X} \times \mathcal{Z}_m$} is a standard \textbf{Borel set}. As a result, both $\mathcal{Y}_m$ and its projection,
\[
\mathcal{X}_m \coloneqq \pi_{\mathcal{X}}(\mathcal{Y}_m) = \set{\gamma \in \mathcal{X}}{\exists (\theta_1,\dots,\theta_m) \in \mathcal{Z}_m, \Delta_m(\gamma, (\theta_1,\dots,\theta_m)) < \eps},
\]
are \textbf{analytic sets} in $\mathcal{X} \times \mathcal{Z}_m$ and $\mathcal{X}$, respectively. Given that $\eta$ is a finite Borel measure (see \cite[Thm. 4.3.1]{Borel_Sets_Book}), $\mathcal{X}_m$ is \textbf{$\eta$-measurable}. 

Since the assumptions of Von Neumann's measurable selection theorem (see \cite[Thm. 5.5.2]{Borel_Sets_Book}) are satisfied, there exists an \textbf{$\eta$-measurable section $\Psi_m : \mathcal{X}_m \to \mathcal{Z}_m$} that satisfies:

% \commentGuy{not necessary; just need to keep in case a referee asks the question} \commentAdo{In the previous version, I thought it was necessary to give the following explanation. What do you think:}

% \Ado{This and the fact that $\mu$ is a finite measure  (see~\cite[Thm. 4.3.1]{Borel_Sets_Book}) implies that $\mathcal{X}_m$ is $\mu$-measurable. Since the assumptions of Von Neumann's section theorem (see~\cite[Thm. 5.5.2]{Borel_Sets_Book}) are satisfied for $\mathcal Y_m$, then there exists a section $u_\eps :\mathcal X_m \to Z_m$ which is measurable with respect to $\sigma(\sum^1_1(Y))$, the $\sigma$-algebra generated by the analytical sets of $Y$. Notice that $\sigma(\sum^1_1(Y))$ contains the $\sigma$-algebra $\mathcal B(Y)$ generated by the Borel sets in $Y$ (the one induced by the metric). Hence, the section $u_\eps$ is $\mu$-measurable. }

\[
	\Mbf(\lpr{\gamma} - \sum_{k=1}^m \lpr{(\Psi_m(\gamma))_k}) \le \eps \qquad \text{for $\eta$-almost every $\gamma \in \mathcal{X}_m$.}
\]
To be precise, $\Psi_m(\gamma)$ is a tuple $(\theta_1, \dots, \theta_m)$. We then define $\Phi_m(\gamma) \coloneqq \sum_{k=1}^m \lpr{(\Psi_m(\gamma))_k}$, which also defines a measurable map. 

Our construction in Step 1 ensures that any $\gamma$ belongs to $\mathcal{X}_m$ for some sufficiently large $m \in \mathbb{N}$. Specifically, since a fragment can be approximated arbitrarily well by a finite sum of integral Lipschitz curve currents (as shown in Step 1), and since $\Lip_m(X)$ contains curves with Lipschitz constant up to $m$, we have that $\mathcal{X}_m \subseteq \mathcal{X}_{m+1}$ for all $m \in \Nbb$. Thus, we can write:
\[
	\mathcal{X} = \bigcup_{m \in \mathbb{N}} \mathcal{X}_m = \bigsqcup_{m \in \mathbb{N}} \mathcal{W}_m, \quad \text{where} \quad \mathcal{W}_m \coloneqq \mathcal{X}_m \smallsetminus \bigcup_{0\le k < m} \mathcal{X}_k.
\]
Since each $\mathcal{W}_m$ is an analytic set (and thus $\eta$-measurable), the \textbf{$\eta$-measurable section $\Phi_\eps : \mathcal{X} \to \Nbf_1(X)$} defined by
\[
	 \Phi_\eps(\gamma) \coloneqq \sum_{m \in \mathbb{N}} \mathbf{1}_{\mathcal{W}_m}(\gamma) \Phi_m(\gamma),
\]
satisfies condition \eqref{eq:eps_meas}, thereby proving the desired assertion.\\

\subsubsection*{2.(b) Construction of the normal currents.}
Let $\eps > 0$. Recalling the notation introduced in the previous step, we have $\mathcal{X} = \Frag(X)$ and
\[
	\mathcal{X}_m = \set{\gamma \in \mathcal{X}}{\exists (\theta_1,\dots,\theta_m) \in \Lip_m([0,1],X)^m, \Mbf(\lpr{\gamma} - \sum_{k=1}^m \lpr{\theta_k}) < \eps}.
\]
Notice that the sequence $(\mathcal{V}_k)_{k = 1}^\infty$ of $\eta$-measurable subsets of $\mathcal{X}$ given by
\[
	\mathcal{V}_k \coloneqq \bigcup_{m \le k} \mathcal{X}_m, \qquad k \in \mathbb{N},
\]
is monotone with respect to set-inclusion. Moreover, by construction $\bigcup_k \mathcal{V}_k = \bigcup_m \mathcal{X}_m = \mathcal{X}$. Hence, from the dominated convergence theorem we get
$\mathbf{1}_{\mathcal{V}_k} \to 1$ in $L^1(\eta)$,
where $\mathbf{1}_A$ denotes the indicator function of an $\eta$-measurable set $A \subset \mathcal{X}$. Lastly, since for every $\gamma \in \mathcal{V}_k$, the current $\Phi_\eps(\gamma)$ consists of at most $k$ integral currents associated with Lipschitz curves, it follows that
\begin{equation}\label{eq:truncation}
	\Mbf(\partial \Phi_\eps(\gamma)) \le 2k \qquad \text{for all $\gamma \in \mathcal{V}_k$}.
\end{equation}

Let $\eta_\eps \coloneqq (\Phi_\eps)_\# \eta$, where $\Phi_\eps$ is the selection from the previous section satisfying \eqref{eq:eps_meas}. By construction, $\eta_\eps$ is a finite Borel measure on $\Mbf_1(X)$, concentrated on $\Nbf_1(X)$.

For $k \in \mathbb{N}_0$, we define $\mathcal Z = \bigcup_{m \in \Nbb} \mathcal Z_m$ and set
\begin{align*}
	N_{\eps,k}(\omega) \coloneqq \begin{cases} \displaystyle
	\int_{\mathcal Z} N(\omega) \, d\eta_\eps(N) & \text{if $k = 0$}\\[1em]
	\displaystyle \int_{\Phi_\eps(\mathcal{V}_k)} N(\omega) \, d\eta_\eps(N) & \text{if $k > 0$}
	\end{cases}
\end{align*}
Notice that $N_{\eps,k}$ is well-defined, and satisfies all the properties of currents because each $\mathcal Z_m$ inherits the measurability properties of $\Frag(X)$ and $\Lip([0,1],X)$ as parametrization of metric currents. The only axiom that requires perhaps an explanation is the continuity property: if $\pi_j \toweakstar \pi$ in $X$, then $N(f,\pi_j) \to N(f,\pi_j)$ for all $N \in \mathcal Z$ and the desired result then follows from the dominated convergence theorem. 

In particular, $N_{\eps,k} \in \Mbf_1(X)$ for every $k \in \Nbb_0$.  If, moreover, $k \in \mathbb{N}$, then for every $f \in \Lip(X)$ it holds
\[
	|N_{\eps,k}(1,f)| = \left|\int_{\mathcal{V}_k} \Phi_\eps(\gamma)(1,f) \, d\eta(\gamma)\right| \le \int_{\mathcal{V}_k} \Mbf(\partial \Phi_\eps(\gamma)) \, d\eta(\gamma) \stackrel{\eqref{eq:truncation}}{\le} 2k \eta(\mathcal{V}_k) \le 2k \eta(\mathcal{X}).
\]
Taking the supremum over $f$ in the left-hand side we hence obtain $\Mbf(\partial N_{\eps,k}) \le 2k \eta(\mathcal{X}) < \infty$, which demonstrates that $N_{\eps,k} \in \Nbf_1(X)$ for every $k \in \mathbb{N}$.\\

\subsubsection*{3. Conclusion.}
Let $k \in \mathbb{N}$. From the triangle inequality we get
\begin{align*}
	\Mbf(N_{\eps,0} - N_{\eps,k}) & \le \int_{\Phi_\eps(\mathcal{X} \smallsetminus \mathcal{V}_k)} \Mbf(N) \, d\eta_\eps(N) \\
	& = \int_{\mathcal{X} \smallsetminus \mathcal{V}_k} \Mbf(\Phi_\eps(\gamma)) \, d\eta(\gamma) \\
	& \le \int_{\mathcal{X} \smallsetminus \mathcal{V}_k} (\Mbf(\lpr{\gamma}) + \eps) \, d\eta(\gamma).
\end{align*}
Recalling that $\eta$ is a finite measure, that $\int \ell(\gamma) \, d\eta(\gamma) = \Mbf(T) < \infty$ and that $\mathbf{1}_{\mathcal{V}_k} \to 1$ in $L^1(\eta)$, we deduce that $\Mbf(N_{\eps,0} - N_{\eps,k}) \to 0$ as $k \to \infty$.
In particular, there exists $k_\eps \in \mathbb{N}$ large enough so that
\begin{equation}\label{eq:almost}
\Mbf(N_{\eps,0} - N_{\eps}) \le \eps, \qquad N_\eps \coloneqq N_{\eps,k_\eps} \in \Nbf_1(X).
\end{equation}

Now, appealing to the properties of the push-forward and the definition of $\eta_\eps$ we obtain
\begin{align*}
	\Mbf(T - N_{\eps,0}) = \Mbf\left(\int_{\mathcal{X}} (\lpr{\gamma} - \Phi_\eps(\gamma)) \, d\eta(\gamma)\right) \le \int_{\mathcal{X}} \Mbf(\lpr{\gamma} - \Phi_\eps(\gamma)) \, d\eta(\gamma) \le \eps \eta(\mathcal{X})
\end{align*}
for all $\omega \in \mathscr{D}^1(X)$, which proves
\begin{equation}\label{eq:almost2}
\Mbf(T - N_{\eps,0}) \le \eps \eta(\mathcal{X})
\end{equation}
Gathering \eqref{eq:almost}-\eqref{eq:almost2} gives
\[
	\Mbf(T - N_\eps) \le \Mbf(T - N_{\eps,0}) + \Mbf(N_{\eps,0} - N_\eps) \le \eps (\eta(\mathcal{X}) + 1).
\]

We have thus proved the following: for a given metric current $T \in \Mbf_1(X)$, there exists a family of normal currents $\{N_\eps\}_{\eps > 0} \subset \Nbf_1(X)$ satisfying
\[
	\lim_{\eps \to 0} \Mbf(T - N_\eps) = 0.
\]
This finishes the proof.\qed

  \printbibliography 
  \pagenumbering{gobble}
  
  \printnoidxglossary[sort=use,type=symbol,style=symbolsstyle,title=List of symbols]

% \vfill
% \eject

\appendix

\section{Measurability results}
\label{sec:appendix_measurability}
\small{
In this appendix, we address several technical measurability questions that underpin the theoretical framework of this paper. While the results presented here are largely expected, providing their detailed proofs ensures completeness and allows the main arguments of the paper to remain focused on core concepts. 

\begin{lemma}\label{lem:Haus_implies_L1}
Let $(K_j)_{j \in \mathbb{N}} \subset \mathcal K([0,1])$ be a sequence of closed sets converging to a closed set $K \subset [0,1]$ in the Hausdorff distance, i.e., $d_H(K_j,K) \to 0$ as $j \to \infty$. If, in addition, their Lebesgue measures converge, $\mathscr L^1(K_j) \to \mathscr L^1(K)$, then their characteristic functions converge in $L^1([0,1])$:
\[
\mathbf 1_{K_j} \longrightarrow \mathbf 1_{K} \quad \text{as } j \to \infty \text{ in } L^1([0,1]).
\]
\end{lemma}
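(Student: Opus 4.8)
The plan is to rewrite the $L^1$ distance as the Lebesgue measure of the symmetric difference and to estimate the two one-sided pieces separately: the ``excess'' part $K_j\setminus K$ is controlled purely by Hausdorff convergence (in fact by only one of the two inclusions it encodes), while the ``missing'' part $K\setminus K_j$ is controlled by combining that estimate with the hypothesis $\mathscr L^1(K_j)\to\mathscr L^1(K)$.

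First I would note that $\|\mathbf 1_{K_j}-\mathbf 1_{K}\|_{L^1([0,1])}=\mathscr L^1(K_j\triangle K)=\mathscr L^1(K_j\setminus K)+\mathscr L^1(K\setminus K_j)$, so it suffices to show both summands vanish as $j\to\infty$. For the first one, fix $\varepsilon>0$ and let $K^\varepsilon\coloneqq\{x\in[0,1]:\dist(x,K)\le\varepsilon\}$, which is closed. By definition of the Hausdorff distance, $\sup_{x\in K_j}\dist(x,K)\le d_H(K_j,K)\to0$, so $K_j\subset K^\varepsilon$ for all $j$ large enough, whence $\limsup_{j}\mathscr L^1(K_j\setminus K)\le\mathscr L^1(K^\varepsilon\setminus K)$. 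Now, since $K$ is closed, $\bigcap_{\varepsilon>0}K^\varepsilon=K$; the sets $K^\varepsilon$ are nested decreasing in $\varepsilon$ and have finite measure, so continuity from above gives $\mathscr L^1(K^\varepsilon\setminus K)=\mathscr L^1(K^\varepsilon)-\mathscr L^1(K)\to0$ as $\varepsilon\to0^+$. Letting $\varepsilon\to0$ yields $\mathscr L^1(K_j\setminus K)\to0$.

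For the second summand I would use the measure hypothesis through the identity $\mathscr L^1(K\setminus K_j)=\mathscr L^1(K)-\mathscr L^1(K\cap K_j)=\bigl(\mathscr L^1(K)-\mathscr L^1(K_j)\bigr)+\mathscr L^1(K_j\setminus K)$: the first bracket tends to $0$ by assumption, and the last term tends to $0$ by the previous paragraph. Adding the two estimates finishes the proof. (If the convention on $d_H$ allows $K=\emptyset$, the statement is immediate, since then $\|\mathbf 1_{K_j}\|_{L^1}=\mathscr L^1(K_j)\to0$.)

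There is no real obstacle here; the only points deserving care are that Hausdorff convergence to a \emph{closed} limit provides the inclusion $K_j\subset K^\varepsilon$ eventually (only this half of the distance is used), and that the hypothesis $\mathscr L^1(K_j)\to\mathscr L^1(K)$ is precisely what is needed to bound the part of $K$ not captured by $K_j$ — without it the conclusion fails (e.g. $K_j$ a fat-Cantor-type set tightly surrounding a nowhere dense $K$ of smaller measure).
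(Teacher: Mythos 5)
Your proof is correct, and it takes a genuinely different and more elementary route than the paper's. The paper argues in $L^2$: it uses the measure hypothesis to get convergence of norms, extracts a weak-$L^2$ cluster point $\theta$ of the bounded sequence $\mathbf 1_{K_j}$, identifies $\theta=\mathbf 1_K$ by showing $\theta$ vanishes a.e.\ off $K$ (via the Hausdorff convergence) and has total integral $\mathscr L^1(K)$, and then upgrades weak convergence plus norm convergence to strong convergence. You instead work directly with $\mathscr L^1(K_j\triangle K)$: the inclusion half of the Hausdorff convergence gives $K_j\subset K^\varepsilon$ eventually, and continuity from above (using that $K$ is closed, so $\bigcap_\varepsilon K^\varepsilon=K$) kills $\mathscr L^1(K_j\setminus K)$; the identity $\mathscr L^1(K\setminus K_j)=(\mathscr L^1(K)-\mathscr L^1(K_j))+\mathscr L^1(K_j\setminus K)$ then disposes of the other half using the measure hypothesis. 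Both proofs use the two hypotheses in essentially the same roles, but yours avoids any compactness or subsequence extraction, is fully quantitative, and extends just as readily to compact subsets of $\R^d$ (where $K^\varepsilon$ still has finite measure), so continuity from above still applies. Your closing observation that only one of the two inclusions encoded in $d_H$ is actually needed is a small sharpening not visible in the paper's argument.
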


\begin{remark}
    The  proof given below can be easily extended to the case of  a sequence of closed subsets $(K_j)$ in $\R^d$.
\end{remark} 

\begin{proof} Actually we will show that $\| \mathbf 1_{K_j}-\mathbf 1_{K}\|_{L^2(0,1)} \to 0$. By the assumption of convergence  $\mathscr L^1(K_j) \to \mathscr L^1(K)$, we have that $\| \mathbf 1_{K_j}\|_{L^2(0,1)} \to \|\mathbf 1_{K}\|_{L^2(0,1)}$.
Therore it is enough to prove that $\mathbf 1_{K_j} $ converges weakly to $\mathbf 1_{K}$ in $L^2(0,1)$.  
As $\mathbf 1_{K_j}$ is bounded in $L^2(0,1))$, we can assume, after extracting a subsequence,  that  $\mathbf 1_{K_j}$  converges weakly  to a function $\theta \in L^2([0,1])$. Then $\theta$ satisfies $\theta\in  [0,1]$ a.e. while 
$$\int_{[0,1]}  \theta \, dx = \lim_{j\to\infty} \int_{[0,1]} \mathbf 1_{K_j} = \mathscr L^1(K) .$$
 Let $x\notin K$ and a closed interval $B$ centered at $x$ such that $K\cap B$ is empty.
Then by the Hausdorff convergence  $K_j\to K$, we have that $K_j\cap B$ is empty for large $j$, whence 
$\int_B \theta \, dx =\lim_j \mathscr L^1(K_j\cap B)=0$.  It follows that  $\theta= 0$ a.e. on $K^c$.
Therefore, since $\theta\le 1$, the equality  $\int_{[0,1]}  \theta \, dx= \int_{K}  \theta \, dx  = \mathscr L^1(K)$ implies that   $\theta=\mathbf 1_K$ - a.e.. We conclude  that $\mathbf 1_K$ is the unique weak cluster point, thus the weak limit of  $(\mathbf 1_{K_j})$ as we claimed.
\end{proof}

The following standard result is included with a proof for completeness, as we could not find a readily available demonstration in the literature:

\begin{lemma}\label{lem:Hausdorff}
Let $B \subset X$ be a closed set. The inverse image map $L: C([0,1],X) \to \mathcal{K}([0,1])$ defined by $L(\theta) = \theta^{-1}(B)$ is Borel measurable. Here, $\mathcal{K}([0,1])$ denotes the space of non-empty closed subsets of $[0,1]$ equipped with the Hausdorff distance $d_H$. 
\end{lemma}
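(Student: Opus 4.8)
The statement to prove is the following Borel-measurability claim: for a fixed closed set $B \subset X$, the map $L : C([0,1],X) \to \mathcal K([0,1])$, $L(\theta) = \theta^{-1}(B)$, is Borel measurable, where the target carries the Hausdorff metric.

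Here is how I would approach it. First note that $\theta^{-1}(B)$ is genuinely closed for every continuous $\theta$ (preimage of a closed set under a continuous map), so $L$ is well-defined into $\mathcal K([0,1])$ — though one should be slightly careful about the empty set, since $\mathcal K([0,1])$ in the lemma's phrasing consists of nonempty closed sets; I would either restrict attention to the Borel set $\{\theta : \theta^{-1}(B) \neq \emptyset\}$ (itself Borel, as I note below) or simply adjoin $\emptyset$ as an isolated point of the hyperspace. The key tool is the Effros/Vietoris characterization of the Borel structure on the hyperspace of closed subsets of a compact metric space: the Borel $\sigma$-algebra generated by the Hausdorff metric on $\mathcal K([0,1])$ is generated by the subbasic sets $\{K : K \cap U \neq \emptyset\}$ for $U$ ranging over open subsets of $[0,1]$ (equivalently, over a countable basis of open sets). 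Hence it suffices to show that for each open $U \subseteq [0,1]$, the set $L^{-1}(\{K : K \cap U \neq \emptyset\}) = \{\theta \in C([0,1],X) : \theta^{-1}(B) \cap U \neq \emptyset\} = \{\theta : \exists\, t \in U,\ \theta(t) \in B\}$ is Borel in $C([0,1],X)$.

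The plan for that is to write $U$ as a countable union of rational closed subintervals $[a,b] \subset U$ with $a,b \in \mathbb Q$, reducing to showing that $A_{[a,b]} := \{\theta : \exists\, t \in [a,b],\ \theta(t) \in B\}$ is Borel — in fact I expect it to be closed. Indeed, if $\theta_n \to \theta$ uniformly and $t_n \in [a,b]$ with $\theta_n(t_n) \in B$, then by compactness of $[a,b]$ a subsequence $t_{n_k} \to t \in [a,b]$; uniform convergence plus continuity of $\theta$ gives $\theta_{n_k}(t_{n_k}) \to \theta(t)$, and since $B$ is closed, $\theta(t) \in B$, so $\theta \in A_{[a,b]}$. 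Thus $A_{[a,b]}$ is closed, a countable union of such sets over rational $a \le b$ with $[a,b]\subset U$ equals $\{\theta : \theta^{-1}(B)\cap U \neq\emptyset\}$, which is therefore $F_\sigma$ and in particular Borel. Collating over a countable basis $\{U_n\}$ of $[0,1]$ shows $L$ is Borel. (The set $\{\theta : \theta^{-1}(B) = \emptyset\}$ is the complement of $A_{[0,1]}$, hence open, confirming the well-definedness remark above.)

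The only genuine subtlety — the step I'd flag as the "main obstacle," though it is standard — is citing/justifying that the Hausdorff-metric Borel $\sigma$-algebra on $\mathcal K([0,1])$ is generated by the hit sets $\{K : K\cap U\neq\emptyset\}$. I would handle this by the classical fact that for a compact metric space $Y$, the Hausdorff metric on $\mathcal K(Y)$ is separable and its open balls can be described via finitely many hit-and-miss conditions; more concretely, $\{K : d_H(K,K_0)<r\}$ is a countable Boolean combination of sets of the form $\{K : K\cap B(y,\rho)\neq\emptyset\}$ and $\{K : K\subseteq V\}$ with $V$ open, and the latter "miss" sets $\{K : K\cap F = \emptyset\}$ for $F$ closed are complements of hit sets $\{K : K\cap F\neq\emptyset\}$, which are themselves Borel (countable intersections of hit sets against shrinking open neighborhoods of $F$). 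One can alternatively cite \cite[\S4.F]{Kechris} (the Effros Borel structure coincides with the Borel structure of the Hausdorff metric for compact metric $Y$). With that in hand the argument above goes through verbatim and the lemma follows.
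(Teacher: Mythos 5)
Your proof is correct, but it follows a genuinely different route from the paper's. The paper works directly with the Hausdorff metric: it takes an arbitrary ball centered at $K \in \mathcal K([0,1])$ of radius $\delta$, decomposes membership of $L(\theta)$ in that ball into the two one-sided conditions $\theta^{-1}(B) \subset K^\delta$ and $K \subset (\theta^{-1}(B))^\delta$, and shows by separate compactness arguments that the first defines an open subset of $C([0,1],X)$ and the second an $F_\sigma$ subset; separability of $\mathcal K([0,1])$ then finishes the job. You instead pass through the Vietoris/Effros description of the Borel structure on the hyperspace --- that it is generated by the hit sets $\{K : K \cap U \neq \emptyset\}$ --- and reduce everything to the single, one-line observation that $A_{[a,b]} = \{\theta : \exists\, t \in [a,b],\ \theta(t) \in B\}$ is closed. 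What your approach buys is a shorter analytic core at the price of invoking (or re-proving) the hyperspace generation fact; the paper's is self-contained modulo the metric definition but needs two distinct topological verifications. Both arguments are sound, and each is standard. One further point in your favor: you are right to flag the nonemptiness issue --- the paper's own remark immediately after the lemma exhibits a sequence with $L(\theta_j) = \emptyset$, so the codomain as literally stated ("non-empty closed subsets") is not quite correct, and your fix (restricting to the open set $\{\theta : \theta^{-1}(B) \neq \emptyset\}$ or adjoining $\emptyset$ to the hyperspace) is the appropriate repair. A cosmetic note only: the paper writes the ball preimage as a union $A_K^\delta \cup B_K^\delta$ where it should be an intersection, but this does not affect either proof.
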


\begin{remark}
    Notice that the continuity  of the map $L$ fails in general. Indeed, consider for instance the case where $X=\R^2$, $B=\{(0,0)\}$
and  $\theta_j(t)= j^{-1} (\cos 2\pi t, \sin 2\pi t)$. Then  $L(\theta_j)$ is empty while $\theta_j$ converges to a constant map such that $L(\theta)=[0,1]$.
\end{remark}

\begin{proof} In  this proof,   for every $C \in\mathcal K([0,1])$ and  
 $\delta>0$, we denote   $C^\delta$ the enlarged set $C^\delta:= \{ s\in [0,1] : \dist(s,C)< \delta\}$ which is relatively open in $[0,1]$.
The Borel  tribe of $\mathcal{K}([0,1])$ is generated by the open balls. The inverse image by the map $L$ of a ball centered at $K$ and of radius $\delta$ can written as  $A_K^\delta \cup B_K^\delta$ where:
\[   A_K^\delta =\set{\theta \in C([0,1], X)} { \theta^{-1}(B) \subset K^{\delta}},\quad 
 B_K^\delta =\set{\theta \in C([0,1], X)} { K\subset (\theta^{-1}(B))^\delta }.
 \]
Therefore we are done if we show that both $A_K^\delta$ and $ B_K^\delta$ are Borel subsets of $C([0,1], X)$.  

First we claim that $ A_K^\delta$ is an open subset. If it is not the case, there exists $\theta \in  A_K^\delta$ and a sequence $(\theta_j)$
in  the complement of $ A_K^\delta$  such that $\theta_j \to \theta$ uniformly.
This means that , for every $j$, there exits $t_j\in [0,1]\smallsetminus K^{\delta}$ such that $\theta_j(t_j)\in B$.
As $[0,1]\smallsetminus K^\delta$ is compact, after passing to a subsequence, we can assume that $t_j\to t$ where $t\notin K^\delta$. Then, by the uniform convergence of $\theta_j$ and the fact that $B$ is closed, we deduce that $\theta(t)\in B$. This would imply that
$\theta\notin  A_K^\delta$ a contradiction. 

Next we claim that $ B_K^\delta$ is a countable union of closed subsets thus a Borel of $C([0,1],X)$. Indeed we observe that, for every monotone sequence $\delta_n \nearrow \delta$, it holds the equality $B_K^\delta = \bigcup  B_K^{\delta_n}$ where
\[ C_K^\delta :=\set{\theta \in C([0,1], X)} { K\subset \overline {(\theta^{-1}(B))^\delta }}.
\]
Therefore we are reduced to   prove that the subset  $C_K^\delta$  defined above is closed.
Let  $\theta_j \in  C_K^\delta$ be such that $\theta_j \to \theta$ uniformly. By definition, for  each $j$, there exists a point  $t_j \in [0,1] \cap \theta_j^{-1}(B)$ such that $\dist(t_j,K) \le \delta$. As before,  we may assume that $t_j\to t\in [0,1]$ and  , by passing to the limit, we deduce that $\theta(t)\in B$ while $\dist(t,K)\le t$. This means precisely that  $\theta\in C_K^\delta$.
 \end{proof}

Finally, we prove the  crucial measurability result which we use for proving the $SBV$-representation Theorem \ref{thm:rep}.\\

\begin{proposition}\label{prop:measurability_beta}
Let $B \subset X$ be a closed set and let $\eta$ be a finite Borel measure on $C([0,1],X)$, which is concentrated on $\Theta(X)$, the subspace of $\Lip_1([0,1],X)$ of injective curves with constant metric speed $1$. Then, the map $\beta: C([0,1],X) \to \mathbb D([0,1],X)$ defined by
\[
\beta(\theta) = \theta \circ \gamma_{\theta^{-1}(B)},
\]
 is $\eta$-measurable. Here, $\gamma_{K}$ is the monotone map from Lemma~\ref{lem:arc-length} associated with a closed set $K \in \mathcal K([0,1])$,
\end{proposition}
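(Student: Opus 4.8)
The plan is to realize $\beta$ as a composition of elementary factors, verify that each is either Borel or — after a Lusin reduction — continuous, and patch. Writing $K_\theta := \theta^{-1}(B)$, I would factor $\beta$ through
\[
\theta \;\longmapsto\; K_\theta \;\longmapsto\; \mathbf 1_{K_\theta}\in L^1([0,1]) \;\longmapsto\; \gamma_{K_\theta}=u_{K_\theta}\in \mathbb D([0,1]) \;\longmapsto\; \theta\circ u_{K_\theta}\in\mathbb D([0,1],X).
\]
The set $\{\theta:\theta^{-1}(B)=\emptyset\}$ is open (it is $\{\dist(\theta([0,1]),B)>0\}$) and $\{\theta:\mathscr L^1(K_\theta)=0\}$ is Borel; on either one $\beta$ is a constant curve depending in a manifestly Borel way on $\theta$ via the $|K|=0$ convention of Lemma~\ref{lem:arc-length}, so it suffices to treat the Borel set $G:=\{\theta:\mathscr L^1(K_\theta)>0\}$. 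On $G$ the first factor $\theta\mapsto K_\theta$ is Borel into $\mathcal K([0,1])$ by Lemma~\ref{lem:Hausdorff}, and $\theta\mapsto\mathscr L^1(K_\theta)$ is Borel because $\mathscr L^1$ is upper semicontinuous (hence Borel) on $\mathcal K([0,1])$. The last factor, post-composition $u\mapsto\theta\circ u$ by a fixed $\theta\in\Lip_1([0,1],X)$, is continuous from $\mathbb D([0,1])$ to $\mathbb D([0,1],X)$ since post-composition with a uniformly continuous map preserves Skorokhod convergence; moreover $d_S(\theta_j\circ u,\theta\circ u)\le\|d(\theta_j,\theta)\|_\infty$, so the $\theta$-dependence of this factor is harmless and the whole matter reduces to the behaviour of $K\mapsto u_K$.

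The mechanism to handle that is Lusin's theorem. Applied to $\eta$ restricted to $G$ and to the Borel map $\theta\mapsto(K_\theta,\mathscr L^1(K_\theta))$, it produces, for every $\varepsilon>0$, a compact set $\mathcal C\subset G$ with $\eta(G\setminus\mathcal C)<\varepsilon$ on which both $\theta\mapsto K_\theta$ and $\theta\mapsto\mathscr L^1(K_\theta)$ are continuous; on such a $\mathcal C$ one also has $\mathscr L^1(K_\theta)\ge\delta>0$ for some $\delta$, by compactness. It then suffices to show $\beta|_{\mathcal C}$ is continuous: letting $\varepsilon\downarrow 0$ along a sequence, $\beta$ becomes Borel on a full $\eta$-measure subset of $G$, hence $\eta$-measurable on $G$ and therefore on all of $C([0,1],X)$.

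So take $\theta_j\to\theta$ in $\mathcal C$. Then $K_{\theta_j}\to K_\theta$ in Hausdorff distance and $\mathscr L^1(K_{\theta_j})\to\mathscr L^1(K_\theta)$, so Lemma~\ref{lem:Haus_implies_L1} gives $\mathbf 1_{K_{\theta_j}}\to\mathbf 1_{K_\theta}$ in $L^1([0,1])$; since $\mathscr L^1(K_{\theta_j})$ stays bounded below, the distribution functions $f_K(x)=\mathscr L^1(K)^{-1}\mathscr L^1(K\cap[0,x])$ satisfy $f_{K_{\theta_j}}\to f_{K_\theta}$ uniformly on $[0,1]$. These are continuous nondecreasing surjections of $[0,1]$, so their graphs converge in Hausdorff distance, hence so do the reflections of these graphs across the diagonal, which are exactly the completed graphs of the right-continuous pseudo-inverses $u_{K_{\theta_j}}$ and $u_{K_\theta}$. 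For monotone functions, Hausdorff convergence of completed graphs implies convergence in the Skorokhod metric $d_S$, so $u_{K_{\theta_j}}\to u_{K_\theta}$ in $\mathbb D([0,1])$. Post-composing with $\theta$ and absorbing $\|d(\theta_j,\theta)\|_\infty$ as above yields $\beta(\theta_j)=\theta_j\circ u_{K_{\theta_j}}\to\theta\circ u_{K_\theta}=\beta(\theta)$ in $\mathbb D([0,1],X)$, which is the required continuity on $\mathcal C$.

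The main obstacle — the step to execute with care — is precisely the transition from $L^1$- to Skorokhod-convergence of $u_{K_\theta}$. Lemma~\ref{lem:arc-length}(ii) only delivers $L^1$-convergence, and this is genuinely too weak: an $L^1$-convergent sequence of monotone functions can approximate a jump by steep continuous pieces and fail to converge in $J_1$, and after composing with the injective $\theta$ the situation is not rescued. The fix is to go back to the \emph{uniform} convergence of the functions $f_{K_\theta}$ and read $J_1$-convergence off the reflected graphs; this synchronizes the jumps automatically (equivalently, it forces $|D^j u_{K_{\theta_j}}|(I)=\operatorname{ess\,sup}K_{\theta_j}-\operatorname{ess\,inf}K_{\theta_j}-\mathscr L^1(K_{\theta_j})\to|D^j u_{K_\theta}|(I)$) and remains valid uniformly even when $K_\theta$ is Cantor-like with infinitely many gaps, where a pointwise argument would break down.
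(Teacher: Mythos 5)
Your overall architecture — factoring $\beta$ through $\theta\mapsto K_\theta\mapsto u_{K_\theta}\mapsto\theta\circ u_{K_\theta}$, the Lusin reduction using Lemma~\ref{lem:Hausdorff} and the length map, the passage to $L^1$-convergence of indicators via Lemma~\ref{lem:Haus_implies_L1}, and the composition step — coincides with the paper's. The gap sits exactly where you flag the delicate point, and your fix does not close it: for monotone c\`adl\`ag functions, Hausdorff convergence of completed graphs is the $M_1$ (graph) topology, which is \emph{strictly weaker} than the Skorokhod $J_1$ topology used here, even for monotone functions and even when the total jump mass converges. A counterexample arises within this very construction: take
\[
K_j=[0,\tfrac14]\cup[\tfrac12-\tfrac1j,\tfrac12+\tfrac1j]\cup[\tfrac34,1],\qquad K_\infty=[0,\tfrac14]\cup\{\tfrac12\}\cup[\tfrac34,1].
\]
Then $d_H(K_j,K_\infty)\to0$, $\mathbf 1_{K_j}\to\mathbf 1_{K_\infty}$ in $L^1$, $|K_j|\to|K_\infty|=\tfrac12$, so $f_{K_j}\to f_{K_\infty}$ uniformly and the reflected graphs do converge in Hausdorff distance. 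But $u_{K_\infty}$ (which depends only on $\mathbf 1_{K_\infty}$ up to null sets) has a single jump of size $\tfrac12$ at $t=\tfrac12$, while $u_{K_j}$ has two jumps of size $\tfrac14-\tfrac1j$ at times both converging to $\tfrac12$, separated by a plateau at height $\approx\tfrac12$. Any reparameterization $\lambda_j$ with $\|\lambda_j-\mathrm{id}\|_\infty\to0$ would force $u_{K_j}$ to have a single jump of size close to $\tfrac12$ at $\lambda_j(\tfrac12)$, which is impossible; one checks $d_S(u_{K_j},u_{K_\infty})\ge\tfrac18$ eventually. Note that the total jump masses $2(\tfrac14-\tfrac1j)\to\tfrac12$ do converge, so the criterion in your parenthetical does not rescue the argument. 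Such $K_j$ can occur as $\theta_j^{-1}(B)$ for unit-speed injective $\theta_j\to\theta$ (a curve touching a line $B$ tangentially at one point, approximated by curves in contact along a shrinking interval), and the Lusin sets chosen to make $\theta\mapsto(K_\theta,\mathscr L^1(K_\theta))$ continuous do not exclude this configuration.

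What this step genuinely requires is a \emph{synchronization of jumps}: one must control how the gap structure of $K_{\theta_j}$ converges to that of $K_\theta$, not merely the convergence of graphs. The paper does this by hand: for each threshold $\delta$ it isolates the finitely many gaps of length $>\delta$, matches them bijectively between $K_j$ and $K_0$ for $j$ large, builds bi-Lipschitz reparameterizations $\lambda_j^\delta$ carrying each large gap onto its partner, and then estimates separately the absolutely continuous part, the matched large jumps, and the tail of small jumps, finally diagonalizing in $\delta$. Any correct completion of your argument must either produce such reparameterizations explicitly or shrink the Lusin set further so that the jump configurations stabilize; the graph-reflection argument by itself only delivers $M_1$-convergence and cannot yield convergence in $d_S$.
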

\begin{proof} 
Our goal is to demonstrate that the map $\beta: C([0,1],X) \to \mathbb D([0,1],X)$ is $\eta$-measurable. By Lusin's theorem, it suffices to show that for any $\varepsilon > 0$, there exists a closed set $F \subset \Lip_1([0,1],X)$ with $\eta(F^c) < \varepsilon$ such that the restriction $\beta|_F$ is continuous. The map $\beta(\theta)$ is the composition of three functions, and we will prove the continuity of each component in a sequence of steps.

\begin{enumerate}
    \item The inverse image map $L: C([0,1],X) \to (\mathcal{K}([0,1]), d_H)$, defined by $L(\theta) = \theta^{-1}(B)$.
    \item The transport map $\Gamma: \mathcal{K}([0,1]) \to \mathbb{D}([0,1])$, defined by $\Gamma(K) = u_K$.
    \item The composition map $C: \Theta(X) \times \mathbb D([0,1],X) \to \mathbb D([0,1],X)$, defined by $C(\theta, \gamma) = \theta \circ \gamma$.
\end{enumerate}

\subsubsection*{Step 1: continuity of the inverse image}

From Lemma~\ref{lem:Hausdorff}, the map $L$ is Borel measurable. A related map, $G: \Theta(X) \to [0,\infty)$ defined by $G(\theta) = \Mbf(\llbracket\theta\rrbracket \mres B)$, is upper semicontinuous. By Lusin's theorem, for any $\varepsilon > 0$, there exists a closed set $F \subset \Lip_1([0,1],X)$ with $\eta(F^c) < \varepsilon$ such that the restrictions of both $L$ and $G$ to $F$ are continuous. This implies that if $\theta_j \to \theta$ in $F$, then $d_H(L(\theta_j), L(\theta)) \to 0$.

Now, let $K_j = L(\theta_j)$ and $K = L(\theta)$. Since curves in $\Theta(X)$ are injective and have constant metric speed 1, their length corresponds to the measure of the preimage: $|K_j| = \mathcal{H}^1(\im \theta_j \cap B) = \Mbf(\llbracket\theta_j\rrbracket \mres B)$. Because $G$ is continuous on $F$, we have $|K_j| \to |K|$. The Hausdorff convergence $d_H(K_j, K) \to 0$ combined with the convergence of lengths $|K_j| \to |K|$ implies, by Lemma~\ref{lem:Haus_implies_L1}, that the characteristic functions converge in $L^1$:
\[
\|\mathbf{1}_{K_j} - \mathbf{1}_K\|_{L^1} \to 0.
\]
Therefore, the restriction $L|_F$ is continuous from $F$ to the space $\mathcal{K}([0,1])$ equipped with the metric $d_H + \|\cdot\|_{L^1}$.

\subsubsection*{Step 2: continuity of the transport map}

We now show that the map $\Gamma: K \mapsto u_K$ is continuous from $(\mathcal{K}([0,1]), d_H + \|\cdot\|_{L^1})$ to the Skorokhod space $(\mathbb{D}([0,1]), d_S)$. Specifically, if $d_H(K_j, K_0) + \|\mathbf{1}_{K_j} - \mathbf{1}_{K_0}\|_{L^1} \to 0$, then $\gamma_j = \gamma_{K_j}$ converges to $\gamma_0 = \gamma_{K_0}$ in the Skorokhod topology. This requires finding a sequence of strictly increasing homeomorphisms $\lambda_j: [0,1] \to [0,1]$ such that
\begin{equation}\label{eq:lambda}
\max\{ \|\gamma_0 - \gamma_j \circ \lambda_j\|_\infty, \|\mathrm{id} - \lambda_j\|_\infty \} \to 0.
\end{equation}

We begin by defining the sets of open intervals that correspond to the jumps of our transport maps. For each $j \in \mathbb{N}_0$, let $\mathcal{F}^j$ be the family of disjoint open intervals whose union is $\Gamma_j \coloneqq [0,1] \smallsetminus K_j$. For any $\delta > 0$, we define $\mathcal{F}^j_\delta \subset \mathcal{F}^j$ as the subfamily of intervals with length strictly larger than $\delta$, and we set $\Gamma^j_\delta \coloneqq \bigcup_{I \in \mathcal{F}^j_\delta} I$.

With these definitions in place, the convergence $d_H(K_0,K_j) + \|\mathbf{1}_{K_j} - \mathbf{1}_{K_0}\|_{L^1} \to 0$ gives rise to the following key observations:

\begin{enumerate}
    \item \emph{Convergence of total length:} The $L^1$ convergence of the characteristic functions of $K_j$ implies that the measure of the symmetric difference of the complements vanishes: $|\Gamma_j \triangle \Gamma_0| \to 0$. This is because $|K_j \triangle K_0| \to 0$.
    \item \emph{Convergence of "large" intervals:} For any fixed $\delta > 0$, the total length of the large jump intervals also converges:
    \[
    |\Gamma^j_\delta \triangle \Gamma^0_\delta| \to 0 \qquad \text{as $j \to \infty$}.
    \]
    To see this, note that the Hausdorff convergence $d_H(K_j,K_0) \to 0$ ensures that each interval $I \in \mathcal{F}^0_\delta$ is uniformly approximated by an interval in $\mathcal{F}^j_\delta$. Since the cardinality of each family $\mathcal{F}^j_\delta$ is finite and bounded by $\delta^{-1}$ (independently of $j$), we find that $\limsup_{j \to \infty} |\Gamma^0_\delta \smallsetminus \Gamma^j_\delta| = 0$. The analogous estimate holds for $\limsup_{j \to \infty} |\Gamma^j_\delta \smallsetminus \Gamma^0_\delta|$, proving the claim.
    \item \emph{Uniform measure convergence:} A stronger, quantified version of the previous point also holds. Due to the finiteness of $\mathcal{F}^j_\delta$ and the uniformity of the approximation, there exists a $j_1 = j_1(\delta)$ sufficiently large such that for all $j \ge j_1$:
    \[
    |\Gamma^j_\delta \triangle \Gamma^0_\delta|(B) \le \delta \qquad \text{for all Borel sets } B \subset (0,1).
    \]
    \item \emph{Convergence of small intervals:} For each $j \in \mathbb{N}_0$, the total length of the small jump intervals, $|\Gamma_j \smallsetminus \Gamma^j_\delta|$, tends to zero as $\delta \to 0$. This follows directly from the monotonicity of the limit $\Gamma^j_\delta \uparrow \Gamma_j$.
    \item \emph{$\delta$-uniform convergence of small intervals:} From observations (1) and (2), it follows that $\mathbf{1}_{\Gamma^j_\delta} \to \mathbf{1}_{\Gamma^0_\delta}$ and $\mathbf{1}_{\Gamma_j} \to \mathbf{1}_{\Gamma_0}$ in $L^1$ as $j \to \infty$. Consequently, there exists a $j_2 = j_2(\delta) \ge j_1$ such that for all $j \ge j_2$, the length of the remaining small intervals can be bounded:
    \[
    |\Gamma_j \smallsetminus \Gamma_\delta^j| = \|\mathbf{1}_{\Gamma_j} - \mathbf{1}_{\Gamma_\delta^j}\|_{L^1} \le \delta + \|\mathbf{1}_{\Gamma_0} - \mathbf{1}_{\Gamma_\delta^0}\|_{L^1} = \mathrm{O}_\delta(1).
    \]
\end{enumerate}

From our previous observations, for a fixed $\delta \in (0,1)$, there exists a $j_3 = j_3(\delta) \ge j_2$ such that for every $j \ge j_3$, we can find a bijective matching between the large jump intervals of $\Gamma_0$ and $\Gamma_j$. Specifically, there is a bijection $\mathcal{F}^0_\delta \to \mathcal{F}^j_\delta$, denoted by $I \mapsto \tilde{I}$, satisfying $|I \triangle \tilde{I}| \le \delta^2$.

This bijective matching allows us to construct a monotone bijection $\lambda_j^\delta: [0,1] \to [0,1]$ that maps each interval $I \in \mathcal{F}^0_\delta$ to its corresponding $\tilde{I} \in \mathcal{F}^j_\delta$. We construct this map by concatenating piecewise affine transformations shifting every point at most by $2\eps$ and sending each interval, into its corresponding matched interval. Due to the the difference of scales $\eps \ll \min\{\delta,m_\delta\}$, we can ensure each transformation only acts on a small neighborhood of its assigned interval, leaving all else invariant. In fact, this construction can be achieved with a bi-Lipschitz bijection $\lambda_j^\delta$ with achievable Lipschitz constants below $2$ (although that is will not play any role), satisfying:
\begin{equation}\label{eq:basta}
    \lambda_j(I) = \tilde I \text{ for all } I \in \mathcal F^0_\delta, \quad \max\{ \Lip(\lambda^\delta_j),\Lip( (\lambda^\delta_j)^{-1})\} \le 2, \quad \text{and} \quad \|\mathrm{id} - \lambda^\delta_j\|_\infty \le 2\epsilon \le \delta^2.
\end{equation}

To prove Skorokhod convergence, we must now estimate the uniform distance between the original curve $\gamma_0$ and the reparameterized curve $\gamma_j \circ \lambda_j^\delta$. We can express the monotone curve $\gamma_j(s)$ as a sum of its absolutely continuous part and its jumps. This expression is derived from Lemma~\ref{lem:arc-length}, which establishes the identity between the current associated with the absolutely continuous part of the curve, $\llbracket \gamma_j \rrbracket^a$, and the current associated with its speed function, $\llbracket K_j \rrbracket$. Similarly, the identity of currents $\llbracket \Gamma_j \rrbracket = \llbracket \gamma_j \rrbracket^j$ links the total length of the jump intervals to the jump part of the current. Therefore, the expression for $\gamma_j(s)$ is given by:
\[
    \gamma_j(s) = |K_j|s + \sum_{I \in \mathcal F_\delta^j\,,\,I \cap (0,s] \neq \emptyset} |I| +\sum_{I \in \mathcal F^j \smallsetminus \mathcal F_\delta^j\,,\,J \cap (0,s] \neq \emptyset} |J|, \qquad s \in (0,1).
\]
For all $j \ge j_3(\delta)$, we can bound the difference $|\gamma_0(s) - \gamma_j(\lambda_j^\delta(s))|$ by summing three terms:
\begin{itemize}
    \item \emph{absolutely continuous part:} The difference in the speed and reparameterization is bounded uniformly in $s$:
    \[
        (|K_j| - |K_0| + \|1 - \lambda_j^\delta\|_\infty )s \le j^{-1} + \delta^2.
    \]
    \item \emph{large jumps:} The difference in the lengths of the large intervals, which, by construction of $\lambda_j^\delta$ and our previous observations, is bounded uniformly in $s$:
    \[
        |\Gamma_\delta^j \triangle \Gamma_\delta^0| \le \delta.
    \]
    \item \emph{small jumps:} The contribution from the small intervals, which are the remainder of the total length, is bounded by a term dependent on $\delta$:
    \[
        |\Gamma_j \smallsetminus \Gamma_\delta^j| + |\Gamma_0 \smallsetminus \Gamma_\delta^0| = \mathrm{O}_\delta(1).
    \]
\end{itemize}
By collecting these uniform estimates, we find that for all $j \ge j_4(\delta) \coloneqq \max\{j_3(\delta),\delta^{-1}\}$, the overall difference is small:
\[
    \|\gamma_0 - \gamma_j \circ \lambda_j^\delta\|_\infty = \mathrm{O}_\delta(1).
\]
We can assume $j_4(\delta)$ is a monotone non-decreasing, surjective function of $\delta$.

To complete the proof, we construct the final sequence of reparameterizations. For each $j \in \mathbb{N}$, we define $\delta_j = k^{-1}$ for the unique integer $k$ such that $j_4(k^{-1}) < j \le j_4((k+1)^{-1})$. The surjectivity of $j_4$ guarantees that every $j$ belongs to such a unique interval. We then set $\lambda_j \coloneqq \lambda^{\delta_j}_j$. As $j \to \infty$, it follows that $\delta_j \downarrow 0$. By construction, $j > j_4(\delta_j)$, so we can apply the bounds above to conclude:
\[
    \|\mathrm{id} - \lambda_j\|_{\infty} \le 2\delta_j \to 0 \quad \text{and} \quad \|\gamma_0 - \gamma_j \circ \lambda_j\|_\infty = \mathrm{O}_{\delta_j}(1) \to 0 \quad \text{as } j \to \infty.
\]
This concludes the proof of the second step.

\subsubsection*{Step 3: Continuity of the Composition}

This step is straightforward. The composition map $C: (\theta, \gamma) \mapsto \theta \circ \gamma$ is well-defined for all $\theta \in \Theta(X)$ and $\gamma \in \mathbb D([0,1],X)$. Let a sequence $(\theta_j, \gamma_j)$ converge to $(\theta, \gamma)$ in the product topology on $\Theta(X) \times \mathbb D([0,1],X)$. By the definition of Skorokhod convergence, there exists a sequence of reparameterizations $\lambda_j \in \Lambda$ such that $\|\mathrm{id} - \lambda_j\|_\infty \to 0$ and $\|d(\gamma, \gamma_j \circ \lambda_j)\|_\infty \to 0$.

We can now estimate the distance between the composed maps using the triangle inequality and the Lipschitz property of $\theta_j$:
\begin{align*}
    \|d(\theta \circ \gamma, \theta_j \circ \gamma_j \circ \lambda_j)\|_\infty
    & \le \|d(\theta \circ \gamma, \theta_j \circ \gamma)\|_\infty + \|d(\theta_j \circ \gamma, \theta_j \circ \gamma_j \circ \lambda_j)\|_\infty \\
    & \le \|d(\theta, \theta_j)\|_\infty + \Lip(\theta_j) \|d(\gamma, \gamma_j \circ \lambda_j)\|_\infty \\
    & \le \|d(\theta, \theta_j)\|_\infty + \|d(\gamma, \gamma_j \circ \lambda_j)\|_\infty \to 0.
\end{align*}
The last inequality holds because the curves $\theta_j \in \Theta(X)$ have a Lipschitz constant of at most 1. Since both terms vanish as $j \to \infty$, the composition map is continuous.

\subsubsection*{Conclusion}

We have demonstrated that for any $\varepsilon > 0$, we can find a closed set $F$ with $\eta(F^c) < \varepsilon$ on which each of the three component maps—$L$, $\Gamma$, and $C$—is continuous. Since the composition of continuous functions is continuous, the full map $\beta = C(\theta, \Gamma(L(\theta)))$ is continuous on $F$. This, by Lusin's theorem, completes the proof that $\beta$ is $\eta$-measurable.
\end{proof}

\end{document}